\newcommand{\reals}{\mathbb{R}}
\newcommand{\naturals}{\mathbb{N}}
\newcommand{\integers}{\mathbb{Z}}
\newcommand{\para}[1]{\left(#1\right)}
\newcommand{\paraa}[1]{\big(#1\big)}
\newcommand{\parab}[1]{\Big(#1\Big)}
\newtheorem{theorem}{Theorem}[section]
\newtheorem{corollary}[theorem]{Corollary}
\newtheorem{lemma}[theorem]{Lemma}
\newtheorem{proposition}[theorem]{Proposition}
\newtheorem{example}[theorem]{Example}
\theoremstyle{definition}
\newtheorem{definition}[theorem]{Definition}
\theoremstyle{remark}
\newtheorem{remark}[theorem]{Remark}
\numberwithin{equation}{section}
\title{Fuzzy cylinders of finite length}
\author{Andreas Sykora}
\address[Andreas Sykora]{}
\email{syko@gelbes-sofa.de}
\thanks{}
\subjclass[2000]{}
\keywords{}
\begin{document}

\begin{abstract}
   We introduce non-commutative algebras, which can be associated with the function
   algebra of functions on a finite or half-finite cylinder. The algebras,
   which depend on a deformation parameter, are crossed product algebras
   of a partial group action of $\integers$ on an interval of the real line $\reals$. 
   Discrete representations of the algebras can be seen as matrix regularizations
   of the respective function algebra on the finite or half-finite cylinder
   and therefore as fuzzy space.
   
   In a second part of the article, we review crossed product algebras
   based on partial group actions and derive the results needed
   in the first part.
\end{abstract}

\maketitle

\tableofcontents

\section{Introduction}

The commuting Gelfand-Naimark theorem states that
every commuting $C^*$-algebra is isomorphic to an algebra
of continuous functions on a topological space. Generalizing this,
one may assume that every non-commuting $C^*$-algebra
is an algebra of functions on a noncommutative topological space.
These considerations play an important role in noncommutative
geometry, \cite{Conn_1994}, \cite{Mad_1999}.

Of particular interest are $C^*$-algebras, which depend
on a parameter $\hbar$, which become commuting, when this
parameter approaches 0. Such algebras may be seen as a fuzzyfication
of the topological space associated with the commuting algebra at $\hbar=0$,
which in this way can be seen as classical commuting limit of the noncommutative space.

In \cite{Arnlind_2022} shift algebras were introduced,
which are based on the action of the
integers $\integers$ on the real line $\reals$ by a shift $S_{\hbar,n}:x\mapsto x+n\hbar$.
Here, $n\in \integers$ and $\hbar$ is a positive real constant.
The action $S_{\hbar,n}$ of $\integers$ on the real line induces an
action $\hat{S}_{\hbar,n}$ on the function algebra
$\mathcal{F}(\reals)$ of complex valued functions on
$\reals$ by pullback, i.e. $f(x)\mapsto f(S_{\hbar,n}(x))=f(x+n\hbar)$.
The corresponding shift algebra is the crossed product algebra
of $\integers$ with $\mathcal{F}(\reals)$ by this induced action. It was
then shown that with the aid of discrete representations of
special subalgebras of the shift algebra, a lot of fuzzy spaces can be reproduced,
i.e. matrix algebras, which in a sense become a continuous
manifold in the classical limit $\hbar\rightarrow 0$.

The shift action $S_{\hbar,n}=S_{\hbar,1}^n$ is based on
the constant shift $S_{\hbar,1}$, which is a bijection of $\reals$
onto itself. A natural generalization of this are partial bijection,
which are bijections between subsets of a set. In this
context, the group action is replaced by a partial action. Such actions
arise by restricting group actions to subsets. In general, the global bijections
of the set are replaced by bijections between subsets, i.e. partial bijections.
For example, we can restrict the action $S_{\hbar,n}$ of $\integers$
on $\reals$ to a subset $I\subset\reals$. We set
 $I_n=I \cap S_{\hbar,n}(I)$ and restrict the domain of
 $S_{\hbar,n}$ to $I_{-n}$, which then results in partial
 bijections $S_{\hbar,n}:I_{-n}\rightarrow I_{n}$. This
 results in a partial action of $\integers$ on the subset $I$.

In \cite{Sieben_1996} and \cite{Exel_1998} partial group actions and 
inverse semigroup actions, which naturally arise in the context
of partial group actions, have been considered in the context of
$C^*$-algebras and crossed product $C^*$-algebras
were constructed based on these actions. Special properties of
$C^*$-algebras, such as approximate identities have been used
to show that the crossed product $C^*$-algebras becomes associative
and that their representations are covariant representations.

The present article is divided into two parts, where in 
the second part we review crossed product algebras based
on semigroup actions and partial group actions, such as 
introduced in \cite{Sieben_1996}. Contrary to \cite{Sieben_1996}
and \cite{Exel_1998}, we will however stay in the context of
ordinary $*$-algebras, to arrive at associative crossed product algebras
with covariant representations. The second part is mainly a reference for
the definitions and theorems, we are using in the first part. 

In the first part, we introduce noncommutative spaces,
which are based on the crossed product algebra of the partial group action
of $\integers$ on the complex valued functions on
an interval $I\subset\reals$, wherein the partial group action
is based on a single partial bijection of $I$. As examples,
we construct a fuzzy cylinder of finite length and try to construct a
fuzzy Poincare disc. 

\part{Fuzzy cylinders}

In the present article, a fuzzy space is seen as a family of
noncommutative algebras that in a sense have a so called "classical limit",
i.e. the family depends on a deformation parameter, such
as in deformation quantization, \cite{Klim_1992}, \cite{Bord_1994}.

We assume that the algebra with zero deformation parameter
becomes commuting. As already mentioned, the commuting 
Gelfand-Naimark theorem then states that a commuting $C^*$-algebra
is isomorphic to $C_0(X)$, the continuous functions on a compact
Hausdorff space $X$, which vanish at infinity. In such a way,
the commutative algebra can be associated with the space $X$,
which becomes "fuzzy", when the deformation parameter $\hbar$
is different from $0$.

This part is organized as follows: In section \ref{sec:alp_fuz}
we introduce a specific fuzzy space, the $\alpha$-fuzzy cylinder,
which is a crossed product algebra of a partial group action of
$\integers$ on an interval $I$ of the real line $\reals$. The 
following section \ref{sec:alp_fuz_prop} is devoted to basic properties
of general $\alpha$-fuzzy cylinder. In section
\ref{sec:shift_cyl} we consider fuzzy cylinders based on a
constant right shift applied to finite intervals and are able to
define a fuzzy cylinder of finite length, which is seen as the main 
result of this work. In section \ref{sec:cl_lim} 
it is shown that the classical limit of the $\alpha$-fuzzy cylinder is really
a topological cylinder. In section \ref{sec:spec_sub} we
consider subalgebras of the $\alpha$-fuzzy cylinder and
try to construct an algebra representing a fuzzy Poincare disc.

\section{$\alpha$-fuzzy cylinders} \label{sec:alp_fuz}

All the fuzzy spaces introduced in the following are
based on a family of partial bijection $\alpha_\hbar$
of an interval $I\subset\reals$, which we will use to
construct a crossed product algebra. The interval can
have open and/or closed ends and also can
go to $-\infty$ and $+\infty$. Recall that a partial
bijection of a set is a bijection between subsets of the set,
see section \ref{sec:inv_semi}.
The deformation parameter $\hbar$ is from a set
of non-negative real parameters containing $0$.
We demand that $\alpha_\hbar$ is continuous
for every $\hbar$ and that $\alpha_0=\text{id}_I$.
Thus, for $\hbar=0$, the partial bijection becomes the
identity of $I$, $\alpha_0=\text{id}_I$. 
We further demand that $\alpha_\hbar \rightarrow \alpha_0=\text{id}_I$ for
$\hbar\rightarrow 0$ pointwise, such that for
decreasing $\hbar$ the partial bijections becomes more and
more the identity mapping.
\begin{definition}
  We say that such a partial bijection $\alpha_\hbar$ has
  \emph{classical limit}.
\end{definition}

When we concatenate $\alpha_\hbar$ with itself, the result
is a further partial bijection, however possibly with another
domain and range. Proposition \ref{single_bij_action} results in:

\begin{proposition} \label{int_pga}
The set $\{\alpha_\hbar^n, n\in\integers \}$ of partial bijections generated
by the partial bijection $\alpha_\hbar$ of the interval $I$ with classical limit
is a partial group action of $\integers$ on 
the interval $I$. With $\alpha_\hbar:I_{\hbar,-1}\rightarrow I_{\hbar,1}$,
where $I_{\hbar,-1}\subset I$ and $I_{\hbar,1} \subset I$
are the subintervals of the interval $I\in\reals$, which are
mapped to each other by $\alpha_\hbar$, the domain and ranges $I_{n}$
of $\alpha_\hbar^n:I_{-1}\rightarrow I_n$ (by
omitting $\hbar$ as index) are
\begin{align*}
      I_{n} =\alpha^{n-1} \paraa{I_\cap} \cap \cdots \cap \alpha \paraa{I_\cap}
\end{align*}
for $n>1$ and $n<1$ where $I_\cap=I_{-1}\cap I_1$.

For $\hbar=0$, the partial group action becomes the
trivial group action of $\integers$ on $I$, in which case
$\alpha_0^n=\text{id}_I$ and $I_n=I$.
\end{proposition}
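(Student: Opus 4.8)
The plan is to separate the statement into its structural half (that the integer powers of a single partial bijection form a partial group action) and its computational half (the explicit description of the ranges $I_n$). For the structural half I would simply invoke Proposition \ref{single_bij_action}: since $\alpha_\hbar$ is a \emph{single} partial bijection of $I$, its integer powers $\alpha_\hbar^n$, with $\alpha_\hbar^0=\text{id}_I$ and $\alpha_\hbar^{-n}=(\alpha_\hbar^n)^{-1}$, automatically assemble into a partial group action of $\integers$ on $I$; that each $\alpha_\hbar^n$ is again continuous follows because a composition of continuous partial maps is continuous. Dropping $\hbar$ from the notation, I write $\alpha\colon I_{-1}\to I_1$, so that $\alpha^n$ is a partial bijection with range $I_n$ and domain $I_{-n}$.

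For the explicit formula I would argue by induction on $n$, resting on three elementary facts about partial maps. First, the composition rule for ranges: since $\alpha^{n+1}=\alpha\circ\alpha^n$, a point lies in $\operatorname{ran}(\alpha^{n+1})$ precisely when it is the $\alpha$-image of a point of $\operatorname{ran}(\alpha^n)\cap\operatorname{dom}(\alpha)$, giving the recursion
\[ I_{n+1}=\alpha\paraa{I_n\cap I_{-1}}. \]
Second, images compose, so $\alpha\paraa{\alpha^k(I_\cap)}=\alpha^{k+1}(I_\cap)$. Third, $\alpha$ is injective and hence distributes over intersections, $\alpha(\bigcap_i A_i)=\bigcap_i\alpha(A_i)$, with the convention $\alpha(S)=\alpha(S\cap I_{-1})$. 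Starting from $I_2=\alpha(I_1\cap I_{-1})=\alpha(I_\cap)$ and assuming $I_n=\bigcap_{k=1}^{n-1}\alpha^k(I_\cap)$, the recursion together with distributivity gives
\[ I_{n+1}=\alpha(I_{-1})\cap\bigcap_{k=1}^{n-1}\alpha^{k+1}(I_\cap)=I_1\cap\bigcap_{k=2}^{n}\alpha^{k}(I_\cap). \]

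It then remains to rewrite this in the stated form $\bigcap_{k=1}^{n}\alpha^k(I_\cap)$. Each $\alpha^k(I_\cap)\subseteq\operatorname{ran}(\alpha)=I_1$, so the leading factor $I_1$ is redundant; and one has the nesting $\alpha^{k+1}(I_\cap)\subseteq\alpha^{k}(I_\cap)$, which I would prove by observing that $\alpha(I_\cap)\cap I_{-1}\subseteq I_1\cap I_{-1}=I_\cap$, whence $\alpha^{2}(I_\cap)=\alpha\paraa{\alpha(I_\cap)\cap I_{-1}}\subseteq\alpha(I_\cap)$, and similarly for the higher powers. Because of this nesting the omitted $k=1$ term $\alpha(I_\cap)$ already contains $\bigcap_{k=2}^{n}\alpha^{k}(I_\cap)$, so it can be reinserted without changing the set, closing the induction. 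The case $n<-1$ I would handle by the identical argument applied to $\alpha^{-1}\colon I_1\to I_{-1}$, noting that $I_\cap=I_{-1}\cap I_1$ is symmetric under interchange of $I_{-1}$ and $I_1$; the cases $n\in\{-1,0,1\}$ are the base data ($I_0=I$). Finally, for $\hbar=0$ one has $\alpha_0=\text{id}_I$, so $\alpha_0^n=\text{id}_I$ and $I_n=I$ for every $n$, which is exactly the trivial global action of $\integers$ on $I$.

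The only genuinely delicate point throughout is the careful bookkeeping of domain restrictions when composing partial bijections—in particular keeping the convention $\alpha(S)=\alpha(S\cap I_{-1})$ consistent—so that injectivity may be used to push $\alpha$ through the intersections. Once the recursion and the compose/distribute/nest facts are stated cleanly, the induction itself is routine.
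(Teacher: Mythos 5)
Your proposal is correct and follows essentially the same route as the paper: the paper proves this statement by citing Proposition \ref{single_bij_action}, whose proof is exactly your induction on the recursion $I_n=\alpha\paraa{I_{n-1}\cap I_{-1}}$ combined with distributivity of the injective $\alpha$ over intersections, with the negative-$n$ case handled by passing to $\alpha^{-1}$. The only cosmetic difference is that the paper recovers the $k=1$ term by inserting the redundant factor $I_1$ \emph{before} applying $\alpha$ (using $I_{n-1}\subseteq I_1$ to replace $I_{-1}$ by $I_\cap$), whereas you reinsert $\alpha(I_\cap)$ afterwards via the nesting $\alpha^{k+1}(I_\cap)\subseteq\alpha^{k}(I_\cap)$.
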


As shown in proposition \ref{pga_semigroup}, every
partial group action gives rise to an inverse semigroup.
The inverse semigroup $S_{\alpha_\hbar}(\integers)$
generated from a single partial bijection is further
described in proposition \ref{single_bij_sg}, where
it is shown that it elements are composed of projectors 
$\alpha_\hbar^n\circ \alpha_\hbar^{-n}$ on the
sets $I_n$ and the mappings $\alpha_\hbar^m$.

According to proposition \ref{fun_alg_act}, a
semigroup action on a set, here the interval $I$,
induces an inverse semigroup action on the function algebra
on the set, i.e. the function algebra $\mathcal{F}(I)$.
When $f$ is a function on $I$, the mapping induced by $\alpha_\hbar$
is simply defined by concatenation, i.e. 
$\hat\alpha_\hbar(f)=f\circ \alpha_\hbar^{-1}$, where
$-1$ indicates the inverse partial bijection. Since however
$\alpha_\hbar$ is solely a partial mapping, this is not possible
for all functions $f$ but only for those, who vanish outside the
domain of $\alpha_\hbar$, which form an ideal of the 
algebra of functions on $I$. 

As we have demanded that $\alpha_\hbar$ is a continuous
mapping, we receive also an inverse semigroup action
on $C(I)$, the algebra of continuous functions on the
interval $I$, see remark \ref{com_fun_alg}.

In summary, starting with a single partial bijection
$\alpha_\hbar$ of an interval $I$, we arrive at an
inverse semigroup action of $\integers$ on the
function algebra $\mathcal{F}(I)$ (or $C(I)$). With such
data, it is possible to construct a crossed product
algebra, such as defined in definition \ref{def_pi_cpa}
(or definition \ref{def_alg_cpa}). These crossed product
algebras will be the basis of the fuzzy spaces described
herein.

\begin{definition} \label{alp_fuz_cyl}
The \emph{$\alpha$-fuzzy cylinder}
based on the partial bijection $\alpha_\hbar$ of the interval $I$
with classical limit is the $*$-algebra
$\text{Cyl}(I, \alpha_\hbar)=\mathcal{\tilde{F}}(I) \rtimes_{\alpha_\hbar} \integers$
(see proposition \ref{single_bij_cpa}), where
$\mathcal{\tilde{F}}(I)$ is one of the algebras 
$\mathcal{F}(I)$, $C(I)$ or $C_0(I)$, i.e. 
the algebra of all functions, the algebra of
continuous functions or the algebra of continuous
functions vanishing at infinity on $I$.
\end{definition}

The algebra properties of the $\alpha$-fuzzy cylinder
follow from proposition \ref{single_bij_cpa}.
\begin{proposition} \label{alp_fuz_cyl_alg}
The elements of $\text{Cyl}(I, \alpha_\hbar)$ are
mappings from $\integers$ into $\mathcal{\tilde{F}}(I)$
of the form
\begin{align*} 
    \phi = \sum_{n\in \integers}^{\text{finite}} f_n \delta_n
\end{align*}
where $f_n \delta_n$ is the mapping, which maps $n\in\integers$ to
the function $f_n$, which is in the ideal $\mathcal{\tilde{F}}(I_n)$,
i.e. which vanishes outside $I_n$. 
The algebra multiplication and involution are
\begin{align*} 
   f_n \delta_n \cdot  f_m \delta_m = f_n \paraa{ f_m |_{I_{-n}} \circ \alpha_\hbar^{-n} } \delta_{n+m} 
\end{align*}
\begin{align*} 
   (f_n\delta_n)^* = \paraa{\overline{f_n}\circ \alpha^{n}}\delta_{-n}
\end{align*}
where $f_n\in \mathcal{\tilde{F}}(I_n)$ and $f_m\in \mathcal{\tilde{F}}(I_m)$.
\end{proposition}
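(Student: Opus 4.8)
The plan is to derive Proposition \ref{alp_fuz_cyl_alg} as a direct specialization of the general crossed-product construction in Proposition \ref{single_bij_cpa}, substituting the concrete inverse semigroup action $\hat\alpha_\hbar$ on $\mathcal{\tilde F}(I)$ coming from the partial bijection $\alpha_\hbar$. First I would recall that a general element of the crossed product is a finitely supported map $\phi\colon\integers\to\mathcal{\tilde F}(I)$, and that the coefficient attached to $n$ must lie in the ideal on which the semigroup element $\alpha_\hbar^n$ acts, namely the functions supported in the range $I_n$. This immediately gives the stated form $\phi=\sum_{n}^{\text{finite}} f_n\delta_n$ with $f_n\in\mathcal{\tilde F}(I_n)$; one only needs to confirm that $\mathcal{\tilde F}(I_n)$ is genuinely an ideal of $\mathcal{\tilde F}(I)$ for each of the three choices $\mathcal F(I)$, $C(I)$, $C_0(I)$, which follows since multiplying by the indicator (or a continuous bump) of $I_n$ is idempotent and $I_n$ is a subinterval.

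Next I would read off the multiplication. In the general crossed product the product of $f_n\delta_n$ and $f_m\delta_m$ is $f_n\cdot\hat\alpha_\hbar^{\,n}(f_m)\,\delta_{n+m}$, where the twisted action $\hat\alpha_\hbar^{\,n}$ is the pullback $g\mapsto g\circ\alpha_\hbar^{-n}$ defined only on the appropriate ideal. The substance of the computation is to make the restriction explicit: $\hat\alpha_\hbar^{\,n}$ acts on $f_m$ only after $f_m$ is cut down to the domain where $\alpha_\hbar^{-n}$ is defined, i.e. to $I_{-n}$, yielding $f_m|_{I_{-n}}\circ\alpha_\hbar^{-n}$, exactly the stated formula
\begin{align*}
   f_n \delta_n \cdot  f_m \delta_m = f_n \paraa{ f_m |_{I_{-n}} \circ \alpha_\hbar^{-n} } \delta_{n+m}.
\end{align*}
The involution is obtained the same way from the general formula $(f_n\delta_n)^*=\hat\alpha_\hbar^{-n}(\overline{f_n})\,\delta_{-n}$, and unwinding $\hat\alpha_\hbar^{-n}$ as the pullback by $\alpha_\hbar^{\,n}$ gives $(\overline{f_n}\circ\alpha_\hbar^{\,n})\delta_{-n}$ as claimed.

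The main obstacle, and the step I would spend the most care on, is bookkeeping the domains so that both sides of each equation land in the correct ideal and the formulas are literally well-defined rather than merely formally correct. For the product I must check that $f_m|_{I_{-n}}\circ\alpha_\hbar^{-n}$ is supported in $I_n$ and that, after multiplying by $f_n\in\mathcal{\tilde F}(I_n)$, the result lies in $\mathcal{\tilde F}(I_{n+m})$; this is where the explicit description $I_{n}=\alpha^{n-1}(I_\cap)\cap\cdots\cap\alpha(I_\cap)$ from Proposition \ref{int_pga} and the semigroup relations among the projectors $\alpha_\hbar^n\circ\alpha_\hbar^{-n}$ (Proposition \ref{single_bij_sg}) are invoked to verify the inclusion $\alpha_\hbar^{\,n}(I_{-n}\cap I_m)\subset I_{n+m}$. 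Similarly, for the involution I must confirm $\overline{f_n}\circ\alpha_\hbar^{\,n}$ is supported in $I_{-n}$, which holds because $\alpha_\hbar^{\,n}$ maps $I_{-n}$ onto $I_n=\operatorname{supp}$-domain of $f_n$. Once these support conditions are verified, continuity (for $C(I)$ and $C_0(I)$) is preserved automatically since $\alpha_\hbar$ is continuous, and the proposition follows.
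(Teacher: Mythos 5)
Your proposal matches the paper's treatment: the paper proves Proposition \ref{alp_fuz_cyl_alg} simply by citing Proposition \ref{single_bij_cpa} (the general crossed product for a single partial bijection, with $X=I$ and $\alpha=\alpha_\hbar$), which is exactly the specialization you carry out. Your additional domain bookkeeping (that $\alpha_\hbar^{\,n}(I_{-n}\cap I_m)\subset I_{n+m}$, via Proposition \ref{pga_prop}, and the support check for the involution) is correct and only makes explicit what the general construction already guarantees.
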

Note that the symbol $\delta_n$ alone makes no sense, since the unit function $f=1$ is not part
of the ideals $\mathcal{\tilde{F}}(I)$ except for $n=0$. In the case of
the function algebra being $C_0(I)$ it is even possible to define a
crossed product $C^*$-algebra and infinite sums are possible.

The expression $f_m |_{I_{-n}}$ means that the function $f_m$, which
solely has support $I_m$, is further restricted to $I_{-n}$, such that
$\alpha^{n}$, which has range $I_{-n}$, can be applied. The result
is multiplied with a function in $\mathcal{\tilde{F}}(I_n)$, which ensures that
the product is again a function in $\mathcal{\tilde{F}}(I)$.
In the second part it is shown that this really result in a well-defined associative product.

In the case that $\mathcal{\tilde{F}}(I)$ is the
algebra of all functions $\mathcal{F}(I)$, the ideals
$\mathcal{F}(I_n)$ contain the partial identities $p_n$
(definition \ref{pi_def}), which are the functions, which
are $1$ on $I_n$ and $0$ on $I\setminus I_n$.
Proposition \ref{single_bij_cpa_pi}, which deals with
this type of crossed product algebras, then tells:
\begin{proposition} \label{alp_fuz_cyl_pi}
The elements of $\text{Cyl}(I, \alpha_\hbar)$ 
based on the function algebra of all functions
 $\mathcal{F}(I)$ have the form
\begin{align*} 
   \phi =\sum_{n\in \naturals}^{\text{finite}} f_{-n} U^{*n} 
           + f_0 + \sum_{n\in \naturals}^{\text{finite}} f_n U^n
\end{align*}  
where $f_n\in \mathcal{F}(I)$,
$U=p_1\delta_1$ and $U^*=p_{-1}\delta_{-1}$. 
The elements $U$ and $U^*$ fulfill 
\begin{align*}   
    U U^* = p_1=p_{I_1}, \qquad
    U^* U = p_{-1} = p_{I_{-1}}, \\
    U f = \paraa{(p_{-1} f)\circ \alpha_\hbar^{-1}} U, \qquad
    U^* f = \paraa{(p_{1} f)\circ \alpha_\hbar} U^*
\end{align*}  
for $f\in \mathcal{F}(X)$.
\end{proposition}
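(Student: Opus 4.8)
The plan is to build everything from the two explicit formulas in Proposition \ref{alp_fuz_cyl_alg}: the product $f_n\delta_n\cdot f_m\delta_m = f_n\paraa{f_m|_{I_{-n}}\circ\alpha_\hbar^{-n}}\delta_{n+m}$ and the involution $(f_n\delta_n)^*=\paraa{\overline{f_n}\circ\alpha_\hbar^{n}}\delta_{-n}$. First I would record that $U=p_1\delta_1$ and $U^*=p_{-1}\delta_{-1}$ are genuine elements, since the partial identities lie in the ideals $\mathcal{F}(I_1)$ and $\mathcal{F}(I_{-1})$, and that the notation is consistent: because $\overline{p_1}=p_1$ and the function $p_1\circ\alpha_\hbar$ equals $1$ exactly on the domain $I_{-1}$ of $\alpha_\hbar$, the involution formula gives $(p_1\delta_1)^*=p_{-1}\delta_{-1}$, so $U^*$ really is the adjoint of $U$.

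The heart of the argument is the claim that $U^n=p_n\delta_n$ and $U^{*n}=p_{-n}\delta_{-n}$ for $n\geq 1$, where $p_n=p_{I_n}$ is the partial identity on $I_n$. I would prove this by induction on $n$. Computing $U^{n}\cdot U = p_n\paraa{p_1|_{I_{-n}}\circ\alpha_\hbar^{-n}}\delta_{n+1}$, the function $p_1|_{I_{-n}}\circ\alpha_\hbar^{-n}$ is the indicator of $B=\{x\in I_n : \alpha_\hbar^{-n}(x)\in I_1\}$, and multiplying by $p_n$ leaves the indicator of $B$ unchanged since $B\subseteq I_n$. The main obstacle is identifying $B$ with $I_{n+1}$. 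This is where Proposition \ref{int_pga} enters: its description of the ranges shows that the intervals are nested, $I_{-1}\supseteq I_{-2}\supseteq\cdots$ and $I_1\supseteq I_2\supseteq\cdots$, and satisfy $I_{n+1}=\alpha_\hbar^{n}(I_\cap)\cap I_n$. Since $\alpha_\hbar^{-n}(x)\in I_{-n}\subseteq I_{-1}$ for $x\in I_n$, the condition $\alpha_\hbar^{-n}(x)\in I_1$ is equivalent to $\alpha_\hbar^{-n}(x)\in I_{-1}\cap I_1=I_\cap$, so $B=\alpha_\hbar^{n}(I_\cap)$, which lands in $I_n$ and hence equals $I_{n+1}$, closing the induction. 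The statement for $U^{*n}$ follows either by the same induction or by applying the involution to $U^n=p_n\delta_n$ together with $p_n\circ\alpha_\hbar^n=p_{-n}$.

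With $U^n=p_n\delta_n$ in hand the normal form is immediate. For $n>0$ and $f_n\in\mathcal{F}(I)$ the product formula (with the degree-zero factor on the left, where $I_0=I$ and $\alpha_\hbar^0=\text{id}$) gives $f_n U^n=(f_n p_n)\delta_n$, and as $f_n$ ranges over $\mathcal{F}(I)$ the function $f_n p_n$ ranges over the whole ideal $\mathcal{F}(I_n)$; the negative powers are handled identically through $U^{*n}=p_{-n}\delta_{-n}$, and the degree-zero part is just $f_0=f_0\delta_0$. Hence every element $\sum_n g_n\delta_n$ from Proposition \ref{alp_fuz_cyl_alg} rewrites in the asserted form. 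Finally, the remaining relations are one-line evaluations of the product formula: $UU^*=p_1\paraa{p_{-1}|_{I_{-1}}\circ\alpha_\hbar^{-1}}\delta_0=p_1$ because $p_{-1}\circ\alpha_\hbar^{-1}=p_1$, and symmetrically $U^*U=p_{-1}$. For the commutation relations I would compute each side separately: $Uf=p_1\paraa{f|_{I_{-1}}\circ\alpha_\hbar^{-1}}\delta_1$, while $\paraa{(p_{-1}f)\circ\alpha_\hbar^{-1}}U=\paraa{(p_{-1}f)\circ\alpha_\hbar^{-1}}p_1\delta_1$, and both reduce to the function $x\mapsto f(\alpha_\hbar^{-1}(x))$ supported on $I_1$; the relation for $U^*f$ is obtained in the same manner, which completes the proof.
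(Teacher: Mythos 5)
Your proof is correct. It is worth noting, though, that the paper does not prove this proposition in place: it is stated as a specialization of Proposition \ref{single_bij_cpa_pi} from Part 2, whose proof in turn leans on the general crossed-product machinery of Propositions \ref{pa_cpa_pi} and \ref{cpa_rel}. There, the relations $U_g U_h = p_{\alpha_g} U_{gh}$ and $(U_g)^* = U_{g^{-1}}$ are established abstractly for any partial group action with partial identities, and the passage to powers of a single $U$ uses only the nesting $I_{n+m}\subset I_n$, which gives $p_n p_{n+m} = p_{n+m}$, hence $U_n U_m = U_{n+m}$ and $U_n = U^n$ by induction. Your argument reproves exactly this content bottom-up from the explicit multiplication formula of Proposition \ref{alp_fuz_cyl_alg}: your identification of $B=\{x\in I_n : \alpha_\hbar^{-n}(x)\in I_1\}$ with $I_{n+1}$ is the concrete, set-level counterpart of the identity $p_n p_{n+1}=p_{n+1}$ combined with $U_n U_1 = p_n U_{n+1}$. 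What your route buys is self-containedness --- a reader need not unwind the Part-2 formalism --- at the cost of redoing in coordinates what the general theory already supplies; the paper's route buys brevity here but hides the interval geometry that your induction makes explicit. Both are sound, and the remaining relations ($UU^*=p_1$, $Uf=\paraa{(p_{-1}f)\circ\alpha_\hbar^{-1}}U$, and their adjoints) come down to the same one-line evaluations of the product formula in either treatment.
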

Note that $UU^*U=U$ and
$U^*UU^*=U^*$ and the elements $0$, $1$ and $U$
generate an inverse semigroup.

With the aid of proposition \ref{point_hs_rep_pg} representations
of the $\alpha$-fuzzy cylinder $\text{Cyl}(I, \alpha_\hbar)$
are easily determined.
\begin{proposition} \label{alp_fuz_cyl_rep}
  Let $\text{Cyl}(I, \alpha_\hbar)$ be a
  $\alpha$-fuzzy cylinder and let $x_0$ be
  a point in the interval $I$. Then
  there are integers $n_- \leq 0$ (which can be $-\infty$)
  and $n_+ \geq 0$ (which can be $+\infty$), such that
 \begin{align*}  
   \tilde{X} = \{ \alpha_\hbar^n(x_0) | n\in \integers, x_0\in I_{-n} \}
      = \{ x_n| n_- \leq n \leq n_+ \}
 \end{align*}  
  Let $\mathcal{H}$  be the Hilbert space with 
  basis $\{\ket{x_n} | x_N\in\tilde{X}, n_- \leq n \leq n_+ \}$ and 
  let
  \begin{align*}
   V \ket{x_n} = \begin{cases}
       	              \ket{x_{n+1} }, \text{ if } n_- \leq n < n_+  \\
                           0, \text{ if } n = n_+
                        \end{cases} & ,  \qquad
   V^* \ket{x_n}  = \begin{cases}
       	              \ket{x_{n-1} }, \text{ if } n_- < n \leq n_+  \\
                           0, \text{ if } n = n_-
                        \end{cases} ,  \\
   \pi(f) \ket{x_n} & = f(x_n) \ket{x_n}
\end{align*}
Then
\begin{align*} 
   \sum_{n\in \integers}^\text{finite} f_n \delta_n \mapsto
   \sum_{n\in \naturals}^{\text{finite}} \pi\paraa{f_{-n}} (V^*)^{n} 
         + \pi(f_0) + \sum_{n\in\naturals}^{\text{finite}} \pi\paraa{f_n} V^n
\end{align*}  
is a $*$-representation of $\text{Cyl}(I, \alpha_\hbar)$.
\end{proposition}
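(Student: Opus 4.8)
The plan is to lean on Proposition \ref{point_hs_rep_pg}, which manufactures $*$-representations of the crossed product of a partial group action out of the orbit data of a single point. This reduces the task to two pieces: (i) describing the orbit of $x_0$ explicitly and identifying the endpoints $n_-, n_+$, and (ii) checking that the diagonal operators $\pi(f)$ together with the truncated shifts $V, V^*$ satisfy the covariance relations recorded in Proposition \ref{alp_fuz_cyl_pi}, so that the hypotheses of the general representation theorem are met.

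First I would pin down the orbit structure. Set $x_n = \alpha_\hbar^n(x_0)$ whenever $x_0 \in I_{-n}$, so that $x_{n+1} = \alpha_\hbar(x_n)$ exists exactly as long as $x_n$ lies in the domain $I_{-1}$, and $x_{n-1} = \alpha_\hbar^{-1}(x_n)$ exists exactly as long as $x_n$ lies in the range $I_1$. The forward chain therefore either runs to $+\infty$ or first breaks at an index $n_+$ with $x_{n_+}\notin I_{-1}$, and symmetrically the backward chain runs to $-\infty$ or breaks at $n_-$ with $x_{n_-}\notin I_1$. Since $\alpha_\hbar$ is a continuous injection of an interval it is strictly monotone; in the increasing case (the one relevant for the cylinder of Section \ref{sec:shift_cyl}, and forced for small $\hbar$ by the classical-limit hypothesis) the iterates form a strictly monotone sequence, so the points $x_n$ are pairwise distinct, $\tilde{X} = \{x_n : n_- \le n \le n_+\}$ is exactly the claimed chain, and $\{\ket{x_n}\}$ is a genuine orthonormal basis of $\mathcal{H}$.

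Next I would record that $\pi$ is a $*$-representation of the function algebra: each $\pi(f)$ is the diagonal multiplication operator $\ket{x_n}\mapsto f(x_n)\ket{x_n}$, and on diagonal operators pointwise product and complex conjugation become operator composition and the adjoint, whence $\pi(fg)=\pi(f)\pi(g)$ and $\pi(\overline{f})=\pi(f)^*$. The operator $V$ is the truncated forward shift and $V^*$ as written is readily checked to be its Hilbert-space adjoint; both are partial isometries of norm $\le 1$. The substance is then the verification of the three relations of Proposition \ref{alp_fuz_cyl_pi} under $U\mapsto V$ (note that $\pi(p_1)V=V$, since every $x_{n+1}$ lies in the range $I_1$). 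Evaluating on basis vectors, $VV^*$ is the projection onto $\mathrm{span}\{\ket{x_n}: n>n_-\}$ and $V^*V$ the projection onto $\mathrm{span}\{\ket{x_n}: n<n_+\}$; the endpoint facts $x_{n_-}\notin I_1$ and $x_{n_+}\notin I_{-1}$ identify these with $\pi(p_1)$ and $\pi(p_{-1})$, giving $VV^*=\pi(p_1)$ and $V^*V=\pi(p_{-1})$. For the covariance relation $V\pi(f)=\pi\paraa{(p_{-1}f)\circ\alpha_\hbar^{-1}}V$, evaluating both sides on $\ket{x_n}$ reduces, for $n<n_+$, to the identity $\big((p_{-1}f)\circ\alpha_\hbar^{-1}\big)(x_{n+1})=f(x_n)$ (using $x_n\in I_{-1}$), while for $n=n_+$ both sides vanish; the adjoint relation, matching the involution $(f_n\delta_n)^*=(\overline{f_n}\circ\alpha_\hbar^{n})\delta_{-n}$, follows by taking $*$.

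Granting these relations, Proposition \ref{point_hs_rep_pg} yields that the stated assignment is a $*$-representation; equivalently, one verifies directly on the generators $f\delta_0$, $U$, $U^*$ that the multiplication and involution of Proposition \ref{alp_fuz_cyl_alg} are reproduced, the general product $f_n(f_m|_{I_{-n}}\circ\alpha_\hbar^{-n})\delta_{n+m}$ again collapsing to the covariance identity. I expect the main obstacle to be the endpoint bookkeeping — proving $x_{n_-}\notin I_1$ and $x_{n_+}\notin I_{-1}$ and thereby matching the kernels of the truncated shifts $V,V^*$ exactly with the supports of the partial identities $p_{\pm 1}$ — since this is precisely the place where the \emph{partiality} of the action, as opposed to an honest $\integers$-action, must be reconciled with the truncation of the shift operators.
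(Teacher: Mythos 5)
Your proposal follows essentially the same route as the paper's proof: describe the orbit $\tilde{X}$ of $x_0$ as a chain $\{x_n \mid n_-\le n\le n_+\}$ using the nesting $I_{n+1}\subset I_n$, then invoke the special covariant representation of Proposition \ref{point_hs_rep_pg} together with Corollary \ref{point_hs_cpa_rep}, noting that $V_n=V^n$ and $V_{-n}=(V^*)^n$. Your extra hands-on verification of the relations of Proposition \ref{alp_fuz_cyl_pi} and the monotonicity argument for distinctness of the $x_n$ go beyond what the paper writes down, but they do not change the approach.
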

\begin{proof}
Let
\begin{align*}  
   \tilde{X}=\tilde{X}(\alpha_\hbar, x_0)
      = \{ \alpha_\hbar^n(x_0) | n\in \integers, x_0\in I_{-n} \}
\end{align*}  
be the set of definition \ref{point_hs} and proposition \ref{point_hs_pga}.
Since the domains and ranges $I_n$ of the $\alpha_\hbar^n$ are
contained within each other $I_{n+1} \subset I_n$ and
$I_{-n-1} \subset I_{-n}$ for $n\geq 1$, there is possibly a minimal $n_-\leq 0$ and
a maximal $n_+ \geq 0$, for which $x_n=\alpha_\hbar^n(x_0)$ is defined 
for the last time. 
Otherwise $n_-$ and/or $n_+$ are $-\infty$ and $+\infty$,
respectively.
Thus $\tilde{X} = \{ x_n| n_- \leq n \leq n_+ \}$.
Depending on $\alpha_\hbar$, this set is finite or infinite.

It follows that the Hilbert space of proposition \ref{point_hs_rep_pg}
has basis $\{\ket{x_n} | n_- \leq n \leq n_+ \}$.
The covariant representation of proposition \ref{point_hs_rep_pg}
is defined by
\begin{align*}
   V_n \ket{x_m} = \begin{cases}
       	              \ket{\alpha_\hbar^n(x_m)}, \text{ if } x_m \in I_{-n} \\
                           0, \text{ otherwise}
                        \end{cases} ,  \qquad
      \pi(f) \ket{x_n} = f(x_n) \ket{x_n}
\end{align*}
When we define $V$ as above, it follows that $V_n=V^n$ 
and $V_{-n}=(V^*)^n$ for $n>0$. 
Thus, according to corollary \ref{point_hs_cpa_rep}, 
the above mapping is a $*$-representation of $\text{Cyl}(I, \alpha_\hbar)$.
\end{proof}

We have arrived at a matrix representation with matrices of size $n_+-n_-$,
in which the functions are mapped to diagonal matrices and the partial
bijection $\alpha_\hbar$ is represented by a shift matrix $V$.

When $\mathcal{\tilde{F}}(I)$ is
the algebra of all functions $\mathcal{F}(I)$,
the algebra element $U$ of proposition \ref{alp_fuz_cyl_pi} is mapped to $V$.

\section{Properties of $\alpha$-fuzzy cylinders} \label{sec:alp_fuz_prop}

Here, we collect some results for
general $\alpha$-fuzzy cylinders. We first note that we also can use
the inverse partial bijection of $\alpha_\hbar$.
\begin{proposition} \label{inv_alpha}
$\text{Cyl}(\alpha_\hbar)=\text{Cyl}(\alpha_\hbar^{-1})$ 
\end{proposition}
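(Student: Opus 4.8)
The plan is to realize the asserted equality as the $*$-isomorphism of crossed product algebras induced by the group automorphism $n\mapsto -n$ of $\integers$. First I would check that the right-hand side is even defined. The inverse $\alpha_\hbar^{-1}$ is again a continuous partial bijection of $I$, and since $\alpha_0=\text{id}_I$ we have $\alpha_0^{-1}=\text{id}_I$; thus it only remains to see that $\alpha_\hbar^{-1}\to\text{id}_I$ pointwise as $\hbar\to 0$, so that $\alpha_\hbar^{-1}$ has classical limit in the sense of Definition above. This is the one point that is not purely formal: it follows from the pointwise convergence of $\alpha_\hbar$ together with the fact that a continuous injection of a real interval is monotone, and I would record this short argument first.

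The key structural observation is that the partial group action generated by $\beta_\hbar:=\alpha_\hbar^{-1}$ consists of exactly the same partial bijections as that generated by $\alpha_\hbar$, only reindexed. Indeed $\beta_\hbar^n=\alpha_\hbar^{-n}$, so by Proposition \ref{int_pga} the domain and range $J_n$ of $\beta_\hbar^n$ satisfies $J_n=I_{-n}$. Hence the two partial actions of $\integers$ differ precisely by precomposition with the automorphism $n\mapsto -n$, and one expects this automorphism to lift to an isomorphism of the associated crossed product algebras.

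Accordingly I would define $\Phi:\text{Cyl}(I,\alpha_\hbar)\to\text{Cyl}(I,\beta_\hbar)$ on generators by $\Phi(f_n\delta_n)=f_n\delta_{-n}$ and extend linearly. This is well defined and bijective: $f_n\in\mathcal{\tilde{F}}(I_n)=\mathcal{\tilde{F}}(J_{-n})$ is exactly the coefficient ideal attached to $\delta_{-n}$ in the target, and the inverse map has the same shape. It then remains to check that $\Phi$ respects the product and involution of Proposition \ref{alp_fuz_cyl_alg}, which is a direct substitution. For the product, applying the target multiplication rule to $\Phi(f_n\delta_n)\,\Phi(f_m\delta_m)=f_n\delta_{-n}\cdot f_m\delta_{-m}$ produces $f_n\paraa{f_m|_{J_n}\circ\beta_\hbar^{n}}\delta_{-n-m}$, and substituting $J_n=I_{-n}$ and $\beta_\hbar^{n}=\alpha_\hbar^{-n}$ turns this into $\Phi$ applied to $f_n\paraa{f_m|_{I_{-n}}\circ\alpha_\hbar^{-n}}\delta_{n+m}$, as required. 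The involution check is identical in spirit, using $\beta_\hbar^{-n}=\alpha_\hbar^{n}$ to match $(f_n\delta_{-n})^*=\paraa{\overline{f_n}\circ\alpha_\hbar^{n}}\delta_{n}$ with $\Phi\paraa{(f_n\delta_n)^*}$.

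The decisive point is therefore purely the bookkeeping identity $J_{-a}=I_{a}$ together with $\beta_\hbar^{a}=\alpha_\hbar^{-a}$, which makes the restriction sets and the pullbacks in the two multiplication rules match term by term; once this is in place the homomorphism property is automatic and $\Phi$ is a bijective $*$-homomorphism, so the two cylinders are identified. I expect the only genuine obstacle to be the verification that $\alpha_\hbar^{-1}$ inherits the classical limit, that is, the pointwise convergence of the inverses; everything else is a relabeling argument driven by the automorphism $n\mapsto -n$.
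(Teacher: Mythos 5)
Your proof is correct, but it takes a genuinely more explicit route than the paper's. The paper disposes of the statement in one line: both $\text{Cyl}(\alpha_\hbar)$ and $\text{Cyl}(\alpha_\hbar^{-1})$ are by construction crossed products of $\mathcal{\tilde{F}}(I)$ by the inverse semigroup generated by the respective partial bijection, and the set $\{\alpha_\hbar^n : n\in\integers\}$ is generated equally well by $\alpha_\hbar$ and by $\alpha_\hbar^{-1}$, so the two crossed products are \emph{literally the same algebra} and no isomorphism needs to be exhibited. You instead realize the identification as the $*$-isomorphism induced by the automorphism $n\mapsto -n$ of $\integers$, and verify compatibility with the product and involution of Proposition \ref{alp_fuz_cyl_alg} through the bookkeeping $J_n=I_{-n}$ and $\beta_\hbar^{\,n}=\alpha_\hbar^{-n}$; those substitutions are correct. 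Your version buys two things the paper's one-liner leaves implicit: the explicit form of the identification at the level of elements, and the observation that $\alpha_\hbar^{-1}$ must itself be checked to be admissible data, i.e.\ continuous with classical limit $\alpha_\hbar^{-1}\rightarrow\text{id}_I$ pointwise, which the paper does not address at all. Your sketch of that last point (a continuous injection of an interval is monotone, so its inverse is continuous, and one sandwiches $y$ between $\alpha_\hbar(x_1)$ and $\alpha_\hbar(x_2)$ for $x_1<y<x_2$ to get pointwise convergence of the inverses at interior points) is sound and worth recording. What the paper's approach buys in exchange is that the equality is literal rather than only up to isomorphism, since the crossed product depends only on the generated inverse semigroup and its induced action on $\mathcal{\tilde{F}}(I)$, not on the chosen generator.
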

\begin{proof}
  The inverse semigroup of $\mathcal{\tilde{F}}(I) \rtimes_{\alpha_\hbar} \integers$
  is generated by $\alpha_\hbar^n$ for $n\in \integers$. This set is also generated
  by $\alpha_\hbar^{-1}$.
\end{proof}

From a topological point of view,
we can restrict to $\alpha$-fuzzy cylinders on the
real line $\reals$, the half-line $[0,\infty[$ and the interval $[0,1]$.
\begin{proposition} \label{pb_cl_IJ}
    Let $\alpha_\hbar$ be partial bijection of the interval $I$ with classical limit
    and let $\beta:I\rightarrow J$ be a continuous bijection from the interval $J$ into the
    interval $I$. Then $\alpha'_\hbar=\beta\circ\alpha_\hbar\circ\beta^{-1}$
    is a partial bijection with classical limit of the interval $J$.
\end{proposition}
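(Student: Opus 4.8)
The plan is to verify, one by one, the defining properties of a partial bijection with classical limit for $\alpha'_\hbar=\beta\circ\alpha_\hbar\circ\beta^{-1}$ on $J$, transporting each corresponding property of $\alpha_\hbar$ across $\beta$. Note first that in the displayed formula $\beta^{-1}$ is applied to points of $J$, so I read $\beta$ as the continuous bijection $\beta\colon I\to J$ with inverse $\beta^{-1}\colon J\to I$. The single genuine ingredient I would isolate at the outset is that a continuous bijection between real intervals is automatically a homeomorphism: an injective continuous function on an interval is strictly monotone, and a strictly monotone surjection between intervals has continuous inverse. Hence $\beta^{-1}$ is continuous as well, a fact I will use repeatedly.

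Next I would identify domain and range. If $\alpha_\hbar\colon I_{-1}\to I_1$ with $I_{\pm 1}\subset I$, then $\beta^{-1}$ carries $\beta(I_{-1})\subset J$ bijectively onto $I_{-1}$, $\alpha_\hbar$ carries $I_{-1}$ bijectively onto $I_1$, and $\beta$ carries $I_1$ bijectively onto $\beta(I_1)\subset J$. Being a composition of bijections, $\alpha'_\hbar$ is therefore a bijection $\beta(I_{-1})\to\beta(I_1)$ between subsets of $J$, i.e.\ a partial bijection of $J$. Continuity of $\alpha'_\hbar$ for each fixed $\hbar$ is then immediate, since it is a composition of the continuous maps $\beta^{-1}$, $\alpha_\hbar$, $\beta$. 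The identity at $\hbar=0$ also follows directly: using $\alpha_0=\text{id}_I$ we get $\alpha'_0=\beta\circ\text{id}_I\circ\beta^{-1}=\text{id}_J$.

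It remains to check the classical limit, i.e.\ pointwise convergence $\alpha'_\hbar\to\text{id}_J$ as $\hbar\to 0$. Fix a point $y$ in the relevant domain and set $x=\beta^{-1}(y)\in I$. Since $\alpha_\hbar\to\text{id}_I$ pointwise, we have $\alpha_\hbar(x)\to x$, and applying the continuous map $\beta$ yields $\alpha'_\hbar(y)=\beta\paraa{\alpha_\hbar(x)}\to\beta(x)=y$. This establishes pointwise convergence to $\text{id}_J$ and completes the verification that $\alpha'_\hbar$ has classical limit.

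The main obstacle I anticipate is not any single computation but the bookkeeping around the domains together with the limit: for the pointwise statement one must ensure that the chosen $y$ lies in the domain $\beta(I_{-1})$ of $\alpha'_\hbar$ for the $\hbar$ under consideration, and one must genuinely have the homeomorphism property of $\beta$ available for the intervals in play (this is where the strict-monotonicity argument for $\beta^{-1}$ does the real work). Once that continuity of $\beta^{-1}$ is secured, all four properties transport mechanically, and the result follows.
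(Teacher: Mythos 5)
Your proof is correct and follows the same route as the paper's own argument, which simply notes that a continuous bijection of intervals has continuous inverse (so $\alpha'_\hbar$ is continuous) and that $\alpha'_0=\beta\circ\text{id}_I\circ\beta^{-1}=\text{id}_J$. You are in fact more thorough than the paper: you justify why $\beta^{-1}$ is continuous (strict monotonicity on an interval), identify the domain and range $\beta(I_{-1})\to\beta(I_1)$ explicitly, and verify the pointwise convergence $\alpha'_\hbar\to\text{id}_J$ as $\hbar\to 0$, all of which the paper's two-line proof leaves implicit.
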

\begin{proof}
  As $\beta$ is a continuous bijection, so is $\beta^{-1}$ and $\alpha'_\hbar$
  is continuous. Furthermore $\alpha'_0=\beta\circ\alpha_0\circ\beta^{-1}=\text{id}_J$.
\end{proof}
\begin{proposition}
  Each $\alpha$-fuzzy cylinder $\text{Cyl}(I, \alpha_\hbar)$ is homeomorphic
  to an $\alpha$-fuzzy cylinder $\text{Cyl}(J, \alpha'_\hbar)$ 
  on $J=\reals$, $J=[0,\infty[$ or $J=[0,1]$.
\end{proposition}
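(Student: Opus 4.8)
The plan is to reduce the statement to the topological classification of intervals and then transport the whole crossed product structure along a homeomorphism of the underlying interval, reading ``homeomorphic'' between fuzzy cylinders as a $*$-isomorphism of the associated crossed product $*$-algebras, which is the noncommutative counterpart of a homeomorphism by Gelfand--Naimark. The key observation is that a nondegenerate interval $I\subset\reals$ is determined up to homeomorphism by how many of its endpoints it actually contains: writing $a=\inf I$ and $b=\sup I$, a finite endpoint is contained only if it lies in $I$ (the values $\pm\infty$ are never contained), so the number $k\in\{0,1,2\}$ of contained endpoints is a homeomorphism invariant, and the three canonical models $\reals$, $[0,\infty[$ and $[0,1]$ realise exactly the cases $k=0$, $k=1$ and $k=2$.

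First I would make the topological step explicit. For $k=0$ the interval is open (possibly half-infinite, or all of $\reals$) and a strictly monotone continuous bijection onto $\reals$ with continuous inverse exists; for $k=1$ the interval has one contained and one open end, and a monotone homeomorphism onto $[0,\infty[$ exists; for $k=2$ the interval is a finite closed interval $[a,b]$, and an affine map carries it onto $[0,1]$. In each case this produces a continuous bijection $\beta\colon I\to J$ with continuous inverse onto the appropriate model $J$. Proposition \ref{pb_cl_IJ} then immediately gives that $\alpha'_\hbar=\beta\circ\alpha_\hbar\circ\beta^{-1}$ is a partial bijection of $J$ with classical limit, so that $\text{Cyl}(J,\alpha'_\hbar)$ is defined, and the domains and ranges of $\alpha'^n_\hbar$ are $J_n=\beta(I_n)$.

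Next I would build the induced isomorphism on the crossed products. Pullback along $\beta^{-1}$, i.e. $\hat\beta\colon f\mapsto f\circ\beta^{-1}$, is a $*$-isomorphism of function algebras $\mathcal{\tilde{F}}(I)\to\mathcal{\tilde{F}}(J)$ that restricts to $C(I)\to C(J)$ and, because $\beta$ is a homeomorphism of locally compact Hausdorff spaces and hence proper, also to $C_0(I)\to C_0(J)$; moreover it carries the ideal $\mathcal{\tilde{F}}(I_n)$ onto $\mathcal{\tilde{F}}(J_n)$. I would define $\Phi\colon\text{Cyl}(I,\alpha_\hbar)\to\text{Cyl}(J,\alpha'_\hbar)$ on the elements of proposition \ref{alp_fuz_cyl_alg} by $\Phi(f_n\delta_n)=(f_n\circ\beta^{-1})\,\delta_n$, extended linearly. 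Using $\alpha'^n_\hbar=\beta\circ\alpha^n_\hbar\circ\beta^{-1}$ one has the two intertwining identities $\beta^{-1}\circ\alpha'^n_\hbar=\alpha^n_\hbar\circ\beta^{-1}$ and $(f_m|_{I_{-n}}\circ\alpha^{-n}_\hbar)\circ\beta^{-1}=(f_m\circ\beta^{-1})|_{J_{-n}}\circ\alpha'^{-n}_\hbar$; substituting these into the product and involution formulas of proposition \ref{alp_fuz_cyl_alg}, and using that $\hat\beta$ is an algebra homomorphism, shows that $\Phi$ preserves multiplication and the $*$-operation. Since $\hat\beta$ is a bijection, $\Phi$ is bijective with inverse induced by $\beta$, hence a $*$-isomorphism, which proves the claim.

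The main obstacle is less the algebraic verification than handling the function-algebra cases correctly: I must ensure the chosen $\beta$ is genuinely a homeomorphism, not merely a continuous bijection, so that the pullback restricts correctly to $C(I)$ and, crucially, to $C_0(I)$; the $C_0$ case requires properness of $\beta$ so that ``vanishing at infinity'' is preserved, which holds precisely because $\beta$ is a homeomorphism between locally compact spaces. A secondary point to dispatch is degenerate intervals, i.e. a single point, which are either excluded by the nondegeneracy assumption or handled trivially.
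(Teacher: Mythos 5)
Your proposal is correct and follows essentially the same route as the paper: classify the interval $I$ up to homeomorphism by the number of contained endpoints, choose a continuous bijection $\beta$ onto the model $J\in\{\reals,\ [0,\infty[,\ [0,1]\}$, and invoke proposition \ref{pb_cl_IJ} to transport $\alpha_\hbar$ to $\alpha'_\hbar=\beta\circ\alpha_\hbar\circ\beta^{-1}$. The only difference is that the paper stops there, leaving the identification of the two crossed product algebras implicit, whereas you explicitly construct the induced $*$-isomorphism $\Phi(f_n\delta_n)=(f_n\circ\beta^{-1})\delta_n$ and verify compatibility with the product, involution and the ideals $\mathcal{\tilde{F}}(I_n)$ --- a worthwhile completion of an argument the paper compresses into one sentence.
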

\begin{proof}
  Depending on whether the interval is $I$ is open at both ends,
  open at one end or closed, we can find a $\beta$ such in
  proposition \ref{pb_cl_IJ} that maps to $J=\reals$, $J=[0,\infty[$ or $J=[0,1]$.
\end{proof}

Note that the domain and range of $\beta$ can be the
same and we can map fuzzy cylinders of different $\alpha_\hbar$ 
on the same interval $I$ to each other. The question arise,
whether there is solely one partial bijection $\alpha_\hbar$ 
representing all other ones. However, when $x$ is a fixed point of
$\alpha_\hbar$, i.e. $\alpha_\hbar(x)=x$, then
the same applies to the point $\beta(x)$ for
$\alpha'_\hbar=\beta\circ\alpha_\hbar\circ\beta^{-1}$.
Thus, it is not possible to reduce
the number of fixed points of a partial bijection
$\alpha_\hbar$ by applying
a continuous bijection $\beta$.
Partial bijections with classical limit with any number
of fixed points are easily constructed. This shows
that the shift based fuzzy cylinders of the
next section are not the only possible solutions.

In particular, $\alpha$-fuzzy cylinders based on
partial bijections $\alpha_\hbar$ with classical limit having fixed points
are interesting on their own, since they give rise
to $\alpha$-fuzzy cylinders, which have commuting
subalgebras. 

\begin{proposition}
  Let $\alpha_\hbar$ be a partial bijection
  of the interval $I$ with classical limit and
  let $S\subset I$ be a subset of fixed points
  of $\alpha_\hbar$, i.e. $\alpha_\hbar(x)=x$ for $x\in S$.
  Then the algebra elements
  \begin{align*} 
      \phi = \sum_{n\in \integers}^{\text{finite}} f_n \delta_n
  \end{align*}
  with $f_n\in \mathcal{\tilde{F}}(S)$
  of $\text{Cyl}(I, \alpha_\hbar)$
  form a commuting subalgebra.
\end{proposition}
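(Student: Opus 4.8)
The plan is to show that when all the functions $f_n$ are supported on the fixed-point set $S$, the twisting in the product formula of Proposition~\ref{alp_fuz_cyl_alg} becomes trivial, so that multiplication reduces to ordinary pointwise multiplication of functions times addition of the $\integers$-indices, which is manifestly commutative. The key observation is that on $S$ the partial bijection $\alpha_\hbar$ acts as the identity, hence so does every power $\alpha_\hbar^n$; therefore the pullback $f_m \circ \alpha_\hbar^{-n}$ does nothing to a function supported on $S$.

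First I would verify that the proposed subalgebra is actually closed under the operations, i.e. that it is a genuine subalgebra. For this I need that $\mathcal{\tilde{F}}(S)$, interpreted as functions supported on $S$, is preserved by the involution and multiplication. Using the involution formula $(f_n\delta_n)^* = \paraa{\overline{f_n}\circ \alpha_\hbar^{n}}\delta_{-n}$, since $\alpha_\hbar^{n}$ fixes every point of $S$, the composition $\overline{f_n}\circ\alpha_\hbar^{n}$ agrees with $\overline{f_n}$ on $S$, so the result is again supported on $S$; thus the set is $*$-closed. One small technical point I would address here is that $S\subset I_n$ for all relevant $n$, which holds because a fixed point $x$ of $\alpha_\hbar$ lies in both $I_{-1}$ and $I_1$ (it is in the domain and range), hence in $I_\cap$, and by the description of $I_n$ in Proposition~\ref{int_pga} it lies in every $I_n$; this guarantees that the restrictions and pullbacks appearing in the product are all defined on $S$.

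Next I would carry out the main computation on generators. Take two basis-type elements $f_n\delta_n$ and $f_m\delta_m$ with $f_n, f_m$ supported on $S$. By the product formula,
\begin{align*}
   f_n \delta_n \cdot f_m \delta_m = f_n \paraa{ f_m|_{I_{-n}} \circ \alpha_\hbar^{-n} } \delta_{n+m}.
\end{align*}
Evaluating the function part at a point $x\in S$: since $\alpha_\hbar^{-n}(x)=x$ on $S$, the factor $f_m|_{I_{-n}}\circ\alpha_\hbar^{-n}$ equals $f_m$ on $S$, so the product restricted to $S$ is simply the pointwise product $f_n(x)f_m(x)$ assigned to the index $n+m$. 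Comparing with the product in the opposite order $f_m\delta_m\cdot f_n\delta_n$, which on $S$ yields $f_m(x)f_n(x)$ attached to $m+n$, and using commutativity of pointwise multiplication of complex-valued functions together with commutativity of addition in $\integers$, the two products agree. Extending bilinearly over finite sums $\sum_n f_n\delta_n$ then gives commutativity of the whole subalgebra.

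The main obstacle I expect is purely bookkeeping rather than conceptual: I must be careful that every function is regarded as an element of the correct ideal $\mathcal{\tilde{F}}(I_n)$ and that restricting to $S$ is compatible with the support conditions, so that no term silently drops out or becomes ill-defined. In particular I would note that a function supported on $S$ need not be supported on all of $I_n$, but since $S\subset I_n$ this causes no problem — the support condition $f_n\in\mathcal{\tilde{F}}(I_n)$ is weaker than $f_n\in\mathcal{\tilde{F}}(S)$, so the subalgebra sits inside $\text{Cyl}(I,\alpha_\hbar)$ as claimed. Once these support and domain issues are checked, the commutativity itself follows immediately from $\alpha_\hbar|_S=\text{id}_S$.
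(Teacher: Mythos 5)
Your proposal is correct and follows essentially the same route as the paper's proof: establish $S\subset I_n$ (via $\alpha_\hbar^n(S)=S$) so the elements are well defined, then observe that because $\alpha_\hbar$ restricts to the identity on $S$ the twisted product collapses to the pointwise product of functions supported on $S$, which commutes. You simply spell out the generator computation and the $*$-closure in more detail than the paper does; no gap.
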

\begin{proof}
  Assume that that 
  $f \in \mathcal{\tilde{F}}(S)$, i.e. that
  $f$ is a function on the set $S$, which vanish outside $S$.
  It follows that $f \in \mathcal{\tilde{F}}(I_n)$, since
 $S\subset I_n$ because $\alpha_\hbar^n(S)=S$.
  An element $\phi$ of $\text{Cyl}(I, \alpha_\hbar)$ such
  as defined above is therefore well defined.

  The algebra product of two such elements
  commutes, since $\alpha_\hbar$ is the identity on $S$,
  and is again of this form, since the product of two function
  on $S$ is again a function on $S$.
\end{proof}

For example, the subset $S$ can be a subinterval of $I$. In
this case the fuzzy cylinder $\text{Cyl}(I, \alpha_\hbar)$
has $\text{Cyl}(S, \text{id}_S)$ as subalgebra, which
is the algebra of functions on the cylinder $S\times S_1$.
This may be seen as an example of a \emph{partially fuzzy space}.

\section{Shift based fuzzy cylinders} \label{sec:shift_cyl}

The basic data of the $\alpha$-fuzzy cylinder
of definition \ref{alp_fuz_cyl} is an interval and
a partial bijection of this interval. From a topological
point of view there are three different types of 
 intervals $[a,b], a<b \in \reals$, $[a,\infty[a\in \reals$ and $]\infty,\infty[=\reals$.
 In this section three different fuzzy cylinders, which are
based on these three types of intervals and a constant
right shift are described.

\begin{definition}
  For $\hbar > 0$ the \emph{right shift} is the mapping
   $S_\hbar: \reals\rightarrow\reals,x\mapsto x+\hbar$.
\end{definition}

When we restrict the right shift to an interval in $\reals$,
we arrive at a partial bijection with classical limit, which we can use for the
$\alpha$-fuzzy cylinder. 

\begin{definition} \label{sb_fuz_cyl}
   For an interval $I \subset \reals$, 
   the \emph{(shift based) fuzzy cylinder} $\text{Cyl}_\hbar(I)$
   is the $\alpha$-fuzzy cylinder $\text{Cyl}(I, S_\hbar)$, 
   where the partial bijection $\alpha_\hbar=S_\hbar$
   is the right shift $S_\hbar$ restricted
   to the subsets $I_1=S_\hbar(I)\cap I$ and 
   $I_{-1}=S^{-1}_\hbar(I)\cap I$ of $I$.
   
   For the intervals $[a,b]$, $[a,\infty[$, $]-\infty,a]$ and $\reals$,
   we call the corresponding
   fuzzy cylinder \emph{finite}, \emph{half-finite} and
   \emph{infinite}.
\end{definition}

Note that the right shift generates a group action
$S_\hbar^n$ ($n\in\integers$) of the group of integers 
$\integers$ on the set $\reals$. According to
proposition \ref{restricted_action}, a group
action on a set restricted to a subset is a partial action, which
can be used to define a crossed product algebra. This
is an alternative way to define the shift based fuzzy cylinder.

In the case of the infinite fuzzy cylinder, i.e. $I=\reals$,
the right shift $S_\hbar$ is not restricted and generates
a group action,i.e. $I_n=\reals$. Therefore, the
identity function $p=1$ is the partial identity for all $I_n$.
The corresponding algebra is the same as the
one of the fuzzy cylinder described in \cite{Sui_2004},
\cite{Stei_2011}, \cite{Arnlind_2022}, \cite{Arn_2020}.

\begin{proposition}
  The elements of the infinite fuzzy cylinder $\text{Cyl}_\hbar(\reals)$
  are of the form
  \begin{align*} 
   \phi =\sum_{n\in \integers}^{\text{finite}} f_{n} U^{n} 
   \end{align*}  
   where $f_n\in \mathcal{\tilde{F}}(I)$.
   The algebra relations follow from
   \begin{align*}   
      U^* = U^{-1}, \qquad f^*=\overline{f}, \qquad
      U f(x) = f(x-\hbar) U, \qquad
      U^* f(x) = f(x+\hbar) U^*
   \end{align*}
   Covariant representations of the infinite fuzzy cylinder are
   \begin{align*}   
      V \ket{\hbar n + \delta} = \ket{n+1)} ,  \qquad
      \pi(f) \ket{\hbar n +\delta} = f(\hbar n +\delta) \ket{\hbar n +\delta}
   \end{align*}
   for $f\in \mathcal{\tilde{F}}(I)$ and $V$, which is the operator
   representing $U$, being a unitary operator.
\end{proposition}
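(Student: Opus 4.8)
The plan is to specialise the general results of the previous two sections to the case $I=\reals$, where the restriction built into the right shift disappears. First I would observe that since $S_\hbar$ is a bijection of $\reals$ onto itself, $I_1=S_\hbar(\reals)\cap\reals=\reals$ and $I_{-1}=S_\hbar^{-1}(\reals)\cap\reals=\reals$, so $I_\cap=I_{-1}\cap I_1=\reals$. Proposition \ref{int_pga} then gives $I_n=\reals$ for every $n\in\integers$: all domains and ranges of the partial group action are the full interval. Consequently the partial identities $p_n=p_{I_n}$ are all equal to the constant function $1$, and each ideal $\mathcal{\tilde{F}}(I_n)$ coincides with the full algebra $\mathcal{\tilde{F}}(\reals)$, so no function is forced to vanish on a proper subset.

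With this observation the form of the elements and the algebra relations follow directly from Propositions \ref{alp_fuz_cyl_alg} and \ref{alp_fuz_cyl_pi}. Since $p_1=1$, the generator is $U=p_1\delta_1=\delta_1$, and the multiplication rule of Proposition \ref{alp_fuz_cyl_alg} gives $U^n=\delta_n$ by induction; identifying $f_n$ with $f_n\delta_0$ yields $f_n\delta_n=f_nU^n$ and hence the stated form $\phi=\sum_n f_nU^n$. For the relations I would substitute $f=1$ and the now-unrestricted $\alpha_\hbar^{\pm n}\colon x\mapsto x\pm n\hbar$ into the multiplication and involution formulas: the involution $(f_n\delta_n)^*=(\overline{f_n}\circ\alpha^n)\delta_{-n}$ applied to $U=\delta_1$ gives $U^*=\delta_{-1}$, and then $UU^*=\delta_1\delta_{-1}=\delta_0$ and $U^*U=\delta_{-1}\delta_1=\delta_0$, so $U^*=U^{-1}$ with $\delta_0$ the unit; the case $n=0$ gives $f^*=\overline{f}$. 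The commutation relations come out as $U\,f=\delta_1\cdot f\delta_0=(f\circ S_\hbar^{-1})\delta_1$, i.e. $Uf(x)=f(x-\hbar)U$, and symmetrically $U^*f(x)=f(x+\hbar)U^*$.

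For the covariant representation I would invoke Proposition \ref{alp_fuz_cyl_rep} with an arbitrary base point $x_0=\delta\in\reals$. The orbit is $\tilde{X}=\{\,\delta+n\hbar\mid n\in\integers\,\}$, and because $I=\reals$ every image $S_\hbar^n(\delta)$ lands back in $I_{-n}=\reals$, so the truncation integers are $n_-=-\infty$ and $n_+=+\infty$. The Hilbert space therefore carries the full bilateral basis $\{\ket{\hbar n+\delta}\mid n\in\integers\}$, and the operator $V$ of Proposition \ref{alp_fuz_cyl_rep} acts as $\ket{\hbar n+\delta}\mapsto\ket{\hbar(n+1)+\delta}$ with no boundary case, i.e. it is the two-sided shift and hence unitary with $V^*V=VV^*=\text{id}$, which is exactly the image of $U^*=U^{-1}$. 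The diagonal action $\pi(f)\ket{\hbar n+\delta}=f(\hbar n+\delta)\ket{\hbar n+\delta}$ is inherited unchanged.

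The verifications are all routine substitutions; the one genuinely substantive point, and the step I would single out, is the passage from partial isometry to unitary. In the general $\alpha$-fuzzy cylinder $U$ is merely a partial isometry with $UU^*=p_1$ and $U^*U=p_{-1}$, and the representing operator is a truncated shift that annihilates the boundary basis vectors at $n_\pm$. What distinguishes the infinite cylinder is precisely that $p_{\pm1}=1$ and $n_\pm=\pm\infty$, so that no truncation occurs and $V$ becomes genuinely unitary. Confirming that this is the sole place the hypothesis $I=\reals$ enters, and that the resulting relations reproduce the algebra of \cite{Sui_2004,Stei_2011,Arnlind_2022,Arn_2020}, is the real content of the argument.
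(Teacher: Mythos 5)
Your proposal is correct and follows exactly the route the paper intends: the observation that $I_n=\reals$ for all $n$, so that all partial identities $p_n$ equal the constant function $1$, after which the form of the elements, the relations, and the bilateral (hence unitary) shift representation all drop out of Propositions \ref{alp_fuz_cyl_alg}, \ref{alp_fuz_cyl_pi} and \ref{alp_fuz_cyl_rep} by direct substitution. The paper states the proposition without a separate proof precisely because this specialization is the whole argument, and you have supplied it faithfully.
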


For the finite and half-finite cylinder, the algebra product and
algebra involution do not simplify and are described
in propositions \ref{alp_fuz_cyl_alg} and \ref{alp_fuz_cyl_pi}.
However, we can further investigate the intervals $I_n$ of
proposition \ref{int_pga} and
the representation of proposition \ref{alp_fuz_cyl_rep}.

\begin{proposition} \label{fin_sb_cyl}
  Let $\text{Cyl}_\hbar([a,b])$  with $a<b \in \reals$
  be a finite fuzzy cylinder. 
  
  Let $N$ be the smallest natural number, such that
  $N \hbar > b-a$, then the domain and
  ranges of the mappings of the partial group action of
  the restricted right shift $S_\hbar^n:I_{-n}\rightarrow I_n$ are
    \begin{align*} 
      I_n = [a+n\hbar, b], \qquad
      I_{-n} = [a, b- n \hbar]
  \end{align*}
  for $0 < n \leq N$. For $n>N$, $I_n$ and $I_{-n}$ are empty.
  Thus, the elements of $\text{Cyl}_\hbar([a,b])$ are
  \begin{align*} 
    \phi = \sum_{-N \leq n \leq N} f_n \delta_n
\end{align*}
  with the functions $f_n$ in the ideals $\mathcal{\tilde{F}}(I_n)$. 
  
  In the nomenclature of proposition \ref{alp_fuz_cyl_rep}, the
  set $\tilde{X}$ is $\{x_0 + n\hbar | 0 \leq n < N \}$ with
  $x_0 \in [a,a+\hbar[$, i.e. $n_-=0$ and $n_+=N-1$.
\end{proposition}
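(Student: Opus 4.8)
The plan is to reduce everything to an explicit description of the domains and ranges of the iterated restricted shift and then read off the remaining claims. First I would record the base case directly from Definition \ref{sb_fuz_cyl}: since $S_\hbar([a,b])=[a+\hbar,b+\hbar]$ and $S_\hbar^{-1}([a,b])=[a-\hbar,b-\hbar]$, intersecting with $I=[a,b]$ gives $I_1=[a+\hbar,b]$ and $I_{-1}=[a,b-\hbar]$ (both empty as soon as $\hbar>b-a$, which corresponds to $N=1$). The heart of the argument is the observation that, for a partial group action generated by a single partial bijection, a point $x$ lies in the domain $I_{-n}$ of $\alpha_\hbar^n=S_\hbar^n$ precisely when the whole forward orbit $x,x+\hbar,\dots,x+n\hbar$ stays inside $I$, since each successive application of $S_\hbar$ is legal exactly when the current point lies in $I_{-1}$. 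Among the constraints $a\leq x+k\hbar\leq b$ for $0\leq k\leq n$, only $x\geq a$ (from $k=0$) and $x+n\hbar\leq b$ (from $k=n$) are binding, so $I_{-n}=[a,b-n\hbar]$ and hence $I_n=S_\hbar^n\paraa{I_{-n}}=[a+n\hbar,b]$.

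Alternatively I would obtain the same formula from the explicit intersection of Proposition \ref{int_pga}: with $I_\cap=I_{-1}\cap I_1=[a+\hbar,b-\hbar]$ one computes $\alpha_\hbar^k\paraa{I_\cap}=[a+(k+1)\hbar,\,b+(k-1)\hbar]$, and intersecting over $k=1,\dots,n-1$ (the largest lower bound coming from $k=n-1$ and the smallest upper bound from $k=1$) again yields $I_n=[a+n\hbar,b]$; the negative powers follow symmetrically through $\alpha_\hbar^{-1}$. Next I would pin down $N$: the interval $[a,b-n\hbar]$ is non-empty iff $n\hbar\leq b-a$, so by the minimality of $N$ the sets $I_{\pm n}$ are non-empty exactly for $0\leq n\leq N-1$ and empty for $n\geq N$. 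Consequently $f_n\in\mathcal{\tilde{F}}(I_n)$ forces $f_n=0$ whenever $|n|\geq N$, and a general element of $\text{Cyl}_\hbar([a,b])$ is a finite sum $\sum f_n\delta_n$ with $f_n$ supported in $I_n$ and only the indices $-N\leq n\leq N$ contributing (the extreme terms $n=\pm N$ vanishing identically).

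Finally, to identify $\tilde{X}$ and the bounds $n_\pm$ of Proposition \ref{alp_fuz_cyl_rep}, I would fix $x_0\in[a,a+\hbar[$ and chase the definition $\tilde{X}=\{\alpha_\hbar^n(x_0)\mid x_0\in I_{-n}\}$. For $n<0$ the image $\alpha_\hbar^n(x_0)=x_0+n\hbar$ requires $x_0\in I_{|n|}=[a+|n|\hbar,b]$, which already fails for $n=-1$ because $x_0<a+\hbar\leq a+|n|\hbar$; hence $n_-=0$. For $n>0$ the point $x_0+n\hbar$ is defined iff $x_0\leq b-n\hbar$, and since $x_0\geq a$ this forces $n\leq(b-a)/\hbar<N$, i.e. $n_+\leq N-1$, giving $\tilde{X}=\{x_0+n\hbar\mid 0\leq n\leq n_+\}$, a set of equally spaced points, and a representation of dimension at most $N$.

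The step requiring the most care is the behaviour at the endpoints. The value $n_+=N-1$, and thus the full $N$-point orbit attributed to $\tilde{X}$, holds precisely when $x_0\leq b-(N-1)\hbar$; for $x_0$ in the remaining part of $[a,a+\hbar[$ the orbit has only $N-1$ points, so the equality $n_+=N-1$ is best read as the maximal (generic) dimension rather than as holding verbatim for every $x_0$. Likewise one must decide, according to whether $b-a$ is an integer multiple of $\hbar$, whether the extreme sets $I_{\pm N}$ are genuinely empty or degenerate to a single point, and keep track of open versus closed endpoints when the restricted shift pushes an endpoint out of $I$. These are the only genuinely delicate bookkeeping points; the remainder is the elementary interval arithmetic above.
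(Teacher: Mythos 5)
Your proof is correct and follows essentially the same route as the paper: the paper likewise computes $I_{\pm 1}$ directly, obtains $I_{\pm n}$ from proposition \ref{int_pga} by the same interval arithmetic, and then reads off $N$, the form of the elements, and $\tilde{X}$. Your closing remark on the endpoints is in fact sharper than the paper's own proof, which simply asserts $\tilde{X}=\{x_0+n\hbar \mid 0\leq n<N\}$; as you observe, for $N\geq 2$ and $x_0\in\,]b-(N-1)\hbar,\,a+\hbar[$ the orbit has only $N-1$ points, so the stated value $n_+=N-1$ holds verbatim only for $x_0\in[a,\,b-(N-1)\hbar]$ and should otherwise be read as the maximal case.
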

\begin{proof}
  The restriction of the right shift to the interval $[a,b]$ is 
  \begin{align*} 
    S_\hbar: I_{-1}=[a,b-\hbar] \rightarrow I_1= [a+\hbar,b]
  \end{align*} 
  When $b-a>\hbar$, it follows that $I_{-1}$ and $I_1$
  are empty and that all $I_n$ are empty. Otherwise
  $I_\cap=[a+\hbar,b-\hbar]$ is non-empty and
  \begin{align*} 
      I_n = S_\hbar^{n-1}( I_\cap) = [a+n\hbar, b], \qquad
      I_{-n} = S_\hbar^{-n+1}( I_\cap) = [a, b- n\hbar]
  \end{align*}
  (see proposition \ref{int_pga}).
  Thus, for $N$ the smallest natural number such that $N\hbar > b-a$,
  the intervals $I_n$ and $I_{-n}$ with $n>N$ are empty.

  For the representation, we can assume that
  $x_0=\in \reals$ is in the interval
  $[a,a+\hbar[$. Otherwise we can apply $S_\hbar^{-1}$ as long
  the result is bigger than $a+\hbar$.
  It follows that the set $\tilde{X}$ 
  is $\{n\hbar + \delta | 0 \leq n < N \}$.
\end{proof}

The resulting $*$-representation of proposition \ref{alp_fuz_cyl_rep}
is restricted to sums between $-N$ and $N$. In particular,
 the right-shift and left-shift matrices $V$ and $V^*$ have the
 property that $V^{N+1}=0$ and $(V^*)^{N+1}=0$.
 
The corresponding results for the half-finite fuzzy cylinder are:
\begin{proposition}
  Let $\text{Cyl}_\hbar([a,\infty[)$  with $a \in \reals$
  be a half-finite fuzzy cylinder. 
  
  The domain and
  ranges of the mappings of the partial group action of
  the restricted right shift $S_\hbar^n:I_{-n}\rightarrow I_n$ are
    \begin{align*} 
      I_n = [a+n\hbar, \infty[, \qquad
      I_{-n} = I= [a, \infty[
  \end{align*}
  for $n\in \naturals_0$.
  The elements of $\text{Cyl}_\hbar([a,\infty[)$ are
  \begin{align*} 
    \phi = \sum_{n \in\integers}^\text{finite} f_n \delta_n
  \end{align*}
  with the functions $f_n$ in the ideals $\mathcal{\tilde{F}}(I_n)$. 
  
  In the nomenclature of proposition \ref{alp_fuz_cyl_rep}, the
  set $\tilde{X}$ is $\{x_0 + n\hbar | n\in \naturals_0\}$ with
  $x_0 \in [a,a+\hbar[$, i.e. $n_-=0$ and $n_+=\infty$.
\end{proposition}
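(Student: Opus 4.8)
The plan is to follow the proof of Proposition \ref{fin_sb_cyl} almost verbatim, the sole structural change being that the half-line $[a,\infty[$ has no right endpoint and hence no cutoff index $N$. First I would restrict the right shift $S_\hbar$ to $I=[a,\infty[$ and read off the domain and range of the generating partial bijection directly. Since $S_\hbar^{-1}(I)=[a-\hbar,\infty[\supset I$, the preimage intersected with $I$ gives $I_{-1}=S_\hbar^{-1}(I)\cap I=[a,\infty[=I$, whereas $S_\hbar(I)=[a+\hbar,\infty[\subset I$ gives $I_1=[a+\hbar,\infty[$. Hence $I_\cap=I_{-1}\cap I_1=[a+\hbar,\infty[$.

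Next I would feed $I_\cap$ into the formula of Proposition \ref{int_pga}. The forward iterates are $\alpha_\hbar^{k}(I_\cap)=[a+(k+1)\hbar,\infty[$, so intersecting $\alpha_\hbar^{n-1}(I_\cap)\cap\cdots\cap\alpha_\hbar(I_\cap)$ leaves the largest lower bound and yields $I_n=[a+n\hbar,\infty[$; the backward iterates all collapse to $\alpha_\hbar^{-k}(I_\cap)=[a,\infty[$, so $I_{-n}=[a,\infty[=I$. The decisive point is that, with no upper bound present, every $I_n$ stays non-empty, so no index $N$ appears and, by Proposition \ref{alp_fuz_cyl_alg}, a general element is $\phi=\sum_{n\in\integers}^{\text{finite}}f_n\delta_n$ with $f_n\in\mathcal{\tilde{F}}(I_n)$, the sum ranging over all of $\integers$.

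Finally, to pin down $\tilde{X}$ I would invoke Proposition \ref{alp_fuz_cyl_rep}. After normalising the base point to $x_0\in[a,a+\hbar[$ (legitimate since $I_{-1}=I$ lets us apply $S_\hbar^{-1}$ without leaving $I$), I would test the membership condition $x_0\in I_{-n}$ index by index: for $n\geq 0$ it holds trivially because $I_{-n}=I$ and $x_0\geq a$, producing the orbit points $\alpha_\hbar^n(x_0)=x_0+n\hbar$, whereas for $n=-m<0$ it requires $x_0\in I_m=[a+m\hbar,\infty[$ and fails since $x_0<a+\hbar\leq a+m\hbar$. This gives $\tilde{X}=\{x_0+n\hbar\mid n\in\naturals_0\}$ with $n_-=0$ and $n_+=\infty$. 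There is no real obstacle; the only point deserving a line of care is the asymmetry between the truncating side (where $I_n$ shrinks with $n$) and the non-truncating side (where $I_{-n}=I$ for all $n$), which is precisely what produces $n_+=\infty$ while forcing $n_-=0$.
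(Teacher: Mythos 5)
Your argument is correct and is exactly the intended one: the paper states this proposition without proof, leaving it as the direct analogue of Proposition \ref{fin_sb_cyl}, and your computation of $I_{\pm n}$ via Proposition \ref{int_pga} and of $\tilde{X}$ via Proposition \ref{alp_fuz_cyl_rep} reproduces that argument, with the correct observation that the absence of a right endpoint removes the cutoff $N$ and forces $n_+=\infty$ while $n_-=0$.
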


\begin{example}The finite fuzzy cylinder of order $N$\end{example}
\noindent As the shift based fuzzy cylinder is considered
as one of the main results of this work, we describe the
finite fuzzy cylinder $\text{Cyl}_\frac{1}{N}([0,1])$
of length $1$ with $\hbar=\frac{1}{N}, n\in\naturals$ and
based on the algebra of all functions $\mathcal{F}([0,1])$
in detail. 

The interval $I$ is the closed interval $[0,1]$ from $0$ to $1$.
For $\hbar=\frac{1}{N}$, the right shift on this interval 
becomes a partial bijection from 
\begin{align*} 
   S_\frac{1}{N}: I_{-1}=[0,1-\frac{1}{N}] \rightarrow I_1= [\frac{1}{N},1],
   x\mapsto x+\frac{1}{N}
\end{align*} 
As we are using the function algebra of all functions
$\mathcal{F}([0,1])$, we are allowed to use partial identities with
support on subintervals (see proposition \ref{single_bij_cpa_pi}).
For example the partial identity $p_1$ is the function, 
which is $1$ on $I_1$ and zero otherwise.

From propositions \ref{fin_sb_cyl} and \ref{single_bij_cpa_pi})
it follows that the elements of $\text{Cyl}_\frac{1}{N}([0,1])$ are 
\begin{align*} 
    \phi =\sum_{0< n < N} f_{-n} U^{*n} 
           + f_0 + \sum_{0<n <N} f_n U^n
\end{align*}  
 where $f_n\in \mathcal{F}([0,1])$, 
$U=p_1\delta_1$ and $U^*=p_{-1}\delta_{-1}$.
The elements $U$ and $U^*$ fulfil 
\begin{align*}   
    U^N=0, \qquad U^{*N}=0, \qquad U U^* = p_1, \qquad   U^* U = p_{-1}, \\
    U f(x) = (p_{-1}f)\paraa{ x-\frac{1}{N} } U, \qquad
    U^* f(x) = (p_{1}f) \paraa{ u+\frac{1}{N} } U^*
\end{align*}  
for $f\in \mathcal{F}([0,1])$.

Proposition \ref{alp_fuz_cyl_rep} describes a representation. In particular,
Let $\mathcal{H}$  be the $N$-dimensional Hilbert space with 
basis $\{\ket{n} | 0 \leq n < N \}$ and let
\begin{align*}
  V \ket{n} = \begin{cases}
      	              \ket{n+1}, \text{ if } 0 \leq n < N - 2  \\
                         0, \text{ if } n = N - 1
                   \end{cases} & ,  \qquad
  V^* \ket{n}  = \begin{cases}
       	              \ket{n-1}, \text{ if } 0 < n < N-2  \\
                           0, \text{ if } n = 0
                      \end{cases} ,  \\
   \pi(f) \ket{n} & = f(x_0+\frac{n}{\delta}) \ket{n}
\end{align*}
where $x_0\in[0,\frac{1}{N}[$. Then 
\begin{align*} 
   \phi \mapsto
   \sum_{n\in \naturals}^{\text{finite}} \pi\paraa{f_{-n}} (V^*)^{n} 
         + \pi(f_0) + \sum_{n\in\naturals}^{\text{finite}} \pi\paraa{f_n} V^n
\end{align*}  
 is a $*$-representation of $\text{Cyl}_\frac{1}{N}([0,1])$,
 which is the algebra of $N\times N$-matrices. The
functions on the interval $[0,1]$ are mapped to diagonal matrices, wherein
the $n$th diagonal entry is the function value at the point $x_0+\frac{n}{N}$.
The right shift and its conjugate are represented by the shift matrices $V$ 
and $V^*$. Section \ref{sec:cl_lim} below shows that for $N\rightarrow\infty$,
the fuzzy cylinder $\text{Cyl}_\frac{1}{N}([0,1])$ becomes the algebra
of functions on the cylinder $[0,1]\times S_1$.

Interestingly, the representations of the finite fuzzy cylinder
can be generated by applying a finite projector to the infinite
representations of the infinite fuzzy cylinder. The same is
done with the fuzzy plane to generate the fuzzy disc in
\cite{Lizzi_2003}. Remarkably, both are fuzzy spaces having
a border.

\section{Classical limit}  \label{sec:cl_lim}

When the $\alpha$-fuzzy cylinder $\text{Cyl}(I, \alpha_\hbar)$
is considered as a $C^*$-algebra, its classical limit $\hbar\rightarrow 0$ 
is the topological space $I\times S_1$, i.e. a cylinder.
Defining a star product, we also show that we can endow
this cylinder with a Poisson structure depending on $\alpha_\hbar$.

\subsection{$C^*$-algebra}

We note that the $\alpha$-fuzzy cylinder algebra
$\text{Cyl}(I, \alpha_\hbar)=\mathcal{\tilde{F}}(I) \rtimes_{\alpha_\hbar} \integers$
of definition \ref{alp_fuz_cyl} can be completed to
a $C^*$-algebra, when the algebra of functions $\mathcal{\tilde{F}}(I)$
is $C_0(I)$. This is shown in \cite{Sieben_1996}, to which
we largely refer in the second part of this article.
For $\hbar=0$, this $C^*$-algebra
becomes a commuting algebra and therefore has to be
a function algebra on a locally compact Hausdorff space
(Gelfand representation theorem), which we then interpret 
as the "classical limit".

In particular, we can define the $C^*$-algebra
$\text{Cyl}^*_{\alpha_\hbar}(I)= C_0(I) \hat\rtimes_{\alpha_\hbar} \integers$
such as in definition 4.1 of \cite{Exel_2010}, where we use the
partial group action of proposition \ref{single_bij_action} of
the present article. (The symbol $\hat\rtimes$ indicates the
completion of the algebra.) When $\hbar=0$,
the crossed product algebra becomes commuting,
resulting in the algebra $C_0(I) \hat\times \integers$, which
is equal to $C_0(I) \hat\times C_0(S_1)=C_0(I\times S_1)$,
the continuous functions vanishing at infinity on
the cylinder $I\times S_1$.

In \cite{Doku_2005}) the crossed product $C^*$-algebra of
\cite{Sieben_1996} is constructed based on the algebraic
crossed product algebra as also defined in the second
part in definition \ref{def_alg_cpa}. Covariant representations of
$\text{Cyl}(I, \alpha_\hbar)$ are also covariant
representations of $\text{Cyl}^*_{\alpha_\hbar}(I)$.
This makes it possible to define "fuzzy spaces" as limits
of matrix algebras, which at the same time are
representations of the crossed product algebras as defined
herein and as in \cite{Sieben_1996}.

\subsection{Star product}

We define a linear mapping 
\begin{align*}   
  \Psi: \text{Cyl}(I, \alpha_\hbar) \rightarrow S(I\times S_1), \qquad 
  \sum_{n\in \integers}^{\text{finite}} f_n \delta_n \mapsto \sum_{n\in \integers}^{\text{finite}} f_n(x) e^{in\varphi}
\end{align*}  
mapping elements of the $\alpha$-fuzzy cylinder
to the function algebra on the cylinder $S(I\times S_1)$, 
composed of functions, which can be expanded in finite Fourier series
with respect to the coordinate $\varphi$ of the circle $S_1$ and where
the Fourier coefficients solely have support in $I_n$ (see proposition
\ref{alp_fuz_cyl_alg} for the definition of $I_n$).
Note that the mapping $\Psi$ is invertible.
\begin{align*}   
  \Psi^{-1}:   S(I\times S_1) \rightarrow \text{Cyl}(I, \alpha_\hbar), \qquad 
   f  \mapsto \sum_{n\in \integers} \frac{1}{2\pi} \int^{\pi}_{-\pi}  f(x,\varphi)e^{-in\varphi}d\varphi \, \delta_n  
\end{align*} 
Since per definition, there are solely a finite number of Fourier coefficients for the function $f$,
the sum at the right hand side is finite.

With the help of the algebra product of $\text{Cyl}(I, \alpha_\hbar)$,
we can define a $\star$ product on the function algebra $S(I\times S_1)$.

\begin{proposition} \label{star_prod}
  Let $f,g\in S(I\times S_1)$ be two functions on the cylinder $I\times S_1$.
  Then
  \begin{align*}   
      f\star_{\alpha_\hbar} g = \Psi\paraa{Psi^{-1}(f)\Psi^{-1}(g) }
       = \sum_{n, m\in \integers}^{\text{finite}} f_n \paraa{ g_m|I_{-n}\circ \alpha_\hbar^{-n} } e^{i(n+m)\varphi}
  \end{align*}  
  is an associative product on the function algebra $S(I\times S_1)$ 
\end{proposition}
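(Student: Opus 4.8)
The plan is to exploit the fact that $\star_{\alpha_\hbar}$ is \emph{defined} by transport of structure along the linear bijection $\Psi$, so that $(S(I\times S_1),\star_{\alpha_\hbar})$ is by construction isomorphic, as a vector space carrying a multiplication, to the crossed product algebra $\text{Cyl}(I,\alpha_\hbar)$. First I would record that $\Psi$ is genuinely a linear isomorphism: the finite Fourier decomposition identifies a function $f\in S(I\times S_1)$ with the finite family of its coefficients $f_n(x)=\frac{1}{2\pi}\int_{-\pi}^{\pi} f(x,\varphi)e^{-in\varphi}\,d\varphi$, and the defining constraint on $S(I\times S_1)$—that the $n$th coefficient be supported in $I_n$—matches exactly the condition $f_n\in\mathcal{\tilde{F}}(I_n)$ in the description of $\text{Cyl}(I,\alpha_\hbar)$ in proposition \ref{alp_fuz_cyl_alg}. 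Hence $\Psi^{-1}$ as written is a genuine two-sided inverse, and $\Psi$ transports the associative product of the crossed product to $S(I\times S_1)$.

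Given this, associativity is immediate and requires no new computation. For $f,g,h\in S(I\times S_1)$ one has
\begin{align*}
  (f\star_{\alpha_\hbar} g)\star_{\alpha_\hbar} h
    &= \Psi\paraa{\Psi^{-1}(f\star_{\alpha_\hbar} g)\,\Psi^{-1}(h)} \\
    &= \Psi\paraa{\paraa{\Psi^{-1}(f)\Psi^{-1}(g)}\Psi^{-1}(h)},
\end{align*}
where the inner equality uses $\Psi^{-1}\Psi=\text{id}$; the parallel computation for $f\star_{\alpha_\hbar}(g\star_{\alpha_\hbar} h)$ yields $\Psi\paraa{\Psi^{-1}(f)\paraa{\Psi^{-1}(g)\Psi^{-1}(h)}}$, and the two agree because multiplication in $\text{Cyl}(I,\alpha_\hbar)$ is associative by proposition \ref{alp_fuz_cyl_alg}. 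Thus associativity is inherited through the isomorphism.

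It then remains to establish the explicit formula, which I would obtain by direct substitution. Writing $\Psi^{-1}(f)=\sum_n f_n\delta_n$ and $\Psi^{-1}(g)=\sum_m g_m\delta_m$, the multiplication rule of proposition \ref{alp_fuz_cyl_alg} gives
\begin{align*}
  \Psi^{-1}(f)\,\Psi^{-1}(g)
    = \sum_{n,m\in\integers}^{\text{finite}}
        f_n\paraa{ g_m|_{I_{-n}}\circ\alpha_\hbar^{-n} }\,\delta_{n+m},
\end{align*}
and applying $\Psi$, which sends $\delta_k\mapsto e^{ik\varphi}$, produces the stated expression with the factor $e^{i(n+m)\varphi}$. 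The only point requiring care—and the one I would single out as the main obstacle—is well-definedness: I must verify that each coefficient $f_n\paraa{g_m|_{I_{-n}}\circ\alpha_\hbar^{-n}}$ of the product lies in $\mathcal{\tilde{F}}(I_{n+m})$, so that the result genuinely belongs to $S(I\times S_1)$, and that only finitely many pairs $(n,m)$ contribute. Both facts are exactly what the crossed product construction of proposition \ref{alp_fuz_cyl_alg} guarantees—the restriction to $I_{-n}$, the pullback by $\alpha_\hbar^{-n}$ and the factor $f_n$ together force the support inside $I_{n+m}$, while finiteness is clear since both original Fourier expansions are finite. As these support and finiteness conditions are precisely the defining constraints of $S(I\times S_1)$, the product lands in the correct space, completing the argument.
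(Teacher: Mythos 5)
Your proposal is correct and follows essentially the same route as the paper: the paper's proof likewise derives associativity by transport of structure along $\Psi$ from the associativity of the crossed product, and obtains the explicit sum formula by substituting the multiplication rule of proposition \ref{alp_fuz_cyl_alg}. Your version merely spells out the well-definedness and finiteness checks that the paper leaves implicit.
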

\begin{proof}
  Associativity of the star product follows directly from the associativity of the algebra product.
  The sum form of the product follows from proposition \ref{alp_fuz_cyl_alg}.
\end{proof}

\begin{proposition} \label{pois_str}
  Such as in proposition \ref{star_prod}, 
  let $f,g\in S(I\times S_1)$ be two functions, which are
  additionally differentiable in the coordinate $x\in I$.
  Furthermore, let $\alpha_\hbar$ be differentiable in $\hbar$
  and let $\beta(x)=\frac{\partial \alpha_0(x)}{\partial\hbar}$
  be the first derivative of $\alpha$ at $\hbar=0$.
  Then 
  \begin{align*}   
      f\star_{\alpha_\hbar} g = fg - i\hbar \beta \partial_\varphi f \partial_x g +\mathcal{O}(\hbar^2)
  \end{align*}
  inside its support.
\end{proposition}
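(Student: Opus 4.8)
The plan is to substitute the Fourier data of $f$ and $g$ into the closed form of the star product from Proposition \ref{star_prod} and expand in $\hbar$ to first order. Writing $f=\sum_n f_n(x)e^{in\varphi}$ and $g=\sum_m g_m(x)e^{im\varphi}$, both finite sums by the definition of $S(I\times S_1)$, Proposition \ref{star_prod} gives
\begin{align*}
  f\star_{\alpha_\hbar} g = \sum_{n,m}^{\text{finite}} f_n(x)\,\paraa{g_m\circ\alpha_\hbar^{-n}}(x)\,e^{i(n+m)\varphi}.
\end{align*}
Because the sums are finite there are no convergence subtleties, so the whole problem collapses to expanding the single scalar factor $\paraa{g_m\circ\alpha_\hbar^{-n}}(x)$ in powers of $\hbar$ and reassembling the Fourier series.

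First I would expand the partial bijection. Since $\alpha_0=\text{id}_I$ and $\alpha_\hbar$ is differentiable in $\hbar$, Taylor's theorem gives $\alpha_\hbar(x)=x+\hbar\beta(x)+\mathcal{O}(\hbar^2)$ with $\beta$ as defined in the statement. From this I would obtain the expansion of the iterated inverse $\alpha_\hbar^{-n}(x)=x-n\hbar\beta(x)+\mathcal{O}(\hbar^2)$ by induction on $n$: one application of $\alpha_\hbar^{-1}$ subtracts $\hbar\beta(x)$ to leading order, and since $\beta\paraa{x-\hbar\beta(x)}=\beta(x)+\mathcal{O}(\hbar)$ the accumulated shift after $n$ steps is just $-n\hbar\beta(x)$ up to $\mathcal{O}(\hbar^2)$. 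Feeding this into a first-order Taylor expansion of $g_m$ in the $x$-variable (here differentiability of $g$ in $x$ is used) yields $\paraa{g_m\circ\alpha_\hbar^{-n}}(x)=g_m(x)-n\hbar\beta(x)\,g_m'(x)+\mathcal{O}(\hbar^2)$.

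Then I would substitute and sort by order. The $\hbar^0$ term is $\sum_{n,m} f_n(x)g_m(x)e^{i(n+m)\varphi}=\paraa{\sum_n f_n e^{in\varphi}}\paraa{\sum_m g_m e^{im\varphi}}=fg$, the ordinary pointwise product. For the $\hbar^1$ term I would recognise the two Fourier sums: term-by-term differentiation gives $\sum_n n f_n e^{in\varphi}=-i\,\partial_\varphi f$ and $\sum_m g_m'(x)e^{im\varphi}=\partial_x g$. Hence the first-order contribution $-\hbar\beta(x)\sum_{n,m} n\,f_n(x)g_m'(x)e^{i(n+m)\varphi}$ factorises as $-\hbar\beta\,\paraa{-i\,\partial_\varphi f}\paraa{\partial_x g}$, i.e. a term $\propto i\hbar\,\beta\,\partial_\varphi f\,\partial_x g$. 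The precise sign is dictated entirely by the convention $\delta_n\mapsto e^{in\varphi}$ fixed in $\Psi$ together with the orientation of $\alpha_\hbar^{-n}$, and reconciling it with the stated coefficient $-i\hbar$ is the one spot that demands careful bookkeeping.

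The main obstacle I expect is not the algebra but the uniform control of the expansion together with the \emph{inside its support} qualifier. Two points need care. First, the inductive identity for $\alpha_\hbar^{-n}$ must keep its $\mathcal{O}(\hbar^2)$ remainder honest; since $n$ ranges over a finite set this is harmless, but it should be stated. Second, the restriction $g_m|_{I_{-n}}$ makes $g_m\circ\alpha_\hbar^{-n}$ defined only on $I_n$, and these domains themselves move with $\hbar$. This is exactly why the conclusion is asserted only inside the support: for a point $x$ in the interior of the relevant $I_n$ and $\hbar$ small enough, $x$ and all the shifted points $\alpha_\hbar^{-n}(x)$ stay interior, so the restrictions are invisible and the Taylor expansions are valid. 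Making this qualifier precise — specifying that both $f$ and $g$ are evaluated away from the moving boundaries — is the delicate part of the argument; everything else is the expansion bookkeeping above.
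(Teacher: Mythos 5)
Your proposal follows the same route as the paper's proof: expand $\alpha_\hbar(x)=x+\hbar\beta(x)+\mathcal{O}(\hbar^2)$, Taylor-expand $g_m\circ\alpha_\hbar^{-n}$ to first order in $\hbar$, and resum the Fourier series using $\sum_n n f_n e^{in\varphi}=-i\,\partial_\varphi f$. Your version is in fact the more careful one: the induction giving $\alpha_\hbar^{-n}(x)=x-n\hbar\beta(x)+\mathcal{O}(\hbar^2)$ over the finitely many $n$ that occur, and the discussion of the moving domains $I_{\pm n}$ (which is all that the qualifier \emph{inside its support} is meant to absorb), are left implicit in the paper. The one point you flag --- the sign --- is a genuine discrepancy, but it lies in the paper, not in your argument. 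With the paper's own conventions (the induced action is $g\mapsto g\circ\alpha_\hbar^{-n}$, and $\alpha_\hbar=\mathrm{id}+\hbar\beta+\mathcal{O}(\hbar^2)$) one gets $g_m\circ\alpha_\hbar^{-n}=g_m-n\hbar\beta\,\partial_x g_m+\mathcal{O}(\hbar^2)$, exactly as you derive, hence $f\star_{\alpha_\hbar} g=fg+i\hbar\beta\,\partial_\varphi f\,\partial_x g+\mathcal{O}(\hbar^2)$. The paper's proof instead inserts $g_m+\hbar n\beta\,\partial_x g_m$ in its intermediate line, with the opposite sign to what its own expansion of $\alpha_\hbar$ dictates, and that is what produces the stated $-i\hbar$. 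A sanity check is the right shift $\alpha_\hbar=S_\hbar$ with $\beta=1$, where the paper itself records $Uf(x)=f(x-\hbar)U$, confirming the minus sign in $g_m\circ\alpha_\hbar^{-n}$. So do not try to reconcile your sign with the statement by adjusting conventions; the statement (and the Poisson bracket that follows it) should carry the opposite overall sign, or equivalently $\beta$ should be defined with an extra minus sign. Apart from this, your proof is complete and matches the paper's.
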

\begin{proof}
   Since $\alpha_\hbar$ is differentiable in $\hbar$ it follows that
   $\alpha_\hbar(x)=x+\hbar \beta(x)+\mathcal{O}(\hbar^2)$.
   Thus inside the support of $f\star_{\alpha_\hbar} g$
  \begin{align*}   
      f\star_{\alpha_\hbar} g 
        & = \sum_{n, m\in \integers}^{\text{finite}} f_n \paraa{ g_m +\hbar n \beta \partial_x g_m} e^{i(n+m)\varphi} +\mathcal{O}(\hbar^2) \\
        & = fg - i\hbar \beta \partial_\varphi f \partial_x g +\mathcal{O}(\hbar^2)
  \end{align*}  
  
\end{proof}

As the first Taylor coefficient of the commutator of a star product is 
a Poisson structure, it follows that 
\begin{align*}   
      \{f, g \}_{\alpha_\hbar} = \beta \paraa{ \partial_x f \partial_\varphi g -  \partial_\varphi f \partial_x g }
\end{align*}
is the Poisson structure associated with the star product of
proposition \ref{star_prod}.

\section{Special subalgebras} \label{sec:spec_sub} 

In the following we assume that the function
algebra $\mathcal{\tilde{F}}(I)$ defining the $\alpha$-fuzzy
cylinder $\text{Cyl}(I, \alpha_\hbar)$ is the algebra 
$\mathcal{F}(I)$ of all functions on $I$. Therefore the
partial identity functions $p_1$ exist and we can
use proposition \ref{alp_fuz_cyl_pi} and in particular the
expansion in the algebra elements $U$ and $U^*$.

Based on this we define subalgebras of $\alpha$-fuzzy
cylinder $\text{Cyl}(I, \alpha_\hbar)$, which are generated by 
the algebra element
\begin{align} \label{alg_gen}
   A= f U
\end{align} 
with a function $f\in \mathcal{F}(I)$. Note that we
consider $*$-subalgebras and therefore the subalgebra
is generated by $A$ and $A^*$.

We introduce the real and non-negative function $\phi=f \overline{f}$,
and the commutator relations of $A$, $A^*$ become
\begin{align} 
   [ A, A^*] = p_1 \phi - (p_{1} \phi) \circ\alpha_\hbar, \qquad  \label{AA_comm}
   [A, g] =  A \paraa{ g - (p_{1} g) \circ\alpha_\hbar}
\end{align}
with a function $g\in \mathcal{F}(I)$. Here, we have
used that
$p_{-1}\paraa{(p_{1} \phi) \circ\alpha_\hbar}=(p_{1} \phi) \circ\alpha_\hbar$,
since $(p_{1} \phi) \circ\alpha_\hbar$ already has domain $I_{-1}$.

We also can form the anticommutator of $A$ and $A^*$
resulting in
\begin{align} \label{AA_anti_comm} 
    A A^*+A^* A = p_1 \phi + (p_{1} \phi) \circ\alpha_\hbar
\end{align} 

With the aid of the algebra elements $A$ and $A^*$ (\ref{alg_gen})
we intend to construct algebras, which have the following
algebra relations
\begin{align} \label{AA_rel} 
   [ A, A^*] = \hbar C(R^2), \qquad  A A^*+A^* A = 2 R^2
\end{align} 
where $R^2$ is a non-negative function on $I$ and $C$ is a function in $R^2$.
Here we have in mind that the algebra elements $A$ and $A^*$ 
correspond to the functions $z$ and $\overline{z}$,
which parametrize the cylinder in the form of a ring in the complex plane.
In this case, the second equation of (\ref{AA_rel})
corresponds to $z\overline{z}=r^2$, 

From (\ref{AA_rel}) it follows that
\begin{align} 
   [ A, A^*] = \hbar C(\frac{1}{2}\paraa{A A^*+A^* A})
\end{align} 

From equations (\ref{AA_comm}), (\ref{AA_anti_comm}) and (\ref{AA_rel})
it follows that
\begin{align*}
    (p_{1} \phi) \circ\alpha_\hbar = R^2 - \frac{\hbar}{2}C(R^2), \qquad 
    p_1 \phi = R^2 + \frac{\hbar}{2}C(R^2)
\end{align*} 
By applying the second equation to the first, we 
arrive at an equation for $R$, $C$ and $\alpha_\hbar$.
\begin{align} \label{2gen_eq}
   R^2 \circ\alpha_\hbar+\frac{\hbar}{2}C(R^2) \circ\alpha_\hbar
        = R^2 - \frac{\hbar}{2}C(R^2)
\end{align} 
which, however, is solely defined on the domain of $\alpha_\hbar$,
i.e. $I_{-1}$.

\begin{proposition} \label{2d_alg}
  Let $\alpha_\hbar$ be a partial bijection with classical limit of the interval $I\subset \reals$,
  wherein $x=\alpha_\hbar(u)$ for $u\in I_{-1}$ is a solution
  of the equation
  \begin{align} \label{x_u_eq}
    x+\frac{\hbar}{2}C(x) = u - \frac{\hbar}{2}C(u)
  \end{align} 
  where $C$ is a function on $I$.
  Further let $\rho:J\rightarrow I$ be a bijection, where
  $J\subset\reals$ is an interval, and let
  \begin{align*} 
    \phi_\rho = \rho+\frac{\hbar}{2}C(\rho), \qquad
    \alpha_{\rho,\hbar} = \rho^{-1} \circ  \alpha_\hbar \circ \rho:
        J_{-1}=\rho^{-1}\para{I_{-1}}\rightarrow J_1=\rho\paraa{I_1}
  \end{align*} 
   Then the subalgebra of the $\alpha$-fuzzy cylinder
  generated by $\sqrt{\phi_\rho}U$, has algebra relations
  \begin{align*} 
     [ A, A^*] =
      \begin{cases}
  		\rho+\frac{\hbar}{2}C(\rho), & u\in J_{1} \setminus J_{-1} \\
             \hbar C(\rho),  & u\in J_1 \cap J_{-1}  \\
	       - \rho + \frac{\hbar}{2}C(\rho), & u\in J_{-1} \setminus J_{1}
       \end{cases} , \,
     \frac{1}{2}\paraa{A A^*+A^* A} = 
      \begin{cases}
  		\frac{1}{2}\paraa{\rho+\frac{\hbar}{2}C(\rho) }, & u\in J_{1} \setminus J_{-1} \\
             \rho,  & u\in J_1 \cap J_{-1}  \\
	       \frac{1}{2}\paraa{\rho-\frac{\hbar}{2}C(\rho) } , & u\in J_{-1} \setminus J_{1}
       \end{cases} 
  \end{align*} 
\end{proposition}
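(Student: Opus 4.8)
The plan is to reduce everything to the general commutator and anticommutator formulas (\ref{AA_comm}) and (\ref{AA_anti_comm}), which were established for an arbitrary generator $A=fU$ of an $\alpha$-fuzzy cylinder with $\phi=f\overline{f}$. The subalgebra in question lives inside $\text{Cyl}(J,\alpha_{\rho,\hbar})$, which is a genuine $\alpha$-fuzzy cylinder by proposition \ref{pb_cl_IJ} applied to the conjugating bijection $\rho$. I take $f=\sqrt{\phi_\rho}$, so that $\phi=f\overline{f}=\phi_\rho$ and the partial identity $p_1$ is $p_{J_1}$, the indicator of the range $J_1$ of $\alpha_{\rho,\hbar}$. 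Then (\ref{AA_comm}) and (\ref{AA_anti_comm}) specialize to
\begin{align*}
 [A,A^*]=p_1\phi_\rho-(p_1\phi_\rho)\circ\alpha_{\rho,\hbar},\qquad
 AA^*+A^*A=p_1\phi_\rho+(p_1\phi_\rho)\circ\alpha_{\rho,\hbar},
\end{align*}
so the whole statement follows once the two functions $p_1\phi_\rho$ and $(p_1\phi_\rho)\circ\alpha_{\rho,\hbar}$ are known explicitly together with their supports.

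The first function $p_1\phi_\rho$ is just $\phi_\rho=\rho+\tfrac{\hbar}{2}C(\rho)$ on $J_1$ and $0$ off $J_1$. The second is supported on the domain $J_{-1}$, and evaluating it there is the step I expect to be the main obstacle, since this is exactly where the defining equation (\ref{x_u_eq}) must be fed in. For $u\in J_{-1}$ I would set $w=\rho(u)\in I_{-1}$ and $x=\alpha_\hbar(w)$; the conjugation $\alpha_{\rho,\hbar}=\rho^{-1}\circ\alpha_\hbar\circ\rho$ gives $\alpha_{\rho,\hbar}(u)=\rho^{-1}(x)$, whence
\begin{align*}
 \phi_\rho\paraa{\alpha_{\rho,\hbar}(u)}
  =\rho\paraa{\rho^{-1}(x)}+\tfrac{\hbar}{2}C\paraa{\rho(\rho^{-1}(x))}
  =x+\tfrac{\hbar}{2}C(x).
\end{align*}
Because $x=\alpha_\hbar(w)$ solves (\ref{x_u_eq}), the right-hand side equals $w-\tfrac{\hbar}{2}C(w)=\rho(u)-\tfrac{\hbar}{2}C(\rho(u))$. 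Thus on $J_{-1}$ the pulled-back function is $\rho-\tfrac{\hbar}{2}C(\rho)$: conjugation by $\rho$ together with the defining relation converts the ``$+$'' branch $\phi_\rho$ into the ``$-$'' branch, which is the single computational fact driving the whole result.

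Finally I would split $J$ into the three support regions. On $J_1\setminus J_{-1}$ only $p_1\phi_\rho$ contributes; on $J_{-1}\setminus J_1$ only $(p_1\phi_\rho)\circ\alpha_{\rho,\hbar}$ contributes; and on the overlap $J_1\cap J_{-1}$ both contribute, equalling $\rho+\tfrac{\hbar}{2}C(\rho)$ and $\rho-\tfrac{\hbar}{2}C(\rho)$ respectively. Substituting these into the two boxed formulas and reading off the difference and the sum case by case reproduces exactly the two tables in the statement; in particular, on $J_1\cap J_{-1}$ the difference collapses to $\hbar C(\rho)$ and the half-sum to $\rho$, so the desired relations of the form (\ref{AA_rel}) hold there. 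A minor point to verify along the way is that $f=\sqrt{\phi_\rho}$ is meaningful, i.e. $\phi_\rho\geq 0$, which is guaranteed under the intended reading of $\rho$ as the non-negative radial function $R^2$ of (\ref{AA_rel}).
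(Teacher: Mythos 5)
Your proposal is correct and follows essentially the same route as the paper: specialize (\ref{AA_comm}) and (\ref{AA_anti_comm}) to $f=\sqrt{\phi_\rho}$, compute $p_1\phi_\rho$ and $(p_1\phi_\rho)\circ\alpha_{\rho,\hbar}$ on their supports using (\ref{x_u_eq}) to turn the ``$+$'' branch into the ``$-$'' branch, and read off the three cases. The only difference is cosmetic (you track the point $u\mapsto\rho(u)\mapsto\alpha_\hbar(\rho(u))$ explicitly where the paper writes the same identity as a composition of functions), and your closing remark about $\phi_\rho\geq 0$ is a reasonable caveat the paper defers to its examples.
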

\begin{proof}
  From the definition of concatenation of partial
  bijections, it follows that
  \begin{align*} 
    	\alpha_{\rho,\hbar}:J_{-1}=\rho^{-1}\para{I_{-1}}\rightarrow J_1=\rho\paraa{I_1}
  \end{align*} 
  We evaluate
  $p_1\phi_\rho \pm (p_1\phi_\rho)\circ \alpha_{\rho,\hbar}$
  (see equations (\ref{AA_comm}), (\ref{AA_anti_comm})  )
  on $u \in J_1 \setminus J_{-1}$,
  $u\in J_1\cap J_{-1}$ 
  and $u\in J_{-1} \setminus J_1$.

  It is
  \begin{align*} 
     p_1\phi_\rho =& \begin{cases}
  		       0 , & u\in J_{-1} \setminus J_1 \\
	              \rho+\frac{\hbar}{2}C(\rho), & u\in J_1
                 \end{cases} \\
     (p_1\phi_\rho)\circ \alpha_{\rho,\hbar} = &
           \begin{cases}
  		  \paraa {\text{id}_I+\frac{\hbar}{2}C } \circ \alpha_{\hbar} \circ \rho
  		    = \paraa{ \text{id}_I-\frac{\hbar}{2}C } \circ \rho , & u\in J_{-1}  \\
	        0, & u\in J_1 \setminus J_{-1}
            \end{cases}
  \end{align*} 
  where in the second equation above,
  equation (\ref{x_u_eq}) has been used.
  Recall that $p_1$ is defined with
  respect to $\alpha_{\rho,\hbar}$, i.e. it
  is the partial identity on $J_1$.
  
  It follows
    \begin{align*} 
       p_1\phi_\rho \pm (p_1\phi_\rho)\circ \alpha_{\rho,\hbar} =
      \begin{cases}
  		\rho+\frac{\hbar}{2}C(\rho), & u\in J_{1} \setminus J_{-1} \\
             2\rho,  & u\in J_1 \cap J_{-1} \text{ and "+"} \\
             \hbar C(\rho),  & u\in J_1 \cap J_{-1} \text{ and "-"} \\
	       \pm \paraa{ \rho-\frac{\hbar}{2}C(\rho) }, & u\in J_{-1} \setminus J_{1}
       \end{cases} 
  \end{align*} 
  
\end{proof}

We see that we are solely free in $J_1 \cap J_{-1}$ 
in choosing $C$, which then determines the commutator
and anticommutator in the rest of $J$.

\begin{corollary} \label{2d_alg_prop}
  The algebra of proposition \ref{2d_alg} has
  algebra relations
  \begin{align*} 
     AA^* & =0, & u\in J_{1} \setminus J_{-1} \\
     [ A, A^*] &= \hbar C\parab{\frac{1}{2}\paraa{A A^*+A^* A}}, & u\in J_1 \cap J_{-1} \\
     A^* A & = 0, & u\in J_{-1} \setminus J_{1}
  \end{align*} 	
\end{corollary}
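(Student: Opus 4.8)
The plan is to obtain the three relations by isolating the individual products $AA^*$ and $A^*A$ from the commutator and anticommutator already computed in Proposition \ref{2d_alg}, using
\begin{align*}
   AA^* = \tfrac12\paraa{AA^*+A^*A}+\tfrac12[A,A^*], \qquad
   A^*A = \tfrac12\paraa{AA^*+A^*A}-\tfrac12[A,A^*].
\end{align*}
On each of the three regions $J_1\setminus J_{-1}$, $J_1\cap J_{-1}$ and $J_{-1}\setminus J_1$ it then suffices to substitute the two case-values of that proposition and simplify. Equivalently, one can return to the defining identities (\ref{AA_comm}) and (\ref{AA_anti_comm}) and add respectively subtract them to peel off $AA^*$ and $A^*A$ before restricting to each region.

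I would first dispose of the central region $u\in J_1\cap J_{-1}$, which carries the actual content. There Proposition \ref{2d_alg} gives $[A,A^*]=\hbar C(\rho)$ together with $\tfrac12\paraa{AA^*+A^*A}=\rho$. Because $C$ depends only on its single argument, substituting the second identity into the first rewrites $\hbar C(\rho)$ as $\hbar C\parab{\tfrac12\paraa{AA^*+A^*A}}$, giving the claimed operator relation $[A,A^*]=\hbar C\parab{\tfrac12\paraa{AA^*+A^*A}}$ on the overlap. This is exactly the target relation (\ref{AA_rel}) with $R^2=\tfrac12\paraa{AA^*+A^*A}$, so the overlap is where the algebra genuinely behaves like the intended ring in the complex plane.

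On the two outer regions the commutator and the anticommutator from Proposition \ref{2d_alg} coincide up to an overall sign, so that one of the two linear combinations above collapses to zero while the other does not. Carrying this out yields $AA^*=0$ on $J_1\setminus J_{-1}$ and $A^*A=0$ on $J_{-1}\setminus J_1$. Heuristically, a point $u$ of an outer region lies in only one of the domain $J_{-1}$ or the range $J_1$ of $\alpha_{\rho,\hbar}$, so at most one of the two products can be supported there and the other must vanish.

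The step I expect to be the main obstacle is the support bookkeeping that fixes the sign in each outer region, i.e.\ deciding which of $AA^*$ and $A^*A$ is the one supported on the range $J_1$ (through the partial identity $p_1$) and which is carried onto the domain $J_{-1}$ (through the composition with $\alpha_{\rho,\hbar}$). This pairing is what determines which product drops out in $J_1\setminus J_{-1}$ and which in $J_{-1}\setminus J_1$; once the domains, ranges and partial identities are matched consistently with the conventions of Proposition \ref{alp_fuz_cyl_pi}, the three relations of the corollary follow directly from the case table.
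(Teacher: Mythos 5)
Your decomposition $AA^*=\frac{1}{2}\paraa{AA^*+A^*A}+\frac{1}{2}[A,A^*]$, $A^*A=\frac{1}{2}\paraa{AA^*+A^*A}-\frac{1}{2}[A,A^*]$ is exactly the intended route (the paper offers no separate argument; the corollary is meant to be read off from the case table of proposition \ref{2d_alg}), and your treatment of the middle region $J_1\cap J_{-1}$ is correct. But the one step you explicitly defer --- the ``support bookkeeping'' that decides which product vanishes in which outer region --- is precisely where your stated conclusion fails, and it cannot be left open, because actually carrying out your own substitution gives the opposite assignment. On $J_1\setminus J_{-1}$, proposition \ref{2d_alg} gives $[A,A^*]=\rho+\frac{\hbar}{2}C(\rho)$ and $\frac{1}{2}\paraa{AA^*+A^*A}=\frac{1}{2}\paraa{\rho+\frac{\hbar}{2}C(\rho)}$, so your linear combinations yield $A^*A=0$ and $AA^*=\rho+\frac{\hbar}{2}C(\rho)$ there; on $J_{-1}\setminus J_1$ they yield $AA^*=0$ and $A^*A=\rho-\frac{\hbar}{2}C(\rho)$. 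You assert the reverse.

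The pairing is fixed, not conventional: from $UU^*=p_1$ and $U^*U=p_{-1}$ (proposition \ref{alp_fuz_cyl_pi}) one computes $AA^*=p_1\phi_\rho$, supported on the \emph{range} $J_1$ of $\alpha_{\rho,\hbar}$, while $A^*A=(p_1\phi_\rho)\circ\alpha_{\rho,\hbar}$ is supported on the \emph{domain} $J_{-1}$ --- these are exactly the two terms entering (\ref{AA_comm}) and (\ref{AA_anti_comm}). Hence it is $A^*A$ that must vanish on $J_1\setminus J_{-1}$ and $AA^*$ that vanishes on $J_{-1}\setminus J_1$; your heuristic about each outer point lying in only one of domain or range is right, but you attached the products to the wrong regions. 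Note that this means the corollary as printed has its two outer lines interchanged relative to proposition \ref{2d_alg} (apparently a slip in the paper); a blind proof that actually performed the bookkeeping you flagged would have surfaced this discrepancy rather than reproducing it, since no convention consistent with $U=p_1\delta_1$ and $\alpha_{\rho,\hbar}\colon J_{-1}\rightarrow J_1$ makes the printed outer relations follow from the case table.
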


\hfill \\ 

For having a defined classical limit, it is necessary
that the functions of the commutator and the anti-commutator
are continuous and are $0$ at the borders of $J$, such that
$\alpha_\hbar$ maps them to functions in the corresponding
ideals. 

\begin{proposition} \label{2d_alg_cont_1}
  In the nomenclature of proposition \ref{2d_alg},
  assume that $J_{1} \setminus J_{-1}$ is not empty.
  Let $u_0$ and $u_1$ be the different border points of
  $J_{1} \setminus J_{-1}$, where $u_0$ is the 
  border of $J$. Then $u_0$ is the image of $u_1$ by
  $\alpha_{\rho,\hbar}$, i.e.
  $u_0=\alpha_{\rho,\hbar}(u_1)$.
  In this case, the functions $[ A, A^*]$ and 
  $\frac{1}{2}\paraa{A A^*+A^* A}$ are continuous
  at $u_1$, iff these functions are $0$ at $u_0$. 
  
  If these two conditions are fulfilled, then $\rho(u_0)+\frac{\hbar}{2}C\paraa{\rho(u_0)}=0$.
  If $\rho$ is a bijection of $I$ with $\rho(u_0)=u_0$,
  i.e. it is orientation preserving, this results in
  $u_0+\frac{\hbar}{2}C(u_0)=0$.
\end{proposition}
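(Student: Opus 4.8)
The plan is to split the argument into three steps: the geometric identification $u_0=\alpha_{\rho,\hbar}(u_1)$, a bridging identity coming from the defining equation (\ref{x_u_eq}), and a one-sided-limit computation using the piecewise formulas of proposition \ref{2d_alg}.

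First I would establish $u_0=\alpha_{\rho,\hbar}(u_1)$. Since $J_1\setminus J_{-1}$ is nonempty with border points $u_0$ (the border of $J$) and $u_1$, the interior point $u_1$ is exactly where the domain $J_{-1}$ stops inside the range $J_1$; hence $u_1$ is an endpoint of $J_{-1}$, while $u_0$ is the endpoint of $J_1$ lying on the border of $J$. The map $\alpha_{\rho,\hbar}:J_{-1}\to J_1$ is a continuous bijection of intervals (it is the conjugate $\rho^{-1}\circ\alpha_\hbar\circ\rho$, cf. proposition \ref{pb_cl_IJ}), hence a monotone homeomorphism; being orientation preserving it carries the endpoint $u_1$ of its domain to the matching endpoint $u_0$ of its range. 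I expect this to be the main obstacle, since it is the only place where the interval topology and the orientation of $\alpha_{\rho,\hbar}$ must be tracked with care, and excluding the possibility that $u_1$ is sent to the other (interior) endpoint of $J_1$ is precisely what brings in the orientation hypothesis that reappears at the end of the statement.

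Next I would record the bridging identity. The relation $(p_1\phi_\rho)\circ\alpha_{\rho,\hbar}=\paraa{\text{id}_I-\tfrac{\hbar}{2}C}\circ\rho$ on $J_{-1}$, derived in the proof of proposition \ref{2d_alg} from equation (\ref{x_u_eq}), evaluated at $u_1$ and combined with $\alpha_{\rho,\hbar}(u_1)=u_0$ and $u_0\in J_1$, gives
\begin{align*}
  \rho(u_0)+\tfrac{\hbar}{2}C\paraa{\rho(u_0)}=\phi_\rho(u_0)=\rho(u_1)-\tfrac{\hbar}{2}C\paraa{\rho(u_1)},
\end{align*}
which links the value of $\phi_\rho$ at $u_0$ to the quantity $\rho(u_1)-\tfrac{\hbar}{2}C(\rho(u_1))$ at $u_1$.

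Finally I would carry out the continuity computation. The point $u_1$ is the common border of the regions $J_1\cap J_{-1}$ and $J_1\setminus J_{-1}$, so by proposition \ref{2d_alg} the two values that $[A,A^*]$ approaches from the two sides of $u_1$ are $\hbar C(\rho(u_1))$ and $\rho(u_1)+\tfrac{\hbar}{2}C(\rho(u_1))$, while those of $\tfrac12\paraa{AA^*+A^*A}$ are $\rho(u_1)$ and $\tfrac12\paraa{\rho(u_1)+\tfrac{\hbar}{2}C(\rho(u_1))}$; in both cases the equality of the two one-sided values reduces to the single condition $\rho(u_1)-\tfrac{\hbar}{2}C(\rho(u_1))=0$. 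On the other hand, at $u_0\in J_1\setminus J_{-1}$ the same proposition gives $[A,A^*](u_0)=\phi_\rho(u_0)$ and $\tfrac12\paraa{AA^*+A^*A}(u_0)=\tfrac12\phi_\rho(u_0)$, so both functions vanish at $u_0$ iff $\phi_\rho(u_0)=0$. By the bridging identity $\phi_\rho(u_0)=\rho(u_1)-\tfrac{\hbar}{2}C(\rho(u_1))$, so continuity at $u_1$ holds iff the functions vanish at $u_0$. In that case $\rho(u_0)+\tfrac{\hbar}{2}C(\rho(u_0))=\phi_\rho(u_0)=0$, and specializing to an orientation-preserving bijection $\rho$ of $I$ with $\rho(u_0)=u_0$ yields $u_0+\tfrac{\hbar}{2}C(u_0)=0$.
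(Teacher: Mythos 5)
Your proposal is correct and follows essentially the same route as the paper: both establish $u_0=\alpha_{\rho,\hbar}(u_1)$ from the interval structure of $J=J_1\cup J_{-1}$, read off the one-sided values of the commutator and anticommutator at $u_1$ and the values at $u_0$ from the piecewise formulas of proposition \ref{2d_alg}, and reduce both conditions to $\rho(u_1)-\tfrac{\hbar}{2}C\paraa{\rho(u_1)}=0$ via equation (\ref{x_u_eq}). Your ``bridging identity'' is just an explicit repackaging of the step the paper performs when it rewrites $\rho(u_0)+\tfrac{\hbar}{2}C\paraa{\rho(u_0)}$ as $\alpha_\hbar\circ\rho(u_1)+\tfrac{\hbar}{2}C\circ\alpha_\hbar\circ\rho(u_1)$, so no substantive difference.
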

\begin{proof}
  Since $J=J_1\cup J_{-1}$, $J_{1} \setminus J_{-1}$
  is at a side of $J$, such that a border of $J_1$
  and optionally a border of $J_{-1}$ are at the border
  of $J$. Let $u_0$ be the point, which is the border
  of $J$, of $J_{1}$ and optionally $J_{-1}$ at this side
  of $J$. Since $J=J_1 \cup J_{-1}$ and $J_{1} \setminus J_{-1}$
  is not empty, it follows that $J_{-1}$ is inside of
  $J_1$ at this side of $J$, and $u_1$, the border point
  of $J_{-1}$ at this side, is different from $u_0$.
    
  Since $\alpha_{\rho,\hbar}$ maps $J_{-1}$ to $J_1$,
  it maps the respective border of $J_{-1}$ to $J_1$
  and therefore $u_0=\alpha_{\rho,\hbar}(u_1)$.

  The commutator being $0$ at $u_0$ and being
  continuous at $u_1$ means
  \begin{align*} 
      \rho(u_0)+\frac{\hbar}{2}C\paraa{\rho(u_0)} = 0, \qquad 
      \rho(u_1)+\frac{\hbar}{2}C\paraa{\rho(u_1)} = \hbar C \paraa{\rho(u_1)}
  \end{align*} 
  The first equation is equivalent to
   \begin{align*} 
      \rho\paraa{\alpha_{\rho,\hbar}(u_1)} + \frac{\hbar}{2}C\paraa{\rho\paraa{\alpha_{\rho,\hbar}(u_1)}} = 
      \alpha_\hbar\circ\rho(u_1) + \frac{\hbar}{2}C\circ\alpha_\hbar\circ\rho(u_1) = 0
   \end{align*} 
   which is equivalent to the second equation, because of
  (\ref{x_u_eq}).
  
  For the anti-commutator, the two conditions imply
  \begin{align*} 
      \rho(u_0)+\frac{\hbar}{2}C\paraa{\rho(u_0) } = 0, \qquad 
      \rho(u_1)+\frac{\hbar}{2}C\paraa{\rho(u_1) } = 2 \rho(u_1) 
  \end{align*}
  which are also equivalent due to $u_0=\alpha_{\rho,\hbar}(u_1)$
  and (\ref{x_u_eq}).
\end{proof}

\begin{proposition} \label{2d_alg_cont_2}
  In the nomenclature of proposition \ref{2d_alg},
  assume that $J_{-1} \setminus J_{1}$ is not empty.
  Then let $u_0$ and $u_1$ be the different border points of
  $J_{-1} \setminus J_{1}$, where $u_0$ is the 
  border of $I$. Then $u_1$ is the image of $u_0$ by
  $\alpha_{\rho,\hbar}$, i.e.
  $u_1=\alpha_{\rho,\hbar}(u_0)$.
  In this case, the functions $[ A, A^*]$ and 
  $\frac{1}{2}\paraa{A A^*+A^* A}$ are continuous
  at $u_1$, iff these functions are $0$ at $u_0$.
  Here, then $\rho(u_0)+\frac{\hbar}{2}C\paraa{\rho(u_0)}=0$.
\end{proposition}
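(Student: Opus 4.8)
The plan is to run the proof of Proposition \ref{2d_alg_cont_1} in mirror image, interchanging the roles of $J_1$ and $J_{-1}$; the present statement is the reflection of that one under $J_1\leftrightarrow J_{-1}$, with the direction in which $\alpha_{\rho,\hbar}$ carries the extreme boundary point reversed. Throughout I would use the piecewise expressions for $[A,A^*]$ and $\frac{1}{2}\paraa{A A^*+A^* A}$ established in Proposition \ref{2d_alg} together with equation (\ref{x_u_eq}).

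First I would fix the geometry of $J_{-1}\setminus J_1$. Since $J=J_1\cup J_{-1}$ and this set is nonempty, it occupies one end of the interval $J$; its extreme endpoint $u_0$ is an endpoint of $J$ lying in $J_{-1}$ but not in $J_1$, while its other endpoint $u_1$ is the adjacent endpoint of $J_1$, i.e. the point separating $J_{-1}\setminus J_1$ from $J_1\cap J_{-1}$. By Proposition \ref{pb_cl_IJ} the map $\alpha_{\rho,\hbar}:J_{-1}\to J_1$ is a continuous bijection, so it sends endpoints of $J_{-1}$ to endpoints of $J_1$; as $u_0$ is the extreme endpoint of $J_{-1}$, its image is the corresponding extreme endpoint of $J_1$, which is $u_1$. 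This yields $u_1=\alpha_{\rho,\hbar}(u_0)$, the direction being opposite to the one in Proposition \ref{2d_alg_cont_1}. I expect this boundary bookkeeping, together with the check that $J_{-1}\setminus J_1$ really sits at the end of $J$ containing $u_0$, to be the main obstacle, exactly as in the previous proposition.

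Next I would compare the one-sided limits at $u_1$. Approaching $u_1$ from within $J_{-1}\setminus J_1$, the functions $[A,A^*]$ and $\frac{1}{2}\paraa{A A^*+A^* A}$ take the outer-region values listed in Proposition \ref{2d_alg}, whereas approaching from within $J_1\cap J_{-1}$ they take the common-region values $\hbar C(\rho)$ and $\rho$. Continuity at $u_1$ is precisely the requirement that these two one-sided limits agree, and for either function this collapses to a single relation among $\rho(u_1)$ and $C\paraa{\rho(u_1)}$.

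Finally I would transport that relation to $u_0$ using equation (\ref{x_u_eq}). Since $u_1=\alpha_{\rho,\hbar}(u_0)$ unwinds to $\rho(u_1)=\alpha_\hbar\paraa{\rho(u_0)}$, substituting $u=\rho(u_0)$ and $x=\rho(u_1)$ into (\ref{x_u_eq}) converts the continuity relation at $u_1$ into the statement that $[A,A^*]$ and $\frac{1}{2}\paraa{A A^*+A^* A}$ vanish at $u_0$; reading the chain of equivalences backwards supplies the converse, so the two conditions are indeed equivalent. The anticommutator case reduces to the commutator case after the same substitution, so a single transport argument settles both functions at once, and the common boundary relation is $\rho(u_0)+\frac{\hbar}{2}C\paraa{\rho(u_0)}=0$.
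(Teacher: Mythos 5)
Your overall strategy is exactly the paper's: the published proof of proposition \ref{2d_alg_cont_2} consists of the single sentence that it can be shown analogously to proposition \ref{2d_alg_cont_1}, and your mirrored argument---locating $J_{-1}\setminus J_1$ at one end of $J$, sending the extreme endpoint $u_0$ of $J_{-1}$ to the endpoint $u_1$ of $J_1$ so that $u_1=\alpha_{\rho,\hbar}(u_0)$, matching the one-sided limits at $u_1$ against the values $\hbar C(\rho)$ and $\rho$ on $J_1\cap J_{-1}$, and transporting the resulting relation to $u_0$ via (\ref{x_u_eq}) with $\rho(u_1)=\alpha_\hbar\paraa{\rho(u_0)}$---is precisely the intended writing-out of that analogy. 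The equivalence ``continuous at $u_1$ iff zero at $u_0$'' does go through this way, for both functions at once, just as you sketch. (Like the paper, you tacitly use that $\alpha_{\rho,\hbar}$ is monotone increasing so that endpoints map to the corresponding endpoints; this gloss is shared with the proof of proposition \ref{2d_alg_cont_1} and is acceptable at the paper's level of rigor.)

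There is, however, one concrete step that would fail if you actually wrote the relations down: your closing claim that ``the common boundary relation is $\rho(u_0)+\frac{\hbar}{2}C\paraa{\rho(u_0)}=0$.'' On $J_{-1}\setminus J_1$, proposition \ref{2d_alg} gives $[A,A^*]=-\rho+\frac{\hbar}{2}C(\rho)$ and $\frac{1}{2}\paraa{AA^*+A^*A}=\frac{1}{2}\paraa{\rho-\frac{\hbar}{2}C(\rho)}$, so vanishing at $u_0$ means $\rho(u_0)-\frac{\hbar}{2}C\paraa{\rho(u_0)}=0$, with a \emph{minus} sign; matching the one-sided limits at $u_1$ gives, for both functions, $\rho(u_1)+\frac{\hbar}{2}C\paraa{\rho(u_1)}=0$; and equation (\ref{x_u_eq}) with $u=\rho(u_0)$, $x=\rho(u_1)$ states exactly that these two expressions are equal, which is what proves the equivalence. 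The plus-sign relation at $u_0$ does not follow from the transport; it belongs to proposition \ref{2d_alg_cont_1}, where $u_0$ lies in $J_1\setminus J_{-1}$ and the commutator there is $\rho+\frac{\hbar}{2}C(\rho)$. Indeed the paper's own remark immediately after the proposition confirms this, stressing that $\rho+\frac{\hbar}{2}C(\rho)$ is \emph{not} forced to vanish at $u_0$ by the continuity conditions and must be imposed separately for $C_0$-membership; the final sentence of the proposition as printed is a sign slip carried over from the previous proposition. Your proof should either derive the corrected relation $\rho(u_0)-\frac{\hbar}{2}C\paraa{\rho(u_0)}=0$ (equivalently $\rho(u_1)+\frac{\hbar}{2}C\paraa{\rho(u_1)}=0$) or explicitly flag that the stated relation cannot be obtained from the computation.
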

\begin{proof}
  This can be shown analogously to proposition \ref{2d_alg_cont_1}.
\end{proof}
\begin{remark}
  Note that in the case $J_{-1} \setminus J_{1} \neq \emptyset$
  the function $\rho+\frac{\hbar}{2}C\paraa{\rho}=0$ is not necessarily $0$ at $u_0$.
  However this is necessary, such that it is a member of $C_0(I)$
  and that there is a classical limit. We arrive at the problematic
  condition that
  $\rho(u_1)+\frac{\hbar}{2}C\paraa{\rho(u_1)}=\rho(u_0)+\frac{\hbar}{2}C\paraa{\rho(u_0)}=0$.
\end{remark}

\begin{example} Plane algebra \end{example}
\noindent  For a constant commutator $C(u)=\pm 1$,
equation (\ref{x_u_eq}) is
\begin{align*} 
    \alpha_\hbar(u) \pm \frac{\hbar}{2}=u \mp \frac{\hbar}{2}
\end{align*} 
Thus, $\alpha_\hbar(u)=u\mp\hbar$. The algebra
generator becomes $A=\sqrt{\phi_\rho}U=\sqrt{\rho\pm \frac{\hbar}{2}}U$
and we are interested in an interval $I=[a,\infty)$
We have to differentiate between two cases:

\hfill \\
Case 1, $C=+1$: 
$\alpha_\hbar$ maps from $I_{-1}=[a+\hbar,\infty)$
to $I_{1}=I=[a,\infty)$.
We only have to consider
$I_1 \setminus I_{-1}=[a, a+ \hbar[$ and
$I_1 \cap I_{-1}=I_{-1}$, since $I_{-1}\setminus I_1 = \emptyset$.
We arrive at
 \begin{align*} 
     [ A, A^*] =
      \begin{cases}
  		\rho+\frac{\hbar}{2}, & u\in \rho^{-1}\paraa{[a, a+\hbar[} \\
             \hbar,  & u\in \rho^{-1}\paraa{[a+\hbar,\infty)} 
       \end{cases} , \\
     \frac{1}{2}\paraa{A A^*+A^* A} = 
      \begin{cases}
  		\frac{1}{2}\paraa{\rho+\frac{\hbar}{2}}, & u\in \rho^{-1}\paraa{[a, a+\hbar[}\\
             \rho,  & u\in \rho^{-1}\paraa{[a+\hbar,\infty)}   
       \end{cases} 
\end{align*} 
These functions are $0$ at
$u_0=\rho^{-1}\paraa{a}$, when
$a+\frac{\hbar}{2}=0$, i.e. $a=-\frac{\hbar}{2}$.
We can apply proposition \ref{2d_alg_cont_1}
and these functions are also continuous at
$u_1=\rho^{-1}\paraa{a+\hbar}$, 
which we can check directly here.
$\phi_\rho=\rho+\frac{\hbar}{2}$ is always
non-negative and the algebra generator $A$ is well
defined.

\hfill \\
Case 2, $C=-1$: 
$\alpha_\hbar$ maps from $I_{-1}=I=[a,\infty)$
to $I_{1}=[a+\hbar,\infty)$ for an $a\in \reals$.
We only have to consider
$I_{-1} \setminus I_{1}=[a, a+\hbar[$ and
$I_1 \cap I_{-1}=I_{1}$, since $I_{1}\setminus I_{-1} = \emptyset$.
We arrive at
 \begin{align*} 
     [ A, A^*] =
      \begin{cases}
             -\hbar,  & u\in \rho^{-1}\paraa{[a+\hbar,\infty)}  \\
	       - \paraa{\rho + \frac{\hbar}{2}}, & u\in \rho^{-1}\paraa{[a, a+\hbar[}
       \end{cases} , \\
     \frac{1}{2}\paraa{A A^*+A^* A} = 
      \begin{cases}
             \rho,  & u\in \rho^{-1}\paraa{[a+\hbar,\infty)}  \\
	       \frac{1}{2}\paraa{\rho+\frac{\hbar}{2} } , & u\in \rho^{-1}\paraa{[a, a+\hbar[}
       \end{cases} 
\end{align*} 
The commutator and anti-commutator become $0$
at $\rho^{-1}(a)$, when $a=-\frac{\hbar}{2}$,
which according to proposition \ref{2d_alg_cont_2} 
implies continuity at $\rho^{-1}(a+\hbar)$.
However, we have $\phi_\rho=\rho-\frac{\hbar}{2}$, i.e.
$\phi_\rho\paraa{\rho^{-1}(a)}=-\hbar$, independently of $a$.

\begin{example} Poincare disc algebra \end{example}
\noindent For $z\overline{z}<1$, the metric of
the Poincare disc is 
\begin{align*} 
   ds^2 = 4 \frac{dz d\overline{z} }{\paraa{1-z \overline{z}}^2}
\end{align*}
which induces the symplectic form
\begin{align*} 
   d\omega = 2 \frac{dz \wedge d\overline{z} }{\paraa{1-z \overline{z}}^2}
\end{align*}
(see for example \cite{Klim_1992}, in which also an algebra with relations
based on this Poisson structure is defined).

As the inverse of the volume form is a Poisson structure,
a compatible commutator for the Poincare disc is $C(u)=\frac{1}{2}\paraa{1-u}^2$
and \ref{x_u_eq} becomes
\begin{align*} 
    \alpha_\hbar + \frac{\hbar}{4}(1-\alpha_\hbar)^2 = u-\frac{\hbar}{4}(1-u)^2
\end{align*} 
Assuming $\hbar\neq 0$, the solutions of this equation are
\begin{align*} 
  \alpha_\hbar(u)  & = \frac{1}{\hbar}\parab{ \hbar-2 \pm \sqrt{4 - 4\hbar(1-u)-\hbar^2(1-u)^2}  } \\
     & = 1- \frac{2}{\hbar} \pm \frac{2}{\hbar} \sqrt {1 + \frac{\hbar}{2} 2(u-1) - \paraa{\frac{\hbar}{2}}^2(u-1)^2 }
\end{align*} 
We restrict to the "+"-case, which has the correct limit 
$\alpha_\hbar(u)\rightarrow u$ for $\hbar\rightarrow 0$.
The expression inside the root is solely non-negative for
\begin{align*} 
   1- \frac{2\sqrt{2}-2}{\hbar} \leq u \leq 1 + \frac{2\sqrt{2}+2}{\hbar}
\end{align*} 
This means that for $\hbar<2\sqrt{2}-2$ $\alpha_\hbar$ 
is defined at least inside the interval $[0,1]$,
which we are interested in. Furthermore,
$\alpha_\hbar(1)=1$, independently from $u$ and $\hbar$.
We will consider $I=[a,1]$ with $a\leq 0$,
where we assume that $\hbar$ is sufficiently small,
such that $\alpha_\hbar$ is defined inside $I$.

$\alpha_\hbar$ has a maximum at $u=1+\frac{2}{\hbar}>1$. Left
of this value it is strictly increasing. Thus, inside $I$, $\alpha_\hbar$
is strictly increasing. Additionally, $\alpha_\hbar(u)<u$ inside $I$.

In the vicinity of $u=0$ we have
\begin{align*} 
   v                 & = 1 +\frac{2}{\hbar} - \frac{1}{\hbar} \sqrt{4+4\hbar-\hbar^2}
           \thickapprox   \frac{\hbar}{2}, \text{ with } \alpha_\hbar(v)=0, \text{ and} \\
   \alpha_\hbar(0)  & = 1 - \frac{2}{\hbar} + \frac{1}{\hbar} \sqrt{4-4\hbar-\hbar^2}
           \thickapprox   -\frac{\hbar}{2}
\end{align*} 
From the second equation it
follow that  $\alpha_\hbar$ is approximately the 
shift $u\mapsto u-\frac{\hbar}{2}$
in the vicinity of $u=0$ and we are in the situation 
of proposition \ref{2d_alg_cont_1} there.
Since $\alpha_\hbar(1)=1$ on the
other side of the interval $I=[a,1]$, we have
avoided the situation of 
proposition \ref{2d_alg_cont_2}. 

We note that the expression inside the square root is
non-negative, when
\begin{align*} 
 \rho \geq \rho_0 = 1-\frac{2}{\hbar}+\frac{2}{\hbar}\sqrt{1-\hbar} \thickapprox -\frac{\hbar}{4}
\end{align*} 
In view of proposition \ref{2d_alg_cont_1}, we
set $a=\rho_0$.

Proposition \ref{2d_alg} tells us the algebra
relations for the subalgebra of the $\alpha$-fuzzy cylinder,
which is generated by the algebra element
$A=\sqrt{\rho+\frac{\hbar}{4}(1-\rho)^2}U$.
The partial bijection $\alpha_\hbar:I_{-1}\rightarrow I_1=I$ maps
the interval $I_{-1}=[\alpha^{-1}_\hbar(a),1]$ to the interval
$I_1=I=[a,1]$. In the notation of proposition \ref{2d_alg} we have
\begin{align*} 
    J_{1} \setminus J_{-1}= \rho\paraa{[a, \alpha^{-1}_\hbar(a)[}, \qquad
    J_1 \cap J_{-1} =\rho\paraa{I_{-1}}=\rho{[\alpha^{-1}_\hbar(a), 1]}, \qquad
    J_{-1} \setminus J_{1} = \emptyset
\end{align*} 
As already mentioned, we have to differentiate
two cases in the commutator relations
\begin{align*} 
   [ A, A^*]  & =
    \begin{cases}
      \rho+\frac{\hbar}{4}(1-\rho)^2 , & u\in \rho\paraa{[a, \alpha^{-1}_\hbar(a)[} \\
      \frac{\hbar}{2}(1-\rho)^2,  & u\in \rho{[\alpha^{-1}_\hbar(a), 1]}  
    \end{cases} \\
  \frac{1}{2}\paraa{A A^*+A^* A} & = 
   \begin{cases}
       \frac{1}{2}\paraa{\rho+\frac{\hbar}{4}(1-\rho)^2 }, & u\in \rho\paraa{[a, \alpha^{-1}_\hbar(a)[} \\
       \rho,  & u\in \rho{[\alpha^{-1}_\hbar(a), 1]}  
    \end{cases} 
\end{align*} 

Due to proposition \ref{2d_alg_cont_1} and our choice of $a$,
we know that these functions are continuous within $J=\rho^{-1}(I)$.

% ******************************************************
% Part II
% ******************************************************
\newpage
\part{Crossed product algebras for partial group actions}

In the first part, will solely use the results of the last section \ref{sec:CPA_PGA} of this second part,
in particular the crossed product algebra of a partial group action of $\integers$ on
the algebra of functions on an interval.

To understand this construction, we have collected the relevant aspects of
inverse semigroups, their actions, partial group actions and crossed
product algebras, which can be constructed from these actions. 
As these results are scattered over some articles and books, and in particular are
mixed with results on $C^*$-algebras, we have collected them and present them
in the following in a self-contained way, such that they can be understood
without nearly no background knowledge. Most of the results are already 
known. What is new according to our knowledge are the consideration
about partial identities (section \ref{sec:CPA_PI}) and the application
of the crossed product algebras constructed herein on function 
algebras.

Although we solely need partial group actions and their crossed 
products in the first part, we firstly introduce inverse semigroups
and their actions in sections \ref{sec:inv_semi} and
\ref{sec:semi_act}. In sections \ref{sec:CPA_PI},
\ref{sec:CPA_NDI} and \ref{sec:cov_rep} we then
define crossed product algebras and covariant representations
for inverse semigroup actions. Here, the special treatment
of partial identities in section \ref{sec:CPA_PI} is new,
which substantially simplifies the corresponding
crossed product algebras, when such elements are present.

In section \ref{sec:spec_cov_rep} a special covariant
representation is defined, which is the basis for all
representations used in the first part.

In section \ref{sec:pga}, we clarify the relationship between
partial group actions and inverse semigroups and
then introduce in section \ref{sec:CPA_PGA} the
crossed product algebras based on partial group actions
of $\integers$ on function algebras of functions on subintervals
of the real line, such as needed in the first part.

\section{Inverse semigroups} \label{sec:inv_semi}

\cite{Lawson_1998} is a good introduction, in which the
missing proofs in the following can be found. There are
also short introductions by the same author available in the Internet.

\begin{definition}
A semigroup $S$ is an \emph{inverse semigroup}, if 
for each $s\in S$ there exits a unique element $s^*$ such that
\begin{align*} 
  s=ss^*s, \qquad s^*=s^*ss^*
\end{align*}
\end{definition}
The inverse operation is an antiautomorphism of the
inverse semigroup $S$, i.e.
\begin{align*} 
  (s^*)^*=s, \qquad (st)^*=t^*s^*
\end{align*}
for $s,t\in S$.

Note that $ss^*$ and $s^*s$ are \emph{idempotents}, i.e.
elements $q$ such that $q^2=e$. An inverse semigroup can
have a unit (or identity) element and a zero element,
which both are idempotents. 

Idempotents are important elements of a semigroup and
also characterize the difference between groups and inverse
semigroups. All groups are inverse semigroups. An inverse semigroup
is a group, if and only if it has a unique idempotent, which
is the unit.

All idempotents $q$ commute and therefore $q=q^*=qq^*$.
If $q$ is an idempotent then also $sqs^*$ is an idempotent for any
$s\in S$. 

An important semigroup is the \emph{symmetric inverse monoid}
$I(X)$ of a set $X$. (Inverse semigroups with unit element
are also called \emph{inverse monoids}.) The symmetric inverse monoid
is the set of all \emph{partial bijections} from the set $X$ into itself.
A partial bijection $\alpha:A\rightarrow B$ is a bijective map
between two subsets $A$ and $B$ of $X$. $A=\text{dom}(\alpha)$
is the domain of $\alpha$ and $B=\text{ran}(\alpha)$ is its range.
The multiplication in $I(X)$ is the composition of partial bijections in the
possibly largest domain, i.e. if $\alpha$ and $\beta$ are two partial
bijections, then $\alpha\beta$ has domain
$dom(\alpha\beta)=\beta^{-1}(\text{ran}(\beta) \cap \text{dom}(\alpha) )$
and range $\text{ran}(\alpha\beta)=\alpha(\text{ran}(\beta)\cap \text{dom}(\alpha) )$.

\begin{proposition}
  The composition of partial bijections is associative, i.e.
  $\text{dom}\paraa{(\gamma\cdot\beta)\cdot\alpha}= \text{dom}\paraa{\gamma\cdot(\beta\cdot\alpha)}$
  and
  $\text{ran}\paraa{(\gamma\cdot\beta)\cdot\alpha}= \text{ran}\paraa{\gamma\cdot(\beta\cdot\alpha)}$
\end{proposition}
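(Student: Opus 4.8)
The plan is to prove associativity of composition of partial bijections by carefully tracking how the domain and range of a composite are built up, and showing that bracketing does not matter. The statement reduces to verifying two set-theoretic identities: that the domain of $(\gamma\cdot\beta)\cdot\alpha$ equals the domain of $\gamma\cdot(\beta\cdot\alpha)$, and likewise for the ranges. Once the domains agree, the two composite maps are given by the same pointwise recipe (apply $\alpha$, then $\beta$, then $\gamma$ wherever everything is defined), so they coincide as partial bijections; hence the real content is the two domain/range computations.

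First I would fix notation: write $\alpha:A_\alpha\to B_\alpha$, $\beta:A_\beta\to B_\beta$, $\gamma:A_\gamma\to B_\gamma$, where $A_\bullet=\text{dom}$ and $B_\bullet=\text{ran}$. I would recall from the definition in the excerpt that for two partial bijections the composite $\beta\cdot\alpha$ has domain $\alpha^{-1}(B_\alpha\cap A_\beta)$ and range $\beta(B_\alpha\cap A_\beta)$. The cleanest approach is to give a direct characterization of $\text{dom}\paraa{\gamma\cdot\beta\cdot\alpha}$ independent of bracketing: a point $x$ lies in the domain of the triple composite precisely when $x\in A_\alpha$, $\alpha(x)\in A_\beta$, and $\beta(\alpha(x))\in A_\gamma$. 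I would then compute each bracketing and show both reduce to this condition.

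For the left bracketing $(\gamma\cdot\beta)\cdot\alpha$, I would first expand $\gamma\cdot\beta$ to get its domain $\beta^{-1}(B_\beta\cap A_\gamma)$, then apply the composition rule with $\alpha$ on the right, obtaining $\alpha^{-1}\paraa{B_\alpha\cap\beta^{-1}(B_\beta\cap A_\gamma)}$; unwinding the preimages gives exactly the condition "$x\in A_\alpha$, $\alpha(x)\in A_\beta$, $\beta(\alpha(x))\in A_\gamma$." For the right bracketing $\gamma\cdot(\beta\cdot\alpha)$, I would compute $\beta\cdot\alpha$ with domain $\alpha^{-1}(B_\alpha\cap A_\beta)$ and range $\beta(B_\alpha\cap A_\beta)$, then compose with $\gamma$ on the left and unwind similarly to reach the same condition. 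The range identity is handled by the mirror-image argument, using that the inverse operation is an antiautomorphism (so $(\gamma\cdot\beta\cdot\alpha)^*=\alpha^*\cdot\beta^*\cdot\gamma^*$) and that the range of a composite is the domain of its inverse; this lets the range identity be deduced from the domain identity applied to the inverses rather than recomputed from scratch.

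The main obstacle is purely bookkeeping: the nested preimages $\alpha^{-1}(\cdots\beta^{-1}(\cdots))$ must be expanded correctly, and one must be careful that a preimage such as $\beta^{-1}(B_\beta\cap A_\gamma)$ is taken inside $A_\beta$ (equivalently inside $B_\alpha\cap A_\beta$ at the next stage), since $\beta$ is only a partial bijection. The key lemma to invoke silently is that for injective partial maps, $\alpha^{-1}(S\cap T)=\alpha^{-1}(S)\cap\alpha^{-1}(T)$ and that $x\in\alpha^{-1}(\beta^{-1}(V))$ iff $x\in A_\alpha$, $\alpha(x)\in A_\beta$, and $\beta(\alpha(x))\in V$. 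Once these elementary preimage identities are in hand, both bracketings collapse to the symmetric three-condition description and the equality is immediate.
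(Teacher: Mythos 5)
The paper does not actually prove this proposition: it is stated without proof, and the surrounding text explicitly defers the missing proofs of that section to the cited introduction on inverse semigroups. So there is no in-paper argument to compare against, and your proposal must stand on its own. It does: reducing both bracketings of the domain to the symmetric condition that $x\in\text{dom}(\alpha)$, $\alpha(x)\in\text{dom}(\beta)$ and $\beta(\alpha(x))\in\text{dom}(\gamma)$ is exactly the right move, and the unwinding of the nested preimages works because $\alpha(x)\in\text{ran}(\alpha)$ and $\beta(\alpha(x))\in\text{ran}(\beta)$ hold automatically, so the extra intersections with the ranges in the paper's composition formula are vacuous. Two small remarks. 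First, your range argument via inverses is sound in substance, but as written the identity $(\gamma\cdot\beta\cdot\alpha)^{*}=\alpha^{*}\cdot\beta^{*}\cdot\gamma^{*}$ quietly presupposes the associativity being proved; all you actually need is the two-factor reversal $(\sigma\cdot\tau)^{-1}=\tau^{-1}\cdot\sigma^{-1}$, which follows directly from the definition of the composite, applied with the brackets kept in place --- or, even more simply, the observation you already make that once the domains coincide the two composites are the same pointwise map, so their ranges are the images of the common domain under that map and coincide for free. Second, the identity $\alpha^{-1}(S\cap T)=\alpha^{-1}(S)\cap\alpha^{-1}(T)$ holds for arbitrary maps (preimages always commute with intersections), so injectivity is not needed at that step.
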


$I(X)$ contains a zero element, which is the
empty bijection. For example, when $ran(\beta)\cap dom(\alpha)=\emptyset$
is empty, then $\alpha\beta=0$. The only idempotents of
$I(X)$ are the identity functions $1_A$ on the subsets $A$ of $X$. In
particular $1_A 1_B=1_{A\cap B}$ for two subsets $A$ and $B$.
It is $\alpha\alpha^{-1}=1_{ran(\alpha)}$ and 
$\alpha^{-1}\alpha=1_{dom(\alpha)}$.

Inverse semigroups always can be viewed as inverse subsemigroups of
a symmetric inverse monoid. (An \emph{inverse subsemigroup}
is a subsemigroup in which the inverse mapping is closed.)
\begin{theorem} [Wagner-Preston representation theorem]
  Every inverse semigroup $S$ can be embedded in the symmetric
  inverse monoid $I(S)$, for which $S$ is considered as a set.
\end{theorem}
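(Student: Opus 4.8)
The plan is to imitate Cayley's theorem for groups, representing each $s\in S$ as a partial bijection of the underlying set $S$ given by left translation. Concretely, for $s\in S$ I would define $\lambda_s$ to have domain $D_s=\{x\in S: s^*s\,x=x\}$ and act by $\lambda_s(x)=sx$. The first step is to check that $\lambda_s$ is genuinely an element of $I(S)$: it is injective on $D_s$ (if $sx=sy$ with $x,y\in D_s$ then $x=s^*sx=s^*sy=y$), its range is $\{y\in S: ss^*\,y=y\}$, and its two-sided inverse is $\lambda_{s^*}$, since $\lambda_{s^*}\lambda_s(x)=s^*sx=x$ on $D_s$ and symmetrically on the range. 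This step uses only the defining identities $ss^*s=s$ and $s^*ss^*=s^*$.

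The second step is to show that $s\mapsto\lambda_s$ is a semigroup homomorphism into $I(S)$, i.e.\ $\lambda_s\lambda_t=\lambda_{st}$ in the composition convention of the excerpt (apply $\lambda_t$ first). Wherever both sides are defined the rule agrees, $\lambda_s(\lambda_t(x))=s(tx)=(st)x=\lambda_{st}(x)$, so the whole content is that the domains coincide: $D_{st}=\{x:t^*tx=x,\ s^*s\,tx=tx\}$. This is the step I expect to be the main obstacle, and it is exactly where the inverse-semigroup structure is needed. I would prove the two inclusions by hand. Starting from $(st)^*(st)x=t^*s^*s\,tx=x$, multiplying on the left by $t^*t$ and using $t^*tt^*=t^*$ recovers $t^*tx=x$, while multiplying on the left by $t$ and commuting the idempotent $tt^*$ past $s^*s$ (then using $tt^*t=t$) recovers $s^*s\,tx=tx$; here the fact recorded in the excerpt that all idempotents of an inverse semigroup commute is essential. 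The reverse inclusion is the short computation $t^*s^*s\,tx=t^*(tx)=t^*tx=x$.

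The third step is injectivity. I would recover $s$ from $\lambda_s$ via the distinguished point $s^*s\in D_s$, noting $\lambda_s(s^*s)=s(s^*s)=s$. If $\lambda_s=\lambda_t$, then in particular $D_s=D_t$; since $s^*s\in D_t$ and $t^*t\in D_s$ give $t^*t\,s^*s=s^*s$ and $s^*s\,t^*t=t^*t$, commutativity of idempotents forces $s^*s=t^*t$, and hence $s=\lambda_s(s^*s)=\lambda_t(t^*t)=t$.

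Finally, the map is automatically an embedding of inverse semigroups rather than merely of semigroups: any homomorphism between inverse semigroups preserves the involution by uniqueness of inverses, and indeed the first step already exhibits $(\lambda_s)^*=\lambda_{s^*}$. Thus $s\mapsto\lambda_s$ is an injective $*$-homomorphism $S\hookrightarrow I(S)$, which is the asserted embedding.
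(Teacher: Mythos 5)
Your proposal is correct. Note that the paper itself does not prove this theorem --- it states it and defers all omitted proofs in that section to the cited introduction by Lawson --- so there is no in-paper argument to compare against; what you have written is precisely the standard Wagner--Preston construction. All the delicate points are handled properly: you respect the paper's composition convention (apply the right factor first) when matching domains, the identification $D_{st}=\{x: t^*tx=x,\ s^*s\,tx=tx\}$ is exactly where the work lies and your two inclusions (using $t^*tt^*=t^*$, $tt^*t=t$, and commutativity of the idempotents $tt^*$ and $s^*s$, which the paper records) are correct, and the injectivity argument via the distinguished point $s^*s\in D_s$ with $\lambda_s(s^*s)=s$ and $s^*s=t^*t$ is the standard one. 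The only cosmetic remark is that your domain $\{x\in S: s^*s\,x=x\}$ is just $s^*sS$, and some treatments use right translations on $Ss^*s$ instead; nothing of substance changes.
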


\section{Semigroup actions} \label{sec:semi_act}

\begin{definition}
An \emph{action} of an inverse semigroup $S$ on the set $X$ 
is a homomorphism $\alpha: S\rightarrow I(X)$ with $X_e=X$.
\end{definition}
In other words, each $\alpha_s:X_{s*}\rightarrow X_s$
is a partial bijective map. Note that we have denoted the
range of $\alpha_s$ with $X_s$ and the domain with $X_{s*}$.

\begin{proposition} \label{action_prop}
  Let $\alpha$ be an action of the inverse semigroup $S$ on
  the set $X$. Then
  \begin{enumerate}[(i)]
  \item $\alpha_e = \text{id}_X$, the identity map on X.
  \item $\alpha_s\alpha_{s^*}=\text{id}_{X_{s}}$, $\alpha_{s^*}\alpha_s=\text{id}_{X_{s^*}}$
                 for $s\in S$
  \item $\alpha_{s^*}=\alpha_s^{-1}$ for $s \in S$
  \item $X_{st}=\alpha_s\paraa{X_t\cap X_{s^*}} \subset X_s$ for $s,t \in S$
  \end{enumerate}
\end{proposition}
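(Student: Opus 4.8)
The plan is to verify each of the four claims directly from the defining property of a semigroup action, namely that $\alpha\colon S\to I(X)$ is a homomorphism of inverse semigroups with $X_e=X$, using the structural facts about $I(X)$ recalled in the previous section: that idempotents of $I(X)$ are exactly the partial identities $1_A$, that $\alpha\alpha^{-1}=1_{\operatorname{ran}(\alpha)}$ and $\alpha^{-1}\alpha=1_{\operatorname{dom}(\alpha)}$, and that composition follows the maximal-domain rule. Throughout I would exploit that $\alpha$ preserves products and the star operation (since the star in $I(X)$ is genuine inverse of partial bijections), so $\alpha_{st}=\alpha_s\alpha_t$ and $\alpha_{s^*}=(\alpha_s)^*$ inside $I(X)$.

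For (i), since $e$ is an idempotent of $S$, its image $\alpha_e$ is an idempotent of $I(X)$, hence a partial identity $1_A$; the hypothesis $X_e=X$ forces $A=X$, so $\alpha_e=\operatorname{id}_X$. For (iii), I would note that in any inverse semigroup the star is the unique inverse, and a homomorphism of inverse semigroups automatically respects stars; applied in $I(X)$ this gives $\alpha_{s^*}=(\alpha_s)^*=\alpha_s^{-1}$, the partial-bijection inverse. Then (ii) follows immediately by composing: $\alpha_s\alpha_{s^*}=\alpha_s(\alpha_s)^{-1}=1_{\operatorname{ran}(\alpha_s)}=\operatorname{id}_{X_s}$, and symmetrically $\alpha_{s^*}\alpha_s=\operatorname{id}_{X_{s^*}}$, using the identities $\alpha\alpha^{-1}=1_{\operatorname{ran}(\alpha)}$ and $\alpha^{-1}\alpha=1_{\operatorname{dom}(\alpha)}$ together with the naming convention $X_s=\operatorname{ran}(\alpha_s)$, $X_{s^*}=\operatorname{dom}(\alpha_s)$.

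For (iv), the content is purely the maximal-domain composition rule in $I(X)$. I would write $X_{st}=\operatorname{ran}(\alpha_{st})=\operatorname{ran}(\alpha_s\alpha_t)$, and then apply the formula $\operatorname{ran}(\alpha\beta)=\alpha\paraa{\operatorname{ran}(\beta)\cap\operatorname{dom}(\alpha)}$ from the previous section with $\alpha=\alpha_s$, $\beta=\alpha_t$. This yields $X_{st}=\alpha_s\paraa{\operatorname{ran}(\alpha_t)\cap\operatorname{dom}(\alpha_s)}=\alpha_s\paraa{X_t\cap X_{s^*}}$, and the inclusion $X_{st}\subset X_s=\operatorname{ran}(\alpha_s)$ is automatic since we are taking the image under $\alpha_s$ of a subset of its domain.

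The steps are all short and essentially bookkeeping; the only point that deserves care — and what I would flag as the main (mild) obstacle — is making sure the naming conventions are consistent, i.e. that $X_s$ denotes $\operatorname{ran}(\alpha_s)$ while $X_{s^*}$ denotes $\operatorname{dom}(\alpha_s)$, so that the abstract identities $\alpha_s\alpha_{s^*}=1_{\operatorname{ran}(\alpha_s)}$ translate correctly into $\operatorname{id}_{X_s}$. Once the dictionary between star, inverse, domain and range is fixed, everything reduces to transporting the three structural facts about $I(X)$ through the homomorphism $\alpha$, with no genuine difficulty and no calculation beyond substituting the maximal-domain composition formula in (iv).
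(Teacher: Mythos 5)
Your proof is correct and follows essentially the same route as the paper's: every item is obtained by transporting the structural facts about $I(X)$ (idempotents are partial identities, $\alpha\alpha^{-1}=1_{\operatorname{ran}(\alpha)}$, $\alpha^{-1}\alpha=1_{\operatorname{dom}(\alpha)}$, and the maximal-domain composition rule) through the homomorphism $\alpha$, and item (iv) is handled identically. The only cosmetic difference is the order of (ii) and (iii): you first get $\alpha_{s^*}=\alpha_s^{-1}$ from the standard fact that semigroup homomorphisms between inverse semigroups preserve the involution and then read off (ii), whereas the paper establishes (ii) directly from $\alpha_s\alpha_{s^*}\alpha_s=\alpha_{ss^*s}=\alpha_s$ and invertibility of $\alpha_s$, and then deduces (iii); both orderings are sound.
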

\begin{proof}
  (i) $\text{id}_X=\alpha_e \alpha_e^{-1} =\alpha_{ee} \alpha_e^{-1} = \alpha_e \alpha_e \alpha_e^{-1} = \alpha_e$
  
  (ii) Let $p_s=\alpha_s\alpha_{s^*}$, then
  $p_s\alpha_s=\alpha_s\alpha_{s^*}\alpha_s=\alpha_{ss^ss}=\alpha_s$.
  Therefore $p_s=\text{id}_{X_s}$ since $\alpha_s$ is invertible.
  The other part follows by exchanging $s$ and $s^*$.
  
  (iii) This follows directly from (ii).
  
  (iv) By definition 
  $\alpha_{st}:X_{(st)^*}\rightarrow X_{st}$,
  $\alpha_{s}:X_{s^*}\rightarrow X_s$,
  $\alpha_{t}:X_{t^*}\rightarrow X_t$,
  from the definition of the partial bijection
  $\alpha_{st}=\alpha_s\alpha_t$ it follows
    \begin{align*}
     X_{(st)^*}=\alpha_{t^*}\paraa{X_t\cap X_{s^*}} 
        \xrightarrow{\alpha_t} X_t\cap X_{s^*}
        \xrightarrow{\alpha_s} \alpha_s\paraa{X_t\cap X_{s^*}}=X_{st}
  \end{align*}
\end{proof}

\begin{proposition} \label{action_prop_idem}
  Let $\alpha$ be an action of the inverse semigroup $S$ on
  the set $X$ and let $q\in S$ be an idempotent. Then
    \begin{enumerate}[(i)]
      \item $\alpha_q = \text{id}_{X_{q}}$
      \item $X_{qs} = X_q \cap X_s$ for $s\in S$.
    \end{enumerate}
\end{proposition}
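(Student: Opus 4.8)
The plan is to reduce both claims to two facts already established: that $\alpha$ is a semigroup homomorphism into $I(X)$, and that the only idempotents of $I(X)$ are the identity maps $1_A$ on subsets $A \subseteq X$. Since $q$ is idempotent we have $q = q^2$ in $S$, so the homomorphism property gives $\alpha_q = \alpha_{qq} = \alpha_q \alpha_q$. Hence $\alpha_q$ is itself an idempotent element of $I(X)$, and everything else follows from classifying such an element.

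For (i), I would invoke the classification of idempotents in $I(X)$: an idempotent partial bijection must be the identity map on some subset, namely on $\text{dom}(\alpha_q) = \text{ran}(\alpha_q)$. By the notational convention that the range of $\alpha_s$ is written $X_s$, the range of $\alpha_q$ is $X_q$, and this forces $\alpha_q = \text{id}_{X_q}$. As a byproduct the domain equals the range, so $X_{q^*} = X_q$, which is also consistent with $q^* = q$ for an idempotent.

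For (ii), I would feed this into Proposition \ref{action_prop}(iv), which gives $X_{qs} = \alpha_q\paraa{X_s \cap X_{q^*}}$. Using $X_{q^*} = X_q$ from the previous step this becomes $X_{qs} = \alpha_q\paraa{X_s \cap X_q}$. Since $X_s \cap X_q \subseteq X_q = \text{dom}(\alpha_q)$ and $\alpha_q$ acts as the identity on $X_q$ by part (i), the image is just $X_s \cap X_q$, giving $X_{qs} = X_q \cap X_s$.

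No step presents a genuine obstacle; the only care needed is bookkeeping of domains and ranges, in particular confirming $X_{q^*} = X_q$ rather than silently assuming it, and checking that $X_s \cap X_q$ really lies in $\text{dom}(\alpha_q)$ so that part (i) applies verbatim. Both are immediate once $\alpha_q$ has been identified as $\text{id}_{X_q}$.
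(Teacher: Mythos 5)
Your proof is correct and follows essentially the same route as the paper: identify $\alpha_q$ as $\text{id}_{X_q}$, note $X_{q^*}=X_q$, and feed this into Proposition \ref{action_prop}(iv). The only (harmless) difference is in part (i), where the paper writes $\alpha_q=\alpha_{qq^*}=\alpha_q\alpha_{q^*}=\text{id}_{X_q}$ using Proposition \ref{action_prop}(ii), while you instead observe that $\alpha_q=\alpha_q\alpha_q$ is an idempotent of $I(X)$ and invoke the classification of idempotents of the symmetric inverse monoid as identity maps on subsets; both justifications rest on facts already recorded in the paper.
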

\begin{proof}
  (i)  $\alpha_q = \alpha_{qq^*} = \text{id}_{X_{q}}$, see item (ii) of proposition \ref{action_prop}.
  
  (ii)  $X_{qs} = \alpha_q\paraa{ X_s \cap X_{q^*} }= X_s \cap X_q$
  according to item (i) of this proposition and item (iv) of proposition \ref{action_prop}.
\end{proof}

\begin{definition}
  A \emph{partial automorphims} of a algebra $A$ is 
  an isomorphism $\alpha:A_1\rightarrow A_2$ of
  subalgebras $A_1, A_2 \subset A$. 
\end{definition}

\begin{proposition}
  Let $\alpha:A_1\rightarrow A_2$ and 
  $\beta:B_1\rightarrow B_2$ be two partial automorphism
  of the algebra $A$. Then $\beta\alpha$ considered as
  a partial bijection is a partial automorphism.
  
  The set $\text{PAut}(A)$ of partial automorphism of $A$ is
  an inverse semigroup.
\end{proposition}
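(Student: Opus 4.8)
The statement has two parts: first, that the composite $\beta\alpha$ of two partial automorphisms is again a partial automorphism; second, that the set $\operatorname{PAut}(A)$ forms an inverse semigroup under composition. The plan is to treat these in turn, leaning on the fact that the underlying partial bijections already live in the symmetric inverse monoid $I(A)$ (viewing $A$ merely as a set), which by the earlier discussion is an inverse semigroup. The real work is only to check that the extra \emph{algebraic} structure—being an isomorphism of subalgebras rather than a mere bijection of subsets—is preserved under composition and inversion.

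\textbf{Step 1: $\beta\alpha$ is a partial automorphism.}
First I would recall the composition rule in $I(A)$: with $\alpha:A_1\to A_2$ and $\beta:B_1\to B_2$, the composite $\beta\alpha$ has domain $\alpha^{-1}(A_2\cap B_1)$ and range $\beta(A_2\cap B_1)$. The intermediate set $A_2\cap B_1$ is an intersection of two subalgebras, hence a subalgebra of $A$. Its preimage under the algebra isomorphism $\alpha$ is a subalgebra of $A_1$ (isomorphisms pull back subalgebras to subalgebras), and likewise its image under $\beta$ is a subalgebra. On these subalgebras $\beta\alpha$ is a composite of two algebra isomorphisms restricted to compatible domains, so it is itself an algebra isomorphism. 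This shows $\beta\alpha\in\operatorname{PAut}(A)$.

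\textbf{Step 2: $\operatorname{PAut}(A)$ is an inverse semigroup.}
Here I would verify the defining property directly. Associativity is inherited from $I(A)$, since composition of partial maps is associative regardless of the algebraic decoration (this is the associativity proposition stated earlier). For the inverse axioms, given $\alpha:A_1\to A_2$ I take $\alpha^*:=\alpha^{-1}:A_2\to A_1$, which is a partial automorphism because the inverse of an algebra isomorphism is an algebra isomorphism. One then checks $\alpha\alpha^*\alpha=\alpha$ and $\alpha^*\alpha\alpha^*=\alpha^*$; these identities hold as equalities of partial maps already in $I(A)$, so they transfer verbatim. Uniqueness of the inverse is the one point needing a short argument: I would note that $\operatorname{PAut}(A)$ sits inside $I(A)$ as a subsemigroup closed under the (set-theoretic) inverse operation, i.e.\ an inverse subsemigroup, and then invoke uniqueness of inverses in the ambient inverse semigroup $I(A)$.

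\textbf{Main obstacle.}
The only genuinely nontrivial point is Step~1's claim that the intermediate set and its pre/post-images are \emph{subalgebras} and that the restricted composite remains an \emph{isomorphism}—in other words, that the algebraic structure survives intersection and composition. The set-theoretic bookkeeping of domains and ranges is handled for free by the symmetric inverse monoid, so the proof reduces almost entirely to the elementary observations that intersections of subalgebras are subalgebras and that isomorphisms send subalgebras to subalgebras; once these are in hand, everything else follows by restriction from the already-established inverse-semigroup structure of $I(A)$.
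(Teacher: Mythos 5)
Your proposal is correct and follows essentially the same route as the paper: compose through the intermediate subalgebra, observe that intersections of subalgebras are subalgebras and that (inverse) algebra isomorphisms carry subalgebras to subalgebras, and then conclude that $\operatorname{PAut}(A)$ is an inverse subsemigroup of $I(A)$ because it is a subsemigroup closed under the set-theoretic inverse. (Incidentally, your intermediate set $A_2\cap B_1$ is the correct one; the paper's displayed composition writes $A_2\cap A_1$, which appears to be a notational slip.)
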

\begin{proof}
  Considered as a partial bijection, the map $\beta\alpha$ is composed
  as follows
  \begin{align*}
     \alpha^{-1}\paraa{A_2\cap A_1} \xrightarrow{\alpha} A_2\cap A_1 \xrightarrow{\beta} \beta\paraa{A_2\cap A_1}
  \end{align*}
  It is clear that $\beta\alpha(a_1 a_2)=\beta\alpha(a_1)\beta\alpha(a_2)$
  and $\beta\alpha(a_1 + a_2)=\beta\alpha(a_1)+\beta\alpha(a_2)$
  for $a_1,a_2 \in \alpha^{-1}\paraa{A_2\cap A_1}$.

  Furthermore, the intersection of subalgebras $A_2\cap A_1$  
  is a subalgebra of $A$. As the image of a subalgebra of the
  algebra homomorphims $\alpha^{-1}$ and $\beta$ is also
  a subalgebra, the map $\beta\alpha$ is a partial automorphism.
  
  As the set $\text{PAut}(A)$ is a subsemigroup of the inverse
  monoid $I(A)$ and for every $\alpha$ also $\alpha^{-1}$ is
  in the set, $\text{PAut}(A)$ is an inverse subsemigroup of $I(A)$.
\end{proof}

\begin{definition}
An \emph{action} of an inverse semigroup $S$ on the algebra $A$ 
is a homomorphism $\alpha: S\rightarrow PAut(A)$ with $A_e=A$.
\end{definition}
In other words, each $\alpha_s: A_{s*} \rightarrow A_s$
is a partial automorphism of $A$, where we have denoted the
range of $\alpha_s$ with $A_s$ and the domain with $A_{s*}$,
which both are subalgebras of $A$.

Note that the assertions of proposition \ref{action_prop} are also
true for actions on algebras. However, the maps are then partial
automorphism.

\begin{remark}
  For $C^*$-algebras $A$, a partial automorphism is defined
  as an isomorphism of closed ideals of $A$ (see for example
  \cite{Sieben_1996}, \cite{Doku_2005}). Since a closed
  subideal $J\subset I$ of a closed ideal $I$ of $A$ is also
  an ideal of $A$, the set of partial automorphism of
  a $C^*$-algebras $A$ is an inverse semigroup. However,
  a restriction to ideals cannot be made in general.
  The fact that a closed subideal $J\subset I$, i.e. $J$ is 
  an ideal of $I$, of a closed ideal $I$ of $A$, is also
  an ideal of $A$ can be shown with the aid of an
  \emph{approximate identity}, a special property of $C^*$-algebras.
  Below we will introduce a \emph{partial identity},
  which in a sense replaces the approximate identity
  in the context of general algebras.
\end{remark}

The following proposition shows that a semigroup action
on a set gives rise to a semigroup action on the function
algebra on the set. The special crossed product algebra
used in the first part is based on the function algebra of
complex-valued functions on an interval of $\reals$.

\begin{proposition} \label{fun_alg_act}
  Let $\alpha$ be an action of an inverse semigroup $S$
  on the set $X$. Furthermore, let $\mathcal{F}(X)$ be
  the algebra of complex-valued functions on $X$. Then
  the action $\alpha$ induces an action $\hat\alpha$ on
  the commutative algebra $\mathcal{F}(X)$. In particular
  \begin{align*}
      \hat\alpha_s\paraa{f}(x)=f\paraa{\alpha_{s^*}(x)}, \qquad
      \hat\alpha_s\paraa{f}\paraa{X\setminus X_s}=0
  \end{align*}
  for $s\in S$, $x \in X_s$, $\alpha_{s^*}:X_s \rightarrow X_{s^*}$,
  and $f \in\mathcal{F}(X_{s^*})$, where 
  $\mathcal{F}(X_{s^*})\subset\mathcal{F}(X)$ is the
  subalgebra of functions on $X$, which vanish outside
  $X_{s^*}$.
 \end{proposition}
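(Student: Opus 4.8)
The plan is to verify that $\hat\alpha$ as defined is indeed an action — that is, a homomorphism $\hat\alpha: S \to \mathrm{PAut}(\mathcal{F}(X))$ satisfying $\mathcal{F}(X)_e = \mathcal{F}(X)$. Since an action into $\mathrm{PAut}(A)$ requires each $\hat\alpha_s$ to be a partial automorphism (an algebra isomorphism between subalgebras) and requires $\hat\alpha_{st} = \hat\alpha_s \hat\alpha_t$ as a composition of partial bijections, I would split the work into three parts: (1) each $\hat\alpha_s$ is a well-defined isomorphism of the appropriate subalgebras; (2) $\hat\alpha_e = \mathrm{id}$; and (3) the homomorphism property holds, with domains and ranges matching.

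First I would fix notation: by Proposition \ref{action_prop}(iii), $\alpha_{s^*} = \alpha_s^{-1}: X_s \to X_{s^*}$, so the formula $\hat\alpha_s(f)(x) = f(\alpha_{s^*}(x))$ is just pullback by $\alpha_s^{-1}$, defined for $x \in X_s$, and extended by zero outside $X_s$. The natural domain of $\hat\alpha_s$ is $\mathcal{F}(X_{s^*})$, the ideal of functions vanishing outside $X_{s^*}$, and its range is $\mathcal{F}(X_s)$. I would check that $\hat\alpha_s$ maps $\mathcal{F}(X_{s^*})$ bijectively onto $\mathcal{F}(X_s)$: a function $f$ supported in $X_{s^*}$ is sent to $f \circ \alpha_{s^*}$, which is supported in $\alpha_s(X_{s^*}) = X_s$, and the inverse map is pullback by $\alpha_s$, i.e. $\hat\alpha_{s^*}$. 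That $\hat\alpha_s$ is an algebra homomorphism is immediate since pullback respects pointwise multiplication and addition: $(fg)\circ\alpha_{s^*} = (f\circ\alpha_{s^*})(g\circ\alpha_{s^*})$, and similarly for sums and scalars. For (2), $\hat\alpha_e(f)(x) = f(\alpha_e(x)) = f(x)$ by Proposition \ref{action_prop}(i), so $\hat\alpha_e = \mathrm{id}_{\mathcal{F}(X)}$ with full domain $\mathcal{F}(X) = \mathcal{F}(X_e)$.

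The main step is the homomorphism property $\hat\alpha_{st} = \hat\alpha_s \hat\alpha_t$, and I expect this to be where care is needed. As partial automorphisms, the composition $\hat\alpha_s \hat\alpha_t$ must be computed in the symmetric-inverse-monoid sense, so I would verify that its domain and range coincide with those of $\hat\alpha_{st}$, namely $\mathcal{F}(X_{(st)^*})$ and $\mathcal{F}(X_{st})$. The key geometric input is Proposition \ref{action_prop}(iv), $X_{st} = \alpha_s(X_t \cap X_{s^*})$, together with the chain $X_{(st)^*} \xrightarrow{\alpha_t} X_t \cap X_{s^*} \xrightarrow{\alpha_s} X_{st}$ established in its proof; dualizing this under pullback should show the domains match. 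On the overlap, for $x$ in the range of the composite one has $(\hat\alpha_s\hat\alpha_t)(f)(x) = \hat\alpha_t(f)(\alpha_{s^*}(x)) = f(\alpha_{t^*}(\alpha_{s^*}(x))) = f(\alpha_{(st)^*}(x)) = \hat\alpha_{st}(f)(x)$, using $\alpha_{t^*}\alpha_{s^*} = (\alpha_s\alpha_t)^{-1} = \alpha_{(st)^*}$.

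The subtle point — the likely obstacle — is bookkeeping of supports: I must confirm that the vanishing-outside conditions are preserved at each stage, since $\hat\alpha_t$ is only defined on functions vanishing outside $X_{t^*}$, and the composite pullback is only defined where both maps are. Concretely, for $\hat\alpha_s \hat\alpha_t(f)$ to make sense I need $\hat\alpha_t(f)$ to lie in the domain $\mathcal{F}(X_{s^*})$ of $\hat\alpha_s$, which forces the intersection $X_t \cap X_{s^*}$ to appear exactly as in Proposition \ref{action_prop}(iv); once I track how the zero-extension interacts with restriction to these nested subsets, the remaining identity is the routine pointwise computation above. I would therefore organize the proof as: (a) well-definedness and isomorphism of each $\hat\alpha_s$; (b) the identity $\hat\alpha_e = \mathrm{id}$; (c) the domain/range matching via Proposition \ref{action_prop}(iv); and (d) the pointwise verification of composition, concluding that $\hat\alpha$ is a homomorphism into $\mathrm{PAut}(\mathcal{F}(X))$ and hence an action.
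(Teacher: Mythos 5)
Your proposal is correct and follows essentially the same route as the paper's proof: show each $\hat\alpha_s$ is pullback by $\alpha_{s^*}$, hence an algebra isomorphism $\mathcal{F}(X_{s^*})\to\mathcal{F}(X_s)$ with inverse $\hat\alpha_{s^*}$, check $\hat\alpha_e=\mathrm{id}$, and verify $\hat\alpha_s\hat\alpha_t=\hat\alpha_{st}$ via $\alpha_{t^*}\alpha_{s^*}=\alpha_{(st)^*}$. Your extra attention to matching the domain of the composite partial automorphism with $\mathcal{F}(X_{(st)^*})$ via Proposition \ref{action_prop}(iv) is a point the paper's proof passes over more quickly, but it is the same argument, not a different one.
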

 \begin{proof}
    We first note that $\hat\alpha_s\paraa{f}$ is in $\mathcal{F}(X_s)$,
    since, per definition, it vanishes outside $X_s$.
    
    The pullback $\hat\alpha(f)=f\circ\alpha_{s^*}$ of the function
    $f \in \mathcal{F}(X_{s^*})$ by the bijection $\alpha_{s^*}$
    is an algebra homomorphism
    \begin{align*}
       \hat\alpha_s: A_{s^*}=\mathcal{F}(X_{s^*})\rightarrow A_s=\mathcal{F}(X_s)
     \end{align*}
    from the subalgebra $A_{s^*}$ to the subalgebra $A_s$, 
    since $(fg)(x)=f(x)g(x)$ per definition for all functions $f,g\in \mathcal{F}(X)$.
    
    $\hat\alpha_{s^*}$ is the inverse of $\hat\alpha_s$.
    In particular, $\hat\alpha_{s^*} \hat\alpha_s(f)=f\circ\alpha_{s^*}\circ\alpha_s=f$
    for all functions $f\in A_{s^*}$ and 
    $\hat\alpha_s\hat\alpha_{s^*}(f)=f\circ\alpha_s\circ\alpha_{s^*}=f$ 
    for all functions $f\in A_s$ due to
    (ii) of proposition \ref{action_prop}.
    
    The mapping $\hat\alpha$ is a semigroup homomorphism, 
    since
     \begin{align*}
      \hat\alpha_s \hat\alpha_t (f) = f\circ \alpha_{t^*}\alpha_{s^*}
         =f\circ\alpha_{(st)^*} = \hat\alpha_{st}(f)
     \end{align*}
      for $s,t\in S$ and $f\in A_t$, and
     \begin{align*}
      \hat\alpha_e (f) = f\circ \alpha_e = f = \text{id}_{\mathcal{F}(X)} (f)
     \end{align*}      
     for $f\in \mathcal{F}(X)$.
 \end{proof}

\begin{remark} \label{com_fun_alg}
  When $X$ has a topology, then the continuous
  functions $C(X)$ on $X$ are a subalgebra of
  $\mathcal{F}(X)$ for which the subspaces $C(X_s)$
  are also ideals. In this case, we demand that all
  partial bijections $\alpha_s$ for $s\in S$ are
  continuous, such that the induced action $\hat\alpha$
  maps these ideals onto each other, i.e. 
  $\hat\alpha_s\paraa{f}\in \mathcal{\tilde{F}}(X_{s})$
  for all $f\in \mathcal{\tilde{F}}(X_{s^*})$.
  
  We further can restrict to the subalgebra $C_0(X)$
  of functions, which vanish at infinity. Also in this,
  case the induced action $\hat\alpha$ stays within
  the ideals $C_0(X_s)$ for $s\in S$.
  
  In the following, we will summarize these function
  algebras $\mathcal{F}(X)$,  $C(X)$ and
  $C_0(X)$ with $\mathcal{\tilde{F}}(X)$.
\end{remark}

\section{Crossed product algebras with partial identities} \label{sec:CPA_PI}

In \cite{Sieben_1996}) a $C^*$-crossed product algebra for
an inverse semigroup action on a $C^*$-algebra is constructed. However,
there special properties of $C^*$-algebras, such as approximate
identities are used. In \cite{Doku_2005} it is shown that for a
semiprime $*$-algebra and an action of an inverse semigroup
onto the algebra, an algebraic crossed product $*$-algebra exists.
This also can be extended to $C^*$-algebras. However, it is not possible
to define algebra elements  $U_s$ of the crossed product algebra, which
form a representation of the inverse semigroup in the crossed product
algebra, without using properties of $C^*$-algebras, namely again 
approximate identities.

We will therefore concentrate herein on algebras 
$A$ and semigroup actions $\alpha$, which
have the property that every subalgebra $A_s$
being the range of $\alpha_s:A_{s^*}\rightarrow A_s$
contains a partial identity.

\begin{definition} \label{pi_def}
Let $I$ be a subalgebra of the $*$-algebra $A$. 
A \emph{partial identity} is an element $p\in I$
with the property that $p a=a p=a$ for every
$a\in I$.
\end{definition}

\begin{proposition} \label{pi_prop}
  Let $A$ be a $*$-algebra, $I$ a subalgebra of $A$ and
  $p$ be a partial identity of $I$. Then
   \begin{enumerate}[(i)]
   \item $p$ is unique. 
   \item $p$ is a projector, i.e. $p^2=p$ and $p^*=p$.
   \item $I=pAp$.
  \end{enumerate}   
\end{proposition}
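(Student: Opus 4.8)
The plan is to dispatch the three claims in order, each reducing to the defining property $pa = ap = a$ for $a\in I$, together with the facts that $I$ is closed under the involution and absorbs multiplication by $A$ on both sides (i.e.\ $I$ is a two-sided $*$-ideal, as holds for the function-algebra ideals $\mathcal{F}(X_s)$ to which the proposition is applied).

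For (i), suppose $p$ and $p'$ are both partial identities of $I$. Since $p'\in I$ and $p$ is a partial identity, $pp'=p'$; since $p\in I$ and $p'$ is a partial identity, $pp'=p$. Hence $p=pp'=p'$, so the partial identity is unique.

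For (ii), applying $pa=a$ to $a=p\in I$ gives $p^2=p$, so $p$ is idempotent. For self-adjointness I would use part (i): for every $a\in I$ one also has $a^*\in I$, and taking the involution of $pa^*=a^*p=a^*$ yields $ap^*=p^*a=a$. Thus $p^*$ is itself a partial identity of $I$, and by the uniqueness just established, $p^*=p$.

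For (iii), the inclusion $I\subseteq pAp$ is immediate: each $a\in I$ satisfies $a=pap$ (using $pa=a$ and then $ap=a$), and since $a\in A$ this exhibits $a\in pAp$. The reverse inclusion $pAp\subseteq I$ is where the real content lies and where a mere subalgebra structure would not suffice: for arbitrary $b\in A$ I must show $pbp\in I$, which follows because $p\in I$ and $I$ absorbs multiplication by $A$, so $bp\in I$ and hence $pbp=p(bp)\in I$ (equivalently $pbp\in IAI\subseteq I$). I expect this step — and in particular the \emph{necessity} of the ideal property of $I$, since a diagonal-matrix subalgebra with $p$ the unit would have $pAp=A\neq I$ — to be the main point to get right; the first two parts are routine once the $*$-closedness of $I$ is invoked.
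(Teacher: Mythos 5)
Your parts (i) and (ii) coincide with the paper's proof: uniqueness via $p=pp'=p'$, idempotency by taking $a=p$, and self-adjointness by applying the involution to $pa=ap=a$ and invoking uniqueness (both you and the paper tacitly need $I$ to be a $*$-subalgebra here, which the statement does not quite say but clearly intends). The interesting divergence is in (iii). The paper proves $pAp\subseteq I$ by setting $b=pap$ and computing $pb=bp=b$ — but that only shows $b$ is fixed by $p$, not that $b\in I$, so the paper's argument is a non sequitur for a general subalgebra. You are right that the inclusion genuinely fails without more hypotheses: your example of $A=M_2(\complex)$, $I$ the diagonal subalgebra and $p$ the identity matrix gives $pAp=A\neq I$, so part (iii) as literally stated is false. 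Your repair — assume $I$ is a two-sided ideal, so that $bp\in I$ and hence $pbp\in I$ — is the correct one, and it is harmless in context: every application of this proposition in the paper (the ideals $\mathcal{\tilde{F}}(X_s)$, and the hypotheses of propositions \ref{cpa_construct} and \ref{cpa_rel}) concerns ideals, and the proposition immediately following \ref{pi_prop} characterizes exactly when $I$ is an ideal in terms of $pa,ap\in I$. So your proposal is not merely correct; for (iii) it identifies and closes a real gap in the paper's own argument, at the cost of strengthening the hypothesis from ``subalgebra'' to ``ideal'' (or, minimally, to ``subalgebra with $Ip\subseteq I$ and $pI\subseteq I$'').
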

\begin{proof}
(i) Suppose that $p$ and $p'$ are partial
identities of $I$, then $p=p p'=p'$, since both elements
are in $I$.

(ii) Since $p$ is an element of $I$, it is idempotent, i.e. $p^2=p$.

Furthermore, $p a = a p = a$ for all $a\in I$ by 
definition. Applying the involution to this equation results in
$a^* p^*= p^*a^* = a^*$ for all $a^*\in I$. Since
$A$ is a $*$-subalgebra, we can replace $a^*$ with $a$. Since
$p$ is unique, the assertion follows.

(iii) Let $a\in I$,then $a=pa=pap$. Therefore
$I\subset pAp$. Let $b=pap$, then $pb=p^2ap=pap=b$
and $bp=pap^2=pap=b$, i.e. $pAp\subset I$.
\end{proof}

\begin{proposition}
  Let $A$ be a $*$-algebra, $I$ a subalgebra of $A$ and
  $p$ be the partial identity of $I$. Then $I$ is an ideal, iff
  $pa\in I$ and $ap\in I$ for all $a\in A$.
  If $I$ is an ideal, then $ap=pa$ for all $a\in A$.
\end{proposition}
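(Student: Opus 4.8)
The plan is to treat the two directions of the equivalence separately and then derive the commutation relation. Throughout I use the defining property of the partial identity, namely $pb = bp = b$ for every $b \in I$ together with $p \in I$ (Definition \ref{pi_def}); I shall not even need the projector properties from Proposition \ref{pi_prop}.

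For the forward implication of the equivalence, suppose $I$ is an ideal. Since $p \in I \subset A$, for any $a \in A$ both products $pa$ and $ap$ lie in $I$ by the very definition of a two-sided ideal. This direction is immediate.

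For the reverse implication, assume $pa \in I$ and $ap \in I$ for all $a \in A$, and I must show that $I$ absorbs multiplication by arbitrary elements of $A$ on both sides. Take $b \in I$ and $a \in A$. The idea is to use the partial identity to rewrite an arbitrary product into one controlled by the hypothesis. Writing $b = bp$, associativity gives $ab = a(bp) = (ab)p$, and by hypothesis $(ab)p \in I$ (applying the assumption to the element $ab \in A$); hence $ab \in I$. Symmetrically, writing $b = pb$ gives $ba = (pb)a = p(ba) \in I$ by the other half of the hypothesis. Thus $I$ is a two-sided ideal.

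Finally, for the last assertion, suppose $I$ is an ideal and let $a \in A$. Then $ap \in I$ and $pa \in I$. Applying the partial identity on the left of $ap$ gives $p(ap) = ap$, i.e.\ $pap = ap$; applying it on the right of $pa$ gives $(pa)p = pa$, i.e.\ $pap = pa$. Combining these two identities yields $ap = pap = pa$, as claimed. I expect no serious obstacle here: the argument is purely formal, and the only point requiring care is the bookkeeping of which side $p$ is inserted on, so that each rewriting is matched to the appropriate half of the hypothesis.
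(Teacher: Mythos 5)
Your proof is correct and follows essentially the same route as the paper: insert the partial identity via $b=pb=bp$ for the reverse implication, and sandwich to get $ap=pap=pa$ for the final claim. The only (immaterial) difference is that in the reverse direction you apply the hypothesis to the product $ab\in A$ directly, whereas the paper applies it to $a$ and then uses closure of the subalgebra $I$ under multiplication; both are equally valid.
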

\begin{proof}
  When $I$ is an ideal, then $pa\in I$ and $ap\in I$ for all $a\in A$
  by definition. On the other hand, let $b\in I$. Then for all
  $a\in A$ it follows that  $ab=apb\in I$ and $ba=bpa\in I$,
  since $ap\in I$ and $pa\in I$ and $I$ is a subalgebra.
  
  Furthermore, let $c=ap$ and $d=pa$. Since $I$ is an ideal,
  $c$ and $d$ are in $I$, it follows that $c=pc=pap=dp=d$.
\end{proof}

\begin{remark}
  The subalgebras $A_s=\mathcal{F}(X_s)$ of \ref{fun_alg_act} 
  comprise the function $p_s=1_{X_s}$, which is
  $1$ on $X_s$ and $0$ otherwise. $p_s$ is
  a partial identity. Furthermore, the subalgebras $A_s$
  are ideals, and every such ideal is generated by $p_s$.
\end{remark}

\begin{proposition}
  Let $A$ be an algebra, and $\alpha:A_1\rightarrow A_2$ 
  be a surjective algebra homomorphism between the subalgebras
  $A_1$ and $A_2$ of $A$. Let $p_1$ be a partial identity of
  $A_1$, then $p_2=\alpha(p_1)$ is a partial identity of $A_2$.
\end{proposition}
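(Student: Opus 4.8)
The plan is a direct verification against Definition \ref{pi_def} of a partial identity. First I would confirm the membership requirement: since $p_1\in A_1$ and $\alpha$ maps $A_1$ into $A_2$, the element $p_2=\alpha(p_1)$ lies in $A_2$, so it is at least a candidate for being the partial identity of $A_2$. It then remains to verify the absorbing property $p_2 b = b p_2 = b$ for every $b\in A_2$.

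The essential ingredient is the surjectivity of $\alpha$. Given an arbitrary $b\in A_2$, surjectivity provides some $a\in A_1$ with $\alpha(a)=b$. Combining the homomorphism property of $\alpha$ with the partial-identity relation $p_1 a = a p_1 = a$ valid in $A_1$, I would compute
\begin{align*}
  p_2 b &= \alpha(p_1)\alpha(a) = \alpha(p_1 a) = \alpha(a) = b, \\
  b p_2 &= \alpha(a)\alpha(p_1) = \alpha(a p_1) = \alpha(a) = b.
\end{align*}
As $b\in A_2$ was arbitrary, both identities hold throughout $A_2$, and hence $p_2$ is a partial identity of $A_2$.

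There is no genuine obstacle here; the argument is a one-line transport of the defining relation through the homomorphism. The only points meriting attention are that $p_2$ must first be placed in the correct subalgebra $A_2$, and that surjectivity is precisely what guarantees every element of $A_2$ arises as $\alpha(a)$ for some $a\in A_1$, so that the relation established in $A_1$ covers all of $A_2$ rather than merely the image of a proper subset. (Were $\alpha$ merely a homomorphism onto a proper subalgebra, $p_2$ would only absorb elements of that image.) Uniqueness of $p_2$ as the partial identity of $A_2$ is then automatic from part (i) of Proposition \ref{pi_prop}.
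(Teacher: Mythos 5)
Your proposal is correct and follows essentially the same argument as the paper: pull an arbitrary $b\in A_2$ back to some $a\in A_1$ via surjectivity, then push the relation $p_1a=ap_1=a$ through the homomorphism. The extra remarks on membership in $A_2$ and on where surjectivity is used are fine but not a different route.
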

\begin{proof}
  Let $a_2 \in A_2$. Since $\alpha$ is surjective, there exists
  $a_1 \in A_1$ with $\alpha(a_1)=a_2$. Now
  $a_2=\alpha(a_1)=\alpha(p_1 a_1)=\alpha(p_1) \alpha(a_1) = p_2 a_2$.
  In the same way one shows that $a_2=a_2 p_2$.
\end{proof}

\begin{proposition}
  Let $A$ be an algebra, $A_1$ and $A_2$ be ideals of $A$, 
  $p_1$ be the partial identity of $A_1$, $p_2$ the partial
  identity of $A_2$ and $p_{12}$ be the partial identity of
  the ideal $A_{12}=A_1\cap A_2$. Then $p_{12}=p_1 p_2 = p_2 p_1$.
\end{proposition}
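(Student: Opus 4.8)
The plan is to show that $p_1 p_2$ is the partial identity of $A_{12} = A_1 \cap A_2$ and then invoke the uniqueness established in proposition \ref{pi_prop}(i). First I would verify that $p_1 p_2 = p_2 p_1$. Since $A_1$ and $A_2$ are ideals and $p_1 \in A_1$, $p_2 \in A_2$, the previous proposition gives $ap = pa$ for any element $a \in A$ against the partial identity $p$ of an ideal; applying this with $a = p_2 \in A \subset A$ and the ideal $A_1$ yields $p_2 p_1 = p_1 p_2$. (Equivalently, one uses that $p_2 p_1$ lies in both $A_1$ and $A_2$ by the ideal property and symmetry.)

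Next I would check that $q := p_1 p_2$ actually lies in $A_{12}$. Because $A_1$ is an ideal and $p_1 \in A_1$, the product $p_1 p_2 \in A_1$; because $A_2$ is an ideal and $p_2 \in A_2$, the product $p_1 p_2 \in A_2$; hence $q \in A_1 \cap A_2 = A_{12}$.

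Finally I would show $q$ acts as a two-sided identity on $A_{12}$. Let $a \in A_{12} = A_1 \cap A_2$. Since $a \in A_2$, we have $p_2 a = a$, and since $a \in A_1$, we have $p_1 a = a$; therefore
\begin{align*}
   q a = p_1 p_2 a = p_1 a = a,
\end{align*}
and symmetrically $a q = a p_2 p_1 = a p_1 p_2 = a p_2 = a$ using $a p_1 = a$ and $a p_2 = a$ together with the commutation $p_1 p_2 = p_2 p_1$. Thus $q$ is a partial identity of $A_{12}$. By the uniqueness of partial identities (proposition \ref{pi_prop}(i)) and the hypothesis that $p_{12}$ is \emph{the} partial identity of $A_{12}$, we conclude $p_{12} = p_1 p_2 = p_2 p_1$.

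The only mild subtlety, and the step I would be most careful about, is justifying the commutation $p_1 p_2 = p_2 p_1$: it relies on the earlier proposition that the partial identity of an ideal is central relative to all of $A$, so I would make sure to cite that result rather than assume commutativity outright. Everything else is a direct verification of the defining identity $qa = aq = a$ for $a \in A_{12}$.
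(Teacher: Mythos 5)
Your proof is correct, but it is organized differently from the paper's. You first import the commutation $p_1p_2=p_2p_1$ from the earlier proposition stating that the partial identity of an ideal satisfies $ap=pa$ for all $a\in A$, then verify directly that $q=p_1p_2$ lies in $A_{12}$ and acts as a two-sided identity there, and finally invoke uniqueness to identify $q$ with $p_{12}$. The paper instead tests everything against $p_{12}$ itself: from $p_{12}\in A_1\cap A_2$ it gets $p_{12}=p_1p_{12}=p_1p_2p_{12}$, and since $p_1p_2\in A_{12}$ (here the ideal hypothesis enters, exactly as in your membership argument) the right-hand side collapses to $p_1p_2$; the analogous computation gives $p_{12}=p_2p_1$, and the commutation $p_1p_2=p_2p_1$ then falls out of uniqueness rather than being assumed up front. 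So the paper's route is slightly shorter and derives commutativity as a by-product, while yours is the more standard ``exhibit a partial identity and cite uniqueness'' verification; note also that the commutation is not actually needed for your check $aq=ap_1p_2=ap_2=a$, only for the final assertion $p_{12}=p_2p_1$, which you could alternatively obtain, as the paper does, by running the symmetric computation and appealing to uniqueness. Both arguments use the same two ingredients (the ideal property to place $p_1p_2$ in $A_{12}$, and uniqueness of partial identities), so this is a difference of arrangement rather than of substance.
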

\begin{proof}
  Since $p_{12}\in A_{12}$, which is contained in $A_1$ and $A_2$,
  $p_{12}$ is also an element of $A_1$ and $A_2$. Therefore,
  $p_{12}=p_1 p_{12} =p_1 p_2 p_{12}=p_1 p_2$, since $p_1 p_2\in A_{12}$ (here
  we need that $A_1$ and $A_2$ are ideals).
  Analogously we can show that $p_{12}=p_2 p_1 p_{12}=p_2 p_1$. Since
  $p_{12}$ is unique it follows that $p_1 p_2 = p_2 p_1$.
\end{proof}

Remember that a subalgebra $A_{st}=\alpha_s\paraa{A_{s^*}\cap A_t}$
of an action is in general not $A_s\cap A_t$. Thus, $p_{st}$ is not $p_sp_t$.
However, we can show the following:
\begin{proposition}
   Let $\alpha$ be an action $\alpha_s:A_{s^*}\rightarrow A_s$ of the inverse semigroup $S$ on the 
   $*$-algebra $A$, where each subalgebra $A_s$ is an ideal and has a partial identity $p_s$.
   Then $p_{st}=\alpha_s\paraa{p_{s^*}p_t}$.
\end{proposition}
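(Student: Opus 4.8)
The plan is to identify $p_{st}$ as the unique partial identity of the ideal $A_{st}$ and then to exhibit $\alpha_s\paraa{p_{s^*}p_t}$ as a partial identity of that same ideal; uniqueness of partial identities (proposition \ref{pi_prop}(i)) then forces the two elements to coincide.

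First I would record that, since the assertions of proposition \ref{action_prop} also hold for actions on algebras (as noted immediately after that proposition), item (iv) gives $A_{st}=\alpha_s\paraa{A_{s^*}\cap A_t}$. The domain appearing here, $A_{s^*}\cap A_t$, is an intersection of two ideals of $A$ and is therefore itself an ideal of $A$. By the earlier proposition on partial identities of intersections of ideals, its partial identity is $p_{s^*}p_t=p_tp_{s^*}$. In particular $p_{s^*}p_t\in A_{s^*}$, the domain of $\alpha_s$, so the expression $\alpha_s\paraa{p_{s^*}p_t}$ is well defined.

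Next I would restrict the partial automorphism $\alpha_s:A_{s^*}\rightarrow A_s$ to the subalgebra $A_{s^*}\cap A_t$. Since $\alpha_s$ is an algebra isomorphism, this restriction is again an algebra homomorphism, and its image is exactly $\alpha_s\paraa{A_{s^*}\cap A_t}=A_{st}$; hence it is a \emph{surjective} algebra homomorphism from $A_{s^*}\cap A_t$ onto $A_{st}$. Applying the earlier proposition that a surjective algebra homomorphism carries a partial identity to a partial identity, the element $\alpha_s\paraa{p_{s^*}p_t}$ is a partial identity of $A_{st}$. As $A_{st}$ is an ideal and hence has a unique partial identity, namely $p_{st}$, we conclude $p_{st}=\alpha_s\paraa{p_{s^*}p_t}$.

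The only delicate point I expect is verifying that the relevant subalgebras are genuinely ideals, so that the two auxiliary propositions apply: $A_{s^*}\cap A_t$ must be an ideal (to read off $p_{s^*}p_t$ as its partial identity) and $A_{st}$ must be an ideal carrying a partial identity (to invoke uniqueness). Both are guaranteed by the standing hypothesis that every $A_s$ is an ideal, together with the facts that intersections of ideals are ideals and that $\alpha_s$, being an isomorphism of ideals, maps the ideal $A_{s^*}\cap A_t$ onto the ideal $A_{st}$. Everything else is routine bookkeeping with the already-established properties of partial identities.
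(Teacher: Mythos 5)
Your proof is correct, and it reaches the same endpoint as the paper — exhibiting $\alpha_s\paraa{p_{s^*}p_t}$ as a partial identity of $A_{st}$ and invoking uniqueness — but it gets there by a more modular route. The paper argues directly at the element level: every $a\in A_{st}$ is $\alpha_s(b)$ for some $b\in A_{s^*}\cap A_t$, and then $\alpha_s\paraa{p_{s^*}p_t}\alpha_s(b)=\alpha_s\paraa{p_{s^*}p_t b}=\alpha_s(b)$ (and similarly on the right), which is essentially an inline re-proof of the two auxiliary facts you cite. Your version instead composes the intersection-of-ideals proposition (giving $p_{s^*}p_t$ as the partial identity of $A_{s^*}\cap A_t$) with the surjective-homomorphism proposition (pushing it forward to $A_{st}$); this is cleaner and reuses the machinery the paper has already built. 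One small point to tidy: the intersection proposition as stated in the paper \emph{presupposes} that $A_1\cap A_2$ has a partial identity, so to apply it to $A_{s^*}\cap A_t$ you should either observe that this intersection equals $A_{(s^*s)t}$ (using $A_{s^*s}=A_{s^*}$ and proposition \ref{action_prop_idem}), hence has a partial identity by the standing hypothesis, or simply check directly that $p_{s^*}p_t$ lies in the intersection (both factors are ideals) and acts as an identity there — a one-line verification. With that remark added, your argument is complete.
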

\begin{proof}
    $A_{s^*}$ is an ideal, thus $p_{s^*}p_t \in A_{s^*}$ 
    and $\alpha_s\paraa{p_{s^*}p_t}$ is defined.

   Since $A_{st}=\alpha_s\paraa{A_{s^*}\cap A_t}$,
   there exists a $b\in A_{s^*} \cap A_t$ for every $a\in A_{st}$ 
   with $a=\alpha_s(b)$. Thus,
   \begin{align*} 
        \alpha_s\paraa{p_{s^*}p_t}a
            =\alpha_s \paraa{p_{s^*}p_t} \alpha_s \paraa{b}
            =\alpha_s\paraa{p_{s^*} p_t b }
            = \alpha_s \paraa{b} = a
   \end{align*}
   since $b\in A_{s^*}$ and $b\in A_t$.
   Analogously, one shows that $a\alpha_s\paraa{p_{s^*}p_t}=a$.
   Therefore $\alpha_s\paraa{p_{s^*}p_t}=p_{st}$, since the partial identity is unique.
\end{proof}

We now construct an algebra, which will be the
basis for the crossed product algebra.
\begin{proposition} \label{cpa_construct}
 Let $\alpha$ be an action $\alpha_s:A_{s^*}\rightarrow A_s$ of the inverse semigroup $S$ on the 
 $*$-algebra $A$, where each subalgebra $A_s$ is an ideal and has a partial identity $p_s$.Then the vector space
   \begin{align*} 
      L(A, S, \alpha) = \left\{ x:S\rightarrow A : x(s) \in A_s, x \text{ has finite support}  \right\}
   \end{align*}
   with multiplication
    \begin{align*} 
      (x \cdot y)(s)=\sum_{rt=s} \alpha_r \paraa{ \alpha_{r^*}\paraa{x(r) } y(t) } 
   \end{align*}
   and involution
    \begin{align*} 
      x^*(s)= \alpha_s\paraa{ x(s^*)^* }
   \end{align*}
   is an associative $*$-algebra.
\end{proposition}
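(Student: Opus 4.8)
The plan is to verify, in order, that both operations are well defined, that the multiplication is bilinear and associative, and that the involution satisfies the $*$-axioms. The substantive content is associativity and anti-multiplicativity of $*$, and both reduce to a single lemma about pushing the maps $\alpha_s$ through products; I expect that lemma to be the main obstacle.

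First I would check well-definedness. For fixed $s$ the sum defining $(x\cdot y)(s)$ runs over factorizations $s=rt$, but only the pairs with $r\in\operatorname{supp}(x)$ and $t\in\operatorname{supp}(y)$ contribute, so it is finite. For membership, if $x(r)\in A_r$ then $\alpha_{r^*}(x(r))\in A_{r^*}$; since $A_{r^*}$ and $A_t$ are ideals, $\alpha_{r^*}(x(r))\,y(t)\in A_{r^*}\cap A_t$, and by the algebra form of proposition \ref{action_prop}(iv) we have $\alpha_r\paraa{A_{r^*}\cap A_t}=A_{rt}=A_s$, so each summand and hence $(x\cdot y)(s)$ lies in $A_s$. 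For the involution, proposition \ref{pi_prop} gives $A_s=p_sAp_s$ with $p_s^*=p_s$, so $A_s$ is closed under $*$; thus $x(s^*)^*\in A_{s^*}$ and $x^*(s)=\alpha_s(x(s^*)^*)\in A_s$, with finite support inherited from $x$. Bilinearity of the product and conjugate-linearity of $*$ are then immediate from the additivity and $\complex$-linearity of the maps $\alpha_r$ together with bilinearity of the product on $A$.

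The key lemma I would isolate is: for $g\in A_{s^*}$ and any $h\in A$,
\[
   \alpha_s(gh)=\alpha_s(g)\,\alpha_s(p_{s^*}h),
\]
which holds because $g=gp_{s^*}$ gives $gh=g\,(p_{s^*}h)$ with both factors in the domain $A_{s^*}$, so the homomorphism $\alpha_s$ distributes. This is precisely the step that fails for a general ideal lacking a partial identity, and it is where the standing hypothesis (each $A_s$ an ideal carrying a partial identity $p_s$) is indispensable. Alongside it I would use $\alpha_s\alpha_{s^*}=\text{id}_{A_s}$, $\alpha_{s^*}\alpha_s=\text{id}_{A_{s^*}}$, and the homomorphism property $\alpha_s\alpha_t=\alpha_{st}$ from proposition \ref{action_prop}. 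For associativity I then expand $\big((x\cdot y)\cdot z\big)(s)=\sum_{uv=s}\sum_{rt=u}\alpha_u\paraa{\alpha_{u^*}\paraa{\alpha_r(\alpha_{r^*}(x(r))y(t))}\,z(v)}$. Writing $\xi=\alpha_{r^*}(x(r))y(t)\in A_{r^*}$ and using $\alpha_{u^*}\alpha_r=\alpha_{(rt)^*}\alpha_r=\alpha_{t^*}\alpha_{r^*}\alpha_r=\alpha_{t^*}$ on $A_{r^*}$, the inner map collapses to $\alpha_{u^*}(\alpha_r(\xi))=\alpha_{t^*}(\xi)$; splitting $\alpha_{rt}=\alpha_r\alpha_t$ and applying the lemma once, each summand becomes $\alpha_r\paraa{\alpha_{r^*}(x(r))\,y(t)\,\alpha_t(p_{t^*}z(v))}$. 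Carrying out the analogous expansion of $\big(x\cdot(y\cdot z)\big)(s)$ and again invoking the lemma (now as $\alpha_t(\alpha_{t^*}(y(t))z(v))=y(t)\,\alpha_t(p_{t^*}z(v))$) yields the identical summand. Since the double sum $\sum_{uv=s}\sum_{rt=u}$ is exactly the sum over triple factorizations $s=rtv$, the two expressions agree term by term.

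Finally I would dispatch the $*$-axioms. Direct computation gives $(x^*)^*(s)=\alpha_s\paraa{(x^*(s^*))^*}=\alpha_s\paraa{\alpha_{s^*}(x(s))}=x(s)$, using $(s^*)^*=s$, the $*$-compatibility $\alpha_{s^*}(a)^*=\alpha_{s^*}(a^*)$, and $\alpha_s\alpha_{s^*}=\text{id}_{A_s}$. Anti-multiplicativity $(x\cdot y)^*=y^*\cdot x^*$ follows from an expansion parallel to the associativity computation: after moving $\alpha_r$ past the involution via $*$-compatibility and reindexing the factorization $s^*=rt$ by the substitution $r\mapsto t^*,\ t\mapsto r^*$, the lemma once more reconciles the domains and the two sides coincide. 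Throughout I use that the partial automorphisms respect the involution, $\alpha_s(a^*)=\alpha_s(a)^*$, which is the natural requirement for an action on a $*$-algebra and holds for the induced actions of proposition \ref{fun_alg_act}.
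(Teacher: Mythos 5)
Your proof is correct and follows essentially the same route as the paper: your key lemma $\alpha_s(gh)=\alpha_s(g)\,\alpha_s(p_{s^*}h)$ is precisely the partial-identity insertion the paper uses to rewrite $a_s\delta_s\cdot a_t\delta_t$ as $a_s\,\alpha_s(p_{s^*}a_t)\,\delta_{st}$, and both associativity computations reduce to the same normal form $\alpha_r\paraa{\alpha_{r^*}(x(r))\,y(t)\,\alpha_t(p_{t^*}z(v))}$ summed over triple factorizations of $s$. Your explicit checks of well-definedness and of $(x^*)^*=x$ are slightly more complete than the paper's write-up, but the mechanism is identical.
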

\begin{proof}
  We denote the function, which maps $s\in S$ to $a_s\in A_s$
  and the other elements to $0$, with $a_s\delta_s$. 
  Formally, $\delta_s(t)=\delta_{s,t}$ with the last being the Kronecker delta,
  however since in general $1 \notin A_s$, $\delta_s$ is a symbol, which cannot
  stand alone. 
  
  The multiplication translates into
  \begin{align*} 
    a_s\delta_s \cdot  a_t\delta_t 
       = \alpha_s \paraa{ \alpha_{s^*}\paraa{a_s}a_t } \delta_{st}
       = \alpha_s \paraa{ \alpha_{s^*}\paraa{a_s} p_{s^*} a_t } \delta_{st}
        = a_s \alpha_s\paraa{ p_{s^*}a_t } \delta_{st} 
  \end{align*}
  since $a_s$ is in $A_s$ and $a_t$ is in $A_t$. $\alpha_{s^*}\paraa{a_s}$ is in $A_{s^*}$ and
  therefore $\alpha_{s^*}\paraa{a_s} = \alpha_{s^*}\paraa{a_s} p_{s^*}$. Since
  $p_{s^*} a_t$ is in $A_{s^*}$ (which is an ideal), we can use
  that $\alpha_s$ is an algebra homomorphism
  on $A_{s^*}$ and that $\alpha_{s^*}$ is its inverse on $A_{s^*}$.
    
  For showing associativity it is enough to show that
  \begin{align*} 
      a_r\delta_r \cdot (a_s\delta_s \cdot  a_t\delta_t) = (a_r\delta_r \cdot a_s\delta_s) \cdot  a_t\delta_t
  \end{align*}
  since this extends to finite sums by linearity. The left hand side is
  \begin{align*} 
      a_r\delta_r \cdot (a_s\delta_s \cdot  a_t\delta_t) 
           & = a_r\delta_r \cdot \parab{ a_s \alpha_s\paraa{ p_{s^*}a_t } \delta_{st} } \\
           & = a_r \alpha_r \parab{ p_{r^*} a_s \alpha_s\paraa{ p_{s^*}a_t } } \delta_{rst}    
  \end{align*}
  The right hand side is
  \begin{align*} 
     (a_r\delta_r \cdot a_s\delta_s) \cdot  a_t\delta_t
         & = \parab{ a_r \alpha_r\paraa{ p_{r^*}a_s } \delta_{rs} } \cdot  a_t\delta_t \\
         & = a_r \alpha_r\paraa{ p_{r^*}a_s } \alpha_{rs} \parab{ p_{(rs)^*} a_t} \delta_{rst} \\
         & = a_r \alpha_r\parab{ p_{r^*}a_s \alpha_{s} \paraa{ p_{s^*r^*} p_{s^*} a_t}  } \delta_{rst} \\
         & = a_r \alpha_r\parab{ p_{r^*}a_s \alpha_{s} \paraa{ \alpha_{s^*}\paraa{p_s p_{r^*}} p_{s^*} a_t}  } \delta_{rst} \\
         & = a_r \alpha_r\parab{ p_{r^*}a_s  p_s p_{r^*} \alpha_{s} \paraa{ p_{s^*} a_t}  } \delta_{rst} \\
         & = a_r \alpha_r\parab{ p_{r^*}a_s  \alpha_{s} \paraa{ p_{s^*} a_t}  } \delta_{rst} 
  \end{align*}
  
  It remains to show that $*$ is an anti-algebra homomorphims
  $(x\cdot y)^*=y^*\cdot x ^*$. It is again enough to show
  this for $x=a_s\delta_s$ and $y=a_t \delta_t$. The 
  involution translates into
  \begin{align*} 
      (a_s\delta_s)^* = \alpha_{s^*}(a_s^*)\delta_{s^*}
  \end{align*}
  On the one hand
  \begin{align*} 
     \paraa{a_s\delta_s\cdot a_t \delta_t}^*
        & = \parab{a_s \alpha_s\paraa{ p_{s^*}a_t } \delta_{st} }^*
           =  \alpha_{(st)^*} \parab{ \paraa{ a_s \alpha_s\paraa{ p_{s^*}a_t} }^* }\delta_{(st)^*} \\
        & =  \alpha_{t^*} \parab{ \alpha_{s^*} \paraa{ \alpha_s\paraa{ a_t^* p_{s^*} } a_s^* } } \delta_{(st)^*} 
           =   \alpha_{t^*} \parab{ a_t^* p_{s^*} \alpha_{s^*} \paraa{a_s^* } } \delta_{(st)^*} \\
        & =   \alpha_{t^*} \parab{ a_t^* \alpha_{s^*} \paraa{a_s^* } } \delta_{(st)^*} 
  \end{align*}
 and on the other hand
  \begin{align*} 
     \paraa{a_t\delta_t}^*\cdot \paraa{a_s \delta_s}^*
        & = \alpha_{t^*}(a_t^*)\delta_{t^*} \cdot \alpha_{s^*}(a_s^*)\delta_{s^*}
           = \alpha_{t^*}(a_t^*) \alpha_{t^*} \parab{ p_t \alpha_{s^*}(a_s^*) } \delta_{t^*s^*} \\
        & = \alpha_{t^*} \parab{ a_t^*  p_t \alpha_{s^*}(a_s^*) } \delta_{(st)^*}
           = \alpha_{t^*} \parab{ a_t^* \alpha_{s^*}(a_s^*) } \delta_{(st)^*}
  \end{align*}
\end{proof}
Compare also the proofs of proposition 5.1 of \cite{Sieben_1996} and Theorem 3.1 of \cite{Doku_2005}.

\begin{proposition} \label{cpa_rel}
  Let $A$, $S$, $\alpha$ and $L(A, S, \alpha)$ be as
  in proposition \ref{cpa_construct}, where
  $S$ has a unit element $e$. Then
     \begin{enumerate}[(i)]
      \item Every element $x \in L(A, S, \alpha)$ is 
                     of the form $x=\sum_{s\in S}^{\text{finite}} a_s U_s$,
                     where $a_s \in A$ and $U_s=p_s\delta_s$ is the function mapping
                     $s\in S$ to $p_s$, the partial identity of $A_s$, and is
                     zero on the other elements of $S$.
      \item $\iota: A\rightarrow L(A, S, \alpha), a\mapsto a\delta_e = a U_e$
      			  is an injective algebra morphism. Therefore, we identify 
      			  $a\in A$ with $\iota(a)$.
      \item $U_s a = \alpha_s\paraa{p_s a} U_s$ for $s\in S$ and $a\in A$.
      \item $(U_s)^* = U_{s^*}$ for $s\in S$
      \item $U_s U_t = U_{st}$
    \end{enumerate}
\end{proposition}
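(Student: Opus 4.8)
The plan is to reduce every claim to the explicit multiplication and involution formulas established in Proposition \ref{cpa_construct}, namely
\[
a_s\delta_s\cdot a_t\delta_t = a_s\,\alpha_s\paraa{p_{s^*}a_t}\,\delta_{st}, \qquad \paraa{a_s\delta_s}^* = \alpha_{s^*}\paraa{a_s^*}\,\delta_{s^*},
\]
together with the bookkeeping facts that $p_s$ is the partial identity of the ideal $A_s$ (so $p_s b = b p_s = b$ for $b\in A_s$, and $p_s^*=p_s$ by Proposition \ref{pi_prop}), that $A_e=A$ forces $A$ to be unital with $p_e=1$ and $\alpha_e=\text{id}$, and that $e^*=e$ since the unit is idempotent. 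The one computation I would record first is $a\,U_s = a\delta_e\cdot p_s\delta_s = a\,\alpha_e\paraa{p_e p_s}\,\delta_s = \paraa{a p_s}\delta_s$, valid for every $a\in A$, which lands in $A_s$ because $A_s$ is an ideal.

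For (i), I would write an arbitrary $x$ as the finite sum $\sum_s x(s)\delta_s$ with $x(s)\in A_s$, note $x(s)p_s=x(s)$, and conclude $x=\sum_s x(s)U_s$, giving the stated form with $a_s=x(s)$. For (ii), the map $a\mapsto a\delta_e = aU_e$ is linear and injective (as $a\delta_e=0$ forces $a=0$), multiplicative since $a\delta_e\cdot b\delta_e = a\,\alpha_e\paraa{p_e b}\,\delta_e = ab\,\delta_e$, and $*$-preserving since $\paraa{a\delta_e}^* = \alpha_e\paraa{a^*}\delta_e = a^*\delta_e$.

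The three relations are then one-line computations. For (iv), $\paraa{U_s}^* = \alpha_{s^*}\paraa{p_s^*}\delta_{s^*} = \alpha_{s^*}\paraa{p_s}\delta_{s^*}$, and the earlier proposition that a surjective homomorphism carries a partial identity to the partial identity of its image, applied to the isomorphism $\alpha_{s^*}:A_s\to A_{s^*}$, gives $\alpha_{s^*}\paraa{p_s}=p_{s^*}$, hence $\paraa{U_s}^*=U_{s^*}$. For (v), $U_sU_t = p_s\,\alpha_s\paraa{p_{s^*}p_t}\,\delta_{st} = p_s\,p_{st}\,\delta_{st}$ by the earlier proposition $p_{st}=\alpha_s\paraa{p_{s^*}p_t}$, and since $p_{st}\in A_{st}\subset A_s$ we get $p_s p_{st}=p_{st}$, so $U_sU_t = U_{st}$. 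For (iii), $U_s a = p_s\delta_s\cdot a\delta_e = p_s\,\alpha_s\paraa{p_{s^*}a}\,\delta_s = \alpha_s\paraa{p_{s^*}a}\,\delta_s$ (using $\alpha_s\paraa{p_{s^*}a}\in A_s$), which equals $\alpha_s\paraa{p_{s^*}a}U_s$; I expect the argument of $\alpha_s$ to be the \emph{domain} partial identity $p_{s^*}$, matching the relation $U f=\paraa{(p_{-1}f)\circ\alpha_\hbar^{-1}}U$ of Proposition \ref{alp_fuz_cyl_pi}.

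There is no deep difficulty here; the entire effort is in the domain/range bookkeeping, keeping $A_s$ (the range of $\alpha_s$) distinct from $A_{s^*}$ (its domain) and invoking the correct partial identity at each step. The two genuinely load-bearing inputs are the structural propositions $\alpha_{s^*}\paraa{p_s}=p_{s^*}$ and $p_{st}=\alpha_s\paraa{p_{s^*}p_t}$: without them parts (iv) and (v) would not close, precisely because in general $A_{st}\neq A_s\cap A_t$ and $p_{st}\neq p_s p_t$.
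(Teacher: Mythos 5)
Your proof is correct and follows essentially the same route as the paper's: expand an arbitrary element in the symbols $a_s\delta_s$, absorb the partial identity to obtain the $U_s$-form, and reduce (ii)--(v) to the multiplication and involution formulas of proposition \ref{cpa_construct} together with the two structural lemmas $\alpha_{s^*}(p_s)=p_{s^*}$ and $p_{st}=\alpha_s(p_{s^*}p_t)$, which are exactly the inputs the paper uses. Your remark on (iii) is also right: since $\alpha_s$ has domain $A_{s^*}$, the computation yields $U_s a=\alpha_s(p_{s^*}a)U_s$, and the $p_s$ appearing in the paper's statement (and repeated in its one-line proof of (iii)) is a typo, as confirmed by the analogous relations in propositions \ref{pa_cpa_pi} and \ref{alp_fuz_cyl_pi}.
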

\begin{proof}
 (i) Per definition elements of $L(A, S, \alpha)$ are functions from
 $S$ into $A$ with finite support, i.e. of the form $x=\sum_{s\in S}^{\text{finite}} a_s \delta_s$
 with the symbol $\delta_s$ as defined in proposition \ref{cpa_construct}. Thus,
   \begin{align*} 
       x=\sum_{s\in S}^{\text{finite}} a_s \delta_s 
           = \sum_{s\in S}^{\text{finite}} a_s p_s \delta_s 
           = \sum_{s\in S}^{\text{finite}} a_s U_s
  \end{align*}
  Since every $a_s\in A$ and $ap_s\in A_s$ for every $a\in A$,
  we also can drop the requirement that $a_s\in A_s$ and
  can generalize to $a_s \in A$.
  
 (ii) Note that $p_e$ is the partial identity on $A_e=A$ and $\alpha_e$ is the identity map
    \begin{align*} 
      \iota(a)\iota(b) = ap_e\delta_e\cdot b p_e\delta_e = a \alpha_e(p_e b) \delta_e = \iota(ab)
  \end{align*}
  for $a,b\in A$. If $\iota(a)=\iota(b)$ it follows that $a\delta_e=b\delta_e$. 
  $a\delta_e$ is the function, which maps $e$ to $a$ and which is $0$ otherwise.
  Thus, $a=b$ and $\iota$ is injective.
  
 (iii) We apply the definition of the algebra product
     \begin{align*} 
        U_s a = p_s\delta_s \cdot a\delta_e = \alpha_s\paraa{p_s a} U_s
  \end{align*}
  
  (iv) 
    \begin{align*} 
         (U_s)^* = (p_s\delta_s)^* 
            = \alpha_{s^*}\paraa{p_s^*} \delta_{s^*}
            = p_{s^*} \delta_{s^*} = U_{s^*}
   \end{align*}
  
  (v) 
     \begin{align*} 
         U_s U_t =  p_s\delta_s \cdot p_t\delta_t 
           = p_s \alpha_s\paraa{p_{s^*} p_t} \delta_{st}
           = \alpha_s\paraa{p_{s^*} p_t} \delta_{st}
           = p_{st} \delta_{st} = U_{st}
   \end{align*}
\end{proof}

In \cite{Sieben_1996} and \cite{Exel_2010}, for defining the 
crossed product $C^*$-algebra the algebra constructed
such as in proposition \ref{cpa_construct} has to be divided
by an ideal $N$ generated by the relation
$a\delta_r-a\delta_t$ for $a\in A_r$ and $r\leq t$, i.e. $r=qt$
for some idempotent $q$ (see also definition \ref{def_alg_cpa}).
This is necessary such that each representation of
the crossed product $C^*$-algebra becomes a covariant
representation.  We will also show this for the \emph{algebraic}
crossed product algebra defined in definition \ref{def_pi_cpa}.

\begin{remark}
In this context, we remark that $\alpha_{ss^*}=\alpha_s\alpha_{s^*}$
is the identity on $A_s$.  Therefore for $a\in A_s$, it follows that
$\alpha_{ss^*}a=a=p_s a$.
Since a covariant representation maps both the action of the 
semigroup elements and the algebra elements to
vector space automorphims, the crossed product algebra
should take account of this. 
\end{remark}

Before defining the crossed product algebra, we firstly
show that the relation $a\delta_r-a\delta_t$
defining the ideal $N$ is equivalent to two other relations.
\begin{proposition} \label{cpa_quot_rel}
    Let $A$, $S$ and $\alpha$ be as in proposition
    \ref{cpa_construct}, where $S$ has a unit element $e$,
    and let $U_s$ as in proposition \ref{cpa_rel}
    Then the following relations are equivalent:
     \begin{enumerate}[(i)]
        \item $p_q=U_q$ or equivalently $p_q\delta_e= p_q \delta_q$ for all idempotents $q\in S$   
        \item $a \delta_r=a \delta_t$ for all $a\in A_r$ and $r\leq t$, i.e. $r=qt$ with an idempotent $q$
        \item $p_s=U_{ss^*}$ or equivalently $p_s\delta_e = p_{ss^*}\delta_{ss^*}$ for all $s\in S$ 
     \end{enumerate}
\end{proposition}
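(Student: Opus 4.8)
The plan is to translate all three statements into identities among the generators $U_s=p_s\delta_s$ and the embedded constants $\iota(a)=a\delta_e$, and then to verify a short list of implications. At the outset I would record three kinematic facts. First, for an idempotent $q\in S$ one has $q^*=q$ and $qq^*=q$. Second, for every $s\in S$ the range subalgebra satisfies $A_{ss^*}=A_s$: indeed $\alpha_{ss^*}=\alpha_s\alpha_{s^*}=\text{id}_{A_s}$ by proposition \ref{action_prop}(ii) transported to algebras, and since $\alpha_{ss^*}$ has range $A_{ss^*}$ while $\text{id}_{A_s}$ has range $A_s$, the two coincide; uniqueness of the partial identity then gives $p_{ss^*}=p_s$. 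Third, using $\iota(a)=a\delta_e=aU_e$ and the product of proposition \ref{cpa_construct}, for $a\in A_t$ one has $aU_t=ap_t\delta_t=a\delta_t$, so in particular $a\delta_r=aU_r$ for $a\in A_r$.

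With these in hand the equivalence (i)$\Leftrightarrow$(iii) is almost formal. Since $qq^*=q$ for an idempotent $q$, statement (iii) specialized to $s=q$ reads $p_q=U_{qq^*}=U_q$, which is (i). Conversely, given (i), apply it to the idempotent $ss^*$ to obtain $p_{ss^*}=U_{ss^*}$ and substitute $p_{ss^*}=p_s$ to get $p_s=U_{ss^*}$, which is (iii).

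For (ii)$\Rightarrow$(i) I would specialize the order relation to $r=q$, $t=e$; this is a legitimate instance because $q=qe$ exhibits $q\leq e$ with witnessing idempotent $q$. Taking $a=p_q\in A_q=A_r$, statement (ii) becomes $p_q\delta_q=p_q\delta_e$, that is $U_q=\iota(p_q)=p_q$, which is (i). For the converse (i)$\Rightarrow$(ii), let $a\in A_r$ with $r=qt$. Proposition \ref{action_prop_idem} gives $A_r=A_{qt}=A_q\cap A_t$, so $a\in A_q\cap A_t$ and hence $ap_q=a=ap_t$. Writing $a\delta_r=aU_r=aU_qU_t$ by proposition \ref{cpa_rel}(v) and using (i) in the form $U_q=p_q\delta_e$, I compute $aU_q=ap_q\delta_e=a\delta_e$ and then $(a\delta_e)U_t=ap_t\delta_t=a\delta_t$, so $a\delta_r=a\delta_t$, which is (ii). Together with (i)$\Leftrightarrow$(iii) this establishes all three equivalences.

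The only genuinely load-bearing inputs are the two structural facts $A_{ss^*}=A_s$ (whence $p_{ss^*}=p_s$) and $A_{qt}=A_q\cap A_t$ for idempotent $q$; both are supplied by propositions \ref{action_prop} and \ref{action_prop_idem} applied to the induced action on the algebra, so no new work is needed. Consequently I expect no real obstacle beyond bookkeeping: the delicate part is keeping track of which idempotent witnesses each relation $r\leq t$ and staying consistent when passing between the $a\delta_s$ notation and the $U_s$ notation.
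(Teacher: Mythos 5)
Your proof is correct and follows essentially the same route as the paper: the substantive step in both is the computation $a\delta_r=aU_qU_t=ap_qU_t=a\delta_t$ using $A_{qt}=A_q\cap A_t$, together with the specialization $t=e$ and the identity $p_{ss^*}=p_s$. The only difference is bookkeeping — you prove (i)$\Leftrightarrow$(iii) and (ii)$\Leftrightarrow$(i) directly while the paper runs the cycle (i)$\Rightarrow$(ii)$\Rightarrow$(iii)$\Rightarrow$(i) — which does not change the argument.
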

\begin{proof}
   $(i)\Rightarrow (ii)$: 
   The relation $p_q= U_q$ 
    is equivalent to $p_q\delta_e=p_q\delta_q$ by definition. 
   When $r=qt$, then $A_r=A_{qt}=A_q\cap A_t$, see proposition \ref{action_prop_idem}.
   Thus, when $a\in A_r$, then also $a\in A_q$ and $a\in A_t$.
   It follows
   \begin{align*} 
     a \delta_r= a U_r = a U_{qt} = a U_q U_t
        = a p_q U_t = a p_q p_ t \delta_t = a \delta_t
   \end{align*} 
   
  $(ii)\Rightarrow (iii)$:
  Since $A_{ss^*}=A_s$, it follows that $p_{ss^*}=p_s \in A_{ss^*}$.
  Therefore it is possible to set $t=e$, $r=q=ss^*$ and $a=p_s$.
  (ii) is then $p_s\delta_{ss^*}=p_s\delta_e$.
    
  $(iii)\Rightarrow (i)$:
  The idempotents are a subset of $S$ and have the property
  that $q=qq^*$.
\end{proof}

\begin{definition} \label{def_pi_cpa}
  Let $\alpha$ be an action $\alpha_s:A_{s^*}\rightarrow A_s$ of the inverse semigroup $S$ on the 
  $*$-algebra $A$, where each subalgebra $A_s$ is an ideal and has a partial identity $p_s$. 
  Furthermore, let $N$ be the ideal of $L(A, S, \alpha)$ generated by
  $p_s\delta_e - p_{ss^*}\delta_{ss^*}$ for $s\in S$. Then
  the quotient algebra
 \begin{align*} 
    A \rtimes_\alpha S = L(A, S, \alpha) / N
  \end{align*}  
  is the \emph{algebraic crossed product} of $A$ and $S$
  by the action $\alpha$.
\end{definition}

Proposition \ref{cpa_quot_rel} shows that additionally to the algebra relations of proposition
\ref{cpa_rel}, the crossed product algebra has
the further algebra relation $p_s=U_{ss^*}$ for each
$s\in S$ or $p_q = U_q$ for each idempotent $q\in S$.

\section{Crossed product algebras with non-degenerate or idempotent ideals} \label{sec:CPA_NDI}

In this section a generalization of
the above defined crossed product algebra is presented. The
generalization is formed from an inverse semigroup action
on a semiprime algebra. This crossed product algebra was introduced in
\cite{Doku_2005} and further developed in \cite{Exel_2010},
where also the name "algebraic crossed product" was introduced. 

The following proposition will be the basis for the proof that the crossed product
algebra is associative.
\begin{proposition} \label{ND_IP_property}
  Let $\beta:I\rightarrow J$ be an automorphism of ideals $I,J\subset A$
  of the algebra $A$ and let $R_b:A\rightarrow I$ be defined
  by $R_{b}(c)=\beta^{-1}\paraa{\beta(c)b}$ for $b\in A$
  and $c\in I$.
  If $I$ is non-degenerate or idempotent, then for $a \in A$
  \begin{align*} 
      R_b(ac) =a R_b(c)  
  \end{align*}
\end{proposition}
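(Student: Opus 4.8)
The plan is to rely on the only algebraic leverage available, namely that $\beta$ is a homomorphism on $I$ and $\beta^{-1}$ is a homomorphism on $J$. Hence a product $\beta(xy)$ factors as $\beta(x)\beta(y)$ exactly when \emph{both} $x,y\in I$, and dually $\beta^{-1}(xy)=\beta^{-1}(x)\beta^{-1}(y)$ when both $x,y\in J$. The whole point of the two hypotheses is to rewrite $R_b(ac)=\beta^{-1}\paraa{\beta(ac)b}$ so that every such factorization is applied to a genuine pair of $I$- or $J$-elements; the difficulty is simply that $a$ need not lie in $I$, so $\beta(ac)$ cannot be split off $a$ directly --- indeed $\beta(a)$ is not even defined. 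Throughout I will freely use that $ac$ and $au_i$ lie in $I$ because $I$ is an ideal, and that $\beta(v_i)b$ and $b\beta(e)$ lie in $J$ because $J$ is an ideal.

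For the idempotent case I would first use $I=I^2$ to write $c=\sum_i u_iv_i$ with $u_i,v_i\in I$. Then $ac=\sum_i(au_i)v_i$ with both $au_i,v_i\in I$, so $\beta(ac)=\sum_i\beta(au_i)\beta(v_i)$. Now $\beta(au_i)\in J$ and $\beta(v_i)b\in J$, so applying $\beta^{-1}$ to each product splits it: $\beta^{-1}\paraa{\beta(au_i)\,\beta(v_i)b}=(au_i)\,\beta^{-1}\paraa{\beta(v_i)b}=(au_i)R_b(v_i)$. Summation gives $R_b(ac)=a\sum_i u_iR_b(v_i)$, and the identical computation applied to $R_b(c)=\beta^{-1}\paraa{\sum_i\beta(u_i)\beta(v_i)b}$ gives $R_b(c)=\sum_i u_iR_b(v_i)$; comparing the two yields $R_b(ac)=aR_b(c)$.

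For the non-degenerate case I would instead set $d=R_b(ac)-aR_b(c)\in I$ and prove $de=0$ for all $e\in I$, so that non-degeneracy of $I$ forces $d=0$. Here multiplying on the right by $e\in I$ is what drags the relevant products into $J$: since $\beta\paraa{R_b(ac)}=\beta(ac)b$, one gets $R_b(ac)\,e=\beta^{-1}\paraa{\beta(ac)\,b\,\beta(e)}=(ac)\,\beta^{-1}\paraa{b\,\beta(e)}$, the last equality splitting the $J$-product $\beta(ac)\cdot\paraa{b\beta(e)}$. Likewise $aR_b(c)\,e=a\,\beta^{-1}\paraa{\beta(c)\,b\,\beta(e)}=a\,c\,\beta^{-1}\paraa{b\beta(e)}=(ac)\,\beta^{-1}\paraa{b\beta(e)}$, splitting $\beta(c)\cdot\paraa{b\beta(e)}$ in $J$. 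The two agree, so $de=0$. (If the paper's non-degeneracy is the left-sided version instead, the mirror computation multiplying by $e\in I$ on the left works verbatim, with both $e\,R_b(ac)$ and $(ea)\,R_b(c)$ reducing to $R_b(eac)$ via $\beta(e)\beta(ac)=\beta(eac)=\beta(ea)\beta(c)$.)

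The main obstacle is nothing conceptual but purely the bookkeeping of memberships: I must verify at each step that a factorization of $\beta$ or $\beta^{-1}$ is applied only to pairs lying in $I$ or $J$ respectively, and recognize that idempotency (supplying a splitting $c=\sum_i u_iv_i$) and non-degeneracy (supplying an extra right factor $e$) play exactly the same role of legalizing these factorizations. No analytic input is needed, and associativity of $A$ is used silently throughout.
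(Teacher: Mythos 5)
Your proof is correct and follows essentially the same route as the paper: in the idempotent case both arguments factor $c$ as a (sum of) product(s) of elements of $I$ and regroup the homomorphism $\beta$ across pairs lying in $I$ or $J$, and in the non-degenerate case both multiply the difference $R_b(ac)-aR_b(c)$ by a test element of $I$ and invoke triviality of the annihilator. The paper checks annihilation on both sides ($de=0$ and $ed=0$) before concluding, which your main computation together with your parenthetical mirror computation also cover.
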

\begin{proof}
  At first note that $\beta(c)b \in J$ and $R_b$ is well defined, since
  $\beta^{-1}$ can be applied to $\beta(c)b$. 
  
  We assume that $I$ is non-degenerate. Then, for $d\in I$
  \begin{align*} 
      R_b(ac)d = \beta^{-1}\paraa{\beta(ac)b}d =\beta^{-1} \paraa{ \beta\paraa{ \beta^{-1}\paraa{\beta(ac)b}d } }
      = ac \beta^{-1} \paraa{ b\beta(d) } \\
      = a \beta^{-1} \paraa{ \beta\paraa{ c \beta^{-1} \paraa{ b\beta(d) } } }
      = a \beta^{-1}\paraa{\beta(c)b}d = a R_b(c) d
  \end{align*}
  Analogously, one can show that $d R_b(ac) = d a R_b(c)$, in summary, the 
  element $e=R_b(ac) - a R_b(c)\in I$ has the property that $ed=de=0$ for
  all $d\in I$, and since $I$ is non-degenerate it follows that $e=0$.
  
  Now we assume that $I$ is idempotent, i.e. that every $c\in I$
  is a sum of products of elements of $I$. We simply
  write $c=c_1 c_2$ and extend the following result by linearity. Then,
    \begin{align*} 
      R_b(ac) = R_b(a c_1 c_2) =  \beta^{-1}\paraa{\beta(a c_1 c_2)b}
       =  a c_1 \beta^{-1}\paraa{\beta(c_2)b} \\
       = a \beta^{-1} \paraa{ \beta\paraa{c_1 \beta^{-1}\paraa{\beta(c_2)b} } }
       = a \beta^{-1}\paraa{\beta(c_ 1 c_2)b}    
       = a R_b(c)  
  \end{align*}
\end{proof}
Note that $R_a$ is a right-multiplier of $I$, see proposition 2.5
of \cite{Doku_2005}. 

Recall that an algebra $A$ is semiprime, when every non-zero ideal
is non-degenerate. The following is also a characterization of a semiprime
algebra.
\begin{proposition} \label{semiprime}
 Let $A$ be an algebra. $A$ is semiprime, if and only if every
 non-zero ideal of $A$ is either idempotent or non-degenerate.
\end{proposition}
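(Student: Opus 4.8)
The plan is to prove the two implications separately. The forward direction is immediate from the definition: if $A$ is semiprime, then every nonzero ideal is non-degenerate, so \emph{a fortiori} every nonzero ideal is either idempotent or non-degenerate. All the content lies in the converse, so I would spend the proof there.

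For the converse I would assume that every nonzero ideal of $A$ is idempotent or non-degenerate and deduce that, in fact, every nonzero ideal is non-degenerate (which is the definition of semiprime). The argument is by contradiction. Suppose some nonzero ideal $I$ is degenerate, i.e. there is a nonzero $e\in I$ with $eI=Ie=0$. I would collect all such elements into
\begin{align*}
   K=\pba{\,k\in I : kI=Ik=0\,},
\end{align*}
which is nonzero by the assumed degeneracy of $I$.

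The key step is to verify that $K$ is a two-sided ideal of all of $A$ (not merely of $I$) and that $K^2=0$. For $k\in K$ and $r\in A$, membership of $I$ as an ideal gives $rk,kr\in I$; the annihilation conditions then follow from $Ir\subseteq I$ and $rI\subseteq I$ together with $kI=Ik=0$, for instance $I(rk)=(Ir)k\subseteq Ik=0$ and $(kr)I=k(rI)\subseteq kI=0$. Since $K\subseteq I$, we get $K^2\subseteq KI=0$, so $K$ is a nonzero ideal with $K^2=0$. I expect this bookkeeping to be the only delicate point; the rest is a clean square-zero argument.

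Finally I would apply the hypothesis to the nonzero ideal $K$: it must be idempotent or non-degenerate. But $K^2=0$ together with $K\neq 0$ rules out idempotence, and $K^2=0$ also means every element of $K$ annihilates $K$ on both sides, so non-degeneracy would force $K=0$, again a contradiction. Hence no nonzero ideal of $A$ can be degenerate, which is exactly the statement that $A$ is semiprime. The main obstacle is the verification that $K$ is an ideal of $A$ rather than only of $I$; once that is in place, the dichotomy forces the square-zero ideal $K$ to be simultaneously non-idempotent and non-degenerate, which is impossible.
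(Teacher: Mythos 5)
Your proof is correct. Note that the paper does not actually prove this proposition itself --- it defers to \cite{Doku_2005}, Proposition 2.6 --- so your argument supplies a self-contained proof where the paper has none. Given the paper's working definition of semiprime (``every non-zero ideal is non-degenerate''), the forward implication is indeed vacuous, and all the content is in the converse. Your construction of the annihilator $K=\{k\in I : kI=Ik=0\}$ is the right move: the verification that $K$ is a two-sided ideal of all of $A$ (via $I(rk)=(Ir)k\subseteq Ik=0$ and $(kr)I=k(rI)\subseteq kI=0$, using that $I$ is itself an ideal of $A$) is sound, $K^2\subseteq KI=0$ follows from $K\subseteq I$, and a nonzero square-zero ideal is neither idempotent ($K^2=0\neq K$) nor non-degenerate (every element of $K$ annihilates $K$ on both sides), so the dichotomy in the hypothesis is violated. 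This is essentially the standard argument one finds in the cited reference, so there is no gap and nothing to flag.
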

For the proof see \cite{Doku_2005}, proposition 2.6.

\begin{remark}
  When we additionally assume that an ideal $I$
  has a partial identity $p$, then
  every element $a\in I$ can be written as $a=ap=pa$. Thus,
  $I$ is idempotent. It will follow that the next proposition
  is a generalization of proposition \ref{cpa_construct}.
\end{remark}

\begin{proposition}  \label{cpa_idem_construct}
   Let $\alpha$ be an action $\alpha_s:A_{s^*}\rightarrow A_s$ of the inverse semigroup $S$ on the 
   $*$-algebra $A$, where each subalgebra $A_s$ is a non-degenerate or idempotent ideal.
   Then the vector space
   \begin{align*} 
      L(A, S, \alpha) = \left\{ x:S\rightarrow A : x(s) \in A_s, x \text{ has finite support}  \right\}
   \end{align*}
   with multiplication
    \begin{align*} 
      (x \cdot y)(s)=\sum_{rt=s} \alpha_r \paraa{ \alpha_{r^*}\paraa{x(r) } y(t) } = R_{y(t)}^{\alpha_{r^*}}\paraa{x(r)}
   \end{align*}
   and involution
    \begin{align*} 
      x^*(s)= \alpha_s\paraa{ x(s^*)^* }
   \end{align*}
   is an associative $*$-algebra.
\end{proposition}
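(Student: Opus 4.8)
The plan is to follow the proof of Proposition \ref{cpa_construct} line by line, replacing each step that relied on a partial identity $p_s$ by the multiplier identity of Proposition \ref{ND_IP_property}. As before I would argue on the generators $a_s\delta_s$ (the function sending $s$ to $a_s\in A_s$ and every other element of $S$ to $0$) and extend to finite sums by bilinearity. First I would check that the product is well defined, which needs neither the non-degeneracy nor the idempotency hypothesis: the support of $x\cdot y$ is finite because $x$ and $y$ have finite support, and each summand $\alpha_r\paraa{\alpha_{r^*}(x(r))\,y(t)}$ lands in $A_s$, since $A_{r^*}$ and $A_t$ are ideals give $\alpha_{r^*}(x(r))\,y(t)\in A_{r^*}\cap A_t$ and $\alpha_r\paraa{A_{r^*}\cap A_t}=A_{rt}=A_s$ by item (iv) of Proposition \ref{action_prop}. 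Likewise $x^*(s)=\alpha_s\paraa{x(s^*)^*}$ lies in $A_s$ because $x(s^*)^*\in A_{s^*}$.

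The main work is associativity, and this is where Proposition \ref{ND_IP_property} enters as the essential new ingredient. It suffices to compare the two triple products term by term along a fixed factorization $s=rvw$; writing $a=x(r)$, $b=y(v)$, $c=z(w)$, the two contributions are
\begin{align*}
   \alpha_r\parab{\alpha_{r^*}(a)\,\alpha_v\paraa{\alpha_{v^*}(b)\,c}}
   \qquad\text{and}\qquad
   \alpha_{rv}\parab{\alpha_{(rv)^*}\paraa{\alpha_r\paraa{\alpha_{r^*}(a)\,b}}\,c}.
\end{align*}
In the second expression I would use $\alpha_{(rv)^*}=\alpha_{v^*}\alpha_{r^*}$ together with the fact that $\alpha_{r^*}\alpha_r$ is the identity on $A_{r^*}$ (item (ii) of Proposition \ref{action_prop}; note $\alpha_{r^*}(a)\,b\in A_{r^*}$ as $A_{r^*}$ is an ideal) to collapse it to $\alpha_r\alpha_v\paraa{\alpha_{v^*}\paraa{\alpha_{r^*}(a)\,b}\,c}$. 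The remaining discrepancy is exactly the multiplier property: applying Proposition \ref{ND_IP_property} with $\beta=\alpha_{v^*}$, whose domain ideal $A_v$ is by hypothesis non-degenerate or idempotent, yields $\alpha_v\paraa{\alpha_{v^*}\paraa{\alpha_{r^*}(a)\,b}\,c}=\alpha_{r^*}(a)\,\alpha_v\paraa{\alpha_{v^*}(b)\,c}$, so that the second contribution becomes the first.

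For the involution I would verify conjugate-linearity and $(x^*)^*=x$ from $\alpha_s\alpha_{s^*}=\text{id}_{A_s}$ and the fact that each $\alpha_s$ intertwines $*$ (used already in the proof of Proposition \ref{cpa_construct}). The anti-homomorphism property $(x\cdot y)^*=y^*\cdot x^*$ I would check directly on generators, again only using $\alpha_s\alpha_{s^*}=\text{id}_{A_s}$, $\alpha_{s^*}\alpha_s=\text{id}_{A_{s^*}}$, $(st)^*=t^*s^*$ and $*$-compatibility; in particular this part requires no appeal to Proposition \ref{ND_IP_property}.

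The step I expect to be the main obstacle is the domain bookkeeping inside the associativity argument: at every stage one must confirm that the elements fed into $\alpha_r$, $\alpha_{v^*}$ and their inverses genuinely lie in the correct subalgebras, so that the cancellation $\alpha_{r^*}\alpha_r=\text{id}$ and the single application of the multiplier identity are legitimate. This careful tracking of where each expression lives is precisely the bookkeeping that the partial identities carried out automatically in Proposition \ref{cpa_construct}, and replacing them by the idempotent/non-degenerate hypothesis via Proposition \ref{ND_IP_property} is the crux of the generalization.
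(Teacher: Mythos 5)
Your proposal is correct and follows essentially the same route as the paper's own proof: reduce to generators $a_s\delta_s$, verify well-definedness via item (iv) of Proposition \ref{action_prop}, collapse the right-associated triple product using $\alpha_{(rv)^*}=\alpha_{v^*}\alpha_{r^*}$ and $\alpha_{r^*}\alpha_r=\text{id}_{A_{r^*}}$, and then close the gap with a single application of the multiplier identity of Proposition \ref{ND_IP_property} to the inner map, with the involution handled separately without that lemma. You even identify the relevant ideal (the domain of $\beta=\alpha_{v^*}$) slightly more carefully than the paper's wording does.
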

\begin{proof}
  We denote the function, which maps $s\in S$ to $a_s\in A_s$, with
  $a_s\delta_s$. Note that $\delta_s$ is a symbol, which cannot
  stand alone. The multiplication then translates into
  \begin{align*} 
    a_s\delta_s \cdot  a_t\delta_t = \alpha_s \paraa{ \alpha_{s^*}\paraa{a_s}a_t } \delta_{st} = R_{a_t}^{\alpha_{r^*}}\paraa{a_s} \delta_{st} 
  \end{align*}
  It is $a_s \in A_s$ and $a_t \in A_t$, therefore $\alpha_{s^*}\paraa{a_s} \in A_{s^*}$ and
  $\alpha_{s^*}\paraa{a_s}a_t \in A_{s^*} \cap A_t$, 
  since both $A_{s^*}$ and $A_t$ are ideals.
    
   For showing associativity it is enough to show that
  \begin{align*} 
      a_r\delta_r \cdot (a_s\delta_s \cdot  a_t\delta_t) = (a_r\delta_r \cdot a_s\delta_s) \cdot  a_t\delta_t
  \end{align*}
  since this extends to finite sums by linearity. The left hand side is
  \begin{align*} 
      \alpha_r \Bigl( \alpha_{r^*}(a_r) \alpha_s \paraa{ \alpha_{s^*}(a_s)a_t }  \Bigr) \delta_{rst}
      =       \alpha_r \Bigl( \underline {\alpha_{r^*}(a_r)  R_{a_t}^{\alpha_{r^*}}\paraa{a_s} } \Bigr) \delta_{rst}
  \end{align*}
  The right hand side is
  \begin{align*} 
      \alpha_{rs} \Bigl(  \alpha_{s^*r^*}  \paraa{ \alpha_r \paraa { \alpha_{r^*}(a_r)a_s } } a_t \Bigr) \delta_{rst}
      & = \alpha_{r} \Bigl(  \alpha_s \paraa{ \alpha_{s^*} \paraa { \alpha_{r^*}(a_r) a_s } a_t } \Bigr) \delta_{rst} \\
      & = \alpha_{r} \Bigl( \underline { R_{a_t}^{\alpha_{r^*}} \paraa { \alpha_{r^*}(a_r) a_s } } \Bigr) \delta_{rst}
  \end{align*}
  Since the ideal $A_{s^*}$ is non-degenerate or idempotent, it follows from
  propostion \ref{ND_IP_property} that the underlined expressions above are 
  equal.
  
  It remains to show that $*$ is an anti-algebra homomorphims
  $(x\cdot y)^*=y^*\cdot x ^*$. It is again enough to show
  this for $x=a_s\delta_s$ and $y=a_t \delta_t$. For the symbols, the 
  involution translates into
  \begin{align*} 
      (a_s\delta_s)^* = \alpha_{s^*}(a_s^*)\delta_{s^*}
  \end{align*}
  It follows that 
  \begin{align*} 
    \paraa{a_s\delta_s \cdot a_t\delta_t}^* 
     & = \paraa{ \alpha_s \paraa{ \alpha_{s^*}\paraa{a_s}a_t } \delta_{st} }^* \\
     & = \alpha_{t^*s^*} \Bigl(  \paraa{ \alpha_s \paraa{ \alpha_{s^*}\paraa{a_s}a_t } }^* \Bigr) \delta_{t^*s^*} \\
     & = \alpha_{t^*s^*} \Bigl(  \alpha_s  \paraa{ a_t^* \alpha_{s^*}\paraa{a_s^*} } \Bigr) \delta_{t^*s^*} \\
     & = \alpha_{t^*} \Bigl(  a_t^* \alpha_{s^*}\paraa{a_s^*} \Bigr) \delta_{t^*s^*} \\
     & = \alpha_{t^*} \Bigl(  \alpha_t \paraa { \alpha_{t^*}(a_t^*) } \alpha_{s^*}\paraa{a_s^*} \Bigr) \delta_{t^*s^*} \\
     & = \alpha_{t^*}(a_t^*)\delta_{t^*} \cdot \alpha_{s^*}(a_s^*) \delta_{s^*} \\
     & = (a_t\delta_t)^* \cdot (a_s\delta_s)^*
  \end{align*}
\end{proof}
Compare also the proofs of proposition 5.1 of \cite{Sieben_1996} and Theorem 3.1 of \cite{Doku_2005}.

\begin{definition} \label{def_alg_cpa}
  Let $\alpha$ be an action $\alpha_s:A_{s^*}\rightarrow A_s$ of the inverse semigroup $S$ on the 
  $*$-algebra $A$, where each subalgebra $A_s$ is a non-degenerate or idempotent ideal.
  Furthermore, let $N$ be the ideal of $L(A, S, \alpha)$ generated by 
  $a\delta_s-a\delta_t$ for $a\in A_s$ and $s\leq t$. Then the 
 \emph{algebraic crossed product} of $A$ and $S$ by the action $\alpha$ is 
   \begin{align*} 
      A \rtimes_{\alpha} S = L(A, S, \alpha) / N
  \end{align*}
\end{definition}

Following proposition shows that this definition is a generalization
of definition \ref{def_pi_cpa}.
\begin{proposition}
  In the nomenclature of definition \ref{def_alg_cpa} assume
  that each subalgebra $A_s$ has a partial identity. Then
  the crossed product algebras of definition \ref{def_pi_cpa} and 
  definition \ref{def_alg_cpa} are the same.
\end{proposition}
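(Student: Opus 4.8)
The plan is to show that the two definitions produce the quotient of the \emph{same} underlying algebra by the \emph{same} ideal, so that the entire statement reduces to the equality of the two ideals $N$, which is precisely what Proposition \ref{cpa_quot_rel} delivers.

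First I would check that the two definitions start from one and the same object $L(A, S, \alpha)$. The remark preceding Proposition \ref{cpa_idem_construct} records that an ideal carrying a partial identity $p$ satisfies $a = ap = pa$ for each of its elements and is therefore idempotent; hence, under the standing hypothesis that every $A_s$ has a partial identity, every $A_s$ is in particular an idempotent ideal, so the construction of Proposition \ref{cpa_idem_construct} applies. Since the vector space, the multiplication $(x\cdot y)(s)=\sum_{rt=s}\alpha_r\paraa{\alpha_{r^*}\paraa{x(r)}y(t)}$, and the involution $x^*(s)=\alpha_s\paraa{x(s^*)^*}$ are given by identical formulas in Propositions \ref{cpa_construct} and \ref{cpa_idem_construct}, the algebra $L(A, S, \alpha)$ underlying both crossed products is literally the same $*$-algebra.

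Next I would match the two ideals against the relations of Proposition \ref{cpa_quot_rel}. In Definition \ref{def_pi_cpa} the ideal $N$ is generated by $p_s\delta_e - p_{ss^*}\delta_{ss^*}$ for $s\in S$, which is exactly the family of relations (iii); in Definition \ref{def_alg_cpa} the ideal $N$ is generated by $a\delta_s - a\delta_t$ for $a\in A_s$ and $s\leq t$, which is exactly the family of relations (ii) (after renaming $r$ to $s$). The crucial observation is that the chain $(i)\Rightarrow(ii)\Rightarrow(iii)\Rightarrow(i)$ proved there consists of purely algebraic identities obtained by using one family of relations as defining identities: each implication shows that the consequent relations hold in the quotient by the ideal generated by the antecedent relations, hence that the consequent generators lie in the antecedent ideal. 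Reading this at the level of generators gives $\langle(ii)\rangle\subseteq\langle(i)\rangle\subseteq\langle(iii)\rangle\subseteq\langle(ii)\rangle$, so the three ideals coincide; in particular $\langle(ii)\rangle = \langle(iii)\rangle$, i.e. the two ideals $N$ are equal.

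Since both crossed products are the quotient of the common algebra $L(A, S, \alpha)$ by the common ideal $N$, they are equal as $*$-algebras, which is the claim. The main obstacle I anticipate is conceptual rather than computational: one must be careful that the ``equivalence of relations'' in Proposition \ref{cpa_quot_rel} is interpreted as equality of the generated ideals, and not merely as equivalence inside one fixed quotient. This is exactly why I would spell out that each of the three implications exhibits one generating family as lying in the ideal generated by another, so that the cyclic chain of inclusions forces the ideals to be mutually contained and hence equal.
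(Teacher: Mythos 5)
Your proposal is correct and follows essentially the same route as the paper: identify the underlying $*$-algebra $L(A,S,\alpha)$ of propositions \ref{cpa_construct} and \ref{cpa_idem_construct} as literally the same object, then invoke proposition \ref{cpa_quot_rel} to conclude that the two ideals $N$ coincide. Your added care in turning the cyclic chain of implications $(i)\Rightarrow(ii)\Rightarrow(iii)\Rightarrow(i)$ into a chain of inclusions of generated ideals is a worthwhile clarification of a point the paper leaves implicit, but it does not change the argument.
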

\begin{proof}
  The definition of the set, the multiplication and the involution of
  the algebra in propositions \ref{cpa_construct} and \ref{cpa_idem_construct}
  is the same. With proposition \ref{cpa_quot_rel} we already
  have shown that the ideals $N$ are equal.
\end{proof}

\section{Covariant representations} \label{sec:cov_rep}

\begin{definition}
 Let $H$ be a Hilbert space. A bounded linear operator $V\in B(H)$ is
 a \emph{partial isometry}, if $V|_{\text{ker}(V)^\perp}$ is an isometry, i.e. for
 every $v\in \text{ker}(V)^\perp$, $||V v||=||v||$. The subspace
 $\text{ker}(V)^\perp$ is the \emph{domain} of the partial isometry $V$,
 while the image of $V$ of its domain is called \emph{range}
 of the partial isometry $V$.
\end{definition} 

The following proposition shows that partial isometries are well suited for
representing elements of inverse semigroups.
\begin{proposition} \label{part_iso_prop}
  Let $V\in B(H)$. The following is equivalent:
  \begin{enumerate}[(i)]
     \item $V$ is a partial isometry.
     \item $V^*$ is a partial isometry.
     \item $VV^*$ is a projection.
     \item $V^*V$ is a projection.
     \item $V^*VV^*=V^*$ 
     \item $VV^*V=V$ 
 \end{enumerate}
 Moreover, $V V^*$ projects onto the range of $V$ and $V^* V$ projects
 onto the domain of $V$.
\end{proposition}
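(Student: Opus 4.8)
The plan is to tie each of the two genuinely analytic conditions (i) and (ii) to one of the idempotency conditions (iv) and (iii), after which everything else is pure algebra of the relations $VV^*V=V$ and $V^*VV^*=V^*$. The cornerstone is the equivalence (i)$\iff$(iv). First I would record that $V^*V$ is self-adjoint and positive, with $\norm{Vx}^2=\langle V^*Vx,x\rangle$ for all $x$; this identity immediately yields $\ker V=\ker(V^*V)$, hence $\ker(V)^\perp=\ker(V^*V)^\perp$. For (iv)$\Rightarrow$(i), if $V^*V=P$ is a projection then $P$ projects onto $\ker(V^*V)^\perp=\ker(V)^\perp$, so for $x$ in the domain $\norm{Vx}^2=\langle Px,x\rangle=\norm{x}^2$, making $V$ a partial isometry. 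Applying the same argument to $V^*$ in place of $V$ then yields (ii)$\iff$(iii) for free.

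The converse (i)$\Rightarrow$(iv) is the step I expect to need the most care, since it upgrades a norm condition on a subspace to an operator identity on all of $H$. Starting from $\norm{Vx}=\norm{x}$ on $\ker(V)^\perp$, polarization gives $\langle V^*Vx,y\rangle=\langle x,y\rangle$ for all $x,y\in\ker(V)^\perp$. I then have to extend this from the domain to all of $H$: for $x\in\ker(V)^\perp$ and $y\in\ker V$ one checks $\langle V^*Vx,y\rangle=\langle x,V^*Vy\rangle=0=\langle x,y\rangle$, so $\langle V^*Vx,y\rangle=\langle x,y\rangle$ holds for every $y\in H$ and hence $V^*Vx=x$. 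Combined with $V^*V=0$ on $\ker V=\ker(V^*V)$, this gives $V^*V=P_{\ker(V)^\perp}$.

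It remains to thread the algebraic conditions into the chain. Taking adjoints of $VV^*V=V$ gives $V^*VV^*=V^*$, so (vi)$\iff$(v) with no work. For (vi)$\Rightarrow$(iv) I would compute $(V^*V)^2=V^*(VV^*V)=V^*V$, so $V^*V$ is a self-adjoint idempotent, i.e. a projection. For (iv)$\Rightarrow$(vi) I decompose an arbitrary $x=Px+(I-P)x$ with $P=V^*V$; since $I-P$ projects onto $\ker(V^*V)=\ker V$, the second summand is killed by $V$, whence $Vx=VPx=VV^*Vx$. The symmetric statement (iii)$\iff$(v) follows by replacing $V$ with $V^*$. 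Assembling these produces the single chain (i)$\iff$(iv)$\iff$(vi)$\iff$(v)$\iff$(iii)$\iff$(ii).

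For the final assertion, (i)$\Rightarrow$(iv) already identifies $V^*V$ as the orthogonal projection onto $\ker(V)^\perp$, which is by definition the domain of $V$. For the range, $VV^*$ is a projection by (iii); its range is obviously contained in $\operatorname{ran}(V)$, while for any $y=Vx$ with $x$ in the domain the relation (vi) gives $VV^*y=VV^*Vx=Vx=y$, so every element of $\operatorname{ran}(V)$ is fixed by $VV^*$ and hence lies in its range. Thus $VV^*$ projects exactly onto $\operatorname{ran}(V)=V(\ker(V)^\perp)$, the range of the partial isometry.
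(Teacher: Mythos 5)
Your proof is correct. The paper itself supplies no argument for this proposition --- it simply refers the reader to an external reference --- so there is no in-paper proof to compare against; your argument is the standard one, tying (i) to (iv) (and (ii) to (iii)) via $\norm{Vx}^2=\angles{V^*Vx,x}$ together with polarization, closing the loop through the purely algebraic equivalences (iv)$\Leftrightarrow$(vi)$\Leftrightarrow$(v)$\Leftrightarrow$(iii), and correctly identifying $V^*V$ and $VV^*$ as the projections onto $\ker(V)^\perp$ and $V\paraa{\ker(V)^\perp}$ respectively.
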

For the proof see \cite{Skou_1998}, for example.

\begin{definition}
A representation of an inverse semigroup $S$
on a Hilbert space $H$ is a map $V:S\rightarrow B(H)$ into the bounded
linear operators on $H$ such that 
\begin{align*} 
  V_{\alpha\beta}=V_\alpha V_\beta, \qquad V_{\alpha^*}=V_\alpha^*
\end{align*}
 If $S$ has a unit $e$, then we require that $V_e=1_H$, where $1_H$ is the
 unit of $H$.
\end{definition} 

Since $V_\alpha V_\alpha^* V_\alpha=V_\alpha$ and
$V_\alpha^* V_\alpha V_\alpha^*=V_\alpha^*$, the elements
$\alpha$ of the inverse semigroup $S$ are mapped to partial
isometries of $H$.

\begin{definition}  \label{def_cov_rep}
 Let $\alpha$ be an action of the inverse semigroup $S$ on the
 algebra $A$. A \emph{covariant representation} is a triple
 $(\pi, V, H)$, where $\pi:A\rightarrow B(H)$ is a 
 representation of $A$ on the Hilbert space $H$ and $s\mapsto V_s$ is a semigroup
 homomorphism from $S$ into an inverse semigroup of partial
 isometries on $H$ such that
   \begin{enumerate}[(i)]
     \item $V_s\pi(a)V_{s^*} = \pi\paraa{\alpha_s(a)}$ for all $a\in A_{s^*}$,
     \item $V_s$ has domain $\pi(A_{s^*})H$ and range $\pi(A_{s})H$
 \end{enumerate}
\end{definition} 

Condition (i) is the same relation as item (iii) of proposition \ref{cpa_rel}. We 
will see that a representation of the algebraic crossed product algebra of
\ref{def_pi_cpa} is equivalent to a covariant representation.
Condition(ii) is important, since it ensures that
the partial isometries $V_s$ map between the subspaces $\pi(A_{s})H$,
which are the images of the subalgebra $A_{s}$ of the algebra 
representation $\pi$.

\begin{proposition} \label{prop_cov_rep}
  Let $(\pi, V, H)$ be a covariant representation of the action $\alpha$ of the
  inverse semigroup $S$ on the algebra $A$. Then for $s,t \in S$, $a_s\in A_s$
  and $a_t\in A_t$
  \begin{enumerate}[(i)]
     \item $V_e=1_H$ 
     \item $V_s V_{s^*}$ projects onto $\pi(A_{s})H$ and 
             $V_{s^*} V_s$ projects onto $\pi(A_{s^*})H$
     \item $V_sV_{s^*} \pi(a_s) = \pi(a_s) V_sV_{s^*} = \pi(a_s)$ 
     \item $V_{s^*}=V_s^*$
     \item $\pi(a_s)V_s\pi(a_t)V_t=\pi\paraa{\alpha_s\paraa{\alpha_{s^*}(a_s)a_t} } V_{st} $ 
     \item $\paraa{\pi(a_s)V_s}^*=\pi \paraa{ \alpha_{s^*}(a_s^*)}V_{s^*}$
 \end{enumerate}
\end{proposition}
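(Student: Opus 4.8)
The plan is to establish the six items in the order (iv), then (i)--(iii), then (v)--(vi), since the later identities all rest on the fact that $V_{s^*}$ is the operator adjoint of $V_s$ and on the two range/domain projections $V_sV_{s^*}$ and $V_{s^*}V_s$. I would begin with (iv). Because $s\mapsto V_s$ is a homomorphism into an inverse semigroup of partial isometries and $S$ is an inverse semigroup, applying $V$ to $s=ss^*s$ and $s^*=s^*ss^*$ gives $V_sV_{s^*}V_s=V_s$ and $V_{s^*}V_sV_{s^*}=V_{s^*}$, so $V_{s^*}$ is the inverse-semigroup inverse of $V_s$ inside the image. On the other hand, since each $V_s$ is a partial isometry, Proposition \ref{part_iso_prop} gives $V_sV_s^*V_s=V_s$ and $V_s^*V_sV_s^*=V_s^*$, so $V_s^*$ is also a generalized inverse; moreover $V_sV_{s^*}=V_{ss^*}$ and $V_{s^*}V_s=V_{s^*s}$ are idempotent partial isometries, hence orthogonal projections. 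Uniqueness of the inverse in an inverse semigroup (equivalently, uniqueness of a generalized inverse whose associated idempotents are self-adjoint) then forces $V_{s^*}=V_s^*$.

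With (iv) in hand I would treat (i)--(iii). For (i), $e=e^2$ makes $V_e$ an idempotent partial isometry, hence an orthogonal projection, whose range is $\pi(A_{e})H=\pi(A)H$ by condition (ii) of Definition \ref{def_cov_rep}; for a non-degenerate $\pi$ this is all of $H$, so $V_e=1_H$. For (ii), by (iv) we have $V_sV_{s^*}=V_sV_s^*$ and $V_{s^*}V_s=V_s^*V_s$, which by Proposition \ref{part_iso_prop} are the orthogonal projections onto the range and the domain of $V_s$; condition (ii) of Definition \ref{def_cov_rep} identifies these with $\pi(A_{s})H$ and $\pi(A_{s^*})H$. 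For (iii), write $P_s=V_sV_{s^*}$ for the projection onto $\pi(A_{s})H$. Since $\pi(a_s)H\subseteq\pi(A_{s})H$ we get $P_s\pi(a_s)=\pi(a_s)$ immediately; for the other side I would use that $A_s$ is a $*$-ideal (so $a_s^*\in A_s$), apply the identity just proved to $a_s^*$, and take adjoints, using $P_s^*=P_s$.

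The engine for (v) and (vi) is the intertwining relation $V_s\pi(b)=\pi\paraa{\alpha_s(b)}V_s$ for $b\in A_{s^*}$. This follows by right-multiplying condition (i) of Definition \ref{def_cov_rep}, namely $V_s\pi(b)V_{s^*}=\pi\paraa{\alpha_s(b)}$, by $V_s$ and using (iii) (applied to $s^*$) in the form $\pi(b)V_{s^*}V_s=\pi(b)$. For (v) I would first rewrite $\pi(a_s)V_s=V_s\pi\paraa{\alpha_{s^*}(a_s)}$ (the intertwining relation with $b=\alpha_{s^*}(a_s)\in A_{s^*}$), so that $\pi(a_s)V_s\pi(a_t)V_t=V_s\pi\paraa{\alpha_{s^*}(a_s)a_t}V_t$; then a second application of the intertwining relation to $c=\alpha_{s^*}(a_s)a_t\in A_{s^*}$ yields $\pi\paraa{\alpha_s(c)}V_sV_t=\pi\paraa{\alpha_s\paraa{\alpha_{s^*}(a_s)a_t}}V_{st}$, which is the claim. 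For (vi), $\paraa{\pi(a_s)V_s}^*=V_s^*\pi(a_s^*)=V_{s^*}\pi(a_s^*)$ by (iv), and the intertwining relation for $s^*$ with $b=a_s^*\in A_{(s^*)^*}=A_s$ gives $V_{s^*}\pi(a_s^*)=\pi\paraa{\alpha_{s^*}(a_s^*)}V_{s^*}$.

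The main obstacle is (iv): the definition only requires a semigroup homomorphism into an inverse semigroup of partial isometries, so the coincidence of the inverse-semigroup inverse $V_{s^*}$ with the operator adjoint $V_s^*$ must be argued rather than assumed. The delicate point is verifying that the idempotents $V_{ss^*}$ and $V_{s^*s}$ are genuine orthogonal projections (an idempotent partial isometry has norm at most one and is therefore self-adjoint), since this is exactly what pins down $V_{s^*}=V_s^*$ and underlies the identifications in (ii) that are used throughout the rest of the argument.
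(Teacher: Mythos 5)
Your proposal is correct, and for the key item (iv) it takes a genuinely different route from the paper. The paper proves $V_{s^*}=V_s^*$ by a direct inner-product computation: it splits $h$ and $k$ into components in $\pi(A_{s^*})H$, $\pi(A_s)H$ and their orthogonal complements and verifies $\langle V_sh,k\rangle=\langle h,V_{s^*}k\rangle$ term by term, which leans heavily on condition (ii) of Definition \ref{def_cov_rep} (the identification of the domain and range of $V_s$ with $\pi(A_{s^*})H$ and $\pi(A_s)H$). You instead observe that $V_{s^*}$ and $V_s^*$ are both generalized inverses of $V_s$ whose associated idempotents ($V_{ss^*}$, $V_{s^*s}$, $V_sV_s^*$, $V_s^*V_s$) are orthogonal projections --- the first two because an idempotent partial isometry has norm at most one and is therefore self-adjoint --- and invoke Moore--Penrose uniqueness. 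This is purely algebraic, never uses condition (ii) of the definition, and so establishes (iv) for any semigroup homomorphism of $S$ into partial isometries; the paper's argument buys nothing extra here but is more elementary in that it avoids the (standard but unproved-in-the-paper) fact that a norm-one idempotent is self-adjoint. Your treatment of (i) (idempotent projection with full range, via non-degeneracy of $\pi$) is a minor variant of the paper's ($V_e$ unitary and idempotent), your (iii) actually supplies the adjoint argument the paper leaves implicit, and your (v)--(vi) repackage the paper's insertions of $V_{s^*}V_s$ as a single intertwining relation $V_s\pi(b)=\pi\paraa{\alpha_s(b)}V_s$ for $b\in A_{s^*}$, which is the same computation in cleaner form; like the paper, you implicitly use that $A_{s^*}$ is an ideal so that $\alpha_{s^*}(a_s)a_t\in A_{s^*}$.
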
 
\begin{proof}
  (i) Since $\pi$ is non-degenerate $H=\pi(A)H=\pi(A_e)H$ and $V_e$ is unitary.
     Therefore $1_H=V_e V_e^{-1} =V_{ee} V_e^{-1}= V_e V_e V_e^{-1}=V_e$.
        
  (ii) Since $V$ is an semigroup homomorphism 
       $V_s V_{s^*} V_s V_{s^*} = V_{ss^*ss^*}=V_{ss^*}=V_sV_{s^*}$. The
       first part of the assertions follows from (2) of
       definition \ref{def_cov_rep}. The second part is analogously.
     
  (iii) $V_sV_{s^*}= V_sV_s^*$ is the projection onto $\pi(A_{s})H$     

  (iv) We have to show that $<V_s h,k>=<h,V_{s^*} k>$ for $h,k\in H$.
  We split $h=h_1+h_2, k=k_1+k_2$, where $h_1\in\pi(A_{s^*})H$, 
  $h_2\in\pi(A_{s^*})H^\perp$, $k_1\in\pi(A_s)H$, $k_2\in\pi(A_s)H^\perp$.
  Thus, it is equivalent to show that
  \begin{align*}
    <V_s h_1,k_1+k_2>=<h_1+h_2,V_{s^*} k_1>
  \end{align*}
  Since $<V_sh_1,k_2>=0$ and $<h_2,V_{s^*}k_1>=0$ it is 
  equivalent to show that
  \begin{align*}
    <V_s h_1,k_1>=<h_1,V_{s^*} k_1>
  \end{align*}
  Since $h_1=V_{s^*}l$ for some $l\in\pi(A_s)H$ we have to show that
  \begin{align*}
    <V_s V_{s^*}l,k_1>=<V_{s^*}lh_1,V_{s^*} k_1>
  \end{align*}
  Now due to (2) $V_s V_{s^*}l=l$, $V_{s^*}$ is a partial isometry and both
  sides are equal to $<l, k_1>$.
   
  (v) 
  \begin{align*}
     \pi(a_s)V_s\pi(a_t)V_t & = V_s \pi\paraa{\alpha_{s^*}(a_s) } V_{s^*} V_s \pi(a_t) V_t \\ 
     &= V_s \pi\paraa{\alpha_{s^*}(a_s) a_t} V_t \\
     &= V_s \pi\paraa{\alpha_{s^*}\alpha_{s} \paraa{\alpha_{s^*}(a_s) a_t} } V_t \\
     &= V_s V_{s^*} \pi\paraa{\alpha_{s} \paraa{ \alpha_{s^*}(a_s) a_t} } V_s V_t \\
     & = \pi\paraa{\alpha_s\paraa{\alpha_{s^*}(a_s)a_t } } V_{st}
   \end{align*}
  
  (vi) $\paraa{\pi(a_s)V_s}^*=V_{s^*}\pi(a_s^*) 
            = V_{s^*} V_s \pi \paraa{ \alpha_{s^*}(a_s^*)}V_{s^*} = \pi \paraa{ \alpha_{s^*}(a_s^*)}V_{s^*}$
  
\end{proof}
The above proof is partially based on propositions 3.2 and 5.3 of \cite{Sieben_1996}.

\begin{proposition} \label{prop_cov_rep_pi}
  Let $(\pi, V, H)$ be a covariant representation of the action $\alpha$ of the
  inverse semigroup $S$ on the algebra $A$, for which the ideals $A_s$
  have partial identities. Then $\pi(p_s)=V_{ss^*}=V_sV_{s^*}$ for
  $s \in S$.
\end{proposition}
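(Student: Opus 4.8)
The plan is to recognise both $\pi(p_s)$ and $V_sV_{s^*}$ as the orthogonal projection onto one and the same closed subspace of $H$, namely $\pi(A_s)H$, and then invoke uniqueness of orthogonal projections.

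First I would record that $p_s$ is a genuine projector in the $*$-algebra $A$: by Proposition \ref{pi_prop}(ii) we have $p_s^2=p_s$ and $p_s^*=p_s$. Since $\pi$ is a $*$-representation (multiplicative and involution-preserving), $\pi(p_s)$ is then a self-adjoint idempotent of $B(H)$, i.e.\ an orthogonal projection. Next I would compute its range. For $a_s\in A_s$ the partial-identity relation $p_sa_s=a_s$ gives $\pi(a_s)h=\pi(p_s)\pi(a_s)h$, so $\pi(A_s)H\subseteq\operatorname{ran}\pi(p_s)$; conversely $p_s\in A_s$ forces $\pi(p_s)h\in\pi(A_s)H$, so $\operatorname{ran}\pi(p_s)\subseteq\pi(A_s)H$. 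Hence $\operatorname{ran}\pi(p_s)=\pi(A_s)H$, and since the range of an orthogonal projection is automatically closed, this incidentally shows that $\pi(A_s)H$ is already closed.

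Now by Proposition \ref{prop_cov_rep}(ii) the operator $V_sV_{s^*}$ is the orthogonal projection onto exactly this subspace $\pi(A_s)H$. Two orthogonal projections with the same range coincide, so $\pi(p_s)=V_sV_{s^*}$. Finally, $V$ is a semigroup homomorphism, whence $V_{ss^*}=V_sV_{s^*}$ (with $V_{s^*}=V_s^*$ by Proposition \ref{prop_cov_rep}(iv)), which completes the chain $\pi(p_s)=V_{ss^*}=V_sV_{s^*}$.

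The only delicate point is making sure the two projections land on precisely the same subspace rather than one being the closure of the other; this is exactly what the closedness observation in the second paragraph settles, so I expect no real obstacle. A more computational variant avoids projections altogether: Proposition \ref{prop_cov_rep}(iii) applied to $a_s=p_s$ already yields $\pi(p_s)V_sV_{s^*}=\pi(p_s)$, and combining this with the covariance identity $V_s\pi(p_{s^*})V_{s^*}=\pi(\alpha_s(p_{s^*}))=\pi(p_s)$, together with $\pi(p_{s^*})V_{s^*}=V_{s^*}$, gives the same equality; but the projection-uniqueness argument is the cleanest to write out.
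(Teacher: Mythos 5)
Your proposal is correct and follows essentially the same route as the paper: both arguments identify $\pi(p_s)$ and $V_sV_{s^*}$ as (orthogonal) projections onto the common subspace $\pi(A_s)H$ and conclude by uniqueness of such projections. Your explicit check that the range of $\pi(p_s)$ is exactly $\pi(A_s)H$ (and hence closed) is a welcome bit of extra care that the paper glosses over, but it does not change the substance of the argument.
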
 
\begin{proof}
  Since $p_s=p_{ss^*}$, we can restrict to the idempotent $q=ss^*$.
  Since $p_q$ is a projector (proposition \ref{pi_prop}, (ii)), the same
  applies to $\pi(p_q)$. Let $a_q \in A_q$. Then $a_q=p_q a_q=a_q p_q$ and
  therefore $\pi(a_q)=\pi(p_q) \pi(a_q)=\pi(a_q) \pi(p_q)$. It follows that
  $\pi(p_q)H = \pi(A_q) H$.

  $V_q$ has range and domain $\pi(A_q)H$ and projects onto $\pi(A_{q})H$
  according to the previous proposition \ref{prop_cov_rep}, i.e. $V_q$ 
  and $\pi(p_q)$ have the same properties as projectors and are the same.
\end{proof}

\begin{proposition} \label{cov_rep_is_alg_rep}
  Let Let $(\pi, V, H)$ be a covariant representation
  of the action $\alpha$ of the inverse semigroup $S$
  on the algebra $A$. Then 
  \begin{align*}
     \paraa{\pi\times V}(x)=\sum_{s\in S}^\text{finite} \pi\paraa{x(s)}V_s
  \end{align*}
  is a representation of the algebra $L(A, S, \alpha)$
  of proposition \ref{cpa_idem_construct} (and also of proposition \ref{cpa_construct}).
\end{proposition}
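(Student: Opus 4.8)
The plan is to reduce everything to the two identities (v) and (vi) of proposition \ref{prop_cov_rep}, which already encode exactly the multiplication and involution of $L(A,S,\alpha)$. Since each map $a\mapsto\pi(a)$ is linear and the defining sum is finite, $(\pi\times V)$ is manifestly linear in $x$; hence what remains is to check that it is multiplicative and that it intertwines the involutions, and by (bi)linearity it suffices to treat the generating elements $x=a_s\delta_s$ and $y=a_t\delta_t$ with $a_s\in A_s$, $a_t\in A_t$, for which $(\pi\times V)(a_s\delta_s)=\pi(a_s)V_s$.

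For multiplicativity I would first recall from proposition \ref{cpa_idem_construct} that $a_s\delta_s\cdot a_t\delta_t=\alpha_s(\alpha_{s^*}(a_s)a_t)\delta_{st}$, so that applying $\pi\times V$ to this product yields $\pi(\alpha_s(\alpha_{s^*}(a_s)a_t))V_{st}$. This is precisely the right-hand side of identity (v) in proposition \ref{prop_cov_rep}, whose left-hand side is $\pi(a_s)V_s\pi(a_t)V_t=(\pi\times V)(a_s\delta_s)\,(\pi\times V)(a_t\delta_t)$. Thus the homomorphism property holds on generators and extends to all of $L(A,S,\alpha)$ by linearity. Equivalently, and perhaps more transparently, one expands both sides directly as double sums: $(\pi\times V)(x)(\pi\times V)(y)=\sum_{r,t}\pi(x(r))V_r\pi(y(t))V_t$, while the product formula of proposition \ref{cpa_idem_construct} gives $(\pi\times V)(x\cdot y)=\sum_{r,t}\pi(\alpha_r(\alpha_{r^*}(x(r))y(t)))V_{rt}$, and (v) matches these two term by term.

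For the involution I would use that $(a_s\delta_s)^*=\alpha_{s^*}(a_s^*)\delta_{s^*}$, so that $(\pi\times V)((a_s\delta_s)^*)=\pi(\alpha_{s^*}(a_s^*))V_{s^*}$; by identity (vi) of proposition \ref{prop_cov_rep} this equals $(\pi(a_s)V_s)^*=((\pi\times V)(a_s\delta_s))^*$. Since both the involution on $L(A,S,\alpha)$ and the map $(\pi\times V)$ are conjugate-linear, checking on generators suffices, and one concludes that $(\pi\times V)$ is in fact a $*$-representation.

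There is no serious obstacle here: the entire content of the proposition is already packed into parts (v) and (vi) of proposition \ref{prop_cov_rep}, which were themselves established from the covariance relations of definition \ref{def_cov_rep} together with the partial-isometry identity $V_sV_{s^*}V_s=V_s$. The only points requiring mild care are the legitimacy of rearranging the finite double sum over $(r,t)$ (which is justified because all supports are finite) and the observation that, unlike the crossed product $A\rtimes_\alpha S$, the algebra $L(A,S,\alpha)$ is \emph{not} a quotient, so at this stage no compatibility of $(\pi\times V)$ with the ideal $N$ needs to be verified.
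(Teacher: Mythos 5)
Your proposal is correct and follows essentially the same route as the paper: both reduce to the generators $a_s\delta_s$ by linearity and then invoke items (v) and (vi) of proposition \ref{prop_cov_rep} to match the multiplication and involution of $L(A,S,\alpha)$. Your closing remarks on the finiteness of the double sum and on $L(A,S,\alpha)$ not yet being the quotient by $N$ are accurate but add nothing beyond what the paper's argument already implicitly handles.
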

\begin{proof}
  Let $a_s\delta_s$ be the function, which
  maps $s\in S$ to $a_s\in A_s$ and the
  other elements to $0$. Then
  $\paraa{\pi\times V}(a_s\delta_s)=\pi(a_s)V_s$.
  It is enough to show the properties of an
  algebra homomorphism on the functions $a_s\delta_s$,
  since they extend to general functions by linearity.
  
  For the functions $a_s\delta_s$ and $a_t\delta_t$
  it follows
  \begin{align*}
     \paraa{\pi\times V}(a_s\delta_s\cdot a_t\delta_t)
       & = \paraa{\pi\times V} \parab{ \alpha_s\paraa{\alpha_{s^*}(a_s)a_t) } \delta_{st} }  \\
       \paraa{\pi\times V}(a_s\delta_s) \paraa{\pi\times V}(a_t\delta_t)
       & = \pi(a_s)V_s \pi(a_t)V_t
       = \pi\parab{ \alpha_s\paraa{\alpha_{s^*}(a_s)a_t) } } V_{st} 
  \end{align*}
  The  last step follows from proposition \ref{prop_cov_rep}, item (v).
  Furthermore,	
  \begin{align*}
     \paraa{\pi\times V}\paraa{(a_s\delta_s)^* }
       = \paraa{\pi\times V} \paraa{ \alpha_{s^*}(a_s^*)\delta_{s^*} }
       = \pi\paraa{\alpha_{s^*}(a_{s^*})} V_{s^*} \\
     \parab{ \paraa{\pi\times V}\paraa{a_s\delta_s} }^*
     = \paraa{ \pi(a_s) V_s }^*
     =\pi \paraa{ \alpha_{s^*}(a_s^*)}V_{s^*} 
  \end{align*}
  The last step follows from proposition \ref{prop_cov_rep}, item (vi).
\end{proof}

The following proposition shows that covariant representations of the 
algebra $L(A, S, \alpha)$, which is the basis of the crossed
product algebra (see proposition \ref{cpa_idem_construct}), are
automatically covariant representation of the
crossed product algebra.

\begin{proposition} \label{cov_rep_L1}
 Let $L(A, S, \alpha)$ be the algebra of proposition \ref{cpa_idem_construct}
 (and also of proposition \ref{cpa_construct}). Also let
 $\rho=\pi\times V:L(A, S, \alpha)\rightarrow B(H)$ be a covariant representation.
 Then $\rho(a\delta_r -a\delta_t)=0$ for $r\leq t\in S$ and $a\in A_r$.
\end{proposition}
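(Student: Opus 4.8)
The plan is to evaluate $\rho$ on the two generators directly. By proposition \ref{cov_rep_is_alg_rep} we have $\rho(a\delta_s)=\pi(a)V_s$, so by linearity of $\rho$ the claim $\rho(a\delta_r-a\delta_t)=0$ is equivalent to the operator identity $\pi(a)V_r=\pi(a)V_t$ for $a\in A_r$ and $r\leq t$. The whole proof reduces to establishing this identity.

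First I would unfold the hypothesis $r\leq t$: by definition this means $r=qt$ for some idempotent $q\in S$. Since $s\mapsto V_s$ is a semigroup homomorphism, $V_r=V_{qt}=V_qV_t$, so the target becomes $\pi(a)V_qV_t=\pi(a)V_t$, and it therefore suffices to prove the simpler statement $\pi(a)V_q=\pi(a)$.

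The key step is to exploit membership of $a$ in $A_q$. Because $a\in A_r=A_{qt}$ and proposition \ref{action_prop_idem}(ii) gives $A_{qt}=A_q\cap A_t$, we have in particular $a\in A_q$. Recall that an idempotent satisfies $q^*=q$ and $qq=q$, so the homomorphism property collapses $V_qV_{q^*}=V_qV_q=V_q$. Applying proposition \ref{prop_cov_rep}(iii) with $s=q$ then yields $\pi(a)V_qV_{q^*}=\pi(a)$, that is $\pi(a)V_q=\pi(a)$ for every $a\in A_q$.

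Combining these facts gives $\rho(a\delta_r)=\pi(a)V_r=\pi(a)V_qV_t=\pi(a)V_t=\rho(a\delta_t)$, hence $\rho(a\delta_r-a\delta_t)=0$, as required. I do not expect a serious obstacle; the only point needing care is verifying that the covariance relation of proposition \ref{prop_cov_rep}(iii) degenerates correctly for the self-adjoint idempotent $q$, so that the two-sided projector $V_qV_{q^*}$ reduces to the single operator $V_q$. This is precisely the place where the idempotent structure of the inverse semigroup $S$ enters and drives the whole argument.
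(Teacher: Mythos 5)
Your proposal is correct and follows essentially the same route as the paper: unfold $r\leq t$ as $r=qt$, use $A_{qt}=A_q\cap A_t$ to get $a\in A_q$, reduce to $\pi(a)V_q=\pi(a)$, and conclude via the homomorphism property $V_{qt}=V_qV_t$. The only difference is that you spell out the justification of $\pi(a)V_q=\pi(a)$ (via $q^*=q$, $V_qV_{q^*}=V_q$ and proposition \ref{prop_cov_rep}(iii)), a step the paper leaves implicit.
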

\begin{proof}
  $r\leq t$ means that $r=qt$ with an idempotent $q\in S$.
  In this case $A_r=A_{qt}=A_q\cap A_t$, see proposition \ref{action_prop_idem},
  and therefore $\pi(a)V_q=\pi(a)$. It follows that
   \begin{align*}
     \rho \paraa{ a\delta_r }
       = \rho \paraa{ a\delta_{qt} }
       = \pi(a) V_{qt}
       = \pi(a) V_q V_t
       = \pi(a) V_t
       = \rho\paraa{a\delta_t}
  \end{align*}
\end{proof}

For crossed product algebras based on algebras with partial 
identities $p_s$, the algebra elements $U_s=p_s\delta_s$
form a representation of the semigroup (see proposition \ref{cpa_rel}).
Covariant representations map the algebra elements $U_s$ 
to the image of the semigroup homomorphims $V_s$, since 
\begin{align*}
  \rho \paraa{U_s} = \pi(p_s)V_s= V_{ss^*}V_s = V_s
\end{align*}
(see propositions \ref{prop_cov_rep_pi}).
The representations of these crossed product algebras on a Hilbert space
are automatically covariant representations.

\begin{proposition} \label{cov_rep_L2}
 Let $L(A, S, \alpha)$ be the algebra of proposition \ref{cpa_construct}, where
 the ideals $A_s$ have partial identities, and let
 $\rho:L(A, S, \alpha)\rightarrow B(H)$ be a Hilbert space representation
 with $\rho(p_s\delta_e - p_{ss^*}\delta_{ss^*})=0$ for $s \in S$.
 Then $\rho$ is a covariant representation.
\end{proposition}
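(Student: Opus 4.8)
The plan is to read off the data $(\pi,V,H)$ of a covariant representation directly from $\rho$ and then check the two covariance axioms of definition \ref{def_cov_rep}. I would set $\pi:=\rho\circ\iota$, so that $\pi(a)=\rho(a\delta_e)$, and $V_s:=\rho(U_s)=\rho(p_s\delta_s)$ for $s\in S$. Since $\iota$ is an injective $*$-algebra morphism (proposition \ref{cpa_rel}(ii)), $\pi$ is a $*$-representation of $A$. Using $U_sU_t=U_{st}$ and $(U_s)^*=U_{s^*}$ (proposition \ref{cpa_rel}(iv),(v)) together with the fact that $\rho$ is a $*$-homomorphism, one gets $V_sV_t=V_{st}$ and $V_{s^*}=V_s^*$ immediately, so $s\mapsto V_s$ is a semigroup homomorphism. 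Each $V_s$ is moreover a partial isometry: from $ss^*s=s$ we have $U_sU_{s^*}U_s=U_s$, hence $V_sV_s^*V_s=V_s$, and proposition \ref{part_iso_prop}(vi) applies.

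The hypothesis is the engine behind both axioms, so first I would rephrase it. Since $p_{ss^*}$ is exactly the partial identity of $A_{ss^*}=A_s$, one has $p_{ss^*}\delta_{ss^*}=U_{ss^*}$, and the assumption $\rho(p_s\delta_e-p_{ss^*}\delta_{ss^*})=0$ becomes $\pi(p_s)=V_{ss^*}=V_sV_{s^*}$ for every $s\in S$.

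For axiom (i) I would compute $U_s\cdot(a\delta_e)\cdot U_{s^*}$ inside $L(A,S,\alpha)$ for $a\in A_{s^*}$, using the multiplication rule of proposition \ref{cpa_construct} and $\alpha_{s^*}(p_s)=p_{s^*}$. Tracking the partial identities, $p_s\delta_s\cdot a\delta_e=\alpha_s(p_{s^*}a)\delta_s=\alpha_s(a)\delta_s$, and then $\alpha_s(a)\delta_s\cdot p_{s^*}\delta_{s^*}=\alpha_s(a\,p_{s^*})\delta_{ss^*}=\alpha_s(a)\delta_{ss^*}=\iota(\alpha_s(a))\,U_{ss^*}$, the absorptions being justified by $a\in A_{s^*}$ and $\alpha_s(a)\in A_s$. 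Applying $\rho$ and the rephrased hypothesis then yields $V_s\pi(a)V_{s^*}=\pi(\alpha_s(a))V_{ss^*}=\pi(\alpha_s(a))\pi(p_s)=\pi(\alpha_s(a))$, which is (i). For axiom (ii), since $V_s$ is a partial isometry its range and domain are the images of $V_sV_s^*$ and $V_s^*V_s$ (proposition \ref{part_iso_prop}). Here $V_sV_s^*=V_sV_{s^*}=\pi(p_s)$, and as $p_s$ is the partial identity of $A_s$ one checks $\pi(p_s)H=\pi(A_s)H$; applying the rephrased hypothesis to $s^*$ (so $s^*s=s^*(s^*)^*$) gives $V_s^*V_s=V_{s^*s}=\pi(p_{s^*})$ with $\pi(p_{s^*})H=\pi(A_{s^*})H$. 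Thus $V_s$ has range $\pi(A_s)H$ and domain $\pi(A_{s^*})H$, which is (ii).

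The semigroup-homomorphism and partial-isometry checks are routine; the step I expect to be most delicate is the algebraic identity $U_s(a\delta_e)U_{s^*}=\iota(\alpha_s(a))U_{ss^*}$, where one must track precisely which partial identity is absorbed at each multiplication and invoke $\alpha_{s^*}(p_s)=p_{s^*}$ and $a\in A_{s^*}$; tightly linked to it is the correct translation of the defining relation into $\pi(p_s)=V_sV_{s^*}$, on which both axioms rest. (If the representations here are not tacitly assumed non-degenerate, one should restrict to the essential subspace $\pi(p_e)H$ so that $V_e=\pi(p_e)$ acts as the identity, as required when $S$ has a unit.)
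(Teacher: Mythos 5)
Your proposal is correct and follows the paper's proof in all essentials: the same definitions $\pi(a)=\rho(a\delta_e)$ and $V_s=\rho(p_s\delta_s)$, the same derivation of the semigroup-homomorphism and partial-isometry properties from $U_sU_t=U_{st}$ and $U_sU_{s^*}U_s=U_s$, and the same computation $U_s\cdot(a\delta_e)\cdot U_{s^*}=\alpha_s(a)\delta_{ss^*}$ combined with the hypothesis $\rho(p_s\delta_e-p_{ss^*}\delta_{ss^*})=0$ to obtain the covariance identity $V_s\pi(a)V_{s^*}=\pi(\alpha_s(a))$. The one place where you genuinely diverge is the verification of the domain/range axiom: you rephrase the hypothesis once and for all as $\pi(p_s)=V_sV_{s^*}$ and then invoke proposition \ref{part_iso_prop} (range of a partial isometry is the image of $VV^*$, domain is the image of $V^*V$) together with $\pi(p_s)H=\pi(A_s)H$, whereas the paper argues directly, showing by computation that $V_s$ maps $\pi(A_{s^*})H$ onto $\pi(A_s)H$ and separately that $V_s$ annihilates $\pi(A_{s^*})H^{\perp}$ via the factorization $p_s\delta_s=p_s\delta_s\cdot p_{s^*}\delta_e$. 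Your route is shorter and leans more systematically on the hypothesis (it is essentially proposition \ref{prop_cov_rep_pi} run in reverse); the paper's is more elementary and does not presuppose that $\pi(p_{s^*})$ is recognized as the orthogonal projection onto $\pi(A_{s^*})H$, though that fact is immediate since $p_{s^*}$ is a self-adjoint idempotent and $\pi$ is a $*$-representation. Your closing caveat about non-degeneracy and $V_e=1_H$ is apt: the paper's proof is silent on this point too, and it is needed for $V$ to be a representation of the unital inverse semigroup in the strict sense of the paper's definition.
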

\begin{proof}
  Assume that 
  $\rho(p_s\delta_e - p_{ss^*}\delta_{ss^*})=0$ for $s \in S$.
  We define $\pi\paraa{a}=\rho\paraa{a\delta_e}$, which is obviously
  a $*$-algebra representation on $H$ and
  $V_s=\rho\paraa{p_s\delta_s}$, which
  is a representation of the inverse semigroup $S$ on $H$.
  In particular
  \begin{align*}
     V_s V_t = \rho\paraa{p_s\delta_s \cdot p_t\delta_t}
                = \rho\paraa{p_s \alpha_s{p_{s^*} p_t }\delta_{st} }
                = \rho\paraa{p_s p_{st} \delta_{st} }
                = \rho\paraa{p_{st} \delta_{st} } = V_{st}
  \end{align*}
  Since $V_sV_{s^*}V_s=V_s$ and $V_{s^*}V_sV_{s^*}=V_{s^*}$
  the $V_s$ are partial isometries (proposition \ref{prop_cov_rep}).
    
  Further we have to show that
  $V_s$ has domain $\pi(A_{s^*})H$ and range $\pi(A_{s})H$.
  First let $h \in \pi(A_{s^*})H$, then 
  $h=\pi(a)k=\rho(a\delta_e )$ for some $a\in A_{s^*}$ and
  some $k\in H$. With this
  \begin{align*}
    V_s h = \rho\paraa{p_s\delta_s \cdot a\delta_e } k
            = \rho\paraa{p_s\alpha_s( p_{s^*}a) \delta_s } k
            = \rho\paraa{\alpha_s(a) p_s \delta_s } k
            = \pi \paraa{ \alpha_s(a) } k
  \end{align*}
  Since for every $b\in A_{s}$ there is an $a\in A_{s^*}$
  with $b=\alpha_s(a)$, we see that $V_s$ maps
  $\pi(A_{s^*})H$ onto $\pi(A_{s})H$, i.e.
  the range of $V_s$ is $\pi(A_{s})H$.
  
  Now let $h \in \pi(A_{s^*})H^\perp$, then
  \begin{align*}
     0= <h, \rho\paraa{a\delta_e}k>
         =<\rho\paraa{(a\delta_e)^*} h, k>
         = <\rho\paraa{a^*\delta_e} h, k>
  \end{align*}
  for all $a\in A_{s^*}$ and $k\in H$. Since
  \begin{align*}
    p_s\delta_s\cdot p_{s^*}\delta_e = p_s\alpha_s\paraa{p_{s^*}p_{s^*}} \delta_s = p_s \delta_s
  \end{align*}
  It follows that 
  \begin{align*}
     V_s h = \rho\paraa{p_s \delta_s} h = \rho\paraa{p_s\delta_s} \rho\paraa{ p_{s^*}\delta_e} h = 0
  \end{align*}
  This means that $V_s \pi(A_{s^*})H^\perp = 0$, i.e. the domain of
  $V_s$ is $\pi(A_{s^*})H$.
        
  We also have to show that $V_s\pi(a)V_{s^*} = \pi\paraa{\alpha_s(a)}$
  for all $a\in A_{s^*}$. Now
  \begin{align*}
     V_s\pi(a)V_{s^*} & = \rho\paraa{p_s\delta_s\cdot a\delta_e \cdot p_{s^*} \delta_{s^*} }
                    = \rho\paraa{p_s\delta_s\cdot ap_{s^*} \delta_{s^*} }
                    = \rho\paraa{ p_s \alpha_s\paraa{ p_{s^*}  ap_{s^*} } \delta_{ss^*} } \\
                    & = \rho\paraa{ \alpha_s\paraa{ a } p_s \delta_{ss^*} }
                    = \rho\paraa{ \alpha_s\paraa{ a } p_s \delta_e }
                    = \rho\paraa{ \alpha_s\paraa{ a } \delta_e }
                    = \pi\paraa{ \alpha_s\paraa{ a } }
  \end{align*}
  where we have used that $\rho(p_s\delta_e - p_s \delta_{ss^*})=0$.
\end{proof}

\begin{corollary}
  A representation of the crossed product algebra of
  definition \ref{def_pi_cpa} is a covariant
  representation.
\end{corollary}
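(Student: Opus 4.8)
The plan is to reduce the statement to Proposition \ref{cov_rep_L2}, which already establishes exactly the converse-direction content we need. By Definition \ref{def_pi_cpa}, the crossed product algebra is the quotient $A\rtimes_\alpha S = L(A,S,\alpha)/N$, where $N$ is the ideal of $L(A,S,\alpha)$ generated by the elements $p_s\delta_e - p_{ss^*}\delta_{ss^*}$ for $s\in S$. So first I would observe that a Hilbert space representation $\rho$ of this quotient algebra pulls back along the canonical quotient map $q:L(A,S,\alpha)\to A\rtimes_\alpha S$ to a Hilbert space representation $\tilde\rho = \rho\circ q$ of the underlying algebra $L(A,S,\alpha)$ of Proposition \ref{cpa_construct}.

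The key point is then that $\tilde\rho$ annihilates $N$; in particular it kills each of the generating elements, so that $\tilde\rho(p_s\delta_e - p_{ss^*}\delta_{ss^*}) = 0$ for every $s\in S$. This is precisely the hypothesis of Proposition \ref{cov_rep_L2}. Applying that proposition, I would conclude that $\tilde\rho$ is a covariant representation: there is a covariant representation $(\pi, V, H)$ of the action $\alpha$, with $\pi(a) = \tilde\rho(a\delta_e)$ and $V_s = \tilde\rho(p_s\delta_s)$, such that $\tilde\rho = \pi\times V$ in the sense of Proposition \ref{cov_rep_is_alg_rep}. Since $\tilde\rho = \rho\circ q$ factors through $q$, the covariant representation $(\pi, V, H)$ so obtained is a covariant representation attached to $\rho$ itself, which is the assertion.

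There is essentially no hard step here, as the entire analytic content is front-loaded into Proposition \ref{cov_rep_L2}. The only point deserving care is the identification of ``a representation of the quotient $A\rtimes_\alpha S$'' with ``a representation of $L(A,S,\alpha)$ that annihilates $N$'': because $N$ is defined as the ideal \emph{generated} by the elements $p_s\delta_e - p_{ss^*}\delta_{ss^*}$ and $\tilde\rho$ is an algebra homomorphism, it suffices that these generators lie in $\ker\tilde\rho$, and then all of $N$ is automatically annihilated. Thus the single relation whose vanishing is required in the hypothesis of Proposition \ref{cov_rep_L2} is exactly what the quotient construction of Definition \ref{def_pi_cpa} guarantees, and the corollary follows immediately.
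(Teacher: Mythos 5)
Your proposal is correct and matches the paper's intended argument: the corollary is stated immediately after Proposition \ref{cov_rep_L2} precisely because a representation of the quotient $L(A,S,\alpha)/N$ pulls back to a representation of $L(A,S,\alpha)$ annihilating the generators $p_s\delta_e - p_{ss^*}\delta_{ss^*}$ of $N$, which is exactly the hypothesis of that proposition. Your added remark that killing the generators suffices to kill the whole ideal is a correct and worthwhile clarification of a step the paper leaves implicit.
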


\cite{Sieben_1996} shows that the above result is also
true for the crossed product algebra of definition \ref{def_alg_cpa}
in the context of $C^*$-algebras. For the proof, the
partial identities $p_s$ are replaced with approximate identities,
which are always present for closed ideals.

\section{A special covariant representation for function algebras} \label{sec:spec_cov_rep}

\begin{remark} \label{semi_fun_alg}
The function algebras $\mathcal{F}(X)$, $C(X)$ and $C_0(X)$ 
summarized as $\mathcal{\tilde{F}}(X)$ in 
remark \ref{com_fun_alg} are all semiprime, since
every (continuous) function can be written as product of
two (continuous) functions. For example, 
$f(x)=r(x)e^{i\varphi(x)}=\sqrt{r(x)}\cdot \sqrt{r(x)} e^{i\varphi(x)}$.

Thus, when there is a (continuous) action $\alpha$ of the
inverse semigroup $S$ on the set (or topological space) $X$
and $\hat\alpha$ is the induced action according to
proposition \ref{fun_alg_act}, it is possible to form
the crossed product algebra
$\mathcal{\tilde{F}}(X) \rtimes_{\hat\alpha} S$.
\end{remark}

In the following we define a special type of covariant representation for
these type of functions algebras, which are therefore
also representations of the crossed product algebra
$\mathcal{\tilde{F}}(X) \rtimes_{\hat\alpha} S$.

\begin{definition} \label{point_hs}
  Let $\alpha$ be an action of a discrete
  inverse semigroup $S$ on the set $X$ and
  let $x_i \in X, i \in I$ with $I$ a finite index set
  be different points in $X$.  Then the $\alpha$-generated set
  \begin{align*}
    \tilde{X}(\alpha, x_i) = \{\alpha_s(x_i) | s\in S, i\in I, \text{ s. t. } x_i\in X_{s^*}\}
  \end{align*}
  is a discrete subset of $X$. We define the
  Hilbert space $\mathcal{H}(\alpha, x_i)$ as the space
  of square summable functions on the set $\tilde{X}$.
\end{definition}

We write the basis vectors associated with an $y=\alpha_s(x_i) \in \tilde{X}$ 
as $\ket{y}$ and it follows that the inner product of 
two such basis vectors is $<y| y'>=\delta_{y,y'}$. 
With this, every element of $\mathcal{H}(\alpha, x_i)$
is of the form
\begin{align*}
   w = \sum_{y\in \tilde{X}} w_y \ket{y}
\end{align*}
with $w_y$ being complex numbers.
Note that there may be points $y=\alpha_s(x_i)=\alpha_s'(x_i')$,
which can be reached by different sets $(s,i)$ and $(s',i')$.
Such points are solely counted once.

\begin{proposition}  \label{point_hs_rep}
   Let $\alpha$ be an action of an inverse semigroup $S$ on the set $X$,
   let $x_i \in X, i \in I$ with $I$ a finite index set different points in $X$, 
   let $\tilde{X}(\alpha, x_i)$ be the generated point set and
   $\mathcal{H}(\alpha, x_i)$ the Hilbert space such
   as in definition \ref{point_hs}. 
   
   Let $\mathcal{\tilde{F}}(X)$ be one of the function
   algebras $\mathcal{F}(X)$ $C(X)$ or $C_0(X)$,
   then for $s\in S$, $y \in \tilde{X}(\alpha, x_i)$ and $f \in \tilde{\mathcal{F}}(X)$
   \begin{align*}
      V_s \ket{y} = \begin{cases}
  		              \ket{\alpha_s(y)}, \text{ if } y\in X_{s^*} \\
                           0, \text{ otherwise}
                        \end{cases} ,  \qquad
      \pi(f) \ket{y} = f(y) \ket{y}
   \end{align*}
   is a covariant representation on $\mathcal{H}(\alpha, x_i)$
   with respect to the induced action $\hat{\alpha}$
   of $S$ on the algebra $\tilde{\mathcal{F}}(X)$.
\end{proposition}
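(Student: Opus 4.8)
The plan is to verify, one at a time, the conditions listed in Definition \ref{def_cov_rep}: that $\pi$ is a $*$-representation, that $s\mapsto V_s$ is a semigroup homomorphism into partial isometries, the covariance identity (i), and the domain/range condition (ii). Everything is checked on the orthonormal basis $\{\ket{y}:y\in\tilde X\}$ and extended by linearity (and, in the bounded cases $C(X)$ and $C_0(X)$, by continuity). The claim that $\pi$ is a $*$-representation is immediate: since $\pi(f)$ is diagonal, $\pi(fg)\ket{y}=f(y)g(y)\ket{y}=\pi(f)\pi(g)\ket{y}$, linearity is clear, and comparing $\langle\pi(f)\ket{y},\ket{z}\rangle$ with $\langle\ket{y},\pi(\overline{f})\ket{z}\rangle$ gives $\pi(f)^*=\pi(\overline{f})$.

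The key preliminary step is the well-definedness of $V_s$, i.e. that $\alpha_s(y)\in\tilde X$ whenever $y\in\tilde X\cap X_{s^*}$, so that $\ket{\alpha_s(y)}$ is an honest basis vector. Writing $y=\alpha_t(x_i)$, the hypothesis $y\in X_t\cap X_{s^*}$ gives, via item (iv) of Proposition \ref{action_prop} (which says $X_{(st)^*}=\alpha_{t^*}(X_t\cap X_{s^*})$), that $x_i=\alpha_{t^*}(y)\in X_{(st)^*}$; hence $\alpha_{st}(x_i)=\alpha_s(\alpha_t(x_i))=\alpha_s(y)$ lies in $\tilde X$. With this in hand, $V_s$ carries the orthonormal vectors $\{\ket{y}:y\in\tilde X\cap X_{s^*}\}$ injectively (because $\alpha_s$ is injective on $X_{s^*}$) onto an orthonormal set and annihilates the rest, so $V_s$ is a partial isometry by Proposition \ref{part_iso_prop}.

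For the homomorphism property I would compute on basis vectors. The identity $V_sV_t=V_{st}$ is exactly the statement that the partial bijections compose, $\alpha_s\alpha_t=\alpha_{st}$: applying $V_sV_t$ to $\ket{y}$ yields $\ket{\alpha_s(\alpha_t(y))}$ precisely when $y\in X_{t^*}$ and $\alpha_t(y)\in X_{s^*}$, which is exactly the domain of the composite $\alpha_{st}$, and there $\alpha_s(\alpha_t(y))=\alpha_{st}(y)$. For $V_{s^*}=V_s^*$ I would compare $\langle V_s\ket{y},\ket{z}\rangle$ with $\langle\ket{y},V_{s^*}\ket{z}\rangle$: both equal $1$ exactly when $y\in X_{s^*}$ and $\alpha_s(y)=z$, the two descriptions matching because $\alpha_{s^*}=\alpha_s^{-1}$ on $X_s$ by items (ii)--(iii) of Proposition \ref{action_prop}.

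Finally the covariance conditions. For (i), take $f\in\mathcal{\tilde F}(X_{s^*})$ and evaluate $V_s\pi(f)V_{s^*}$ on $\ket{y}$: if $y\notin X_s$ both sides vanish (the left since $V_{s^*}\ket{y}=0$, the right since $\hat\alpha_s(f)$ is supported in $X_s$), while for $y\in X_s$ one uses $\alpha_s\alpha_{s^*}=\text{id}_{X_s}$ to get $V_s\pi(f)\ket{\alpha_{s^*}(y)}=f(\alpha_{s^*}(y))\ket{y}=\hat\alpha_s(f)(y)\ket{y}=\pi(\hat\alpha_s(f))\ket{y}$ by the formula of Proposition \ref{fun_alg_act}. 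For (ii), I would note that $\ker V_s$ is spanned by the $\ket{y}$ with $y\notin X_{s^*}$, so its orthogonal complement (the domain of $V_s$) is the closed span of $\{\ket{y}:y\in\tilde X\cap X_{s^*}\}$, which coincides with $\pi(\mathcal{\tilde F}(X_{s^*}))\mathcal H$ because for each such $y$ there is a function in $\mathcal{\tilde F}(X_{s^*})$ nonvanishing at the isolated point $y\in\tilde X$; the range is identified with $\pi(\mathcal{\tilde F}(X_s))\mathcal H$ in the same way, using that $\alpha_s$ maps $\tilde X\cap X_{s^*}$ bijectively onto $\tilde X\cap X_s$. The main obstacle I anticipate is the well-definedness of $V_s$ together with the bookkeeping in $V_sV_t=V_{st}$ -- matching the domain of the composite partial bijection with the surviving basis vectors -- where item (iv) of Proposition \ref{action_prop} does the real work; the identification in (ii) carries only a minor topological subtlety in the $C(X)$ and $C_0(X)$ cases, which is harmless since $\tilde X$ is discrete.
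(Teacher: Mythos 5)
Your proposal is correct and follows essentially the same route as the paper's proof: checking the partial-isometry property on the basis, verifying $V_sV_t=V_{st}$ via the domain identity $X_{(st)^*}=\alpha_{t^*}(X_t\cap X_{s^*})$, comparing matrix elements for $V_{s^*}=V_s^*$, and identifying domain and range with $\pi(\mathcal{\tilde F}(X_{s^*}))\mathcal{H}$ and $\pi(\mathcal{\tilde F}(X_{s}))\mathcal{H}$ by choosing a nonvanishing function. Your explicit check that $\alpha_s(y)\in\tilde X$ for $y\in\tilde X\cap X_{s^*}$ is a detail the paper leaves implicit, and is a worthwhile addition rather than a departure.
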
 
\begin{proof}
  We abbreviate $\mathcal{H}(\alpha, x_i)$ with $\mathcal{H}$
  and $\tilde{X}(\alpha, x_i)$ with $\tilde{X}$.

  Let $w\in\mathcal{H}$, then $w=\sum_{y\in\tilde{X}} c_y\ket{y}$
  and $V_s w = \sum_{y\in\tilde{X}\cap X_{t^*}} c_y\ket{\alpha_s(y)}$
  for $s\in S$. Thus, $||V_s w||<||w||$ and $V_s$ is bounded.
  The subspace $\text{ker}(V_s)^\perp$ is the span of all vectors
  $\ket{y}$ with $\alpha_s(y)\neq 0$. Thus, if additionally
   $w\in \text{ker}(V_s)^\perp$, it follows that $||V_s w||=||w||$
   and $V_s$ is a partial isometry.
   
   Let $y\in\tilde{X}$ and $s,t\in S$. 
   Since $\alpha_{st}(y)=\alpha_s \alpha_t(y)$ for $y\in X_{(st)^*}$
   it follows that $V_s V_t \ket{y} = V_{st} \ket{y}$ for $y\in \tilde{X} \cap X_{(st)^*}$.
   According to proposition \ref{action_prop}
   $X_{(st)^*} = \alpha_{t^*}\paraa{X_t\cap X_{s^*}}$.
   Thus if $y\notin X_{(st)^*}$ it follows that $\alpha_t(y)\notin X_t\cap X_{s^*}$
   and therefore $V_s (V_t \ket{y})=0$. Therefore, $V_{st}=V_s V_t$
   on all of $\mathcal{H}$.
  
  When $y\in\tilde{X}\cap X_{s^*}$ and $y'\in\tilde{X}\cap X_{s}$ then
   \begin{align*}  
     <y', V_s y > = <y',\alpha_s(y)>
        =\delta_{y',\alpha_s(y)}=\delta_{\alpha_{s^*}(y'),y}
        =<\alpha_{s^*}(y'),y> =<V_{s^*} y' ,y>
   \end{align*}
   When $y\notin\tilde{X}\cap X_{s^*}$ or $y'\notin\tilde{X}\cap X_{s}$ then
   \begin{align*}  
     <y',V_s y >=0=<V_{s^*} y' ,y>
   \end{align*}
   Therefore  $V_{s^*}=V_s^*$ and together with $V_{st}=V_s V_t$
   we have shown that the mapping $t\rightarrow V_t$
   is a semigroup homomorphism.

   Since evaluation at a point is an algebra homomorphims, 
   the mapping $\pi$ is an algebra representation.

   Let $f\in \tilde{\mathcal{F}}(X_{s^*})$ and $y\in \tilde{X}$. When
   $y\in X_s$ then
   \begin{align*}  
     V_s\pi(f)V_{s^*}\ket{y} & = V_s\pi(f)\ket{\alpha_{s^*}(y)}
      = V_s f\paraa{\alpha_{s^*}(y)} \ket{\alpha_{s^*}(y)}
      = f\paraa{\alpha_{s^*}(y)} \ket{\alpha_s\alpha_{s^*}(y)} \\
      & = f\paraa{\alpha_{s^*}(y)} \ket{y}
      = \pi\paraa{\hat{\alpha}_s(f)} \ket{y}
   \end{align*}
   and when $y \notin X_s$ then
   \begin{align*}  
     V_s\pi(f)V_{s^*}\ket{y} = 0
      = \pi\paraa{\hat{\alpha}_s(f)} \ket{y}
   \end{align*}
   since $\hat{\alpha}_s(f)(X\setminus X_s) = 0$ by definition.

   The domain of $V_s$, i.e. the vectors, which are
   not mapped to $0$ by $V_s$, is the span
   of all vectors $\ket{y}$ with $y \in \tilde{X}\cap X_{s^*}$. 
   Let $v$ be a vector in the domain of $V_s$, then
   \begin{align*}  
     v=\sum_{y \in \tilde{X}\cap X_{s^*}} v_y \ket{y}
   \end{align*}  
   with $v_y\neq 0$. On the other hand, let $w\in \mathcal{H}$ and 
   $f\in \tilde{\mathcal{F}}(X_{s^*})$, such that $f(y)\neq 0$
   for $y \in \tilde{X}\cap X_{s^*}$. We can find such a function
   for all function algebras in consideration. Then
   \begin{align*}  
     \pi(f) w=\sum_{y \in \tilde{X}\cap X_{s^*}} f(y) w_y \ket{y}
   \end{align*} 
   For $v_y=f(y) w_y$ the vectors are equal and we can choose $w$, such that
   $\pi(f)w=v$. It follows that $V_s$ has domain
   $\pi\para{\tilde{\mathcal{F}}(X_{s^*})}\mathcal{H}$.
   
   In the same way one shows that the range of $V_s$,
   which is the span of all vectors $\ket{y}$ with $y \in \tilde{X}\cap X_s$
   is equal to $\pi\paraa{\tilde{\mathcal{F}}(X_{s})}\mathcal{H}$.
\end{proof}

\begin{corollary} \label{point_hs_cpa_rep}
  In the nomenclature of proposition \ref{point_hs_rep}
  \begin{align*}
     \sum_{s\in S}^\text{finite} f_s \delta_s \mapsto \sum_{s\in S}^\text{finite} \pi\paraa{f_s}V_s
  \end{align*}
  is a representation of the crossed product algebra 
  $\mathcal{\tilde{F}}(X) \rtimes_{\alpha} S$ on the
  Hilbert space $H$
  (see proposition \ref{cov_rep_is_alg_rep}).
  
  In the case, when $\mathcal{\tilde{F}}(X)=\mathcal{F}(X)$
  is the complete function algebra, which contains 
  partial identities $p_s=\text{id}_X$ for all ideals
  $\mathcal{F}(X)$ then
    \begin{align*}
     U_s = p_s \delta_s \mapsto V_s
  \end{align*}
\end{corollary}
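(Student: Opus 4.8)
The plan is to obtain this statement purely by chaining together the three preceding propositions, so that essentially no new computation is required beyond identifying the relevant hypotheses. The analytic content has already been discharged, so the corollary is really a bookkeeping step.

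First I would invoke Proposition \ref{point_hs_rep}, which already establishes that the triple $(\pi, V, \mathcal{H}(\alpha, x_i))$ built from the formulas $V_s\ket{y}=\ket{\alpha_s(y)}$ (or $0$) and $\pi(f)\ket{y}=f(y)\ket{y}$ is a covariant representation of the induced action $\hat\alpha$ of $S$ on $\mathcal{\tilde{F}}(X)$. Since each of $\mathcal{F}(X)$, $C(X)$ and $C_0(X)$ is semiprime by remark \ref{semi_fun_alg}, the crossed product $\mathcal{\tilde{F}}(X) \rtimes_\alpha S = L(\mathcal{\tilde{F}}(X), S, \hat\alpha)/N$ is the one defined in definition \ref{def_alg_cpa}. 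Now Proposition \ref{cov_rep_is_alg_rep} applies verbatim: it tells us that $\pi\times V:\sum_s f_s\delta_s \mapsto \sum_s \pi(f_s)V_s$ is an algebra representation of $L(\mathcal{\tilde{F}}(X), S, \hat\alpha)$ on $\mathcal{H}(\alpha, x_i)$, which is exactly the map in the statement.

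It then remains only to check that this representation descends to the quotient by the ideal $N$. Proposition \ref{cov_rep_L1} supplies precisely this: for any covariant representation $\rho=\pi\times V$ one has $\rho(a\delta_r - a\delta_t)=0$ whenever $r\leq t$ and $a\in\mathcal{\tilde{F}}(X_r)$, i.e. $\rho$ annihilates the generators of $N$. Because $\rho$ is an algebra homomorphism, it then annihilates every element of the ideal $N$ generated by these differences, and hence factors through $L/N = \mathcal{\tilde{F}}(X) \rtimes_\alpha S$. This proves the first assertion.

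For the second assertion I would specialize to $\mathcal{\tilde{F}}(X)=\mathcal{F}(X)$, where each ideal $\mathcal{F}(X_s)$ carries the partial identity $p_s=1_{X_s}$, and compute on the generator $U_s=p_s\delta_s$ directly: $\rho(U_s)=\pi(p_s)V_s$. By Proposition \ref{prop_cov_rep_pi} we have $\pi(p_s)=V_sV_{s^*}$, so $\rho(U_s)=V_sV_{s^*}V_s=V_{ss^*s}=V_s$, using that $V$ is a semigroup homomorphism and that $ss^*s=s$ in the inverse semigroup $S$. Thus $U_s\mapsto V_s$, as claimed. The only point requiring a word of justification is the passage from vanishing on the generators of $N$ to vanishing on all of $N$, and that is immediate from multiplicativity of $\rho$; I do not expect any genuine obstacle here.
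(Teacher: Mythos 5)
Your proposal is correct and follows essentially the same route the paper intends: chain Proposition \ref{point_hs_rep} (the triple is a covariant representation) through Proposition \ref{cov_rep_is_alg_rep} (covariant representations represent $L(A,S,\alpha)$) and Proposition \ref{cov_rep_L1} (they annihilate the generators of $N$, hence all of $N$, so the map descends to the quotient), and then compute $\rho(U_s)=\pi(p_s)V_s=V_sV_{s^*}V_s=V_s$ via Proposition \ref{prop_cov_rep_pi}, exactly as the paper does in the discussion preceding Proposition \ref{cov_rep_L2}. No gaps.
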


Note that the partial identity $p_s$ for the ideals $\mathcal{F}(X)$
can be seen as a limit of continuous bump functions,
which approximate the identity function inside the set
$X_s$. The limit function $p_s$ is not continuous,
since there is a jump from $1$ to $0$ at the border of $X_s$.

\section{Partial group actions} \label{sec:pga} 

\cite{Sieben_1996} shows that a partial group action generates
an inverse semigroup. In \cite{Exel_1998} it is shown
that every group is associated with a universal inverse semigroup and
that the partial actions of the group on a set are in one-to-one
correspondence with the actions of the inverse semigroup on that set. 
We shortly explore this relationship, since in the first part we use the inverse
semigroup generated by a partial group action of $\integers$ on an
interval of $\reals$. 

A partial action of a group on a set is defined
as follows (see for example \cite{Doku_2005}):

\begin{definition} \label{pga}
 Let $G$ be a discrete group and  $X$ a set. A \emph{partial action}
 $\alpha$ of $G$ on $X$ is a collection of subsets $X_g \subseteq X$ $(g\in G)$
 and partial bijections $\alpha_g:X_{g^{-1}}\rightarrow X_g$, such that
  \begin{enumerate}[(i)]
    \item $X_e=X$,
    \item $X_{(gh)^{-1}} \supset \alpha_h^{-1}\paraa{X_h \cap X_{g^{-1}} }$,
    \item $\alpha_{gh}|\alpha_h^{-1}\paraa{X_h \cap X_{g^{-1}} }= \alpha_g\alpha_h$
  \end{enumerate}
  for all $g,h\in G$.
\end{definition}

Note that the range of $\alpha_g\alpha_h$ is $\alpha_h^{-1}\paraa{X_h \cap X_{g^{-1}} }$
and therefore items (ii) and (iii) mean that the mapping $\alpha_{gh}$
extends the mapping $\alpha_g\alpha_h$.
  
In general, a group action on a set restricted to a subset results in a partial group action.
(The following proofs are adapted from \cite{Sieben_1996}.)

\begin{proposition} \label{restricted_action}
Let $\beta$ be a group action of the group $G$ on the set $Y$. Then the action
$\alpha$ defined by the action $\beta$ restricted to the subset $X\subset Y$
with $X_g=X \cap \beta_g(X)$ and $\alpha_g=\beta_g|X_{g^{-1}}$, i.e.
$\beta$ restricted to $X_{g^{-1}}$ $(g\in G)$ is a partial action on $X$.
\end{proposition}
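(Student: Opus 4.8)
The plan is to verify directly the three defining conditions of a partial group action (Definition \ref{pga}) for the collection $\{X_g\}$ and maps $\{\alpha_g\}$ obtained by restricting the global action $\beta$ to $X$. Throughout I will use that $\beta$ is a genuine group action, so $\beta_g$ is a global bijection of $Y$ with $\beta_g^{-1}=\beta_{g^{-1}}$, $\beta_e=\mathrm{id}_Y$, and $\beta_{gh}=\beta_g\beta_h$.

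First I would check condition (i): $X_e = X\cap\beta_e(X)=X\cap X=X$, which is immediate. Next I would confirm that $\alpha_g$ really is a partial bijection $X_{g^{-1}}\to X_g$. Since $\alpha_g=\beta_g|_{X_{g^{-1}}}$ and $\beta_g$ is injective, $\alpha_g$ is injective; it remains to see that $\beta_g\paraa{X_{g^{-1}}}=X_g$. Writing $X_{g^{-1}}=X\cap\beta_{g^{-1}}(X)$ and applying $\beta_g$, I get $\beta_g(X)\cap\beta_g\beta_{g^{-1}}(X)=\beta_g(X)\cap X = X_g$, using $\beta_g\beta_{g^{-1}}=\beta_e=\mathrm{id}_Y$. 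So $\alpha_g$ is a bijection onto $X_g$, as required.

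For conditions (ii) and (iii) the key is to compute the domain of $\alpha_g\alpha_h$. I would compute $\alpha_h^{-1}\paraa{X_h\cap X_{g^{-1}}}=\beta_{h^{-1}}\paraa{X_h\cap X_{g^{-1}}}$ and compare it with $X_{(gh)^{-1}}=X\cap\beta_{(gh)^{-1}}(X)$. Expanding $X_h=X\cap\beta_h(X)$ and $X_{g^{-1}}=X\cap\beta_{g^{-1}}(X)$ and applying $\beta_{h^{-1}}$ gives $\beta_{h^{-1}}(X)\cap X\cap\beta_{h^{-1}}\beta_{g^{-1}}(X)$; since $\beta_{h^{-1}}\beta_{g^{-1}}=\beta_{(gh)^{-1}}$, this equals $\beta_{h^{-1}}(X)\cap X\cap\beta_{(gh)^{-1}}(X)$. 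This set is contained in $X\cap\beta_{(gh)^{-1}}(X)=X_{(gh)^{-1}}$, which is exactly the inclusion (ii). For (iii), on this common domain both $\alpha_{gh}$ and $\alpha_g\alpha_h$ act as the restriction of $\beta_g\beta_h=\beta_{gh}$, so they agree; here I would note that a point $x$ in the domain of $\alpha_g\alpha_h$ has $\beta_h(x)\in X_h\cap X_{g^{-1}}$, so $\beta_g\beta_h(x)\in X_g\subset X$, confirming the image lands in the correct range.

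The only mild subtlety — and the step I would be most careful with — is keeping the set-theoretic bookkeeping straight, since the natural inclusion in (ii) is in general \emph{not} an equality: the restricted map $\alpha_g\alpha_h$ has a smaller domain than $\alpha_{gh}$, which is precisely why a restricted group action yields only a \emph{partial} action rather than a global one. All three verifications are elementary manipulations of intersections and images under the bijections $\beta_g$, with no genuine obstacle beyond careful tracking of which $\beta$'s apply where.
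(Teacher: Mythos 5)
Your proof is correct and follows essentially the same route as the paper: both verify the three axioms of Definition \ref{pga} by expanding $X_h\cap X_{g^{-1}}$, applying $\beta_{h^{-1}}$, and using $\beta_{h^{-1}}\beta_{g^{-1}}=\beta_{(gh)^{-1}}$ to identify the domain of $\alpha_g\alpha_h$ as $\beta_{h^{-1}}(X)\cap X\cap\beta_{(gh)^{-1}}(X)\subset X_{(gh)^{-1}}$. The only cosmetic difference is that the paper also records explicitly that $\alpha_{g^{-1}}$ is the inverse of $\alpha_g$ by computing $\alpha_{g^{-1}}(X_g)=X_{g^{-1}}$, which is the mirror image of your verification that $\beta_g(X_{g^{-1}})=X_g$.
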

\begin{proof}
   $X_e=X \cap \beta_e(X)=X$ and (i) from above definition is fulfilled.

   For $g\in G$, $\alpha_g$ is a partial bijection with
   inverse $\alpha_{g^{-1}}$, since 
   $\text{ran}(\alpha_g)=\text{dom}(\alpha_{g^{-1}})$
   and
   \begin{align*}
     \alpha_{g^{-1}}\paraa{X_g) } =
     \alpha_{g^{-1}}(X\cap \beta_g{X})=
     \beta_{g^{-1}}\paraa{X\cap \beta_g(X) } =
     \beta_{g^{-1}}(X) \cap X=X_{g^{-1}}
   \end{align*}

  For $g,h\in G$, by definition of the concatenation of
  partial bijections, the domain of  $\alpha_g\alpha_h$ is 
   \begin{align*}
     \text{dom}\paraa{ \alpha_g\alpha_h } & = \alpha_{h^{-1}} \paraa{\text{ran}(\alpha_h)\cap \text{dom}(\alpha_g) } \\
     & = \alpha_{h^{-1}} \paraa{X\cap \beta_h(X) \cap \beta_{g^{-1}}(X) } \\
     & = \beta_{h^{-1}} \paraa{\text{dom}(\alpha_{h^{-1}})  \cap  X \cap \beta_h(X) \cap \beta_{g^{-1}}(X) } \\
     & = \beta_{h^{-1}}(X) \cap X \cap \beta_{h^{-1}g^{-1}}(X)
   \end{align*}
   On the other hand
   \begin{align*}
     \text{dom}\parab{ \alpha_{gh}|\alpha_h^{-1}\paraa{ X_{g^{-1}} } } & =
               X \cap \beta_{(gh)^{-1}}(X) \cap \alpha_{h^{-1}}\paraa{ X \cap \beta_{g^{-1}}(X) }  \\
      & = X \cap \beta_{(gh)^{-1}}(X) \cap \beta_{h^{-1}}\paraa{ \text{dom}(\alpha_{h^{-1}})\cap X \cap \beta_{g^{-1}}(X) } \\
      & = X \cap \beta_{(gh)^{-1}}(X) \cap  \beta_{h^{-1}}(X) 
   \end{align*}
   thus, since $\beta_g\beta_h=\beta_{gh}$, $\alpha_{gh}$
   extends $\alpha_g\alpha_h$.
\end{proof}

A further example is the partial
group action generated by a single partial bijection.
\begin{proposition} \label{single_bij_action}
Let $\alpha:X_{-1}\rightarrow X_1$
be a partial bijection of the set $X$. Then the bijections
$\alpha_n=\alpha^n:X_{-n}\rightarrow X_n$
with $n\in \integers$ are a partial group action
of $\integers$ on $X$.
 Let $X_\cap=X_{-1} \cap X_1$. Then the
 domain and range of $\alpha_n$ are
   \begin{align*}
      X_{-n} &=\alpha^{-(n-1)}\paraa{X_\cap} \cap \cdots \cap \alpha^{-1} \paraa{X_\cap} \\
      X_{n} &=\alpha^{n-1} \paraa{X_\cap} \cap \cdots \cap \alpha \paraa{X_\cap}
   \end{align*}
   for $n=2, 3, \dots$.
\end{proposition}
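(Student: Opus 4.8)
The plan is to treat $\alpha$ as an element of the symmetric inverse monoid $I(X)$ and argue in two stages: first that the powers $\alpha^n$ satisfy the axioms of a partial group action (Definition \ref{pga}), and then to compute the domains $X_{-n}$ and ranges $X_n$ by induction using the composition rule for partial bijections recalled in Section \ref{sec:inv_semi}.

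For the partial-action axioms, writing $\integers$ additively, axiom (i) is immediate since $\alpha^0 = \text{id}_X$ has $X_0 = X$. Axioms (ii) and (iii) together assert that $\alpha^{m+n}$ extends the composite $\alpha^m \alpha^n$ as partial bijections, since the domain of $\alpha^m\alpha^n$ is exactly $\alpha_n^{-1}\paraa{X_n\cap X_{-m}}$. To see this I would collapse the word $\alpha^m\alpha^n$ using associativity in $I(X)$ together with $\alpha^k\alpha^{-k}=1_{X_k}$ and $1_A 1_B = 1_{A\cap B}$: when $m,n$ have the same sign the composite equals $\alpha^{m+n}$ outright, and when they have opposite signs one cancels the shorter run of inverses, leaving $\alpha^{m+n}$ pre- or post-composed with an idempotent projection $1_{X_k}$. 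In every case $\alpha^m\alpha^n$ is a restriction of $\alpha^{m+n}$, which is precisely (ii) and (iii).

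For the domain/range formulas I would proceed by induction on $n\geq 2$. From $\alpha^n=\alpha\cdot\alpha^{n-1}$ and the composition rule, $X_n=\text{ran}(\alpha^n)=\alpha\paraa{X_{n-1}\cap X_{-1}}$, and symmetrically, from $\alpha^n = \alpha^{n-1}\cdot\alpha$, $X_{-n}=\alpha^{-1}\paraa{X_1\cap X_{-(n-1)}}$. The base case $n=2$ gives $X_2=\alpha(X_1\cap X_{-1})=\alpha(X_\cap)$ and $X_{-2}=\alpha^{-1}(X_\cap)$, matching the stated single-term formulas. For the inductive step I would use that $\alpha$ restricted to $X_{-1}$ is a bijection and therefore commutes with finite intersections of subsets of $X_{-1}$; pushing the hypothesis $X_{n-1}=\bigcap_{k=1}^{n-2}\alpha^k(X_\cap)$ forward through $\alpha$ turns each $\alpha^k(X_\cap)$ into $\alpha^{k+1}(X_\cap)$ and reproduces the claimed formula for $X_n$, the $k=1$ term being absorbed because the sets $\alpha^k(X_\cap)=X_{k+1}$ are nested decreasing (which itself follows from $\alpha^{k+1}=\alpha^k\cdot\alpha$, giving $X_{k+1}\subset X_k$). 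The computation for $X_{-n}$ is entirely analogous with $\alpha$ replaced by $\alpha^{-1}$.

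The main obstacle I anticipate is bookkeeping rather than conceptual: one must keep the convention for composition direction straight, verify that the images $\alpha^k(X_\cap)$ appearing in the formula really are images under the \emph{partial} bijection $\alpha^k$, i.e. of $X_\cap\cap X_{-k}$, and confirm that applying $\alpha$ commutes with the finite intersection. This last step relies essentially on injectivity of $\alpha$ on its domain, and is where a careless argument could silently drop or add a boundary set.
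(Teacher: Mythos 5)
Your proposal is correct and follows essentially the same route as the paper: axiom (i) is immediate, axioms (ii)--(iii) reduce to $\alpha^{m+n}$ extending $\alpha^m\alpha^n$ in $I(X)$, and the domain/range formulas follow by induction from $X_n=\alpha\paraa{X_{n-1}\cap X_{-1}}$ together with injectivity of $\alpha$ on its domain. If anything you are more careful than the paper's proof, which simply asserts $\alpha^n\alpha^m=\alpha^{n+m}$ even though for opposite signs the composite is only a restriction of $\alpha^{n+m}$ (cut down by an idempotent $1_{X_k}$), which is exactly the point your cancellation argument handles.
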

\begin{proof}
  Since $\alpha_0=\alpha^0=\text{id}_X$ and $X_0=X$,
  item (i) of definition \ref{pga} is fulfilled.
  Items (ii) and (iii) follow automatically, since 
  $\alpha_n\alpha_m=\alpha^n\alpha^m=\alpha^{n+m}=\alpha_{n+m}$.
  
  The sets for domain and range of $\alpha_n$ can be derived by induction by
  using that
     \begin{align*}
        X_n = \alpha\paraa{ X_{n-1} \cap X_{-1} }
          = \alpha\paraa{ X_{n-1} \cap X_1 \cap X_{-1} }  
          = \alpha\paraa{ X_{n-1} \cap X_\cap }  
          = \alpha\paraa{ X_{n-1} } \cap \alpha\paraa{ X_{\cap} }
     \end{align*}
     and that $X_{-n}$ is the range of $\alpha_{-n}$.
\end{proof}

In particular, the domain and range for $n+1$ are contained
in the domain and range for $n$, i.e. for $n>0$, 
$X_{n+1} \subset X_{n}$ and $X_{-n-1} \subset X_{-n}$.

\begin{proposition} \label{pga_prop}
If $\alpha$ is a partial group action of $G$ on $X$, then
   \begin{enumerate}[(i)]
    \item $\alpha_e$ is the identity map on $X$
    \item $\alpha_{g^{-1}}=\alpha_g^{-1}$ 
    \item $\alpha_g(X_{g^{-1}} \cap X_h) = X_g \cap X_{gh}$
    \item $\alpha_{gh}\alpha_{h^{-1}}=\alpha_{g}\alpha_{h}\alpha_{h^{-1}}$ and 
            $\alpha_{g^{-1}}\alpha_{gh}=\alpha_{g^{-1}}\alpha_{g}\alpha_h$ 
    \item $\alpha_{g}=\alpha_{g}\alpha_{g^{-1}}\alpha_{g}$ and 
            $\alpha_{g^{-1}}=\alpha_{g^{-1}}\alpha_{g}\alpha_{g^{-1}}$ 
  \end{enumerate}
  for all $g,h\in G$.
  \end{proposition}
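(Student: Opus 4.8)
The plan is to establish the five items in the stated order, bootstrapping each from the previous ones together with the two substantive clauses of Definition \ref{pga}: the containment $X_{(gh)^{-1}} \supset \alpha_h^{-1}\paraa{X_h \cap X_{g^{-1}}}$ and the extension property $\alpha_{gh}|\alpha_h^{-1}\paraa{X_h\cap X_{g^{-1}}} = \alpha_g\alpha_h$, i.e. $\alpha_{gh}$ agrees with $\alpha_g\alpha_h$ on the whole domain $D=\alpha_{h^{-1}}\paraa{X_h\cap X_{g^{-1}}}$ of the latter. For (i) I would specialize the extension property to $g=h=e$: since $X_e=X$ and $ee=e$, the restriction set is $\alpha_e^{-1}(X)=X$, so $\alpha_e=\alpha_e\alpha_e$ on all of $X$; as $\alpha_e$ is a bijection of $X$, composing with its inverse gives $\alpha_e=\text{id}_X$. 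For (ii) I would apply the extension property to the pairs $(g^{-1},g)$ and $(g,g^{-1})$: the first yields $\alpha_e$ on $X_{g^{-1}}$, hence by (i) $\alpha_{g^{-1}}\alpha_g=\text{id}_{X_{g^{-1}}}$, and the second yields $\alpha_g\alpha_{g^{-1}}=\text{id}_{X_g}$; since $\alpha_g\colon X_{g^{-1}}\to X_g$ is a bijection, these two identities identify $\alpha_{g^{-1}}$ as its two-sided inverse, so $\alpha_{g^{-1}}=\alpha_g^{-1}$.

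For (iii) the inclusion $\subset$ is read off from the extension property: the range of $\alpha_g\alpha_h$ is $\alpha_g\paraa{X_{g^{-1}}\cap X_h}$, which lies inside $\text{ran}(\alpha_{gh})=X_{gh}$ because $\alpha_{gh}$ extends $\alpha_g\alpha_h$, and trivially inside $\text{ran}(\alpha_g)=X_g$. For $\supset$ I would apply this inclusion to the pair $(g^{-1},gh)$, obtaining $\alpha_{g^{-1}}\paraa{X_g\cap X_{gh}}\subset X_{g^{-1}}\cap X_h$; then for $y\in X_g\cap X_{gh}$ the point $x=\alpha_{g^{-1}}(y)$ lies in $X_{g^{-1}}\cap X_h$, and by (ii) $\alpha_g(x)=y$, so $y\in\alpha_g\paraa{X_{g^{-1}}\cap X_h}$.

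For (iv) I would again start from $\alpha_{gh}|_D=\alpha_g\alpha_h$ and right-compose with $\alpha_{h^{-1}}$ (resp. left-compose with $\alpha_{g^{-1}}$). Where both composites are defined they visibly agree, so the real content is that the two sides have the same domain. Here I would use (iii) to evaluate those domains: $\text{dom}\paraa{\alpha_{gh}\alpha_{h^{-1}}}=\alpha_h\paraa{X_{h^{-1}}\cap X_{(gh)^{-1}}}$, and applying (iii) with the indices $h$ and $(gh)^{-1}=h^{-1}g^{-1}$ rewrites this as $X_h\cap X_{g^{-1}}=\text{dom}\paraa{\alpha_g\alpha_h\alpha_{h^{-1}}}$; for the second identity the analogous computation gives $\text{dom}\paraa{\alpha_{g^{-1}}\alpha_{gh}}=\alpha_{(gh)^{-1}}\paraa{X_{gh}\cap X_g}=X_{(gh)^{-1}}\cap X_{h^{-1}}$, which by (iii) equals $D$, matching $\text{dom}\paraa{\alpha_{g^{-1}}\alpha_g\alpha_h}$. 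Finally, for (v) once (ii) is available these are just the partial-bijection relations $\alpha_g\alpha_g^{-1}\alpha_g=\alpha_g$ and its mirror; alternatively one sets $h=g^{-1}$ in the first identity of (iv) and uses (i), since then the left side collapses to $\alpha_e\alpha_g=\alpha_g$.

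I expect the main obstacle to be precisely the domain bookkeeping in (iii) and (iv): the relevant set-equalities hold exactly (not merely up to inclusion) only after (ii) and (iii) are in place, so the order of the argument is essential, and I would keep careful track of which subset $X_{\bullet}$ each composite is defined on rather than relying on formal cancellation of symbols.
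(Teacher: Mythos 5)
Your proposal is correct and follows essentially the same route as the paper's proof: (i) and (ii) from the extension clause of Definition \ref{pga}, (iii) from the containment clause plus the substitution trick for the reverse inclusion, (iv) by matching domains via (iii), and (v) as the special case $h=g^{-1}$. Your version is somewhat more explicit about the domain bookkeeping in (ii) and (iv), but the underlying argument is the same.
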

\begin{proof}
 (i) $\alpha_{ee}$ extends $\alpha_e\alpha_e$ on $\alpha_e^{-1}(X_e=X)$, 
     therefore
   \begin{align*}
     \text{id}=\alpha_e\alpha_e^{-1}=\alpha_{ee}\alpha_e^{-1}
       =\alpha_e\alpha_e\alpha_e^{-1} = \alpha_e
   \end{align*}
 
 (ii) $\alpha_{gg^{-1}}$ extends $\alpha_g\alpha_{g^{-1}}$
          on $\alpha_{g^{-1}}^{-1}(X_{g^{-1}})=X_g$, therefore
   \begin{align*}
     \alpha_g\alpha_{g^{-1}} =\alpha_{gg^{-1}}|X_g
     =\alpha_{e}|X_g=\text{id}|X_g 
   \end{align*}
   
 (iii) By item (ii) of definition \ref{pga} and (ii) above
     \begin{align*}
        \alpha_g(X_{g^{-1}} \cap X_h)=\alpha_{g^{-1}}^{-1}(X_{g^{-1}} \cap X_h) \subset X_{gh}
     \end{align*}
    Since $X_g$ is the range of $\alpha_g$
    \begin{align*}
       \alpha_g(X_{g^{-1}} \cap X_h)\subset X_g\cap X_{gh}
    \end{align*}       
    
    Replacing $g$ with $h^{-1}$ and $h$ with $hg$ results in
    $\alpha_{h^{-1}}(X_h \cap X_{hg})\subset X_{h^{-1}}\cap X_g$ and by 
    applying $\alpha_h$, 
    \begin{align*}
       X_h \cap X_{hg} \subset \alpha_h(X_{h^{-1}}\cap X_g)
    \end{align*}
    
 (iv) By definition, the concatenation of the partial bijections
      $\alpha_{gh}$ and $\alpha_{h^{-1}}$ is between the sets
      \begin{align*}
          \alpha_{gh}\alpha_{h^{-1}}: X_h \rightarrow X_{h^{-1}}\cap X_{h^{-1}g^{-1}} \rightarrow X_{gh}
      \end{align*}
      (where the outer sets are further restricted).
      By (iii), the set in the middle is 
      $\alpha_{h^{-1}}(X_h\cap X_{g^{-1}}) \subset \alpha_{h^{-1}}(X_{g^{-1}})$
      and on this set, $\alpha_{gh}$ extends $\alpha_{g}\alpha_{h}$.
      The second equation follows by inverting the first equation, applying (ii) and exchanging $g$ and $h$ with $h^{-1}$ and
       $g^{-1}$.
       
  (v) This is (iv) with $h=g^{-1}$.
\end{proof}

\begin{proposition} \label{pga_prop_idem}
  Let $\alpha$ be a partial group action of $G$ on $X$. Then
  $\epsilon_g = \alpha_g \alpha_g^{-1}$ is an idempotent
  with
   \begin{enumerate}[(i)]
    \item $\epsilon_g \epsilon_h = \epsilon_h \epsilon_g$
    \item $\alpha_h \epsilon_g = \epsilon_{hg} \alpha_h$ 
  \end{enumerate}
  for all $g,h\in G$.
  \end{proposition}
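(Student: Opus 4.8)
The plan is first to identify $\epsilon_g$ explicitly as an identity partial bijection, which makes idempotence and the commutativity (i) immediate, and then to prove (ii) by matching domains and ranges. Since $\alpha_g:X_{g^{-1}}\rightarrow X_g$ has $\text{ran}(\alpha_g)=X_g$, the symmetric inverse monoid relation $\alpha\alpha^{-1}=1_{\text{ran}(\alpha)}$ recorded in section \ref{sec:inv_semi} gives
\begin{align*}
  \epsilon_g = \alpha_g\alpha_g^{-1} = 1_{X_g},
\end{align*}
the identity map on the subset $X_g$. Idempotence is then the computation $1_{X_g}1_{X_g}=1_{X_g\cap X_g}=1_{X_g}$, using the rule $1_A 1_B=1_{A\cap B}$ for identity maps.

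For (i) I would use the same rule once more:
\begin{align*}
  \epsilon_g\epsilon_h = 1_{X_g}1_{X_h} = 1_{X_g\cap X_h} = 1_{X_h\cap X_g} = 1_{X_h}1_{X_g} = \epsilon_h\epsilon_g,
\end{align*}
so the two idempotents commute because intersection is symmetric.

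For (ii), which I expect to be the main obstacle because it demands careful bookkeeping of domains and ranges, I would exhibit both sides as the same restriction of $\alpha_h$. The left side $\alpha_h\epsilon_g=\alpha_h 1_{X_g}$ is $\alpha_h$ restricted to $\text{dom}(\alpha_h)\cap X_g=X_{h^{-1}}\cap X_g$, and by item (iii) of proposition \ref{pga_prop} (with the roles $g\mapsto h$, $h\mapsto g$) its range is $\alpha_h\paraa{X_{h^{-1}}\cap X_g}=X_h\cap X_{hg}$. The right side $\epsilon_{hg}\alpha_h=1_{X_{hg}}\alpha_h$ is $\alpha_h$ with range restricted to $X_{hg}\cap\text{ran}(\alpha_h)=X_{hg}\cap X_h$, so its domain is $\alpha_{h^{-1}}\paraa{X_h\cap X_{hg}}$, which by item (iii) of proposition \ref{pga_prop} (applied with $g\mapsto h^{-1}$ and $h\mapsto hg$) equals $X_{h^{-1}}\cap X_g$. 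Hence both $\alpha_h\epsilon_g$ and $\epsilon_{hg}\alpha_h$ have domain $X_{h^{-1}}\cap X_g$ and range $X_h\cap X_{hg}$; since each is a restriction of $\alpha_h$ to this common domain, they coincide as partial bijections, establishing (ii).
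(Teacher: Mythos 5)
Your proof is correct, but it takes a genuinely different route from the paper's. The paper proves idempotence and both identities by pure symbol manipulation inside the inverse semigroup: it repeatedly inserts $\alpha_g\alpha_{g^{-1}}\alpha_g=\alpha_g$ and applies the extension relations $\alpha_{gh}\alpha_{h^{-1}}=\alpha_g\alpha_h\alpha_{h^{-1}}$ and $\alpha_{g^{-1}}\alpha_{gh}=\alpha_{g^{-1}}\alpha_g\alpha_h$ from items (iv) and (v) of proposition \ref{pga_prop}, never identifying what the idempotents actually are. You instead recognize $\epsilon_g=\alpha_g\alpha_g^{-1}=1_{X_g}$ as the identity on the range $X_g$ (which the paper itself records as item (ii) of proposition \ref{pga_prop} and as the rule $\alpha\alpha^{-1}=1_{\mathrm{ran}(\alpha)}$ in section \ref{sec:inv_semi}), which makes idempotence and (i) immediate from $1_A 1_B = 1_{A\cap B}$, and reduces (ii) to checking that both sides are restrictions of $\alpha_h$ with the same domain $X_{h^{-1}}\cap X_g$ and range $X_h\cap X_{hg}$ --- a check that rests on item (iii) of proposition \ref{pga_prop}, applied twice with the correct substitutions. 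Your domain/range bookkeeping for (ii) is accurate. What your concrete approach buys is transparency and brevity: the commutativity of the $\epsilon_g$ is just commutativity of set intersection, and (ii) becomes a statement about where $\alpha_h$ is being restricted. What the paper's relational approach buys is that it uses only the abstract relations among the $\alpha_g$, which is the style it needs for the subsequent word manipulations in lemma \ref{pga_lemma1} and propositions \ref{pga_semigroup} and \ref{pga_sg_elem}; it also sidesteps any reliance on the elements being literal partial bijections of a set. Since in this paper all partial actions (including those on algebras) are realized as partial bijections of an underlying set, both arguments are legitimate here.
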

\begin{proof}
   $\epsilon_g$ is an idempotent since
   \begin{align*}
     \epsilon_g^2  = \alpha_g \alpha_g^{-1} \alpha_g \alpha_g^{-1} = \alpha_g \alpha_g^{-1} = \epsilon_g
   \end{align*}
   
   (i) and (ii) follow by direct calculation:
   \begin{align*}
     \epsilon_g \epsilon_h & = \alpha_g \alpha_g^{-1} \alpha_h \alpha_{h^{-1}}
        = \alpha_g \alpha_{g^{-1} h } \alpha_h^{-1}
        = \alpha_g \alpha_{g^{-1} h } \alpha_{h^{-1} g } \alpha_{g^{-1} h }\alpha_h^{-1} \\
      &  = \alpha_{ g g^{-1} h } \alpha_{h^{-1} g } \alpha_{g^{-1} h h^{-1} }
         = \alpha_{h} \alpha_{h^{-1} g } \alpha_{g^{-1}}
        = \alpha_{h} \alpha_{h^{-1}} \alpha_{g} \alpha_{g^{-1}}
        = \epsilon_h \epsilon_g 
   \end{align*}
   \begin{align*}
     \alpha_g \epsilon_h & = \alpha_g \alpha_h \alpha_{h^{-1}}
     = \alpha_g \alpha_{g^-1} \alpha_g \alpha_h \alpha_{h^{-1}}
     = \alpha_g \epsilon_{g^-1} \epsilon_{h^{-1}}
     = \alpha_g \epsilon_{h^{-1}} \epsilon_{g^-1} \\
     &= \alpha_g \alpha_h \alpha_{h^{-1}} \alpha_{g^-1} \alpha_g
     = \alpha_{gh} \alpha_{h^{-1}} \alpha_{g^-1} \alpha_g
     = \alpha_{gh} \alpha_{h^{-1}g^{-1}} \alpha_g
     = \epsilon_{gh} \alpha_g
   \end{align*}
\end{proof}
(The proof is similar to the proof of proposition 2.2 of \cite{Buss_2011}.)

As already mentioned, a partial group action gives rise to an
inverse semigroup. We show this with the aid of two lemmas.

\begin{lemma} \label{pga_lemma1}
   Let $\alpha$ be a partial group action of the group $G$ on the set $X$,
   let $\epsilon_g = \alpha_g \alpha_g^{-1}$, $\epsilon_h = \alpha_h \alpha_h^{-1}$
   and $t=\alpha_{g_1}\dots\alpha_{g_n}$ for $g, h, g_1, \dots, g_n \in G$. Then
   \begin{align*}
     t \epsilon_h t^{-1} \epsilon_g = \epsilon_g t \epsilon_h t^{-1} 
   \end{align*}
\end{lemma}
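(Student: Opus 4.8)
The plan is to recognize that both $\epsilon_g$ and $t\epsilon_h t^{-1}$ are idempotents of the symmetric inverse monoid $I(X)$, and then to invoke the two elementary facts about inverse semigroups recorded in section \ref{sec:inv_semi}: that $sqs^*$ is idempotent whenever $q$ is, and that all idempotents commute. The claimed identity is then just the statement that these two particular idempotents commute.

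First I would observe that $t=\alpha_{g_1}\cdots\alpha_{g_n}$ is a composite of partial bijections of $X$, hence is itself an element of $I(X)$, and that its inverse partial bijection $t^{-1}$ coincides with the inverse-semigroup involution $t^*$ (as recorded for a single partial bijection by $\alpha\alpha^{-1}=1_{\mathrm{ran}(\alpha)}$, $\alpha^{-1}\alpha=1_{\mathrm{dom}(\alpha)}$). Likewise $\epsilon_h=\alpha_h\alpha_h^{-1}$ is an idempotent of $I(X)$ — concretely the partial identity $1_{X_h}$ on the range $X_h$ of $\alpha_h$ — exactly as used when checking $\epsilon_g^2=\epsilon_g$ in proposition \ref{pga_prop_idem}. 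In particular $\epsilon_g$ is an idempotent as well.

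Next I would apply the fact from section \ref{sec:inv_semi} that $sqs^*$ is an idempotent for any $s$ and any idempotent $q$. Taking $s=t$ and $q=\epsilon_h$ shows that $t\epsilon_h t^{-1}=t\epsilon_h t^*$ is an idempotent of $I(X)$. Thus both factors appearing in the asserted equation are idempotents of the inverse semigroup $I(X)$.

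Finally I would invoke the remaining basic fact of section \ref{sec:inv_semi}, that all idempotents of an inverse semigroup commute (equivalently, partial identities satisfy $1_A 1_B=1_{A\cap B}=1_B 1_A$). Applied to the idempotents $t\epsilon_h t^{-1}$ and $\epsilon_g$ this yields precisely $t\epsilon_h t^{-1}\epsilon_g=\epsilon_g\,t\epsilon_h t^{-1}$. I do not anticipate a real obstacle: the only step needing a moment of care is verifying that $t\epsilon_h t^{-1}$ is genuinely an idempotent rather than an arbitrary element, and this is immediate from the $sqs^*$ fact. In particular no induction on $n$ and no appeal to the conjugation formula \ref{pga_prop_idem}(ii) is required, which is why this route is preferable to a direct computational expansion of $t$.
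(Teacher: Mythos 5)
Your proof is correct, but it takes a genuinely different route from the paper's. The paper argues by a computation internal to the partial group action: it moves $\epsilon_g$ leftward through $t^{-1}$ one factor at a time using the conjugation relation $\alpha_h\epsilon_g=\epsilon_{hg}\alpha_h$ of proposition \ref{pga_prop_idem}, so that it becomes $\epsilon_{g_n^{-1}\cdots g_1^{-1}g}$, commutes this with $\epsilon_h$ via part (i) of that proposition, and then moves it back through $t$, where the group indices telescope to give $\epsilon_g$ again. You instead work entirely inside the symmetric inverse monoid $I(X)$: $\epsilon_g$ and $\epsilon_h$ are partial identities (idempotents of $I(X)$), the partial-bijection inverse $t^{-1}$ coincides with the inverse-semigroup involution $t^*$ (this is consistent with $\alpha_{g^{-1}}=\alpha_g^{-1}$ from proposition \ref{pga_prop}), so $t\epsilon_h t^{-1}=t\epsilon_h t^*$ is again an idempotent, and any two idempotents of an inverse semigroup commute --- concretely $1_A 1_B=1_{A\cap B}=1_B 1_A$. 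Both general facts you invoke are stated in section \ref{sec:inv_semi}, so the argument is available within the paper and is not circular (those facts concern $I(X)$ itself, not the semigroup $S_\alpha(G)$ whose construction this lemma supports). Your route is shorter and avoids the conjugation formula entirely; what the paper's computation buys in exchange is an explicit identification of the conjugated idempotent as $\epsilon_{g_n^{-1}\cdots g_1^{-1}g}$, in the same spirit as the later structural result of proposition \ref{pga_sg_elem}, though that extra information is not needed for the lemma itself.
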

\begin{proof}
  We move the idempotent $\epsilon_g$ from the right through $t^{-1}$, where
  according to proposition \ref{pga_prop_idem}, (ii) it becomes $\epsilon_{g_n^{-1}\dots g_1^{-1} g}$.
  This idempotent commutes with $\epsilon_h$ and we can move it through
  $t$, where it becomes $\epsilon_{g_1 \dots g_n g_n^{-1}\dots g_1^{-1} g}=\epsilon_{g}$.
\end{proof}

\begin{lemma} \label{pga_lemma2}
   Let $\alpha$ be a partial group action of the group $G$ on the set $X$. Then
   \begin{align*}
       \alpha_h\paraa{X_{h^{-1}} \cap X_{g_1}\cap & X_{g_1 g_2 } \cap \cdots \cap X_{g_1\dots g_n} } 
         = X_h \cap X_{hg_1}\cap X_{hg_1 g_2 } \cap \cdots \cap X_{hg_1\dots g_n} 
   \end{align*}
   for $h, g_1, \dots, g_n \in G$.
\end{lemma}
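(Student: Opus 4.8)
The plan is to exploit the single fact that $\alpha_h$ is a partial bijection with domain $X_{h^{-1}}$, hence \emph{injective} on $X_{h^{-1}}$, and to combine this with item (iii) of proposition \ref{pga_prop}, namely $\alpha_g(X_{g^{-1}}\cap X_k)=X_g\cap X_{gk}$. The left-hand side of the claimed identity is an image under $\alpha_h$ of an intersection all of whose factors already lie inside the domain $X_{h^{-1}}$ (because $X_{h^{-1}}$ is itself one of the intersected sets), so I can push $\alpha_h$ through the intersection and then evaluate each piece by proposition \ref{pga_prop}(iii). No genuine induction is needed.

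First I would record the elementary set-theoretic fact that an injective map commutes with intersections: if $\beta\colon D\to E$ is a bijection and $\{B_i\}$ is any family of subsets of $D$, then $\beta\paraa{\bigcap_i B_i}=\bigcap_i \beta(B_i)$. The inclusion $\subseteq$ holds for any map; for $\supseteq$, a point $y\in\bigcap_i\beta(B_i)\subseteq E$ has, for each $i$, some preimage in $B_i$, but injectivity forces all these preimages to coincide with the single point $\beta^{-1}(y)$, which therefore lies in every $B_i$ and hence in $\bigcap_i B_i$. Applying this with $\beta=\alpha_h$, $D=X_{h^{-1}}$, $E=X_h$ is legitimate precisely because every set I intersect will be arranged to be a subset of $X_{h^{-1}}$.

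Then I would set $B_0=X_{h^{-1}}$ and $B_i=X_{h^{-1}}\cap X_{g_1\cdots g_i}$ for $1\le i\le n$, so that each $B_i\subseteq X_{h^{-1}}$ and
\begin{align*}
   \bigcap_{i=0}^{n} B_i = X_{h^{-1}}\cap X_{g_1}\cap X_{g_1g_2}\cap\cdots\cap X_{g_1\cdots g_n},
\end{align*}
which is exactly the argument of $\alpha_h$ on the left-hand side. By the intersection fact, $\alpha_h\paraa{\bigcap_{i=0}^n B_i}=\bigcap_{i=0}^n \alpha_h(B_i)$. Evaluating the factors, $\alpha_h(B_0)=\alpha_h(X_{h^{-1}})=X_h$, and by proposition \ref{pga_prop}(iii) with $g=h$ and $k=g_1\cdots g_i$,
\begin{align*}
   \alpha_h(B_i)=\alpha_h\paraa{X_{h^{-1}}\cap X_{g_1\cdots g_i}}=X_h\cap X_{hg_1\cdots g_i}.
\end{align*}
Intersecting over $i=0,\dots,n$ collapses the repeated factor $X_h$ and yields $X_h\cap X_{hg_1}\cap X_{hg_1g_2}\cap\cdots\cap X_{hg_1\cdots g_n}$, which is the right-hand side.

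I expect no serious obstacle here; the only point requiring care is the bookkeeping that guarantees every intersected set lies in $\text{dom}(\alpha_h)=X_{h^{-1}}$, so that the injectivity argument is applicable — this is secured by keeping the explicit factor $X_{h^{-1}}$ (equivalently $B_0$) in the intersection. An alternative, should one prefer to stay in the algebra of idempotents, would be to translate each $X_{g_1\cdots g_i}$ into the projection $\epsilon_{g_1\cdots g_i}$ and use proposition \ref{pga_prop_idem}(ii) repeatedly, but the direct set-theoretic route above is shorter.
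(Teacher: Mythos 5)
Your proposal is correct and is essentially the paper's own argument: the paper likewise splits the intersection into pieces $X_{h^{-1}}\cap X_{g_1\cdots g_i}$, uses that the bijection $\alpha_h$ commutes with intersections of subsets of its domain, applies proposition \ref{pga_prop}(iii) to each piece, and collapses the repeated factor $X_h$. The only difference is that you spell out the injectivity argument for commuting with intersections and carry the explicit factor $B_0=X_{h^{-1}}$, which the paper leaves implicit.
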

\begin{proof}
  Using proposition \ref{pga_prop}, (iii), we calculate
  \begin{align*}
     \alpha_h\paraa{X_{h^{-1}} \cap X_{g_1}\cap & X_{g_1 g_2 } \cap \cdots \cap X_{g_1\dots g_n} } \\
         &= \alpha_h\paraa{X_{h^{-1}}\cap X_{g_1}} \cap \alpha_h\paraa{X_{h^{-1}} \cap  X_{g_1 g_2 } } \cap \cdots \cap
          \alpha_h\paraa{X_{h^{-1}} \cap X_{g_1\dots g_n} }   \\
         &= X_h \cap X_{h g_1} \cap X_h \cap X_{h g_1 g_2 }  \cap \cdots \cap
          X_h  \cap X_{h  g_1\dots g_n} \\
         &= X_h \cap X_{h g_1} \cap X_{h g_1 g_2 }  \cap \cdots \cap X_{h  g_1\dots g_n}           
  \end{align*} 
  where we have used that $\alpha_h(A\cap B)=\alpha_h(A) \cap \alpha_h(B)$ for two subsets $A$ and $B$
  of $X_{h^{-1}}$ since $\alpha_h$ is a bijection.
\end{proof}

\begin{proposition} \label{pga_semigroup}
Let $\alpha$ be a partial group action of the group $G$ on the set $X$. 
Then the partial bijections $\alpha_g$ with $g\in G$ generate a unital 
inverse subsemigroup $S_\alpha(G)$ of the inverse
monoid $I(X)$ of $X$.

For an element $s=\alpha_{g_1}\dots\alpha_{g_n}$ of $S_\alpha(G)$, the
domain and range are
  \begin{align*}
    s: X_{g_n^{-1}}\cap X_{g_n^{-1}g_{n-1}^{-1} } \cap \cdots \cap X_{g_n^{-1}\dots g_{1}^{-1} } \rightarrow
   X_{g_1}\cap X_{g_1 g_2 } \cap \cdots \cap X_{g_1\dots g_n}
  \end{align*}
\end{proposition}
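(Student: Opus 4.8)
The plan is to handle the two assertions separately: first that the generated subsemigroup is a unital inverse subsemigroup of $I(X)$, and then the explicit domain and range formulas, which I would obtain by induction on the word length using Lemma \ref{pga_lemma2}.

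For the first assertion, I would let $S_\alpha(G)$ denote the subsemigroup of $I(X)$ generated by all $\alpha_g$, $g\in G$; by construction it is closed under composition of partial bijections, so it is a subsemigroup. To see that it is an \emph{inverse} subsemigroup, it suffices to show closure under the inverse operation of $I(X)$, which is the functional inverse. An arbitrary element has the form $s=\alpha_{g_1}\cdots\alpha_{g_n}$, and since the functional inverse reverses products, $s^{-1}=\alpha_{g_n}^{-1}\cdots\alpha_{g_1}^{-1}$. Using $\alpha_{g^{-1}}=\alpha_g^{-1}$ from proposition \ref{pga_prop}(ii), this is $s^{-1}=\alpha_{g_n^{-1}}\cdots\alpha_{g_1^{-1}}$, again a product of generators and hence in $S_\alpha(G)$. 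Since $\alpha_e=\text{id}_X$ (proposition \ref{pga_prop}(i)) is a generator and is the unit of $I(X)$, the subsemigroup is unital.

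For the range formula I would induct on $n$, building the word from the left. The base case $s=\alpha_{g_1}$ has range $X_{g_1}$, matching the claim. For the inductive step, write $s=\alpha_{g_1}t$ with $t=\alpha_{g_2}\cdots\alpha_{g_n}$, whose range by the inductive hypothesis is $X_{g_2}\cap X_{g_2 g_3}\cap\cdots\cap X_{g_2\cdots g_n}$. Since composition of partial bijections gives $\text{ran}(\alpha_{g_1}t)=\alpha_{g_1}\paraa{\text{ran}(t)\cap \text{dom}(\alpha_{g_1})}$ with $\text{dom}(\alpha_{g_1})=X_{g_1^{-1}}$, I would apply Lemma \ref{pga_lemma2} with $h=g_1$ to the set $X_{g_1^{-1}}\cap X_{g_2}\cap\cdots\cap X_{g_2\cdots g_n}$. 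This yields exactly $X_{g_1}\cap X_{g_1 g_2}\cap\cdots\cap X_{g_1\cdots g_n}$, the claimed range.

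Finally, the domain formula I would deduce for free: the domain of $s$ equals the range of its inverse $s^{-1}=\alpha_{g_n^{-1}}\cdots\alpha_{g_1^{-1}}$ computed above, and applying the range formula to this word (with generators $g_n^{-1},g_{n-1}^{-1},\dots,g_1^{-1}$) produces $X_{g_n^{-1}}\cap X_{g_n^{-1}g_{n-1}^{-1}}\cap\cdots\cap X_{g_n^{-1}\cdots g_1^{-1}}$, as required. The only real care needed is the correct relabeling of indices when invoking Lemma \ref{pga_lemma2}, so the main obstacle is bookkeeping rather than any genuine conceptual difficulty, since propositions \ref{pga_prop}, \ref{pga_prop_idem} and the two lemmas already carry the substantive content.
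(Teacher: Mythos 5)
Your proof is correct, and the domain/range part follows essentially the same route as the paper: an induction on word length driven by Lemma \ref{pga_lemma2}, with the other of the two formulas recovered by passing to the inverse word (you induct on the range by prepending $\alpha_{g_1}$, the paper inducts on the domain by appending $\alpha_{g_n}$; these are mirror images of one another). Where you genuinely diverge is the inverse-subsemigroup claim. The paper verifies $s s^{-1} s = s$ for $s=\alpha_{g_1}\cdots\alpha_{g_n}$ by an explicit induction that shuffles the idempotents $\epsilon_g$ through the word using Proposition \ref{pga_prop_idem} and Lemma \ref{pga_lemma1}. You instead observe that the generated subsemigroup is closed under the functional inverse of $I(X)$, since $s^{-1}=\alpha_{g_n}^{-1}\cdots\alpha_{g_1}^{-1}=\alpha_{g_n^{-1}}\cdots\alpha_{g_1^{-1}}$ is again a word in the generators by Proposition \ref{pga_prop}(ii), and then invoke the fact that a subsemigroup of the inverse monoid $I(X)$ closed under inversion is an inverse subsemigroup; this is exactly the paper's own parenthetical definition of the term, and the identities $ss^{-1}s=s$ and $s^{-1}ss^{-1}=s^{-1}$, together with uniqueness of the inverse, are inherited from $I(X)$ because $\alpha\alpha^{-1}=1_{\text{ran}(\alpha)}$ for any partial bijection. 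Your route is shorter and makes Lemma \ref{pga_lemma1} unnecessary for this proposition; the paper's explicit computation has the side benefit of rehearsing the normal-form manipulations with the idempotents $\epsilon_g$ that are reused in Proposition \ref{pga_sg_elem}, but as a proof of the stated claim it does more work than needed. The only point worth making explicit in your write-up is the uniqueness clause in the definition of an inverse semigroup: any von Neumann inverse of $s$ inside $S_\alpha(G)$ is also one inside $I(X)$, where it is unique, so no new inverses can appear in the subsemigroup.
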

\begin{proof}
  According to items (i) of proposition \ref{pga_prop} the unit
  is $\alpha_e$. 
  
  We show that $ss^{-1} s=s$ for $s=\alpha_{g_1}\dots\alpha_{g_n}$ 
  by induction.
    
  The case $n=1$ is item (v) of proposition \ref{pga_prop}.
  
  For the general case, we set $t= \alpha_{g_2}\dots\alpha_{g_{n-1}}$. Then by induction
  \begin{align*}
    s s^{-1} s & = \alpha_{g_1} t \alpha_{g_n} \alpha_{g_n^{-1}}  t^{-1} \alpha_{g_1^{-1}} \alpha_{g_1} t \alpha_{g_n} \\
               & = \alpha_{g_1} \alpha_{g_1^{-1}}  \alpha_{g_1} t \alpha_{g_n} \alpha_{g_n^{-1}}  t^{-1} t \alpha_{g_n} \\
               & = \alpha_{g_1} t t^{-1} t \alpha_{g_n} \alpha_{g_n^{-1}}  \alpha_{g_n} \\
               & = \alpha_{g_1} t t^{-1} t \alpha_{g_n} \\
               & = \alpha_{g_1} t \alpha_{g_n} \\
               & = s
  \end{align*}  
  Here we have used proposition \ref{pga_prop_idem} and lemma \ref{pga_lemma1}.
  In the same way, it can be shown that $s^*s s^* =s^*$.
  
  We also show the statement about the domain and range by induction. For $n=1$, the statement
  is true by definition, and by lemma \ref{pga_lemma2}
   \begin{align*}
     \text{dom} \paraa{ \alpha_{g_1}\dots \alpha_{g_n} } 
         & = \alpha_{g_n}^{-1}\paraa{ X_{g_n} \cap \text{dom} \paraa{ \alpha_{g_1}\dots \alpha_{g_{n-1} } } } \\
         & = \alpha_{g_n}^{-1}\paraa{ X_{g_n}  \cap X_{g_{n-1}^{-1}}\cap \cdots \cap X_{g_{n-1}^{-1}\dots g_1^{-1}} } \\
         & =  X_{g_n^{-1}} \cap X_{g_n^{-1}g_{n-1}^{-1}}\cap \cdots \cap X_{{g_n}^{-1}g_{n-1}^{-1}\dots g_1^{-1}} 
  \end{align*}
  The range of $\alpha_{g_1}\dots \alpha_{g_n}$ is the domain of
  $\alpha_{g_n^{-1}}\dots \alpha_{g_1^{-1}} $.
\end{proof}

The inverse semigroup $S_\alpha(G)$ is composed of partial bijections
of the set $X$, which therefore acts naturally on
the set $X$. 

Item (iv) of proposition \ref{pga_prop} provide
relations of the inverse semigroup $S_\alpha(G)$. With the aid
of these relations, it can be shown that every element $s\in S_\alpha(G)$
is a product of idepotents with one element $\alpha_g$
(see \cite{Buss_2011}, Proposition 2.5). 

\begin{proposition} \label{pga_sg_elem}
  Let $\alpha$ be a partial group action of the group $G$ on the set $X$. Then
  every element $s$ of $S_\alpha(G)$ is of the form
  \begin{align*}
    s = \epsilon_{g_1} \cdots \epsilon_{g_n} \alpha_g
   \end{align*}
   for $g_1,\dots,g_n, g \in G$ and
   $\epsilon_{g_i}=\alpha_{g_i}\alpha_{g_i^{-1}}$ idempotents.
   The range of $s$ is
   \begin{align*}
     X_s = X_{\alpha_{g_1}} \cap X_{\alpha_{g_n}} \cap X_{\alpha_{g}}
   \end{align*}
\end{proposition}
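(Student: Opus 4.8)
The plan is to argue by induction on the length of a word in the generators $\alpha_g$. By Proposition \ref{pga_semigroup}, every $s\in S_\alpha(G)$ is a finite product $s=\alpha_{g_1}\cdots\alpha_{g_n}$, so it suffices to show that any such product can be rewritten as a string of idempotents $\epsilon_h=\alpha_h\alpha_{h^{-1}}$ followed by a single $\alpha_g$. The engine of the whole argument will be the reduction identity
\begin{align*}
  \alpha_g\alpha_h = \epsilon_g\,\alpha_{gh},
\end{align*}
which collapses a product of two generators into one idempotent times a single generator.

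First I would establish this identity from the relations already available. Writing $\alpha_h=\alpha_h\alpha_{h^{-1}}\alpha_h$ (item (v) of Proposition \ref{pga_prop}) and then using item (iv) of the same proposition, $\alpha_g\alpha_h\alpha_{h^{-1}}=\alpha_{gh}\alpha_{h^{-1}}$, gives
\begin{align*}
  \alpha_g\alpha_h = \alpha_g\alpha_h\alpha_{h^{-1}}\alpha_h = \alpha_{gh}\alpha_{h^{-1}}\alpha_h = \alpha_{gh}\,\epsilon_{h^{-1}},
\end{align*}
since $\epsilon_{h^{-1}}=\alpha_{h^{-1}}\alpha_h$. Moving the idempotent to the front with item (ii) of Proposition \ref{pga_prop_idem}, namely $\alpha_{gh}\epsilon_{h^{-1}}=\epsilon_{(gh)h^{-1}}\alpha_{gh}=\epsilon_g\alpha_{gh}$, yields the claimed identity.

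With this in hand the induction is routine. The case $n=1$ is $\alpha_{g_1}=\epsilon_{g_1}\alpha_{g_1}$, again by item (v). For the inductive step I would apply the hypothesis to $\alpha_{g_2}\cdots\alpha_{g_n}=\epsilon_{h_1}\cdots\epsilon_{h_k}\alpha_h$, then push the leading $\alpha_{g_1}$ rightward through the idempotents by repeated use of $\alpha_{g_1}\epsilon_{h_j}=\epsilon_{g_1 h_j}\alpha_{g_1}$ (Proposition \ref{pga_prop_idem}(ii)), and finally absorb the residual $\alpha_{g_1}\alpha_h$ via the reduction identity into $\epsilon_{g_1}\alpha_{g_1 h}$. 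The result is once more a product of idempotents times a single generator.

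For the range, I would use that $\epsilon_h=\alpha_h\alpha_{h^{-1}}=1_{X_h}$ is exactly the identity on $X_h$, the range of $\alpha_h$ (recall $\alpha\alpha^{-1}=1_{\mathrm{ran}(\alpha)}$ from Section \ref{sec:inv_semi}), so that a commuting product $\epsilon_{g_1}\cdots\epsilon_{g_n}$ restricts to the identity on $X_{g_1}\cap\cdots\cap X_{g_n}$. Precomposing with $\alpha_g$, whose range is $X_g$, then shows that the range of $s$ is $X_g\cap X_{g_1}\cap\cdots\cap X_{g_n}$, which is what the stated intersection $X_{\alpha_{g_1}}\cap\cdots\cap X_{\alpha_{g_n}}\cap X_{\alpha_g}$ abbreviates. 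The only genuine obstacle here is bookkeeping: in a partial action $\alpha_g\alpha_h$ is not $\alpha_{gh}$ but merely a restriction of it, so the reduction identity must faithfully track the defect as the idempotent $\epsilon_g$; once that single identity is proven correctly, both the normal-form and the range claims follow mechanically.
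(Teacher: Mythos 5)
Your proof is correct and takes essentially the same route as the paper: induction on word length, pushing the leading generator through the idempotents via $\alpha_h\epsilon_g=\epsilon_{hg}\alpha_h$, and then collapsing the residual product of two generators --- your isolated reduction identity $\alpha_g\alpha_h=\epsilon_g\,\alpha_{gh}$ is exactly the computation the paper performs inline in its final two display lines using items (iv) and (v) of proposition \ref{pga_prop}. The range argument likewise matches the paper's appeal to the fact that $\epsilon_g$ acts as the identity on $X_g$.
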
 
\begin{proof}
  Let $s=\alpha_{h_1}\cdots \alpha_{h_n} \in S_\alpha(G)$.
  For $n=0$ the assertion is trivial. Assume that
  $\alpha_{h_2}\cdots \alpha_{h_n} = \epsilon_{g_1} \cdots \epsilon_{g_m} \alpha_g$
  for some $g_1,\dots,g_m,g\in G$ . Then
  \begin{align*}
    s & = \alpha_{h_1}\alpha_{h_2}\cdots \alpha_{h_n} 
        = \alpha_{h_1} \epsilon_{g_1} \cdots \epsilon_{g_m} \alpha_g \\
       & = \epsilon_{h_1 g_1} \cdots \epsilon_{h_1 g_m} \alpha_{h_1} \alpha_g 
        = \epsilon_{h_1 g_1} \cdots \epsilon_{h_1 g_m} \alpha_{h_1} \alpha_{h_1^{-1}} \alpha_{h_1}\alpha_g \\
       & = \epsilon_{h_1 g_1} \cdots \epsilon_{h_1 g_m} \alpha_{h_1} \alpha_{h_1^{-1}} \alpha_{h_1 g} 
        = \epsilon_{h_1 g_1} \cdots \epsilon_{h_1 g_m} \epsilon_{h_1} \alpha_{h_1 g}
  \end{align*}
  by propositions \ref{pga_prop} and \ref{pga_prop_idem}. Thus
  the assertion follows by induction.
  
  The formula for the range of $X_s$ follows from
  proposition \ref{action_prop_idem}, since the range
  (as well as the domain) of the projector $\epsilon_g$ are $X_g$.
\end{proof}

In the end of this section, we investigate the
inverse semigroup  $S_\alpha(\integers)$ generated
by a single bijection $\alpha:X_{-1}\rightarrow X_1$
of a set $X$ (see proposition \ref{single_bij_action}).

\begin{proposition} \label{single_bij_sg}
Let $\alpha:X_{-1}\rightarrow X_1$
be a partial bijection of the set $X$ and 
$\alpha_n=\alpha^n:X_{-n}\rightarrow X_n$
be the partial group action of $\integers$ on $X$.
Then the elements of the inverse
semigroup $S_\alpha(\integers)$ are 
 \begin{align*}
    s = \epsilon_{n_+}\epsilon_{n_-} \alpha_m 
 \end{align*}
 for $n_+ \geq 0$, $n_- \leq 0$ and $m$ an integer.
\end{proposition}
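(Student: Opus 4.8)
The plan is to build directly on Proposition \ref{pga_sg_elem}, which already shows that every element of $S_\alpha(\integers)$ has the form $s = \epsilon_{m_1}\cdots\epsilon_{m_k}\alpha_m$ with $m_1,\dots,m_k,m \in \integers$ and $\epsilon_{m_i} = \alpha_{m_i}\alpha_{m_i}^{-1}$. So the only thing left to establish is that, in the special case $G = \integers$ generated by a single partial bijection, an arbitrary product of idempotents $\epsilon_{m_1}\cdots\epsilon_{m_k}$ collapses to a product $\epsilon_{n_+}\epsilon_{n_-}$ of just two idempotents with $n_+ \geq 0$ and $n_- \leq 0$.

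First I would recall, from item (ii) of Proposition \ref{action_prop} (equivalently item (ii) of Proposition \ref{pga_prop}), that each $\epsilon_n = \alpha_n\alpha_n^{-1} = \text{id}_{X_n}$ is the identity partial bijection on its range $X_n$. Since all idempotents of an inverse semigroup commute and the product of two such identities is the identity on the intersection of their sets, i.e. $\text{id}_A\,\text{id}_B = \text{id}_{A\cap B}$, any product of the $\epsilon_{m_i}$ equals $\text{id}_{X_{m_1}\cap\cdots\cap X_{m_k}}$. Thus the problem reduces entirely to simplifying the intersection $X_{m_1}\cap\cdots\cap X_{m_k}$.

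The key step is then to exploit the nesting of the ranges that is particular to the single-bijection $\integers$ case. The remark following Proposition \ref{single_bij_action} supplies the two descending chains $X = X_0 \supseteq X_1 \supseteq X_2 \supseteq \cdots$ and $X = X_0 \supseteq X_{-1} \supseteq X_{-2} \supseteq \cdots$. Setting $n_+ = \max(0,m_1,\dots,m_k) \geq 0$ and $n_- = \min(0,m_1,\dots,m_k) \leq 0$, the nesting gives $X_{n_+} \subseteq X_{m_i}$ for every nonnegative index $m_i$ and $X_{n_-} \subseteq X_{m_i}$ for every nonpositive index, so that $X_{m_1}\cap\cdots\cap X_{m_k} = X_{n_+}\cap X_{n_-}$. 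Consequently $\epsilon_{m_1}\cdots\epsilon_{m_k} = \text{id}_{X_{n_+}\cap X_{n_-}} = \epsilon_{n_+}\epsilon_{n_-}$, and substituting back into Proposition \ref{pga_sg_elem} yields $s = \epsilon_{n_+}\epsilon_{n_-}\alpha_m$, as claimed.

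The argument is essentially a bookkeeping reduction, so I do not expect a serious obstacle. The one point that genuinely carries the proof, rather than being purely formal, is the total ordering of the ranges $X_n$ by inclusion: it is exactly this that lets an arbitrary intersection be represented by its extreme positive and negative indices. I would therefore invoke Proposition \ref{single_bij_action} explicitly, emphasizing that for a general group $G$ (without such nesting) the collapse to only two idempotents would in general fail.
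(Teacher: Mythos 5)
Your proposal is correct and follows essentially the same route as the paper: both reduce to the normal form of Proposition \ref{pga_sg_elem} and then use the nesting $X_{n+1}\subset X_n$, $X_{-n-1}\subset X_{-n}$ from Proposition \ref{single_bij_action} to collapse the product of idempotents to the two extremal ones. Your version is slightly more careful in two respects the paper glosses over — you make explicit that $\epsilon_{m_1}\cdots\epsilon_{m_k}=\text{id}_{X_{m_1}\cap\cdots\cap X_{m_k}}$, and by including $0$ in the max/min you guarantee $n_+\geq 0$ and $n_-\leq 0$ even when all indices have the same sign — but these are refinements of the same argument, not a different one.
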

\begin{proof}
   For $n>0$ the sets $X_n$  have the property that $X_n \subset X_{n-1}$
   and $X_{-n} \subset X_{-(n-1)}$. Thus, a product of idempotents
   $\epsilon_{n_i} = \alpha_{n_i} \alpha_{-n_i}$ with the $n$ either all less than
   $0$ or all greater than $0$ is equal to the idempotent of highest $n$ or smallest $n$, 
   respectively. 
   Thus every element $s=\epsilon_{n_1}\cdots \epsilon_{n_N} \alpha_m$ 
   of $S_\alpha(\integers)$ reduces to
   $s=\epsilon_{n_+}\epsilon_{n_-} \alpha_m$,
   where $n_+$ is the greatest and $n_-$ is the smallest value of $n_1$ to $n_N$.
\end{proof}

\section{Crossed product algebras for partial group actions} \label{sec:CPA_PGA}

We have just have seen that the action
$\alpha$ creates a semigroup $S_\alpha(G)$
(see proposition \ref{pga_semigroup}). We use this for defining the
crossed product algebra for partial group actions, such as in
 \cite{Sieben_1996}.

\begin{definition} \label{def_pga_alg}
  A partial group action $\alpha$ of the 
  group $G$ on the algebra $A$ is a partial
  group action of $G$ on the set $A$, where the partial
  bijections $\alpha_g: A_{g^{-1}}\rightarrow A_g$ are
  partial automorphims between ideals $A_{g^{-1}}$
  and $A_g$ of $A$.
\end{definition}

\begin{definition} \label{def_pa_cpa}
  Let $\alpha$ be a partial action of the group 
  $G$ on the $*$-algebra $A$, where the domains and
  ranges of the elements of the inverse semigroup 
  $S_\alpha(G)$ are non-degenerate or idempotent ideals.
  Then the \emph{algebraic crossed product} of $A$ and $G$
  by the action $\alpha$ is 
   \begin{align*} 
      A \rtimes_{\alpha} G = A \rtimes_{\alpha} S_\alpha(G)
  \end{align*}
\end{definition}

Following proposition shows that the crossed product algebra for
a partial group action can be reduced to discrete functions
on the group $G$ alone. Here, it is important that the
crossed product algebra is a factor algebra.
\begin{proposition} \label{pa_cpa_prop}
  Let $\alpha$ be a partial action of the group 
  $G$ on the $*$-algebra $A$, according to definition \ref{def_pa_cpa}.
  Then the elements $\phi$
  of $A \rtimes_{\alpha} G$ are of the form
   \begin{align*} 
      \phi = \sum_{g\in G}^{\text{finite}} a_g \delta_g
  \end{align*}
  where $a_g\in A_g$ and we have
  abbreviated $\delta_g=\delta_{\alpha_g}$
  For $g, h \in G$ the algebra multiplication
  and involution translates into
  \begin{align*} 
    a_g\delta_g \cdot  a_h\delta_h = \alpha_g \paraa{ \alpha_{g^{-1}}\paraa{a_g}a_h } \delta_{gh} 
  \end{align*}
    \begin{align*} 
      (a_g\delta_g)^* = \alpha_{g^{-1}}(a_g^*)\delta_{g^{-1}}
  \end{align*}
  where $a_g\in A_g$ and $a_h\in A_h$.
\end{proposition}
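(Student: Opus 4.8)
The plan is to realize $A \rtimes_\alpha G = A \rtimes_\alpha S_\alpha(G)$ concretely by collapsing the generators $\delta_s$, indexed by the inverse semigroup $S_\alpha(G)$, down to generators $\delta_g := \delta_{\alpha_g}$ indexed by the group $G$. The two tools are the normal form for semigroup elements from Proposition \ref{pga_sg_elem} and the defining relations of the ideal $N$ from Definition \ref{def_alg_cpa}; everything else is substitution into the multiplication and involution already established in Proposition \ref{cpa_idem_construct}.

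First I would settle the spanning statement. A general element of $L(A, S_\alpha(G), \alpha)$ is a finite sum $\sum_s a_s \delta_s$ with $a_s \in A_s$. By Proposition \ref{pga_sg_elem}, each $s \in S_\alpha(G)$ factors as $s = \epsilon_{g_1}\cdots\epsilon_{g_n}\alpha_g = q\alpha_g$ with $q$ idempotent, so $s \leq \alpha_g$, and its range satisfies $A_s \subset A_{\alpha_g} = A_g$. Since $a_s \in A_s$, the element $a_s\delta_s - a_s\delta_{\alpha_g}$ is one of the generators of $N$, hence $a_s\delta_s \equiv a_s\delta_{\alpha_g} \pmod{N}$. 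Collecting all terms with the same group part $g$ and writing $a_g \in A_g$ for their sum then exhibits every class in the quotient in the form $\sum_{g} a_g\delta_g$.

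Next I would read off the product from Proposition \ref{cpa_idem_construct} applied to $s = \alpha_g$, $t = \alpha_h$. Using $\alpha_g^* = \alpha_{g^{-1}}$ (item (ii) of Proposition \ref{pga_prop}) this gives $a_g\delta_{\alpha_g}\cdot a_h\delta_{\alpha_h} = \alpha_g\paraa{\alpha_{g^{-1}}(a_g)\,a_h}\delta_{\alpha_g\alpha_h}$. The one genuine point is to pass from the semigroup product $\alpha_g\alpha_h$ back to the group element $gh$: I would verify $\alpha_g\alpha_h = \epsilon_g\alpha_{gh}$ by combining the relation $\alpha_{g^{-1}}\alpha_{gh}=\alpha_{g^{-1}}\alpha_g\alpha_h$ with $\alpha_g\alpha_{g^{-1}}\alpha_g=\alpha_g$ (items (iv) and (v) of Proposition \ref{pga_prop}), so that $\alpha_g\alpha_h \leq \alpha_{gh}$. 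Moreover the coefficient $\alpha_g\paraa{\alpha_{g^{-1}}(a_g)a_h}$ lies in the range $A_{\alpha_g\alpha_h} = \alpha_g\paraa{A_{g^{-1}} \cap A_h} \subset A_{gh}$ by item (iii) of Proposition \ref{pga_prop}. These two facts are exactly the hypotheses of the defining relation of $N$, so the coefficient may be reindexed from $\delta_{\alpha_g\alpha_h}$ to $\delta_{gh}$, giving the stated formula with $a_g\delta_g \cdot a_h\delta_h \in A_{gh}\,\delta_{gh}$.

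Finally, the involution is immediate from Proposition \ref{cpa_idem_construct}: $(a_g\delta_{\alpha_g})^* = \alpha_{g^{-1}}(a_g^*)\delta_{\alpha_{g^{-1}}} = \alpha_{g^{-1}}(a_g^*)\delta_{g^{-1}}$, and no reduction is needed since $\alpha_g^* = \alpha_{g^{-1}}$ already has group part $g^{-1}$. I expect the main obstacle to be purely the bookkeeping in the multiplication, namely confirming both $\alpha_g\alpha_h \leq \alpha_{gh}$ and that the product coefficient genuinely lands in $A_{gh}$, so that the $N$-relation legitimately identifies $\delta_{\alpha_g\alpha_h}$ with $\delta_{gh}$. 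Once these are in place the proposition follows by direct substitution into the formulas of Proposition \ref{cpa_idem_construct}.
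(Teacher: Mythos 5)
Your proposal is correct and follows essentially the same route as the paper: normal form from Proposition \ref{pga_sg_elem} plus the ideal relation to collapse $\delta_s$ to $\delta_{\alpha_g}$, and the identity $\alpha_g\alpha_h=\alpha_g\alpha_{g^{-1}}\alpha_{gh}\leq\alpha_{gh}$ via items (iv) and (v) of Proposition \ref{pga_prop} to reindex the product. Your explicit check that the product coefficient lies in $A_{\alpha_g\alpha_h}\subset A_{gh}$ is a point the paper leaves implicit, but it is not a different argument.
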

\begin{proof} 
   Let $\phi \in A \rtimes_{\alpha} G$. Then
   \begin{align*}
      \phi = \sum_{s\in S_\alpha(G)}^{\text{finite}} a_s \delta_s
   \end{align*}
   where $a_s \in A_s$.
   Every element $s\in S_\alpha(G)$ is of the form
   $ s = \epsilon_{g_1} \cdots \epsilon_{g_n} \alpha_h $
   and $A_s \subset A_g$ (see proposition \ref{pga_sg_elem}).
   Therefore $a_s \in A_g$, $s \leq \alpha_h$ and 
    $a_s \delta_s = a_s \delta_g$. For each $g$, we set 
    $a_g = \sum_{s \leq g} a_s$ and it follows that
   \begin{align*} 
      \phi = \sum_{g\in G}^{\text{finite}} a_g \delta_g
  \end{align*}    
    For the multiplication, we have to show that
   $\delta_{\alpha_g\alpha_h}$ can be replaced with $\delta_{\alpha_{gh}}$.
   Now 
   \begin{align*} 
     \alpha_g \alpha_h = \alpha_g \alpha_g^{-1}\alpha_g \alpha_h
      = \alpha_g \alpha_g^{-1}\alpha_{gh } 
   \end{align*}
   since the $\alpha_g$ are elements of the inverse
   semigroup $S_\alpha(G)$. The last step follows from
   item (v) of proposition \ref{pga_prop}. Thus
   $\alpha_g \alpha_h \leq \alpha_{gh }$ and
   \begin{align*} 
     a_g\delta_g \cdot  a_h\delta_h 
        = a_g\delta_{\alpha_g} \cdot  a_h\delta_{\alpha_h}
        = \alpha_g \paraa{ \alpha_{g^{-1}}\paraa{a_g}a_h } \delta_{\alpha_g \alpha_h} 
        = \alpha_g \paraa{ \alpha_{g^{-1}}\paraa{a_g}a_h } \delta_{\alpha_{gh} } 
  \end{align*}

    The algebra involution follows directly from
  the definition.
\end{proof}

 The following crossed product algebra is the basis for all fuzzy spaces defined
 in the first part. It is the crossed product algebra formed in a canonical
 way from one partial bijection $\alpha$ of a set $X$.

\begin{proposition} \label{single_bij_cpa}
  Let $\alpha_n=\alpha^n, n\in\integers$
  be the partial action of the group $\integers$
  on the set $X$ generated by a
  single partial bijection $\alpha$ of $X$, such as defined
  in \ref{single_bij_action}. 
  Further, let $\beta_{\alpha^n}=\alpha^n$ be the action
  of  $S_\alpha(\integers)$ on $X$,
  and $\hat\beta$ the induced action on
  one of the function algebras
  $\mathcal{\tilde{F}}(X)=\mathcal{F}(X), C(X), C_0(X)$
  (remark \ref{com_fun_alg}). Then the crossed product algebra
 \begin{align*} 
       \mathcal{\tilde{F}}(X) \rtimes_{\alpha} \integers = 
           \mathcal{\tilde{F}}(X) \rtimes_{\hat\beta} S_\alpha(\integers)
  \end{align*}  
  can be formed. The elements $\phi$
  of $\mathcal{\tilde{F}}(X) \rtimes_{\alpha} \integers$ are of the form
   \begin{align*} 
      \phi = \sum_{n\in \integers}^{\text{finite}} f_n \delta_n
  \end{align*}
  where $f_n\in \mathcal{\tilde{F}}(X_n)$. The algebra multiplication
  and involution translates into
  \begin{align*} 
    f_n \delta_n \cdot  f_m \delta_m = \paraa{ f_n ( f_m |_{X_{-n}} \circ \alpha^{-n} ) } \delta_{n+m} 
  \end{align*}
    \begin{align*} 
      (f_n\delta_n)^* = \paraa{\overline{f_n}\circ \alpha^{n}}\delta_{-n}
  \end{align*}
  where $f_n\in \mathcal{\tilde{F}}(X_n)$ and $f_m\in \mathcal{\tilde{F}}(X_m)$.
\end{proposition}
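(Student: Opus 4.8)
The plan is to read Proposition \ref{single_bij_cpa} as the specialization of the general partial-group-action crossed product (Proposition \ref{pa_cpa_prop}) to the group $G=\integers$, the $*$-algebra $A=\mathcal{\tilde{F}}(X)$, and the induced action $\hat\beta$, and then to rewrite its abstract product and involution in terms of restriction and composition of functions.

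First I would confirm that the crossed product is defined at all. By Remark \ref{semi_fun_alg} each of $\mathcal{F}(X)$, $C(X)$, $C_0(X)$ is semiprime, and in fact every function factors (for instance $f=\sqrt{r}\cdot\sqrt{r}\,e^{i\varphi}$), so each ideal $\mathcal{\tilde{F}}(X_n)$ is idempotent. Proposition \ref{single_bij_action} supplies the partial group action $\alpha^n\colon X_{-n}\to X_n$ of $\integers$ on $X$, and Proposition \ref{fun_alg_act} together with Remark \ref{com_fun_alg} lifts it to the induced inverse-semigroup action $\hat\beta$ on $\mathcal{\tilde{F}}(X)$. Thus the hypotheses of Definition \ref{def_pa_cpa} hold, and $\mathcal{\tilde{F}}(X)\rtimes_{\alpha}\integers=\mathcal{\tilde{F}}(X)\rtimes_{\hat\beta}S_\alpha(\integers)$ is well defined by that definition.

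Next I would apply Proposition \ref{pa_cpa_prop} verbatim to the partial action $\hat\beta$ of $\integers$ on $\mathcal{\tilde{F}}(X)$. It gives immediately that every element is $\phi=\sum_n^{\text{finite}} f_n\delta_n$ with $f_n\in\mathcal{\tilde{F}}(X_n)$, and that
\begin{align*}
  f_n\delta_n\cdot f_m\delta_m = \hat\beta_n\paraa{\hat\beta_{-n}(f_n)\,f_m}\delta_{n+m},\qquad
  (f_n\delta_n)^* = \hat\beta_{-n}\paraa{\overline{f_n}}\,\delta_{-n}.
\end{align*}
It then remains only to substitute the explicit formula $\hat\beta_n(f)=f\circ\alpha^{-n}$ of Proposition \ref{fun_alg_act}. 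For the involution this reads $\hat\beta_{-n}(\overline{f_n})=\overline{f_n}\circ\alpha^{n}$, which is the stated formula. For the product I would compute in stages: $\hat\beta_{-n}(f_n)=f_n\circ\alpha^{n}\in\mathcal{\tilde{F}}(X_{-n})$; the pointwise product with $f_m$ is supported in $X_{-n}\cap X_m$; and applying $\hat\beta_n=(\,\cdot\,)\circ\alpha^{-n}$ and evaluating at $y$ gives $f_n(y)\,f_m\paraa{\alpha^{-n}(y)}$, that is $f_n\cdot\paraa{f_m|_{X_{-n}}\circ\alpha^{-n}}$.

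The one thing that needs care is the domain bookkeeping in the product: one must check that each intermediate function lies in the ideal on which the next partial automorphism is defined, so that $\hat\beta_{-n}$ and then $\hat\beta_n$ may legitimately be applied. The restriction $f_m|_{X_{-n}}$ is exactly what makes $\alpha^{-n}$ (whose range is $X_{-n}$) postcomposable, and the nested inclusions $X_{n+1}\subset X_n$ from Proposition \ref{single_bij_action} guarantee that the result again has support in $X_{n+m}$, so the product lands back in the crossed product. I expect this matching of supports, rather than any structural subtlety, to be the only real point to verify.
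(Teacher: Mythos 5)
Your proposal is correct and follows essentially the same route as the paper: specialize Proposition \ref{pa_cpa_prop} to $G=\integers$ with the induced action of Proposition \ref{fun_alg_act}, then unwind $\hat\beta_{\pm n}(f)=f\circ\alpha^{\mp n}$ to get the stated product and involution, with the restriction $f_m|_{X_{-n}}$ justified by the support of $f_n\circ\alpha^{n}$. Your extra remark that the ideals are idempotent (so Definition \ref{def_pa_cpa} applies) is handled in the paper by Remark \ref{semi_fun_alg} rather than inside the proof, but this is only a difference in where the bookkeeping is recorded, not in substance.
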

\begin{proof}
  The form of the elements and the involution immediately follows
  from proposition \ref{pa_cpa_prop} by inserting the induced action
  defined in proposition \ref{fun_alg_act}.
  
  For the algebra multiplication this results in 
  \begin{align*} 
    f_n \delta_n \cdot  f_m \delta_m = \parab{ \paraa{ (f_n\circ \alpha^{n}) f_m }\circ \alpha^{-n} } \delta_{n+m} 
      =  \paraa{ f_n ( f_m |_{X_{-n}} \circ \alpha^{-n} ) } \delta_{n+m} 
  \end{align*}
  The last step follows since $f_n\circ \alpha^{n} \in \mathcal{\tilde{F}}(X_{-n})$
  and we can replace $(f_n\circ \alpha^{-n}) f_m$ with $f_n\circ \alpha^{-n}) (f_m |_{X_{-n}})$,
  i.e. we can restrict $f_m$ to $X_{-n}$, such that it is in the range of $\alpha^{-n}$.
\end{proof}
Note that the restriction to $X_{-n}$ also can be achieved by 
multiplication with the support function of $X_{-n}$, which is present,
when the function algebra is the algebra of all functions $\mathcal{F}(X)$.
This is also the result of the two following propositions.

In particular, when we further assume that the algebra $A$ has partial identities,
we further can define the crossed product algebra based on elements
$U_g$ and their relations.
\begin{proposition} \label{pa_cpa_pi}
    Let $\alpha$ be a partial action of the group 
   $G$ on the $*$-algebra $A$, according to definition \ref{def_pa_cpa},
   where additionally, the domains and
  ranges of the elements of the inverse semigroup 
  $S_\alpha(G)$ have partial identities.
   Then every element $\phi$
    of $A \rtimes_{\alpha} G$ is of the form
   \begin{align*} 
      \phi =\sum_{g\in G}^{\text{finite}} a_g U_g
   \end{align*}  
   where $a_g \in A$ and $U_g=p_{\alpha_g} \delta_{\alpha_g}$.
   The elements $U_g$ fulfil 
   \begin{align*}   
      U_g U_h = p_{\alpha_g} U_{gh}, \qquad \paraa{U_g}^*=U_{g^{-1}},
     \qquad       U_g a  = \alpha_g\paraa{ p_{\alpha_g^{-1}} a } U_g
   \end{align*}  
   for $g,h\in G$ and $a \in A$.
\end{proposition}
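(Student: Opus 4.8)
The plan is to specialize the general results on crossed products of inverse semigroup actions---propositions \ref{cpa_rel} and \ref{pa_cpa_prop}, together with the surrounding lemmas on partial identities---to the particular semigroup elements $\alpha_g\in S_\alpha(G)$, writing $U_g=U_{\alpha_g}=p_{\alpha_g}\delta_{\alpha_g}$. Essentially nothing genuinely new has to be proved; the task is to translate the semigroup statements into the group indexing $g\in G$.

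First I would establish the normal form. By proposition \ref{pa_cpa_prop} every element of $A\rtimes_\alpha G$ is $\phi=\sum_g a_g\delta_g$ with $a_g\in A_g$. Since $p_{\alpha_g}$ is the partial identity of the ideal $A_g$, any $a_g\in A_g$ satisfies $a_g=a_g p_{\alpha_g}$, and the product of proposition \ref{pa_cpa_prop} gives $a_g\,U_g=a_g\delta_e\cdot p_{\alpha_g}\delta_g=(a_g p_{\alpha_g})\delta_g$ (using $\alpha_e=\text{id}$). Reading this identity in both directions shows that $\{a_g\delta_g:a_g\in A_g\}=\{a\,U_g:a\in A\}$, so $\phi=\sum_g a_g U_g$ with $a_g\in A$, the coefficient now being allowed to range over all of $A$.

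Next come the involution and commutation relations, each of which I would read off the formulas of proposition \ref{pa_cpa_prop}. The involution formula $(a_g\delta_g)^*=\alpha_{g^{-1}}(a_g^*)\delta_{g^{-1}}$ applied to $a_g=p_{\alpha_g}$ gives $(U_g)^*=\alpha_{g^{-1}}(p_{\alpha_g})\delta_{g^{-1}}$; since $\alpha_{g^{-1}}=\alpha_g^{-1}$ is a surjective homomorphism carrying the partial identity of $A_g$ onto that of $A_{g^{-1}}$, this equals $p_{\alpha_{g^{-1}}}\delta_{g^{-1}}=U_{g^{-1}}$. The commutation relation $U_g a=\alpha_g(p_{\alpha_g^{-1}}a)U_g$ is item (iii) of proposition \ref{cpa_rel} rewritten for $s=\alpha_g$, using $(\alpha_g)^*=\alpha_{g^{-1}}$ and $p_{(\alpha_g)^*}=p_{\alpha_g^{-1}}$.

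The product relation $U_gU_h=p_{\alpha_g}U_{gh}$ is the only step requiring the partial-group-action structure, and I expect it to be the main obstacle, precisely because $\alpha_g\alpha_h\neq\alpha_{gh}$ in general. Computing with proposition \ref{pa_cpa_prop} and $\alpha_{g^{-1}}(p_{\alpha_g})=p_{\alpha_{g^{-1}}}$ yields $U_gU_h=\alpha_g(p_{\alpha_{g^{-1}}}p_{\alpha_h})\delta_{gh}$, so I must identify $\alpha_g(p_{\alpha_{g^{-1}}}p_{\alpha_h})$ with $p_{\alpha_g}p_{\alpha_{gh}}$. This follows from the proposition giving $p_{st}=\alpha_s(p_{s^*}p_t)$ together with the range computation of proposition \ref{pga_semigroup}, which shows that the semigroup element $\alpha_g\alpha_h$ has range $A_g\cap A_{gh}$ and hence partial identity $p_{\alpha_g}p_{\alpha_{gh}}$; equivalently one may factor $\alpha_g\alpha_h=\epsilon_g\alpha_{gh}$ as in the proof of proposition \ref{pa_cpa_prop} and invoke $U_{\epsilon_g}=p_{\epsilon_g}=p_{\alpha_g}$ from the idempotent relation of proposition \ref{cpa_quot_rel}. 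Either route gives $U_gU_h=p_{\alpha_g}p_{\alpha_{gh}}\delta_{gh}=p_{\alpha_g}U_{gh}$, completing the proof.
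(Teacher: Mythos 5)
Your proof is correct and follows essentially the same route as the paper: both arguments specialize the general semigroup relations of propositions \ref{cpa_rel} and \ref{pa_cpa_prop} to $s=\alpha_g$, with the only substantive step being the identification $p_{\alpha_g\alpha_h}=p_{\alpha_g}p_{\alpha_{gh}}$ together with the quotient relation $\delta_{\alpha_g\alpha_h}=\delta_{\alpha_{gh}}$, which you obtain from $p_{st}=\alpha_s(p_{s^*}p_t)$ and the range formula just as the paper does through its chain of manipulations. Your alternative factorization $\alpha_g\alpha_h=\epsilon_g\alpha_{gh}$ with $U_{\epsilon_g}=p_{\alpha_g}$ is likewise equivalent to the paper's computation, so nothing further is needed.
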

\begin{proof}
  According to proposition \ref{pa_cpa_prop}, the elements
  of $A \rtimes_{\alpha} G$ are of the form
   $ \phi = \sum_{g\in G}^{\text{finite}} a_g \delta_g$. Since
   $a_g = a_g p_g$ for every $a_g\in A_g$ and $a p_g \in A_g$
   for $a\in A$, it follows that $ \phi = \sum_{g\in G}^{\text{finite}} a_g U_g$
   for $a_g\in A$.
   
   For showing the algebra relations, we note that
    \begin{align*} 
         p_{\alpha_g\alpha_h}
          & =p_{\alpha_g\alpha_{g^{-1}}\alpha_g\alpha_h }
          =p_{\alpha_g\alpha_{g^{-1}}\alpha_{gh} }
          =\alpha_g\paraa{p_{\alpha_{g^{-1}}} \alpha_{g^{-1}} \paraa{ p_{\alpha_g} p_{\alpha_{gh}} } } \\
          & = \alpha_g\paraa{\alpha_{g^{-1}} \paraa{ p_{\alpha_g} p_{\alpha_{gh}} } }
          = p_{\alpha_g} p_{\alpha_{gh}}
    \end{align*} 
    (which is equivalent to item (iii) of proposition \ref{pga_prop}.)
    Together with the aid of item (v) of proposition \ref{cpa_rel}
    this results in 
    \begin{align*} 
      U_g U_h = U_{\alpha_g} U_{\alpha_h}
                 =U_{\alpha_g\alpha_h}
                 = p_{\alpha_g\alpha_h} \delta_{\alpha_g\alpha_h} 
                 = p_g p_{\alpha_{gh}} \delta_{\alpha_{gh} }
                 = p_g U_{gh}
    \end{align*} 

   The involution is
   \begin{align*}
      \paraa{U_g}^* = \paraa{U_{\alpha_g}}^*  = U_{(\alpha_g)^*} 
             =U_{\alpha_{g^{-1}}} = U_{g^{-1}}
   \end{align*}        
   by item (iv) of proposition \ref{cpa_rel} and
   item (ii) of proposition \ref{pga_prop}.
   
   The last assertion follows by applying definitions
   \begin{align*}
      U_g a  &= p_{\alpha_g} \delta_{\alpha_g} \cdot a \delta_{\alpha_e}
      = p_{\alpha_g} \alpha_g\paraa{ p_{\alpha_g^{-1}} a } \delta_{\alpha_g\alpha_e} \\
      & = \alpha_g\paraa{ p_{\alpha_g^{-1}} a } p_{\alpha_g}  \delta_{\alpha_g}  
            = \alpha_g\paraa{ p_{\alpha_g^{-1}} a } U_g
   \end{align*}
\end{proof}

\begin{remark}
  Since $\alpha$ is a partial group action and not an (ordinary) group action,
  the elements $U_g$ do not from a representation of the group $G$,
  but reflect properties of the partial group action. This is due to
  the fact that the product of the actions $\alpha_g$ and $\alpha_h$
  of two group elements $g, h\in G$ is not the
   action $\alpha_{gh}$ of the product of the group elements.
  In particular, the $U_g$ are not invertible but fulfil
   \begin{align*}
      U_g U_{g^{-1}} = p_{\alpha_g}, \qquad U_{g^{-1}} U_g = p_{\alpha_{g^{-1}}}
   \end{align*}
   In covariant representations these elements will be mapped to
   partial isometries.
\end{remark}
  
\begin{proposition} \label{single_bij_cpa_pi}
  When we assume further in proposition \ref{single_bij_cpa}
  that $\mathcal{\tilde{F}}(X)=\mathcal{F}(X)$ is the
  full function algebra and the ideals $\mathcal{F}(X_n)$ contain
  the partial identities $p_n$ (which are the functions, which
  are $1$ on $X_n$ and $0$ on $X\setminus X_n$), then
  every element $\phi$
  of $\mathcal{\tilde{F}}(X) \rtimes_{\alpha} \integers$ is of the form
   \begin{align*} 
      \phi =\sum_{n\in \naturals}^{\text{finite}} f_{-n} U^{*n} 
              + f_0 + \sum_{n\in \naturals}^{\text{finite}} f_n U^n
   \end{align*}  
   where $f_n\in \mathcal{\tilde{F}}(X)$, 
   $U=p_1\delta_1$ and $U^*=p_{-1}\delta_{-1}$.
   The elements $U$ and $U^*$ fulfil 
   \begin{align*}   
      U U^* = p_1=p_{\text{ran}(\alpha)}, \qquad
      U^* U = p_{-1} = p_{\text{dom}(\alpha)}, \\
      U f = \paraa{(p_{-1} f)\circ \alpha^{-1}} U, \qquad
      U^* f = \paraa{(p_{1} f)\circ \alpha} U^*
   \end{align*}  
   for $f\in \mathcal{\tilde{F}}(X)$.
\end{proposition}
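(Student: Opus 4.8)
The plan is to obtain the statement as a specialization of Proposition \ref{pa_cpa_pi} to the group $G=\integers$ with the partial action generated by the single bijection $\alpha$, combined with the induced action $\hat\beta$ on $\mathcal{F}(X)$ from Proposition \ref{fun_alg_act}. Writing $U_n=p_{\alpha^n}\delta_{\alpha^n}=p_n\delta_n$, Proposition \ref{pa_cpa_pi} already yields that every element is $\phi=\sum_{n\in\integers}^{\text{finite}} f_n U_n$ with $f_n\in\mathcal{F}(X)$ and supplies the relations
\begin{align*}
  U_g U_h = p_{\alpha_g} U_{gh}, \qquad (U_g)^*=U_{g^{-1}}, \qquad U_g a = \hat\beta_g\paraa{p_{\alpha_g^{-1}}a}U_g .
\end{align*}
Thus the real content is to re-express the family $U_n$ through the two generators $U=U_1$ and $U^*=U_{-1}$.

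First I would prove $U^n=U_n$ and $U^{*n}=U_{-n}$ for $n\geq 1$ by induction. The base cases are the definitions $U=U_1$ and $U^*=U_{-1}$. For the step, the multiplication relation gives $U^{n+1}=U_1U_n=p_{\alpha_1}U_{n+1}=p_1\,p_{n+1}\,\delta_{n+1}$; since the ranges are nested, $X_{n+1}\subset X_1$ (the remark following Proposition \ref{single_bij_action}), the indicator functions multiply as $p_1 p_{n+1}=p_{X_1\cap X_{n+1}}=p_{n+1}$, so $U^{n+1}=U_{n+1}$. The dual computation with the nested domains $X_{-n-1}\subset X_{-1}$ gives $U^{*n}=U_{-n}$. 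Splitting the finite sum of Proposition \ref{pa_cpa_pi} into its negative, zero and positive parts, and noting $U_0=p_X\delta_e=1$ so that $f_0U_0=f_0$, then produces exactly the claimed normal form $\phi=\sum_{n}f_{-n}U^{*n}+f_0+\sum_n f_nU^n$.

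The four algebra relations are then direct substitutions. From the remark after Proposition \ref{pa_cpa_pi} ($U_gU_{g^{-1}}=p_{\alpha_g}$) I read off $UU^*=U_1U_{-1}=p_{\alpha_1}=p_1=p_{\text{ran}(\alpha)}$ and $U^*U=U_{-1}U_1=p_{\alpha_{-1}}=p_{-1}=p_{\text{dom}(\alpha)}$. For the module relations I specialize $U_g a=\hat\beta_g\paraa{p_{\alpha_g^{-1}}a}U_g$, using that for function algebras $\hat\beta_g$ is the pullback of Proposition \ref{fun_alg_act}: with $g=1$ one has $p_{\alpha_1^{-1}}=p_{-1}$ and $\hat\beta_1(h)=h\circ\alpha^{-1}$, whence $Uf=\paraa{(p_{-1}f)\circ\alpha^{-1}}U$; with $g=-1$ one has $\hat\beta_{-1}(h)=h\circ\alpha$, whence $U^*f=\paraa{(p_{1}f)\circ\alpha}U^*$.

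I expect the only non-formal step to be the collapse $U^n=U_n$. A priori the iterated product $U^n$ accumulates a string $p_1p_2\cdots p_n$ of partial identities, and it is precisely the nesting $X_{n}\subset X_{n-1}\subset\cdots\subset X_1$ of the ranges of the single bijection (and, dually, of the domains) that makes this string telescope to the single projector $p_n$. Once this is in place, everything reduces to substitution into the relations already furnished by Proposition \ref{pa_cpa_pi}.
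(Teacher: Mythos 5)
Your proposal is correct and follows essentially the same route as the paper's proof: both reduce the statement to Proposition \ref{pa_cpa_pi} and then use the nesting $X_{n+1}\subset X_n$ (and dually for negative indices) to telescope the string of partial identities, giving $U_n=U^n$ and $U_{-n}=U^{*n}$ by induction. Your version is slightly more explicit in spelling out the four algebra relations as specializations of the relations in Proposition \ref{pa_cpa_pi}, which the paper leaves implicit, but the key idea is identical.
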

\begin{proof}
  We define $p_n=p_{\alpha_n}$ to be the partial identity of
  $\mathcal{\tilde{F}}(X_n)$ for $n\in\integers$.
  From \ref{single_bij_action} we know that
  $X_n\subset X_{n-1}$ and $X_{-n}\subset X_{-(n-1)}$, thus
  $p_n p_m = p_n$ and $p_{-n} p_{-m} = p_{-n}$ for
  $n>m\in \naturals$.

  Let $n, m\in \naturals$, then
  \begin{align*}
    U_n U_m = p_n U_{n+m} = p_n p_{n+m}\delta_{\alpha_{n+m}}
       =p_n p_{n+m}\delta_{\alpha_{n+m}} =p_{n+m}\delta_{\alpha_{n+m}}
       = U_{n+m}
  \end{align*}
  since $n<n+m$. Analogously, $U_{-n} U_{-m} = U_{-n-m}$. It follows
  by induction that $U_n=U_1^n$ and $U_{-n}=U_{-1}^n$. 
  Furthermore, $U_{-1}=U_1^*=U^*$, and we can replace $U_n$ in
  proposition \ref{pa_cpa_pi} accordingly.
\end{proof}

In the end of this section, we consider the covariant representation
of proposition \ref{point_hs_rep} for the inverse
semigroup $S_\alpha(\integers)$ of proposition \ref{single_bij_sg}.
This covariant representation is a representation of
the crossed product algebra of proposition \ref{single_bij_cpa}
and \ref{single_bij_cpa_pi}.

\begin{lemma} \label{point_hs_pga}
  Let $\alpha$ be a partial group action of the group $G$
  on the set $X$ and let $x_i \in X$ be a finite set of points.
    Then the set $\tilde{X}(\alpha, x_i)$ of definition
    \ref{point_hs} generated by the
    inverse semigroup $S_\alpha(G)$ is
    \begin{align*}
       \tilde{X}(\alpha, x_i)= \{\alpha_g(x_i) | g\in G, i\in I, \text{ s. t. } x_i\in X_{g^*}\}
    \end{align*}
\end{lemma}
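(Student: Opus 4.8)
The plan is to prove the two inclusions separately, the nontrivial content being that the idempotents occurring in a general inverse-semigroup element act trivially along the generated orbit. First I would dispatch the easy inclusion ``$\supseteq$'': each generator $\alpha_g$ for $g\in G$ is by construction an element of $S_\alpha(G)$, and its domain $X_{g^{-1}}$ is precisely $X_{g^*}$ in the inverse-semigroup notation of definition \ref{point_hs}, so every point $\alpha_g(x_i)$ with $x_i\in X_{g^*}$ already lies in $\tilde{X}(\alpha,x_i)$.

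For the reverse inclusion ``$\subseteq$'', I would take an arbitrary $s\in S_\alpha(G)$ together with a point $x_i$ in its domain $X_{s^*}$, and show $\alpha_s(x_i)=\alpha_g(x_i)$ for a suitable $g\in G$ with $x_i\in X_{g^*}$. The key tool is proposition \ref{pga_sg_elem}, which factors $s=\epsilon_{g_1}\cdots\epsilon_{g_n}\alpha_g$ with idempotents $\epsilon_{g_j}=\alpha_{g_j}\alpha_{g_j^{-1}}$. Since $\epsilon_{g_j}$ is the identity map on $X_{g_j}$ (its domain and range both equal $X_{g_j}$, as recorded in the proof of proposition \ref{pga_sg_elem}), applying $s$ amounts to first applying $\alpha_g$ and then passing the result through a chain of partial identities.

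Because composition is read right-to-left, the domain $X_{s^*}$ of $s$ is contained in the domain $X_{g^{-1}}=X_{g^*}$ of $\alpha_g$ (see the domain/range formula of proposition \ref{pga_semigroup}), so the hypothesis $x_i\in X_{s^*}$ already forces $x_i\in X_{g^*}$. Moreover, the very fact that $x_i$ lies in $X_{s^*}$ guarantees that $y:=\alpha_g(x_i)$ lies in each of the sets $X_{g_n},\dots,X_{g_1}$ on which the successive idempotents act; hence each $\epsilon_{g_j}$ fixes $y$ and we obtain $\alpha_s(x_i)=\alpha_g(x_i)$. This places $\alpha_s(x_i)$ in the right-hand set with the \emph{same} index $i$, completing the inclusion and thereby the equality.

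The main obstacle I anticipate is the bookkeeping in this last step: one must argue that ``$x_i$ in the domain of $s$'' is exactly the condition which makes every intermediate idempotent act as the identity rather than annihilate the point. This is less a computation than a matter of reading off the domain of the composite partial bijection correctly from proposition \ref{pga_semigroup} and confirming that membership of $x_i$ in $X_{s^*}$ propagates cleanly through the factorization $s=\epsilon_{g_1}\cdots\epsilon_{g_n}\alpha_g$.
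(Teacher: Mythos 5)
Your proposal is correct and follows essentially the same route as the paper: both inclusions are handled identically, with the decomposition $s=\epsilon_{g_1}\cdots\epsilon_{g_n}\alpha_g$ from proposition \ref{pga_sg_elem} doing the work for ``$\subseteq$'' and the observation that the idempotents act as partial identities. Your version is in fact slightly more careful than the paper's, which only notes that $\epsilon_{g_j}$ either fixes or annihilates $\alpha_g(x_i)$, whereas you explicitly verify that $x_i\in X_{s^*}$ forces the fixing case.
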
 
\begin{proof}
   Let us denote 
   \begin{align*}
      \tilde{X}_G= \{\alpha_g(x_i) | g\in G, i\in I, \text{ s. t. } x_i\in X_{g^*}\}
   \end{align*}
  By definition the set $\tilde{X}(\alpha, x_i)$, 
  which we abbreviate with $\tilde{X}_S$,
  is generated by all elements of the
  inverse semigroup $S_\alpha(G)$
  \begin{align*}
      \tilde{X}_S = \{\alpha_s(x_i) | s\in S_\alpha(G), i\in I, \text{ s. t. } x_i\in X_{s^*}\}
  \end{align*}
  The elements of $S_\alpha(G)$ all can be written
  as $s = \epsilon_{g_1} \cdots \epsilon_{g_n} \alpha_g$
  (see proposition \ref{pga_sg_elem} ).
  The projectors $\epsilon_{g_1}$ are identities on subsets of $X$ and
  thefore $\epsilon_{g_i}\alpha_g(x_i)=\alpha_g(x_i)$, when $\alpha_g(x_i)$
  is in the domain of $\epsilon_{g_i}$, or $\epsilon_{g_i}\alpha_g(x_i)=\emptyset$,
  when not. It follows that $\tilde{X}_S \subset \tilde{X}_G$.
  
  Since the $\alpha_g$ for $g\in G$ are elements of  $S_\alpha(G)$, it
  is clear that $\tilde{X}_G \subset \tilde{X}_S$ and therefore $\tilde{X}_G=\tilde{X}_S$.
\end{proof}

\begin{proposition}  \label{point_hs_rep_pg}
   Let $\alpha$ be a partial group action of the group $G$ on the set $X$,
   let $x_i \in X, i \in I$ with $I$ a finite index set different points in $X$, 
   let $\tilde{X}(\alpha, x_i)$ be the generated point set such as in 
   proposition \ref{point_hs_pga} and $\mathcal{H}(\alpha, x_i)$
   the Hilbert space such as in definition \ref{point_hs}.

   Let $\mathcal{\tilde{F}}(X)$ be
    one of the function algebras
  $\mathcal{F}(X)$, $C(X)$ or $C_0(X)$
  (remark \ref{com_fun_alg}).
   Then for $s=\alpha_{g_1}\cdots \alpha_{g_n}\in S_\alpha(G)$,
   $y \in \tilde{X}(\alpha, x_i)$ and $f \in \tilde{\mathcal{F}}(X)$
   \begin{align*}
      V_s \ket{y} = \begin{cases}
  		              \ket{\alpha_{g_1}\cdots \alpha_{g_n}(y)}, \text{ if } y\in X_{s^*} \\
                           0, \text{ otherwise}
                        \end{cases} ,  \qquad
      \pi(f) \ket{y} = f(y) \ket{y}
   \end{align*}
   is a covariant representation on $\mathcal{H}(\alpha_s, x_i)$
   with respect to the induced action $\hat{\alpha}$
   of $S_\alpha(G)$ on the algebra $\tilde{\mathcal{F}}(X)$.
   
   The action is generated by the operators $V_g=V_{\alpha_g}$ 
   for $g\in G$ via $V_s= V_{g_1}\cdots V_{g_n}$
   In particular, the $V_g$ are partial isometries
   with $V_{g^{-1}}=V_g^*$, $V_{gh}V_{h^{-1}}=V_{g} V_{h} V_{h^{-1}}$
   and $V_{g^{-1}}V_{gh}=V_{g^{-1}} V_{g} V_{h}$
   for all $g,h \in G$.
\end{proposition}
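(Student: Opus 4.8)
The plan is to obtain this proposition as a direct specialization of Proposition \ref{point_hs_rep} to the inverse semigroup $S_\alpha(G)$. By Proposition \ref{pga_semigroup}, the partial bijections $\alpha_g$ generate a unital inverse subsemigroup $S_\alpha(G)$ of $I(X)$, and this inverse semigroup acts naturally on $X$. Thus Proposition \ref{point_hs_rep}, applied verbatim to the action of $S_\alpha(G)$ on $X$ and to the induced action $\hat\alpha$ on $\mathcal{\tilde{F}}(X)$, already asserts that the operators $V_s\ket{y} = \ket{\alpha_s(y)}$ (for $y \in X_{s^*}$, and $0$ otherwise) together with $\pi(f)\ket{y} = f(y)\ket{y}$ form a covariant representation on $\mathcal{H}(\alpha, x_i)$. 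So the first two equations of the statement are nothing but the content of Proposition \ref{point_hs_rep} transcribed for $s \in S_\alpha(G)$; the only points requiring comment are the identification of the generated point set and of the explicit form of $V_s$.

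First I would invoke Lemma \ref{point_hs_pga}, which shows that the set $\tilde{X}(\alpha, x_i)$ generated by all of $S_\alpha(G)$ coincides with $\{\alpha_g(x_i) \mid g \in G,\ i \in I,\ x_i \in X_{g^*}\}$; this is exactly the index set over which the Hilbert space basis $\ket{y}$ is formed. Next, since Proposition \ref{point_hs_rep} establishes that $s \mapsto V_s$ is a semigroup homomorphism, I would write an arbitrary $s \in S_\alpha(G)$ as a product $s = \alpha_{g_1}\cdots\alpha_{g_n}$ inside $S_\alpha(G)$ and read off $V_s = V_{\alpha_{g_1}}\cdots V_{\alpha_{g_n}}$. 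Setting $V_g := V_{\alpha_g}$ then gives $V_s = V_{g_1}\cdots V_{g_n}$, and because $\alpha_s = \alpha_{g_1}\cdots\alpha_{g_n}$ as a partial bijection, the explicit formula $V_s\ket{y} = \ket{\alpha_{g_1}\cdots\alpha_{g_n}(y)}$ is just the formula of Proposition \ref{point_hs_rep} rewritten. Note that well-definedness is not an issue: $V$ is defined on the semigroup element $s$ itself, so its value is independent of the chosen factorization of $s$ into generators.

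It remains to record the relations among the $V_g$. Because $V$ is a $*$-preserving homomorphism, $V_{g^{-1}} = V_{\alpha_{g^{-1}}} = V_{(\alpha_g)^*} = V_g^*$, using $\alpha_{g^{-1}} = (\alpha_g)^*$ from item (ii) of Proposition \ref{pga_prop}. The partial isometry property then follows by Proposition \ref{part_iso_prop} from $V_g V_g^* V_g = V_g V_{g^{-1}} V_g = V_g$, which is the image under $V$ of $\alpha_g\alpha_{g^{-1}}\alpha_g = \alpha_g$ (item (v) of Proposition \ref{pga_prop}). Finally, the two braiding relations $V_{gh}V_{h^{-1}} = V_g V_h V_{h^{-1}}$ and $V_{g^{-1}}V_{gh} = V_{g^{-1}}V_g V_h$ are precisely the images under the homomorphism $V$ of the corresponding identities $\alpha_{gh}\alpha_{h^{-1}} = \alpha_g\alpha_h\alpha_{h^{-1}}$ and $\alpha_{g^{-1}}\alpha_{gh} = \alpha_{g^{-1}}\alpha_g\alpha_h$ in item (iv) of Proposition \ref{pga_prop}.

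Since all structural work is inherited from earlier results, there is no genuine analytic obstacle; the one place demanding care is bookkeeping the translation dictionary $V_g = V_{\alpha_g}$ and checking that each relation claimed for the $V_g$ really is the homomorphic image of an already-proven relation among the $\alpha_g$ in Proposition \ref{pga_prop}. The main conceptual step — that a partial group action yields a genuine inverse-semigroup action to which Proposition \ref{point_hs_rep} applies — is supplied entirely by Proposition \ref{pga_semigroup}.
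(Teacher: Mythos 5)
Your proposal is correct and follows essentially the same route as the paper: both reduce the covariant-representation claim to Proposition \ref{point_hs_rep} applied to the inverse semigroup $S_\alpha(G)$, obtain $V_s=V_{g_1}\cdots V_{g_n}$ from the semigroup homomorphism property, and derive the relations $V_{g^{-1}}=V_g^*$ and the two braiding identities as homomorphic images of the corresponding relations among the $\alpha_g$ in Proposition \ref{pga_prop}. Your explicit appeal to Lemma \ref{point_hs_pga} for the identification of the generated point set is a small addition of bookkeeping the paper leaves implicit, but it is not a different argument.
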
 
\begin{proof}
  Let $s = \alpha_{g_1}\cdots \alpha_{g_n} \in S_\alpha(G)$.
  According to proposition \ref{point_hs_rep}, the
  operators $V_s$ form a representation of partial
  isometries and together with the algebra representation $\pi$ a
  covariant representation. By definition
  \begin{align*}
      V_s \ket{y} = \ket{\alpha_s(y)} = \ket{\alpha_{g_1}\cdots \alpha_{g_n}(y)}
       = V_{g_1} \cdots V_{g_n} \ket{y} 
   \end{align*}
   for $y\in X_{s^*}$. 
   Furthermore,
  \begin{align*}
      V_{g^{-1}}= V_{\alpha_{g^{-1}}} = V_{\alpha^*_g}
       = \paraa{V_{\alpha_g}}^* = V_g^*
   \end{align*}
   and   
  \begin{align*}
      V_{gh}V_{h^{-1}}=V_{\alpha_{gh}} V_{\alpha_{h^{-1}} } 
        =V_{\alpha_{gh} \alpha_{h^{-1}} } 
        = V_{\alpha_g\alpha_h \alpha_{h^{-1}} } 
        = V_{\alpha_g}V_{\alpha_h} V_{\alpha_{h^{-1}} }
        = V_g V_h V_{h^{-1}} 
   \end{align*}
   by proposition \ref{pga_prop}. The other equation is proven
   equivalently.
\end{proof}

\bibliographystyle{alpha}
\bibliography{references}  

\end{document}